\theoremstyle{plain}
\newtheorem*{thm*}{Theorem}
\newtheorem{thm}{Theorem}[section]
\crefname{thm}{Theorem}{Theorems}
\Crefname{thm}{Theorem}{Theorems}
\newtheorem*{lem*}{Lemma}
\newtheorem{lem}[thm]{Lemma}
\crefname{lem}{Lemma}{Lemmas}
\Crefname{lem}{Lemma}{Lemmas}
\newtheorem*{claim*}{Claim}
\newtheorem{claim}[thm]{Claim}
\crefname{claim}{Claim}{Claims}
\Crefname{claim}{Claim}{Claims}
\newtheorem{prop}[thm]{Proposition}
\crefname{prop}{Proposition}{Propositions}
\Crefname{prop}{Proposition}{Propositions}
\newtheorem{cor}[thm]{Corollary}
\crefname{cor}{Corollary}{Corollaries}
\Crefname{cor}{Corollary}{Corollaries}
\newtheorem{conj}[thm]{Conjecture}
\crefname{conj}{Conjecture}{Conjectures}
\Crefname{conj}{Conjecture}{Conjectures}
\newtheorem{qn}[thm]{Question}
\crefname{qn}{Question}{Questions}
\Crefname{qn}{Question}{Questions}
\newtheorem{obs}[thm]{Observation}
\crefname{obs}{Observation}{Observations}
\Crefname{obs}{Observation}{Observations}
\crefname{ex}{Example}{Examples}
\Crefname{ex}{Example}{Examples}
\theoremstyle{definition}
\crefname{prob}{Problem}{Problems}
\Crefname{prob}{Problem}{Problems}
\newtheorem{defn}[thm]{Definition}
\crefname{defn}{Definition}{Definitions}
\Crefname{defn}{Definition}{Definitions}
\theoremstyle{remark}
\newtheorem*{rem}{Remark}
\crefname{rem}{Remark}{Remarks}
\Crefname{rem}{Remark}{Remarks}
\xpatchcmd{\proof}{\itshape}{\normalfont\proofnamefont}{}{}
\newcommand{\proofnamefont}{}
\renewcommand{\proofnamefont}{\bfseries}
\newcommand{\remove}[1]{}
\newcommand{\ceil}[1]{
    \lceil #1 \rceil
}
\newcommand{\floor}[1]{
    \lfloor #1 \rfloor
}
\newcommand{\rhat}{\hat{r}}
\newcommand{\eps}{\varepsilon}
\newcommand{\F}{\mathcal{F}}
\newcommand{\T}{\mathcal{T}}
\newcommand{\G}{\mathcal{G}}
\newcommand{\HH}{\mathcal{H}}
\newcommand{\A}{\mathcal{A}}
\newcommand{\N}{\mathbb{N}}
\newcommand{\sarrow}{\stackrel{s}{\longrightarrow}}
\newcommand{\family}{\A}
\DeclareMathOperator{\dist}{dist}
\DeclareMathOperator{\LG}{LG}
\title{Size-Ramsey numbers of powers of hypergraph trees and long subdivisions}
\author{
	Shoham Letzter\thanks{
	Department of Mathematics, 
	University College London, 
	Gower Street, London WC1E~6BT, UK. 
	Email: \texttt{s.letzter}@\texttt{ucl.ac.uk}. 
	Research supported by the Royal Society.
	}
	\and
	Alexey Pokrovskiy \thanks{
	Department of Mathematics, 
	University College London, 
	Gower Street, London WC1E~6BT, UK. 
	Email: \texttt{a.pokrovskiy}@\texttt{ucl.ac.uk}.
	}
	\and
	Liana Yepremyan\thanks{ 
	Deparment of Mathematics, London School of Economics, London WC2A 2AE, UK.
	Email: \texttt{l.yepremyan}@\texttt{lse.ac.uk}. Research supported by Marie Sklodowska Curie Global Fellowship, H2020-MSCA-IF-2018:846304.
	}
}
\date{}
\begin{document}

\maketitle

\begin{abstract}

	\setlength{\parskip}{\medskipamount}
	\setlength{\parindent}{0pt}
	\noindent
	
	The \emph{$s$-colour size-Ramsey number} of a hypergraph $H$ is the minimum number of edges in a hypergraph $G$ whose every $s$-edge-colouring contains a monochromatic copy of $H$. 
	We show that the $s$-colour size-Ramsey number of the $t$-power of the $r$-uniform tight path on $n$ vertices is linear in $n$, for every fixed $r, s, t$, thus answering a question of Dudek, La Fleur, Mubayi, and R\"odl (2017).

	In fact, we prove a stronger result that allows us to deduce that powers of bounded degree hypergraph trees and powers of `long subdivisions' of bounded degree hypergraphs have size-Ramsey numbers that are linear in the number of vertices. This extends and strongly generalises recent results about the linearity of size-Ramsey numbers of powers of bounded degree trees and of long subdivisions of bounded degree graphs.
	
\end{abstract}

\section{Introduction} \label{sec:intro}

	For two hypergraphs $G$ and $H$ and an integer $s \ge 2$ write $G \sarrow H$ if in every $s$-edge-colouring of $G$ there is a monochromatic copy of $H$. The $s$-colour \emph{size-Ramsey number} of a hypergraph $H$, denoted by $\rhat_s(H)$, is the minimum number of edges in a hypergraph $G$ satisfying $G \sarrow H$. Namely,
	\begin{equation*}
		\rhat_s(H) = \min\{e(G): G \sarrow H\}.
	\end{equation*}
	When $s = 2$, we often omit the subscript $2$, and refer to the $2$-colour size-Ramsey number of $H$ as, simply, the \emph{size-Ramsey number} of $H$.

	The notion of size-Ramsey numbers of graphs was introduced by Erd\H{o}s, Faudree, Rousseau and Schelp \cite{erdos1978size} in 1978. It is an interesting and well-studied variant of the classical $s$-colour \emph{Ramsey number} of $H$, denoted by $r_s(H)$, which is be defined as
	\begin{equation*}
		r_s(H) = \min\{|G| : G \sarrow H\},
	\end{equation*}
	i.e.\ it is the minimum number of vertices in a graph $G$ such that $G \sarrow H$. The study of Ramsey numbers, especially for $s = 2$, is one of the most central and well-studied topics in combinatorics.  In this paper we study size-Ramsey numbers of graphs and hypergraphs, an active field of research in recent years. 
	
	One of the earliest results regarding size-Ramsey numbers of graphs, obtained by Beck \cite{beck1983size} in 1983, asserts that the size-Ramsey number of a path is linear in its length; more precisely, Beck showed that $\rhat(P_n) \le 900 n$ for large $n$. The problem of determining the size-Ramsey number of a path has been the focus of many papers \cite{bal2019new,beck1983size,bollobas2001random,bollobas1986extremal,dudek2015alternative,dudek2017some,letzter2016path}, and the currently best-known bounds are as follows:
	\begin{equation*}
		(3.75 + o(1))n \le \rhat(P_n) \le 74n,
	\end{equation*}
	where the lower bound is due to Bal and DeBiasio \cite{bal2019new} and the upper bound is by  Dudek and Pra{\l}at \cite{dudek2017some}. One can easily generalise Beck's arguments to the multicolour setting, showing that $\rhat_s(P_n) = O_s(n)$. This multicolour variant has received its own fair share of attention \cite{dudek2017some,krivelevich2019long,bal2019new,dudek2018note}; and currently the best-known bounds are as follows, where the the lower bound is due to Dudek and Pra{\l}at \cite{dudek2017some} and  the upper  bound is by Krivelevich \cite{krivelevich2019long}.
	\begin{equation*}
		\rhat_s(P_n) = \Omega(s^2 n), \qquad \rhat_s(P_n) = O\big((s^2 \log{s}) n\big).
	\end{equation*}
    In contrast, the study of size-Ramsey numbers of hypergraphs was only initiated in 2017 by Dudek, La Fleur, Mubayi and R\"odl \cite{dudek2017size}. One of the first problems proposed in their paper is the following  generalisation of Beck's result \cite{beck1983size} regarding size-Ramsey numbers of paths. The \emph{$r$-uniform tight path} on $n$ vertices, denoted by $P_n^{(r)}$, is the $r$-uniform hypergraph on vertices $[n]$ whose edges are all sets of $r$ consecutive elements in $[n]$. Observe that $P_n^{(2)}$ is the path $P_n$ on $n$ vertices. The authors of \cite{dudek2017size} ask if, similarly to the graph case, $\rhat\big(P_n^{(r)}\big) = O(n)$. This was answered affirmatively for $r = 3$ by Han, Kohayakawa, Letzter, Mota and Parczyk \cite{han2021size}. For $r \ge 4$, the best-known bound prior to our work was $\rhat(P_n^{(r)}) = O\big( (n \log n)^{r/2}\big)$, due to Lu and Wang \cite{lu2018size}.  The problem of determining if $\rhat\big(P_n^{(r)}\big) = O(n)$ was the main motivation  of our work, and, as one of the consequences of our main result (\Cref{thm:main} which is stated below), we settle this problem.

	\begin{thm} \label{thm:main-path0}
		Fix integers $r, s \ge 1$. Then $\rhat_s\big(P_n^{(r)}\big)=O(n)$.
	\end{thm}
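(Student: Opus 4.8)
The plan is to derive \Cref{thm:main-path0} as an immediate special case of our main result, \Cref{thm:main}. The point is that the tight path $P_n^{(r)}$ is about the simplest object to which that result applies: it is a bounded-degree $r$-uniform tight tree — it is grown from a single $r$-edge by repeatedly attaching a new vertex to the last $r-1$ vertices, and every vertex lies in at most $r = O(1)$ of its edges — and, equivalently, it is the ``long subdivision'' of a single $r$-uniform edge. Since $P_n^{(r)}$ is its own first power, feeding it into \Cref{thm:main} (with power $t = 1$) produces a host hypergraph $G$ with $O(n)$ edges such that $G \sarrow P_n^{(r)}$ for any fixed number of colours $s$, which is exactly the assertion. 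So essentially all of the work is in \Cref{thm:main}; below I sketch how I would attack the relevant case of it.

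The engine I would use is the recently-established linearity of size-Ramsey numbers, in the \emph{graph} setting, of long subdivisions and of powers of bounded-degree graphs. Concretely, I would encode $P_n^{(r)}$ into a bounded-degree \emph{graph} gadget — a sufficiently long subdivision $H$ of a fixed bounded-degree graph, suitably thickened so that threading a tight path on $n$ vertices through it is possible — and invoke the graph result to obtain a host graph $\Gamma$ with $O(n)$ edges and $\Gamma \sarrow H$. I would then lift $\Gamma$ to an $r$-uniform host $G$ by installing $r$-edges along the gadget, arranged so that $e(G) = O(n)$ (which forces the gadget, hence $\Gamma$, to have bounded degree), and argue that any $s$-colouring of $E(G)$ can be pushed down to an $s$-colouring of $E(\Gamma)$ — for instance by a robust majority vote over the boundedly many $r$-edges meeting each gadget-segment — in such a way that a monochromatic copy of $H$ in $\Gamma$ carries a monochromatic tight path on $n$ vertices in $G$. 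For this to go through one needs the subdivision long enough that each segment ``sees'' enough $r$-edges for the colour information to survive the reduction, and $\Gamma$ robust enough that a monochromatic $H$ still threads the right local structure.

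The hard part will be the interplay between sparsity and higher uniformity. A host with only $O(n)$ edges is extremely sparse as an $r$-uniform hypergraph — on average each $(r-1)$-set lies in only boundedly many edges — so an adversarial $s$-colouring of the $r$-edges need not resemble (a blow-up of) any colouring of pairs or of a bounded-degree scaffold, and one cannot afford to discard more than a constant fraction of the edges. Converting an arbitrary colouring of such a sparse $r$-uniform host into a graph-colourable structure, while keeping a monochromatic tight path recoverable, is the crux; I expect \Cref{thm:main} to handle it by building $G$ so that its $r$-edges are organised along an underlying bounded-degree graph scaffold (so the hypergraph colouring can be localised segment-by-segment along the scaffold) together with an absorption-type argument that stitches the local monochromatic pieces into one long tight path. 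The remaining steps — checking that $P_n^{(r)}$ embeds into the resulting monochromatic structure and that $e(G) = O(n)$ — should then be routine bookkeeping.
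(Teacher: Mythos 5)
There is a genuine gap in the reduction, and it is precisely the one-line observation that the paper's deduction hinges on. \Cref{thm:main} does not accept a hypergraph as input: its hypothesis is that $H$ is a \emph{graph} in $\family_{n,d,10t\log n}$, and its conclusion is $\G \sarrow K_r(H^t)$, where $H^t$ is the graph power and $K_r(\cdot)$ takes $r$-cliques. So you cannot ``feed $P_n^{(r)}$ into \Cref{thm:main} with $t=1$'': with $t=1$ the theorem only yields targets of the form $K_r(H)$ for $H\in\family_{n,d,10\log n}$, and the unique graph $H$ with $K_r(H)\cong P_n^{(r)}$ is $H=P_n^{r-1}$, which for $r\ge 3$ contains triangles and hence does not lie in $\family_{n,d,\ell}$ for any $\ell\ge 3$ (graphs in that family are built from pendant vertices and internally-new paths of length at least $\ell$, so they have no short cycles). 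The correct instantiation, which is the paper's entire proof of \Cref{thm:main-path0}, is to take $H=P_n$ (the ordinary path, which is in $\family_{n,d,\ell}$ for every $\ell$) and power $t\ge r-1$, and to observe that $K_r\big(P_n^{r-1}\big)\cong P_n^{(r)}$; the power $t$ in \Cref{thm:main} is doing essential work here and cannot be set to $1$. Your framing of $P_n^{(r)}$ as a bounded-degree tight tree would instead point to \Cref{thm:main-hypergraph-tree}, but that result is itself deduced (via \Cref{lem:switch-to-hypergraph-trees}) from the graph statement \Cref{thm:main-tree} by exactly the same kind of translation $\T\subseteq K_r(S^{d+1})$ that is missing from your argument.

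Separately, the strategy you sketch for proving the relevant case of \Cref{thm:main} is not needed (the theorem can be quoted) and, as described, would fail at its central step: projecting an arbitrary $s$-colouring of the sparse $r$-uniform host down to a $2$-uniform colouring of a pre-Ramsey graph $\Gamma$ by a majority vote over the $r$-edges meeting each segment destroys exactly the information you must preserve --- a monochromatic copy of the graph gadget under the projected colouring gives segments in which only a majority of the $r$-edges share a colour, which is not enough to thread a single monochromatic tight path through them, and no constant-fraction loss can be absorbed in a host with $O(n)$ edges. The paper's mechanism is different: it colours $K_r(G^k[K_B])$ for a bounded-degree expander $G$, uses \Cref{lem:monochromatic-blowup} (and its ordered-tree strengthenings) to make all edges of the same \emph{type} monochromatic, and then finds a monochromatic homomorphic image via connectors, lifting it inside the blow-up; there is no reduction to a Ramsey statement for an already-fixed graph host $\Gamma$.
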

	
    In fact, we show that the same result holds also for so-called powers of tight paths. While there is no standard definition of powers of hypergraphs in the literature, it is natural to define powers of tight paths as follows; this definition was used previously in \cite{bedenknecht2019powers}.
    Define the \emph{$t$-power} of an $r$-uniform tight path on $n$ vertices, denoted by $\big(P_n^{(r)}\big)^t$, to be the $r$-graph on vertices $[n]$ whose edges are the $r$-sets of vertices contained in intervals in $[n]$ of length $r+t-1$.  The following theorem, strengthening \Cref{thm:main-path0}, also follows from our main result.

	\begin{thm} \label{thm:main-path}
		Let $r, s, t \ge 1$ be integers. 
		The $s$-colour size-Ramsey number of $\big(P_n^{(r)}\big)^t$ is $O(n)$.
	\end{thm}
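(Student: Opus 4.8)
The plan is to deduce \Cref{thm:main-path} (and hence \Cref{thm:main-path0}, by taking $t=1$) directly from our main result, \Cref{thm:main}, which asserts in particular that the $t$-power of any bounded-degree $r$-uniform hypergraph tree has size-Ramsey number linear in its number of vertices. The first step is to check that $P_n^{(r)}$ is a bounded-degree $r$-uniform hypergraph tree on $n$ vertices. It is a tight tree: it is obtained from the single edge $\{1,\dots,r\}$ by repeatedly attaching a new vertex $i$ to the $(r-1)$-set $\{i-r+1,\dots,i-1\}$, creating the edge $\{i-r+1,\dots,i\}$. It has bounded degree, since each vertex $i \in [n]$ lies in at most $r$ edges, namely those intervals $\{j,j+1,\dots,j+r-1\}$ with $i-r+1 \le j \le i$. (Equivalently, $P_n^{(r)}$ is a long subdivision of a single edge, so one could instead invoke the long-subdivision half of \Cref{thm:main}.)

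The second step is to reconcile the two notions of ``power'' in play: the $r$-graph $\big(P_n^{(r)}\big)^t$ defined in this paper has as edges the $r$-subsets of intervals of $[n]$ of length $r+t-1$, and one must verify this coincides with — or is contained in — the notion of $t$-power of the hypergraph tree $P_n^{(r)}$ to which \Cref{thm:main} applies. Here one uses that size-Ramsey numbers are monotone under passing to sub-hypergraphs: if $H' \subseteq H$, then any $G$ with $G \sarrow H$ also satisfies $G \sarrow H'$, so $\rhat_s(H') \le \rhat_s(H)$. Thus it suffices to exhibit $\big(P_n^{(r)}\big)^t$ as a sub-hypergraph of the general $t'$-power of $P_n^{(r)}$ for some $t' = t'(r,t)$ depending only on $r$ and $t$.

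Combining these, \Cref{thm:main} applied to the bounded-degree hypergraph tree $T = P_n^{(r)}$ on $n$ vertices yields $\rhat_s\big(\big(P_n^{(r)}\big)^t\big) = O(n)$, with the implied constant depending only on $r$, $s$, and $t$; this is exactly \Cref{thm:main-path}, and the special case $t=1$ is \Cref{thm:main-path0}.

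I expect the only slightly delicate point in this deduction to be the bookkeeping in the second step — confirming that the tight-tree ordering of $P_n^{(r)}$ and the uniform bound on its degree satisfy the precise hypotheses of \Cref{thm:main}, and that the ad hoc interval definition of powers of tight paths is captured (up to a harmless change of the power) by the general definition of powers of hypergraph trees used there. Once the definitions are matched up, the deduction is immediate; all of the substance is in the proof of \Cref{thm:main} itself.
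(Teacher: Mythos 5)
There is a genuine gap, and it sits exactly where the paper's own (one-line) deduction does its work. \Cref{thm:main} is a statement about \emph{graphs}: it gives a host $\G$ with $\G \sarrow K_r(H^t)$ for every graph $H \in \family_{n,d,10t\log n}$, where $H^t$ is a graph power and $K_r(\cdot)$ takes $r$-cliques. It does not assert anything directly about powers of $r$-uniform hypergraph trees, so your key step -- ``$P_n^{(r)}$ is a bounded-degree tight tree, hence \Cref{thm:main} applies to it'' -- does not parse: for $r \ge 3$ the hypergraph $P_n^{(r)}$ is not a member of $\family_{n,d,\ell}$, which consists of graphs only. The statement you are implicitly invoking is \Cref{thm:main-hypergraph-tree}, which the paper obtains from \Cref{thm:main-tree} (and hence \Cref{thm:main}) only after the reduction of \Cref{sec:reductiontrees}, i.e.\ \Cref{lem:switch-to-hypergraph-trees} and \Cref{cor:power-tight-trees}, which build an auxiliary graph tree $S$ with $\T^t \subseteq K_r(S^{t(d+1)})$. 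That reduction (or something replacing it) is precisely what is missing from your argument; the same graph/hypergraph confusion appears in your parenthetical, since it is the graph $P_n$, not the hypergraph $P_n^{(r)}$, that is a long subdivision of a single edge.

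The repair is short and is what the paper does: take $H = P_n$, which lies in $\family_{n,2,\ell}$ for every $\ell$, and apply \Cref{thm:main} with power $r+t-2$, observing that $K_r\big(P_n^{r+t-2}\big) \cong \big(P_n^{(r)}\big)^t$ -- an $r$-clique of $P_n^{r+t-2}$ is exactly an $r$-set of vertices whose pairwise distances are at most $r+t-2$, i.e.\ an $r$-set contained in an interval of $r+t-1$ consecutive vertices. Your two preliminary observations are correct but insufficient on their own: $P_n^{(r)}$ is indeed a tight tree of maximum degree $r$, and the interval definition of $\big(P_n^{(r)}\big)^t$ does coincide with the \Cref{def:power} power of $P_n^{(r)}$ (tight paths in $P_n^{(r)}$ on at least $r$ vertices span intervals), a verification you flag but do not carry out. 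However, these facts only yield \Cref{thm:main-path} if you additionally invoke \Cref{thm:main-hypergraph-tree} or redo the \Cref{sec:reductiontrees} machinery; resting the deduction on \Cref{thm:main} alone, as written, leaves the essential bridging step unproved.
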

	
    The same problem for graphs was studied previously. For a graph $G$, its \emph{$t$-power} is the graph on $V(G)$ where two vertices are adjacent if the distance between them in $G$ is at most $t$. Following a rather long gap after the linearity of the size-Ramsey number of cycles was established by Haxell, Kohayakawa and {\L}uczak \cite{haxell1995induced}, Conlon~\cite{conlon2016} asked if powers of paths also have linear size-Ramsey numbers. Clemens, Jenssen, Kohayakawa, Morrison, Mota, Reding and Roberts \cite{clemens2019size}  proved this to be the case for two colours and Han, Jenssen, Kohayakawa, Mota and Roberts \cite{han2020multicolour} for $s \ge 2$ colours. 

    Another family of graphs whose size-Ramsey numbers have been studied extensively is the family trees. An influential result of Friedman and Pippenger \cite{friedman1987expanding} implies that bounded degree trees have linear size-Ramsey number too. 
    Here the bounded degree requirement is essential, since, for example, the double star, obtained by joining by an edge the centres of two disjoint stars on $n$ vertices, has size-Ramsey number which is quadratic in $n$.  Following the line of research which stemmed from Conlon's question,  Kam\v{c}ev, Liebenau, Wood and Yepremyan \cite{kamcev2019size} showed that powers of bounded degree trees have  linear size-Ramsey number, for two colours. Berger, Kohayakawa, Maesaka, Martins, Mendon\c{c}a, Mota and Parczyk \cite{berger2020size} showed that the same holds for an arbitrary number of colours. More precisely,  if we let  $\T_{n, d}$ to be the family of trees on $n$ vertices with maximum degree at most $d$, then by these results we know that $\rhat_s(T^t) = O(n)$ for every $T \in \T_{n,d}$, where $s, t, d$ are fixed.  
    
    There are various ways one can think of trees in the hypergraph setting. Inspired by Dudek, La Fleur, Mubayi and R\"odl  \cite{dudek2017size} we study hypergraph trees defined as follows.

	\begin{defn} \label{def:hypergraph-tree}
		An \emph{$r$-uniform tree} is an $r$-unifrom hypergraph with edges $\{e_1, \ldots, e_m\}$ such that for every $i \in \{2, \ldots, m\}$ we have $\big|e_i \cap (\bigcup_{1 \le j < i} e_j)\big| \le r-1$ and $e_i \cap (\bigcup_{1 \le i < j} e_j) \subseteq e_{i_0}$ for some $i_0 \in [i-1]$.\footnote{It would be more natural to require that  $e_i\cap \bigcup_{1 \le j < i} e_j \neq \emptyset$, or to refer to hypergraphs as in this definition as `forests', but we stick to this definition and notation to be consistent with \cite{dudek2017size}.}
	\end{defn}

	This generalises two notions of hypergraph trees: \emph{tight trees}, where the intersection $e_i \cap (\bigcup_{1 \le j < i} e_j)$ is required to have size \emph{exactly} $r-1$ rather than at most $r-1$; and \emph{$\ell$-trees}, for $\ell \in [r-1]$,  where the same intersection is required to have size at most $\ell$. The latter definition appears in Section 2.2 in \cite{dudek2017size}, where the authors ask if there are $\ell$-trees on $n$ vertices whose size-Ramsey number is $\Omega(n^{\ell+1})$. Here we answer this question negatively for bounded degree hypergraph trees, for all colours $s\geq 2$.

    \begin{thm} \label{thm:main-hypergraph-tree-0}
		Let $r, s, d \ge 1$ be integers. Every $r$-uniform tree $\T$ on $n$ vertices with maximum degree at most $d$ satisfies $\rhat_s(\T) = O(n)$.
	\end{thm}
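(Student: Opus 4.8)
The plan is to deduce \Cref{thm:main-hypergraph-tree-0} from a more general statement — which is where essentially all the difficulty lies — asserting that \emph{long subdivisions of bounded-degree $r$-uniform hypergraphs} have size-Ramsey number linear in their number of vertices, together with a short combinatorial reduction showing that every bounded-degree $r$-uniform tree embeds into such an object of linear size. Roughly, for an $r$-uniform hypergraph $F$ and a parameter $\ell$ the $\ell$-subdivision of $F$ is obtained by replacing the edges of $F$ by tight paths of length about $\ell$; the general result I would aim for is that for each $d$ there is $\ell_0 = \ell_0(r,s,d)$ such that whenever $F$ has maximum degree at most $d$ and $\ell\ge\ell_0$, the $\ell$-subdivision $H$ of $F$ satisfies $\rhat_s(H)=O(|V(H)|)$, the implied constant depending only on $r,s,d,\ell$. (This would also recover \Cref{thm:main-path0} and \Cref{thm:main-path}, since tight paths and their powers are, up to a bounded stretch, long subdivisions of a single edge.)

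For the reduction, given an $r$-uniform tree $\T$ on $n$ vertices with maximum degree at most $d$, form its \emph{block tree} $B$: one node per edge of $\T$, with the node of $e_i$ joined to that of the edge $e_{i_0}$ containing $e_i\cap\bigcup_{j<i}e_j$ (see \Cref{def:hypergraph-tree}). Then $B$ has $O(n)$ nodes and maximum degree $O(rd)$, since every edge of $\T$ meets at most $r(d-1)$ others, and the vertices of $e_i$ lying outside $e_{i_0}$ are new, so passing from $e_{i_0}$ to $e_i$ deletes at most $r-1$ vertices and adds the same number of fresh ones. Now let $B'$ be obtained from $B$ by subdividing every edge $\ell_0$ times and let $H$ be a tight tree whose block tree is $B'$; there is freedom in building such an $H$ (each new edge is the previous one with a chosen vertex replaced by a fresh one), and I would use it to make $\T$ a subhypergraph of $H$: each original adjacency $e_{i_0}\to e_i$ is realised along the length-$\ell_0$ tight path replacing it in $B'$ by $\le r-1\le\ell_0$ genuine vertex swaps followed by padding moves (add a fresh vertex, then delete it), which keeps the structure a tight tree. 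Since $H$ has $O(\ell_0 n)=O(n)$ vertices and is a long subdivision of a bounded-degree hypergraph, the general result yields $\rhat_s(\T)\le\rhat_s(H)=O(n)$; this step is routine bookkeeping.

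For the general result, the host cannot be a uniformly random sparse $r$-graph — with only $O(n)$ edges it lies far below the threshold for containing a long tight path at all — so I would build $G$ by gluing, along a (slightly blown-up) bounded-degree $r$-graph modelling the branch structure of $F$, one \emph{connector gadget} per edge of $F$: each a structured, random-like $r$-graph with two designated terminal $(r-1)$-sets and only $O_{r,s,d,\ell}(1)$ edges, so that $e(G)=O(n)$. The gadget must be engineered to be \emph{robustly monochromatically tight-path connected}: in every $s$-colouring there is a colour in which its terminals are joined by a tight path of flexible length between $\ell$ and $2\ell$, and this survives deleting a bounded number of vertices and combines consistently with the colours used in neighbouring gadgets. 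Granting this, in any $s$-colouring of $G$ one embeds the target long subdivision $H$ of $F$ by first placing the branch vertices and then joining consecutive ones through the gadgets by monochromatic tight paths, using the slack in the subdivision length — and a pigeonhole choice, at each branch vertex, of a colour that all of its incident gadgets can use — to meet the boundedly many colour constraints at each branch vertex.

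The main obstacle is constructing and analysing the connector gadget, i.e.\ proving a robust monochromatic tight-path connection lemma for hypergraphs. This is the hypergraph counterpart of the expander-connectivity machinery (Friedman--Pippenger, and its use by Haxell--Kohayakawa--\L{}uczak, Clemens et al., Kam\v{c}ev--Liebenau--Wood--Yepremyan and Berger et al.) behind the graph results, but tight paths are far less forgiving than paths, so the gadget cannot be a plain sparse expander and one must carefully limit the ways an adversarial colouring can destroy every long tight path through it. Once this lemma is available, the embedding reduction, the assembly of the gadgets, and the probabilistic estimates needed to build the gadgets all go through without serious difficulty.
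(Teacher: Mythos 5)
There is a genuine gap, and it is in the engine of your plan rather than in the bookkeeping. Your host is built by gluing one constant-size ``connector gadget'' per edge of $F$ along a (slightly blown-up) copy of $F$'s own branch structure, and each gadget is only guaranteed that \emph{some} colour joins its terminals by a tight path. Such a host does not arrow the subdivision $H$ at all: an adversary can colour each gadget entirely in a single colour, choosing different colours for different gadgets (say, alternating red and blue along the tree structure of $F$). Every gadget is still ``monochromatically connected'', but a copy of $H$ must be monochromatic \emph{globally}, and since the host has only $O(n)$ edges arranged along $F$'s structure, any copy of $H$ must pass through a constant fraction of the gadgets; with alternating colours each colour class splits into bounded-size pieces, so no monochromatic copy exists. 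Your ``pigeonhole choice at each branch vertex of a colour all incident gadgets can use'' only enforces local consistency, and an embedding whose paths are monochromatic in different colours is not a monochromatic copy of $H$. This is exactly the obstruction that forces the paper (and all prior work on this problem) to use a single global host $K_r(G^{k}[K_B])$ built from an expander, with a global iterative argument (the monochromatic-blow-up/type lemma, the ordered-tree Ramsey lemmas, the tree-skeleton iteration, and \Cref{cor:ramsey-expanders}), rather than target-mirroring local Ramsey gadgets; so the ``main obstacle'' you defer is not a technical connection lemma but a flaw in the architecture.

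A secondary problem is the reduction itself. Adjacent edges $e_{i_0}, e_i$ of $\T$ may share up to $r-1$ vertices, while the first and last edges of a tight path of length $\ell_0 > r$ are disjoint; so $\T$ cannot sit inside a hypergraph in which $e_{i_0}$ and $e_i$ are the two end-edges of a long tight path. The chain you describe, in which the shared vertices persist through all intermediate edges while padding vertices are swapped in and out, is a legitimate tight tree but it is \emph{not} an ``$\ell$-subdivision with edges replaced by tight paths'', so the general theorem you postulate would not apply to it; you would have to strengthen that theorem to cover such chains, i.e.\ essentially to bounded-degree hypergraph trees themselves. The paper's route is different and avoids subdivisions entirely: \Cref{lem:switch-to-hypergraph-trees} produces an ordinary graph tree $S$ on at most $n+c(\T)\le 2n$ vertices with maximum degree at most $dr$ such that $\T \subseteq K_r(S^{d+1})$, and then \Cref{thm:main-tree} (a special case of \Cref{thm:main}, which handles bounded-degree trees and their powers directly, with no length restriction) gives $\rhat_s(K_r(S^{d+1})) = O(n)$ and hence $\rhat_s(\T)=O(n)$. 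Your block-tree idea is close in spirit to that lemma, but it should feed into a result about $K_r$ of powers of bounded-degree trees, not into a long-subdivision statement.
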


    Our method applies to the powers of hypergraph trees as well. As far as we know, there is no standard notion of a power of a hypergraph. We thus make the following definition, which is reminiscent of the more natural notion of a power of a tight path (introduced before \Cref{thm:main-path}).

    \begin{defn} \label{def:power}
        Given an $r$-uniform hypergraph $\HH$, its \emph{$t$-power}, denoted by $\HH^t$, is the $r$-uniform hypergraph on $V(\HH)$ whose edges are $r$-sets of vertices that are contained in some tight path in $\HH$ on at most $r+t-1$ vertices.
    \end{defn}

    The following is a generalisation of the results of \cite{kamcev2019size} and \cite{berger2020size} for hypergraph trees. 

	\begin{thm} \label{thm:main-hypergraph-tree}
		Let $r, s, d, t \ge 1$ be integers. Every $r$-uniform tree $\T$ on $n$ vertices with maximum degree at most $d$ satisfies $\rhat_s(\T^t) = O(n)$.
	\end{thm}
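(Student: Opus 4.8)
The plan is to deduce \Cref{thm:main-hypergraph-tree} from the general result \Cref{thm:main} by a short subgraph argument. Two elementary observations set this up. First, since $\T$ has maximum degree at most $d$, a given vertex of $\T$ lies on only boundedly many tight paths on at most $r+t-1$ vertices, so $\Delta(\T^t) \le D$ for some $D = D(r,d,t)$; in particular $e(\T^t) = O(n)$ by handshaking. Second, $\rhat_s$ is monotone under taking subgraphs: if $H' \subseteq H$ then $G \sarrow H$ implies $G \sarrow H'$, whence $\rhat_s(H') \le \rhat_s(H)$. It therefore suffices to exhibit a hypergraph $\mathcal{S}$ with $\T^t \subseteq \mathcal{S}$ and $|V(\mathcal{S})| = O(n)$ to which the ``long subdivisions'' part of \Cref{thm:main} applies.

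The candidate is a bounded power of a long subdivision of $\T^t$ itself. Write $\HH := \T^t$, a bounded-degree $r$-uniform hypergraph on $n$ vertices with $O(n)$ edges, and let $\ell = \ell(r,s,D)$ be the length threshold above which \Cref{thm:main} guarantees that powers of long subdivisions of bounded-degree hypergraphs have linear size-Ramsey number. Let $\mathcal{S}$ be obtained from $\HH$ by replacing every hyperedge by a tight path through its $r$ vertices together with $\ell$ new internal vertices. Then $\mathcal{S}$ is $r$-uniform with $\Delta(\mathcal{S}) \le rD = O(1)$ and $|V(\mathcal{S})| \le n + \ell\cdot e(\HH) = O(n)$, and $\mathcal{S}$ is a sufficiently long subdivision of a bounded-degree hypergraph. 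Crucially $\HH \subseteq \mathcal{S}^{\ell+1}$: for every hyperedge $e$ of $\HH$, its $r$ vertices all lie on the tight path of $\mathcal{S}$ on $r+\ell = r+(\ell+1)-1$ vertices that replaced $e$, so $e$ is an edge of $\mathcal{S}^{\ell+1}$; and $V(\HH) \subseteq V(\mathcal{S})$.

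Combining these, \Cref{thm:main} gives $\rhat_s(\mathcal{S}^{\ell+1}) = O(|V(\mathcal{S})|) = O(n)$ --- here the power exponent $\ell+1$ and the degree bound $D$ are constants depending only on $r,s,d,t$, which is all that is required --- and then monotonicity yields $\rhat_s(\T^t) \le \rhat_s(\mathcal{S}^{\ell+1}) = O(n)$, proving \Cref{thm:main-hypergraph-tree}. (Taking $\T = P_n^{(r)}$, which is a bounded-degree $r$-uniform tree, recovers \Cref{thm:main-path} and hence \Cref{thm:main-path0}.)

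Essentially all of the difficulty sits inside \Cref{thm:main}: building a host hypergraph with only $O(n)$ edges and running a Friedman--Pippenger-style greedy embedding in the $r$-uniform setting, maintaining expansion of the unused part of a suitable monochromatic subhypergraph while extending tight paths and controlling co-degrees of $(r-1)$-sets. For the reduction above the only points to verify are the two elementary observations (the bounded degree of $\T^t$, and the subgraph relation $\T^t \subseteq \mathcal{S}^{\ell+1}$); the one thing one must be careful about is that the length threshold $\ell$ in \Cref{thm:main} should not depend on the power exponent --- otherwise setting the subdivision length equal to $\ell$ and then taking the $(\ell+1)$-th power would be circular --- but this independence is the natural formulation and we rely on it.
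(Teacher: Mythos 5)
There is a genuine gap, and it is exactly the one you flag at the end but then wave away. \Cref{thm:main} is not a statement about ``powers of long subdivisions of bounded-degree \emph{hypergraphs}'' with a constant length threshold $\ell(r,s,D)$: it says that the host $\G$ arrows $K_r(H^t)$ for every \emph{graph} $H \in \family_{n,d,10t\log n}$, where the graph power $H^t$ is taken and then $r$-cliques are extracted. So two things break in your reduction. First, your $\mathcal{S}$ is an $r$-uniform hypergraph obtained by tight-path subdividing $\T^t$; \Cref{thm:main} simply does not apply to it, and to make it apply you would have to realise $\mathcal{S}^{\ell+1}$ inside $K_r(H^{t'})$ for some graph $H$ in the family --- which is essentially the nontrivial reduction you have not carried out. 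Second, even granting a hypergraph version, the length threshold in \Cref{thm:main} is $10t\log n$: it depends both on $n$ and on the power exponent $t$. Your construction needs subdivision length $\ell$ together with power exponent $\ell+1$, i.e.\ $\ell \ge 10(\ell+1)\log n$, which has no solution; this is precisely the circularity you were worried about, and it is fatal rather than a formality one may ``rely on''.

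The paper's deduction avoids subdivisions altogether, exploiting that $\T_{n,d} \subseteq \family_{n,d,\ell}$ for \emph{every} $\ell$ (trees are built by pendant-vertex additions alone, so the $\log n$ threshold never enters). Concretely, \Cref{lem:switch-to-hypergraph-trees} produces an ordinary tree $S$ on at most $n + c(\T)$ vertices with maximum degree at most $dr$ such that $\T \subseteq K_r(S^{d+1})$, \Cref{cor:power-tight-trees} upgrades this to $\T^t \subseteq K_r(S^{t(d+1)})$, and then \Cref{thm:main-tree} (the tree case of \Cref{thm:main}) gives $\rhat_s(K_r(S^{t(d+1)})) = O(n)$; monotonicity under subhypergraphs, which you state correctly, finishes the proof. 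If you want to salvage your approach, the missing ingredient is exactly such a lemma converting the hypergraph tree into a bounded-degree graph tree whose bounded power's clique hypergraph contains $\T^t$; replacing hyperedges by long tight paths cannot substitute for it, because the subdivision length and the power exponent in \Cref{thm:main} are coupled through $10t\log n$.
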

	
    We deduce   \cref{thm:main-hypergraph-tree} from a stronger result,  \Cref{thm:main-tree} below (see  \Cref{sec:reductiontrees} for the deduction). To state the latter we need the following notation. For a graph $G$ and integer $r \ge 1$, denote by $K_r(G)$ the $r$-uniform hypergraph on $V(G)$ whose edges are $r$-cliques in $G$, and write $G[K_b]$ for the  \emph{complete blow-up} of $G$ where each vertex is replaced by a $b$-clique. 
	
	\begin{thm} \label{thm:main-tree}
		Let $r, s, d, t \ge 1$ be integers. 
		Every $T \in \T_{n, d}$ satisfies $\rhat_s(K_r(T^t)) = O(n)$.
	\end{thm}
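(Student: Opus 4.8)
The plan is to exhibit, for each $T\in\T_{n,d}$, a host hypergraph $G$ with $e(G)=O(n)$ such that $G\sarrow K_r(T^t)$; then $\rhat_s(K_r(T^t))\le e(G)=O(n)$. The host will be the $r$-clique hypergraph of a bounded power (or bounded blow-up of a bounded power) of a bounded-degree expander, and the proof that it works combines a structural reduction of the target, a local Ramsey argument, a ``monochromatic expander'' lemma, and the Friedman--Pippenger tree-embedding theorem.

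\emph{Step 1: reduce the target to a thick power of a tree.} The graph $T^t$ is chordal (it is the intersection graph of a family of subtrees of $T$) with clique number and maximum degree bounded in terms of $d$ and $t$. Partitioning $V(T)$ into connected pieces of size at most $b=b(d)$ --- for instance into the stars formed by a maximal independent set --- and letting $S$ be the resulting contraction, one obtains a tree $S$ on $\Theta(n)$ vertices with $\Delta(S)=O(1)$ together with a subgraph embedding $T^t\hookrightarrow S^t[K_b]$: a path of length at most $t$ in $T$ meets at most $t+1$ pieces, so two vertices at $T$-distance $\le t$ land in pieces at $S$-distance $\le t$. Hence it suffices to show that, in any $s$-colouring of $E(G)$, there is a monochromatic copy of $K_r(S^t[K_b])$.

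\emph{Step 2: the host.} Fix large constants $D,t',M$ depending on $r,s,d,t$, put $N=Cn$ with $C=C(r,s,d,t)$ large, and let $\Lambda$ be a $D$-regular graph on $N$ vertices of large girth that is a strong expander: it contains every bounded-degree tree on $\le\delta N$ vertices (Friedman--Pippenger), and moreover is robust in the sense that every subset of $\ge N/s$ of its vertices contains a subset of size $\Omega(N)$ inducing a subgraph with the same tree-universality. Take $G=K_r\big(\Lambda^{t'}[K_M]\big)$, the $r$-clique hypergraph of the complete $M$-blow-up of the $t'$-th power of $\Lambda$; since $\Delta\big(\Lambda^{t'}[K_M]\big)=O(1)$ we get $e(G)=O(N)=O(n)$. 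Geometrically, a subgraph embedding of $S$ into $\Lambda$ turns an $r$-clique of $S^t[K_b]$ into an $r$-clique of $\Lambda^{t'}[K_M]$ (distances $\le t$ becoming $\le t'\le $ the clique-radius), once the $b$ copies of each vertex of $S$ are routed appropriately among the blobs of $\Lambda^{t'}[K_M]$.

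\emph{Step 3: find a monochromatic region and embed, and the main obstacle.} Given an $s$-colouring of $E(G)$, inside each bounded-radius ball of $\Lambda^{t'}[K_M]$ the relevant $r$-sets number only $O(1)$, so the hypergraph Ramsey theorem yields, inside each such ball, a monochromatic sub-configuration of any prescribed constant size; selecting these compatibly for the boundedly many balls through each blob (this forces $M$ to be enormous), then applying pigeonhole over the $s$ colours and the robustness of $\Lambda$, should produce a colour $c$ and a subgraph $\Lambda'\subseteq\Lambda$ on $\Omega(N)$ vertices such that all relevant $r$-sets near $\Lambda'$ are coloured $c$ while $\Lambda'$ is still tree-universal for bounded degree. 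Since $\Omega(N)\ge|V(S)|$ once $C$ is large, Friedman--Pippenger embeds $S$ into $\Lambda'$; routing the $b$ copies of each vertex of $S$ through the monochromatic cores --- and, where several pieces of $S$ meet, spreading the copies across several blobs so that no $r$-clique of the copy collapses into a single blob --- produces a monochromatic copy of $K_r(S^t[K_b])\supseteq K_r(T^t)$. The delicate point, where the real work lies, is exactly this last step: one must organise the nested local Ramsey choices so that they are simultaneously valid for all $O(1)$ balls through a given blob (so that the monochromatic region is well defined) while keeping the constants in the order $M\gg D\gg s$ and then $C$; and one must carry out the embedding so that \emph{every} $r$-set used by the copy of $T^t$, including those straddling several pieces of $S$, is routed into the monochromatic region rather than left to the adversary. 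In other words, the crux is a ``monochromatic connected expander'' statement adapted to this hypergraph colouring, together with an embedding aligned to it; Step 1 is routine.
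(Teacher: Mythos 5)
Your Steps 1 and 2 are fine (and Step 2 is essentially the paper's host: $K_r(G^k[K_B])$ for a bounded-degree expander $G$), but the proof has a genuine gap exactly where you locate "the real work". The claim in Step 3 --- that local Ramsey inside balls, pigeonhole over the $s$ colours, and robustness of $\Lambda$ "should produce" a colour $c$ and a tree-universal subgraph $\Lambda'$ on $\Omega(N)$ vertices such that \emph{all} relevant $r$-sets near $\Lambda'$ are coloured $c$ --- is not just unproved, it is false as a statement. Applying Ramsey inside each blob (or ball) only controls $r$-sets confined to that blob, and the colours obtained in different blobs are then reconciled by pigeonhole; but the adversary colours the $r$-sets straddling several blobs independently of these choices, so no large "monochromatic region" containing all nearby cross-blob $r$-sets need exist. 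Already for $r=2$, $s=2$ one can make every blob's surviving subclique blue while colouring all edges between blobs red; the classical two-colour proofs then switch to building the target out of the red cross-edges, and it is precisely this case analysis that does not survive $s\ge 3$ colours. Your proposal never supplies the mechanism that replaces it, and you acknowledge this by reducing the problem to an unproven "monochromatic connected expander" statement --- which is the theorem's actual content, not a routine lemma.

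The paper's route is structurally different at this point and is worth comparing. It does not seek a region in which all nearby $r$-sets are monochromatic. Instead, \Cref{lem:monochromatic-blowup} shrinks the blobs so that $r$-sets of the same \emph{type} (same intersection pattern with the blobs) are monochromatic, which reduces the task to finding a monochromatic \emph{homomorphic} image with bounded multiplicities in $K_r(G^k)$ (\Cref{thm:main-homomorphic}); then an iterative argument (\Cref{prop:one-step}) hangs ordered trees on the vertices, colours an auxiliary graph by the existence of monochromatic "connectors" between trees, and uses the dichotomy of \Cref{cor:ramsey-expanders}: either a non-grey monochromatic copy of the pattern graph appears, and the connectors are stitched together (using the type-uniformity from \Cref{lem:trimming-ramsey}) into the desired monochromatic homomorphic copy, or grey cliques cover half the vertices and the trees are grown taller; termination is guaranteed by the ordered-tree Ramsey lemma \Cref{lem:trees-tight-paths}, which forbids the connector-free property \ref{itm:MT} for tall trees. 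None of this iterative connector/grey-clique machinery, nor any substitute for it, appears in your sketch, so the proposal does not constitute a proof of \Cref{thm:main-tree} (which the paper obtains as the special case $\T_{n,d}\subseteq\family_{n,d,\ell}$ of \Cref{thm:main}).
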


    In light of the results above, it might be tempting to think that the size-Ramsey number of any bounded degree graph or hypergraph should be linear in its order, and, indeed, this was suggested by Beck \cite{beck1983size} for graphs. However, it turns out not to be the case; R\"odl and Szemer\'edi \cite{rodl2000size} constructed a sequence $(H_n)$ where $H_n$ is an $n$-vertex graph with maximum degree $3$ such that $\rhat(H_n) = \Omega\big(n (\log n)^{1/60}\big)$. This construction was also generalized to hypergraphs in~\cite{dudek2017size}.  
    
    It is easy to see that the size-Ramsey number of a bounded degree graph on $n$ vertices is $O(n^2)$. Indeed, this follows from the linearity of Ramsey numbers of bounded degree graphs, established by Chvat\'al, R\"odl, Szemer\'edi and Trotter \cite{chvatal1983ramsey}, and by the observation that $\rhat(H)\leq {r(H) \choose 2}$. Kohayakawa, R\"odl, Schacht and Szemer\'edi \cite{kohayakawa2011sparse} provided a significant improvement of this easy bound: they showed that, if $H$ is an $n$-vertex graph with maximum degree $\Delta$, then $\rhat(H) = O(n^{2-1/\Delta} (\log n)^{1/\Delta})$; this is the best-known upper bound to date on the size-Ramsey numbers of bounded degree graphs.

	The above upper bound is quite far from linear. One way to obtain graphs whose size-Ramsey numbers are closer to linear is to subdivide the edges of a bounded degree graph. The \emph{$q$-subdivision} of a graph $H$ is the graph obtained by subdividing each edge of $H$ by $q-1$ vertices; more precisely, the $q$-subdivision is obtained by replacing each edge $uv$ by a path $P_{uv}$ of length $q$ and ends $u$ and $v$, whose the interior vertices are unique to $P_{uv}$. Dragani\'c, Krivelevich and Nenadov \cite{draganic2021size} recently showed that, for fixed $\Delta$ and $q$, the size-Ramsey number of the $q$-subdivision of an $n$-vertex graph with maximum degree $\Delta$ is bounded by $O(n^{1+1/q})$ (improving on a result of Kohayakawa, Retter and R\"odl \cite{kohayakawa2019size}). This is close to tight if the host graph is a random graph. However, it is unclear if this bound is anywhere near tight in general, as the only known superlinear bound on size-Ramsey number of a family of bounded degree graph is the aforementioned bound of $\Omega(n (\log n)^{1/60})$, due to R\"odl and Szemer\'edi. Dragani\'c, Krivelevich and Nenadov also showed \cite{draganic2021rolling} that if $H$ is the $q$-subdivision of a bounded degree graph, where $|H| = n$ and $q \ge c \log n$ for a large constant $c$, then $\rhat_s(H) = O(n)$, thus confirming a conjecture of Pak \cite{pak2002mixing} from 2002.
    We generalize these results on subdivisions to the hypergraph in two aspects: we establish the linearity of powers of such graphs, and we generalise the result to hypergraphs.

    \begin{thm} \label{thm:main-subdivision}
		Let $r, s, d, t \ge 1$ be fixed integers, and let $n$ be large.
		If $H$ is a graph on $n$ vertices, obtained from a graph with maximum degree at most $d$ by subdividing each edge at least $10t \log n$ times, then $\rhat_s(K_r(H^t)) = O(n)$.
	\end{thm}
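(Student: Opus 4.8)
The plan is to derive \Cref{thm:main-subdivision} from the general result \Cref{thm:main} stated below; all that the deduction really requires is a structural observation about long subdivisions, together with the bookkeeping that a suitable host has $O(n)$ edges. Write $G_0$ for the base graph — maximum degree at most $d$, and, as we may assume, with no isolated vertices — say on $N$ vertices and $m$ edges, so that $H$ is obtained by replacing each edge of $G_0$ by a path of length at least $q := 10t\log n$, and $n = |H| = N + m(q-1)$. Since $G_0$ has no isolated vertices, $m \ge N/2$, so $n \ge Nq/2$ and hence $N \le 2n/q = O(n/\log n)$. Let $W \subseteq V(H)$ be the set of $N$ branch vertices (the images of $V(G_0)$). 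The feature of long subdivisions that I would feed into \Cref{thm:main} is that $W$ is \emph{small} and that $H - W$ is a vertex-disjoint union of $m = O(n/\log n)$ paths, each of length at least $q-1$; equivalently, $H$ is a bounded-degree graph of girth at least $q$, assembled from $O(n/\log n)$ branch vertices and $O(n/\log n)$ long, internally disjoint paths of logarithmic length. (Note this is genuinely weaker information than being a bounded-degree tree — e.g.\ $H$ may be a long subdivision of a cycle — so \Cref{thm:main-subdivision} does not follow from \Cref{thm:main-tree} by monotonicity of $\rhat_s$ under subgraphs.)

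For the quantitative core I would take the host to be a $t$-th clique-power of a bounded-degree expander. Let $\Gamma$ be a bounded-degree expander (an explicit Ramanujan graph, or a random $\Delta$-regular graph, or a suitably sparsified $G_{M, C/M}$) on $M = O(n)$ vertices; then $e(\Gamma) = O(n)$, the power $\Gamma^t$ has bounded maximum degree (at most $\Delta^{t+1}$, say), and hence $e\big(K_r(\Gamma^t)\big) = O(n)$. The claim to establish is $K_r(\Gamma^t) \sarrow K_r(H^t)$. Given an $s$-colouring of the $r$-sets of $\Gamma^t$, the embedding of $K_r(H^t)$ would be produced in two stages. First embed the branch set $W$ greedily, using $|W| = O(n/\log n) \ll M$ and the expansion of $\Gamma$ to place the images pairwise far apart and to keep, at each, a colour that is locally ``good''. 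Then connect the $m - N + 1$ prescribed pairs of images by internally disjoint paths of length exactly $q$, each of whose $t$-th clique-power is monochromatic. Each individual connection amounts to a monochromatic copy of $K_r(P_q^t)$ — the clique-power of a tiny tree, so within the scope of the same mechanism as \Cref{thm:main-tree} — but all $m - N + 1$ of them must be found inside a single colour class, routed disjointly, and with the correct parities; for this one uses that after an $s$-colouring some colour behaves, for the purpose of such embeddings, like the clique-power of a linear-sized bounded-degree expander (the ``rolling'' mechanism of Dragani\'c--Krivelevich--Nenadov \cite{draganic2021rolling}, in the spirit of Friedman--Pippenger \cite{friedman1987expanding}), and that $q = \Theta(\log n)$ is large enough to fit each connecting path, one at a time, with room left over.

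The main obstacle is precisely this colouring step. One must convert an adversarial $s$-colouring of the $r$-element \emph{cliques} of $\Gamma^t$ (not of the edges of $\Gamma$) into a single colour that simultaneously supports the greedy embedding of $W$, admits the $m - N + 1$ length-$q$ connections between prescribed endpoints with the right parities, and stays locally dense enough that along every connecting path all of the required $t$-th powers and $r$-cliques are present \emph{and} of that colour. Making this quantitative is where the hypothesis $q \ge 10t\log n$ is consumed — the connecting paths have logarithmic length, so a union bound over the $O(n/\log n)$ connections and over the $s$ colours survives — and where the pseudorandomness of $\Gamma$ is essential. This is the technical heart of \Cref{thm:main}; once it is in place, the deduction of \Cref{thm:main-subdivision} is just the structural observation of the first paragraph together with the estimate $e\big(K_r(\Gamma^t)\big) = O(n)$.
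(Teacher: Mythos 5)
There is a genuine gap, and it is located exactly where you wave it away. In the paper, \Cref{thm:main-subdivision} is a one-line corollary of \Cref{thm:main}: one checks that a graph $H$ obtained from a max-degree-$d$ graph by subdividing every edge at least $10t\log n$ times lies in $\family_{n,d,10t\log n}$ (each subdivided edge is a path of length at least $10t\log n+1$; new branch vertices are introduced by growing such a path one pendant vertex at a time from an existing branch vertex, and subdivided edges between two branch vertices that are already present are added via the ``connect two existing vertices by a long internally-new path'' operation), and then \Cref{thm:main} applies verbatim, already supplying a host $\G = K_r(G^k[K_B])$ with $O(n)$ edges. Your proposal never performs this membership check. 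The ``structural observation'' you propose to feed into \Cref{thm:main} --- that the branch set $W$ has size $O(n/\log n)$ and $H-W$ is a union of long paths, equivalently large girth --- is not the hypothesis of \Cref{thm:main}, which accepts only membership in $\family_{n,d,10t\log n}$; so as stated the deduction does not go through, even though the missing verification is easy.

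More seriously, instead of using \Cref{thm:main} as a black box you drift into sketching a fresh proof of a \Cref{thm:main}-type statement over a different host, $K_r(\Gamma^t)$ for a bounded-degree expander $\Gamma$, via a two-stage embedding (place $W$ greedily, then connect by monochromatic copies of $K_r(P_q^t)$). The decisive step --- extracting from an adversarial $s$-colouring of the $r$-cliques a single colour class that simultaneously supports the placement of $W$ and all $\Theta(n/\log n)$ vertex-disjoint connections --- you explicitly leave open, calling it the ``main obstacle'' and the ``technical heart''. That step is essentially the entire content of the paper (the monochromatic blow-up lemma, the ordered-tree Ramsey results, and the tree-skeleton/connector iteration of Sections 3--6), and it is proved there for the host $K_r(G^k[K_B])$ with $k$ and $B$ large constants; the blow-up by $K_B$ and the large power $k$ are used essentially in that machinery, and nothing in your sketch (nor in the cited ``rolling'' technique, which concerns edge-colourings of graphs rather than colourings of $r$-cliques of a power) shows that the leaner host $K_r(\Gamma^t)$ suffices. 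So the proposal neither completes the intended cheap deduction nor supplies the hard ingredient it would need if it were to stand on its own.
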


	\subsection{Our main result}
        We now state our main result in its full generality. Let $\family_{n, d, \ell}$ be the family of graphs on $n$ vertices with maximum degree at most $d$, that can be obtained from a singleton graph by successively either adding a new vertex and joining it by an edge to an existing one, or connecting two existing vertices by a path of length at least $\ell$ whose interior vertices are new.

		\begin{thm} \label{thm:main}
			Fix integers $r, s, t, d \ge 1$, and let $n$ be sufficiently large. 
			There exists an $r$-uniform hypergraph $\G$ with $O(n)$ edges such that $\G \sarrow K_r(H^t)$ for every $H \in \family_{n, d, 10t\log n}$.
		\end{thm}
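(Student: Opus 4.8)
The plan is to build a single $r$-uniform hypergraph $\G$ with $O(n)$ edges that is Ramsey for the \emph{whole} family $\family_{n,d,10t\log n}$ simultaneously, by taking $\G$ to be a hyperexpander that both carries out the online embedding of powers of bounded-degree trees underlying \Cref{thm:main-tree} and has enough expansion to route the ``long ears'' that distinguish members of $\family_{n,d,\ell}$ from trees. The first step is a structural reduction. Fix $H\in\family_{n,d,\ell}$ with $\ell=10t\log n$ together with a construction $H_0\subset H_1\subset\dots\subset H_k=H$ witnessing membership, where each step either attaches a pendant vertex or adds a bare path (an ``ear'') of length at least $\ell$ between two existing vertices; since each ear uses at least $\ell-1$ fresh interior vertices, there are only $O(n/\log n)$ ear steps. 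Because $\ell>t$, attaching a pendant vertex, or an ear, never decreases below $t$ the distance between two already-present vertices — an ear of length $\ge\ell$ is too long to be a shortcut, and a vertex added after step $j$ has at most one neighbour in $V(H_j)$ — so $K_r(H^t)[V(H_j)]=K_r(H_j^t)$ for every $j$. Hence $K_r(H^t)$ is built up along the construction by adding, at a pendant step, only $O(1)$ new edges inside the radius-$t$ ball of the attachment point, and, at an ear step with ear $P$ of length $L\in[\ell,n]$, a copy of the $r$-uniform $t$-th power $K_r(P^t)$ together with $O(1)$ further edges inside the radius-$t$ balls of the two ear-endpoints; so it suffices to embed $K_r(H^t)$ \emph{online}, following the construction, into one colour class of $\G$.

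For the host I would take (a universal, extendable strengthening of) the one behind \Cref{thm:main-tree}: a bounded-degree expander $\Gamma$ on $\Theta(n)$ vertices with every vertex blown up into a clique of size $b=b(r,s,t,d)$, and $\G=K_r\big((\Gamma[K_b])^{2t}\big)$; since $\Gamma[K_b]$ has bounded degree, each radius-$2t$ ball spans $O(1)$ vertices and $e(\G)=O(n)$. The property to establish — call it the \emph{Ramsey-expander} property — is that every $s$-edge-colouring of $\G$ admits a colour $i$ and a linear-sized substructure $\G_i\subseteq\G$, monochromatic in colour $i$, such that (i) $\G_i$ supports Friedman--Pippenger-type online embedding — a partial embedding of $K_r(T^t)$, $T\in\T_{n,d}$, can be extended by $O(1)$ vertices at a time as long as only $O(n)$ vertices of $\G_i$ have been used, which is exactly the engine behind \Cref{thm:main-tree} — and (ii) $\G_i$ retains robust expansion, so that for any two already-embedded vertices $x,y$ and any prescribed $L\in[\ell,n]$ one can route, on fresh vertices, a copy of $K_r(P^t)$ for a path $P$ of length $L$ from $x$ to $y$. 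Property (ii) holds \emph{because} the ears are long: the images $x,y$ lie within distance $O(\log n)$ in $\G_i$, and $L\ge\ell=10t\log n$ comfortably exceeds this, so the expansion of $\G_i$ leaves enough slack to realise a path of exactly the right length on untouched vertices — this is the precise role of the hypothesis $\ell\ge 10t\log n$, and it is what short ears, or unsubdivided edges, would not allow.

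Given an $s$-colouring of $\G$, one fixes $i$ and $\G_i$ as above and embeds $K_r(H^t)$ by processing the construction of $H$ in order: a pendant step is absorbed by the extendability (i), since it contributes only $O(1)$ edges, all in a bounded neighbourhood of the already-embedded attachment point; an ear step is handled by the routing (ii), carried out inside the rich local structure of $\G_i$ around the images of the two endpoints so that the $O(1)$ extra edges near the endpoints come out monochromatic as well. As $|V(H)|=n$ while $\G_i$ has linearly many vertices with a large implicit constant, extendability and the routing budget survive throughout, and the outcome is a colour-$i$ copy of $K_r(H^t)$ in $\G$.

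The crux is the Ramsey-expander property: that an adversarial $s$-colouring of $K_r\big((\Gamma[K_b])^{2t}\big)$ still leaves a monochromatic substructure that is \emph{simultaneously} expanding enough to run the online tree-embedding and well-connected enough to route long path-powers of prescribed length. Its graph analogue is already the technical core of the linearity results for long subdivisions, and the passage to $K_r(\cdot^{2t})$ makes it heavier: one must extract monochromatic structure in each radius-$2t$ window of the blow-up — this is where the clique size $b$ and Ramsey's theorem enter — and then thread these local pieces, over a positive fraction of the windows all of a common colour, into a global substructure without destroying the quantitative expansion. The second, more conceptual, point — which is why one linear-sized host can serve the whole family — is the exact-length routing of long ears in (ii), enabled precisely by the hypothesis $\ell\ge 10t\log n$.
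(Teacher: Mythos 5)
Your structural reduction of $H$ into pendant steps and long ears (and the observation that an ear of length $\ell>t$ cannot create a shortcut between old vertices) is fine, and your choice of host, $K_r\big((\Gamma[K_b])^{2t}\big)$ for a bounded-degree expander $\Gamma$, matches the paper's host $K_r(G^k[K_B])$ in spirit. But the entire weight of your argument rests on the asserted ``Ramsey-expander property'': that every adversarial $s$-colouring leaves \emph{one} colour class containing a linear-sized substructure that simultaneously supports Friedman--Pippenger-type online extension and robust exact-length routing. You name this as the crux and sketch that one should ``extract monochromatic structure in each radius-$2t$ window and thread these pieces together,'' but you give no proof, and this is precisely the core difficulty of the theorem, not a transferable black box. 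Applying Ramsey's theorem inside each window produces different majority colours in different windows, and even on a positive fraction of windows sharing a colour, the $r$-edges that meet several windows are still coloured adversarially; there is no known way to extract from this a single monochromatic expanding subhypergraph, and already for multicolour powers of paths and trees in graphs the earlier proofs needed ad hoc arguments exactly because such a ``monochromatic expander'' step fails. For general $r$ and $t$ it is, as far as anyone knows, simply not available in the form you need.

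The paper's proof is structured to avoid ever needing such a monochromatic expanding substructure. \Cref{lem:monochromatic-blowup} only yields, after shrinking the cliques, that edges of the same \emph{type} receive the same colour (the colour still depends on the type), which reduces the task to finding a monochromatic \emph{homomorphic} image with bounded fibres (\Cref{thm:main-homomorphic}). That statement is then proved by the iterated tree-skeleton construction: ordered trees hanging from each vertex, an auxiliary colouring of $G[U_j]^{k_{j+1}}$ by ``connectors'' versus grey, and \Cref{cor:ramsey-expanders}, which at each stage either produces a monochromatic non-grey copy of a bipartite auxiliary graph $Q$ (lifted through connectors and \Cref{lem:trimming-ramsey} to the homomorphic image of $K_r(Q^-[K_t])$, then to a genuine copy via the blow-up) or produces many grey cliques used to build taller trees, with termination guaranteed by \Cref{lem:trees-tight-paths}. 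The Friedman--Pippenger machinery (\Cref{thm:embed-H}) is used only to embed the \emph{uncoloured} auxiliary bipartite graph $Q$ into powers of expanders inside \Cref{lem:ramsey-expanders}, never to embed $K_r(H^t)$ into a monochromatic colour class; the passage from general $H\in\family_{n,d,10t\log n}$ to such bipartite $Q$ is the separate reduction \Cref{lem:reduction}. So your proposal is not a proof: its key step is exactly the statement whose unavailability forced the paper's machinery, and nothing in your sketch supplies a substitute for that machinery.
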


		In fact, our proof shows that $\G$ can be taken to be $K_r(G^{k}[K_B])$, where $G$ is a bounded degree \emph{expander} (i.e.\ there is an edge between every two large sets of vertices) on $\Theta(n)$ vertices, and $k$ and $B$ are large constants.  
		
		Let us see first how \Cref{thm:main} implies all the previously mentioned results.  Recall that $P_n^{(r)}$ is the $r$-uniform tight path on $n$ vertices. It readily follows from \Cref{thm:main} that the $s$-colour size-Ramsey number of $P_n^{(r)}$ is linear in $n$. Indeed, take $H = P_n$, $t = r$, and observe that $K_r\big(P_n^{r-1}\big) \cong P_n^{(r)}$. This proves \Cref{thm:main-path0}. Observe that $\big(P_n^{(r)}\big)^t \cong K_r(P_n^{r+t-2})$, which implies that \cref{thm:main-path} also follows from \Cref{thm:main}. For \cref{thm:main-tree}, note that $\T_{n,d} \subseteq \family_{n, d, \ell}$ for any $\ell$, so \Cref{thm:main-tree} is a special case of \Cref{thm:main}. And, finally, if $H$ is a graph on $n$ vertices, obtained from a graph with maximum degree at most $d$ by subdividing each edge at least $10t \log n$ times, then $H \in \family_{n, d, \ell}$ with $\ell = 10t \log n$, implying that \cref{thm:main-subdivision} also follows from \Cref{thm:main}.

		Our method  has several novelties. First, we prove that, given a bounded degree hypergraph $\HH$, every $s$-colouring of $\HH[K_B]$ (the complete blow-up of $\HH$ by $B$-cliques )  contains a copy of $\HH[K_b]$ where edges of the same `type' have the same colour, provided that $B$ is sufficiently large with respect to $b$. Two edges have the same `type' if they intersect each $b$-clique in the same number of vertices. This is extremely useful for us. For example, it allows us to assume that there are `many' copies of each vertex, essentially all vertices in the same $b$-clique are interchangeable. In particular, the task of finding, say, a monochromatic tight path of length $n$ can be reduced to that of finding a monochromatic tight \emph{walk} of length $n$ where each vertex is repeated $O(1)$ times.
		
		Second, we introduce the notion of `ordered trees' which will be used to model cliques in the host hypergraph with some additional structure. We define an \emph{ordered tree} to be, roughly speaking, a rooted tree equipped with an ordering of its leaves that corresponds to a planar drawing. We prove two Ramsey-type results for ordered trees and forests which serve as stepping stones for all our further proofs. 
		
        A final, rather subtle idea, is the use of overlapping ordered trees associated with each vertex of a host hypergraph as in \Cref{thm:main}. Our proof proceeds by sequentially refining the structure of the host hypergraph based on these trees, using the results from the previous paragraphs. Each such step requires extensive ‘trimming’ of the ordered trees associated with each vertex. The expansion properties of the host hypergraph are maintained throughout the process using the overlapping properties of the ordered trees.
    		
		The above ideas and results provide us with the means to prove the linearity of size-Ramsey numbers of a general family of graphs and hypergraphs in a unified and relatively clean way. This is in contrast with previous results on this problem, whose methods were more ad hoc, especially for the multicolour setting (with $s \ge 3$). 
		Thus, we think that the methods developed in this paper will be useful for the systematic study of size-Ramsey numbers of graphs and hypergraphs.

	\subsection*{Structure of the paper}

		In the next section we present the main ideas of the proof. \Cref{sec:ramsey-expanders} contains one of the main ingredients of the proof,  \Cref{lem:ramsey-expanders}, which is a Ramsey-type result for powers of expanders; two of the proofs in this sections are delayed to \Cref{sec:appendix,sec:expanding-thm}.
		In~\Cref{sec:colouredorderedtrees}, we define the notion of ordered trees and prove two Ramsey-type results for ordered trees and forests. 
		\Cref{sec:hanging-trees} contains various auxiliary results about hypergraphs whose every vertex has an ordered tree associated with it.
		In~\Cref{sec:main-proof} we put together all auxiliary results to first prove the main component of our proof, \Cref{thm:main-auxiliary}, and then deduce our main result, \Cref{thm:main}. 
		The latter task is accomplished via a reduction (\Cref{lem:reduction}) whose proof we postpone to \Cref{sec:subdivisions}, since it is long but not very novel.
		Finally, in \Cref{sec:reductiontrees} we deduce \Cref{thm:main-hypergraph-tree} from the main result, more precisely from \cref{thm:main-tree}.

\section{Proof Overview} \label{sec:overview}

	In this section we describe some of the main ingredients of our proof. 
	Recall that for a graph $G$ and integers $B$ and $r$, we define $G[K_B]$ to be the complete blow-up of $G$ obtained by replacing each vertex of $G$ by a $B$-clique, and $K_r(G)$ is the $r$-graph on vertex set $V(G)$ whose edges are the $r$-cliques in $G$. We say that a statement holds when $\alpha \ll \beta$, for two constants $\alpha$ and $\beta$, if for any choice of $\alpha$ there exists some $\beta_0 \geq \alpha$ such that for all $\beta \geq \beta_0$ the statement holds.

	We first briefly describe previous proofs of similar results \cite{bedenknecht2019powers,berger2020size,clemens2019size,han2020multicolour}. Let $H$ be a power of a path or bounded degree tree with $|H| = n$. Take $G$ to be an expander\footnote{Various notions of expansions were used, typically stronger than what we use.} with $O(n)$ vertices and maximum degree $O(1)$, and consider the host graph $G' = G^k[K_B]$, where $k$ and $B$ are large constants. Fix an $s$-colouring of $G'$; it now suffices to find a monochromatic copy of $H$. The vertices of $G'$ naturally split into $B$-cliques $B(u)$, for $u \in V(G)$. The first step in the above proofs is to apply Ramsey's theorem to each clique $B(u)$ to find large monochromatic subcliques $B'(u) \subseteq B(u)$. Say that the majority colour of the subcliques is blue, and focus on the subgraph of $G'$ induced by the union of blue subcliques $B'(u)$. The proof now proceeds by either embedding a blue copy of $H$ using the blue subcliques and blue edges between them, or, if this fails, leverage the sparsity of blue edges between subcliques to embed $H$ in another colour. The latter part tends to include ad hoc arguments, which becomes trickier when the number of colours is larger than $2$. 

	Our setup is similar to the one described above.
	Let $\G = K_r(G^k[K_B])$, where $G$ is an expander\footnote{Here an \emph{expander} is a graph where there is an edge between every two large sets of vertices.} on $O(n)$ vertices with maximum degree $O(1)$, and $k$ and $B$ are large constants. In particular, as above, the vertices of $\G$ consist of pairwise disjoint cliques $B(u)$ of size $B$, for $u \in V(G)$. The first new ingredient in our proof is the following (see \Cref{lem:monochromatic-blowup}): we show that there are large subcliques $B'(u) \subseteq B(u)$ such that any two edges in $\bigcup_{u} B'(u)$ of the same \emph{type} have the same colour. Here two edges have the same \emph{type} if they intersect each $B'(u)$ in the same number of vertices. In particular, we recover the above property that each $B'(u)$ is a monochromatic clique, but we also know quite a lot more: for $r = 2$ we additionally have that all edges between $B'(u)$ and $B'(v)$ have the same colour; and for $r = 3$ we have, e.g., that all edges with one vertex in $B'(u)$ and two vertices in $B'(v)$ have the same colour, for any edge $uv$ in $G^k$.

	An immediate advantage of this is that, in order to find a monochromatic copy of an $r$-graph $\HH$, it suffices to find a monochromatic homomorphic copy $\HH'$ of $\HH$ in every $s$-colouring of $K_r(G^k)$, such that each vertex in $\HH'$ is the image of not too many vertices in $\HH$. To see this, suppose for convenience that $r = 2$ and consider the auxiliary colouring of $G^k$, obtained by colouring $uv$ by the colour of the edges between $B'(u)$ and $B'(v)$. Now given a monochromatic $\HH'$ in $G^k$ as above, it can be `lifted' to a monochromatic copy of $\HH$ in $G^k[K_B]$, by replacing the copies of a vertex $v$ by distinct vertices in $B'(v)$. 

	For the rest of this discussion, we focus on the case where $\HH$ is a tight $r$-uniform path on $n$ vertices. As explained in the previous paragraph, it suffices to show that in every $s$-colouring of $K_r(G^k)$ there is a monochromatic homomorphic copy $\HH'$ of $\HH$ such that no vertex in $\HH'$ is the image of too many vertices in $\HH$ (so $\HH'$ is a tight $r$-uniform walk on $n$ vertices where no vertex repeats too many times).

	In the remainder of this proof sketch we use the notion of \emph{ordered trees}, which we define to be rooted trees whose leaves come with a prescribed `natural ordering'; see \Cref{def:ordered-tree} and \Cref{fig:ordered-tree}. 

	To find the required copy of $\HH$, we iteratively assign an ordered tree $T_j(u)$ to each vertex $u$, for $j \in \{0, \ldots, h\}$, where $h$ is a large constant. The tree $T_0(u)$ is simply the singleton tree $\{u\}$, and for $j \ge 1$ the tree $T_j(u)$ is an ordered tree, rooted at $u$, which is a $d_j$-ary tree of height $j$, and whose vertices are vertices of $G$. Here $d_1, \ldots, d_h$ is a decreasing sequence (with $d_j \ll d_{j-1}$), so the trees $T_j(u)$ get taller with $j$ but their degrees decrease (so they get `narrower'). Our aim is to use these trees as auxiliary structures to locate the desired monochromatic $\HH'$. An important step towards this goal is a generalisation of the above step that produced large subcliques where edges of the same type have the same colour. Roughly speaking, given an appropriate $r$-graph on $\bigcup_u L(T_j(u))$ (where $L(T)$ is the set of leaves in $T$) with an $s$-colouring inherited from $K_r(G^k)$, we may assume (by appropriately `trimming' the trees $T_j(u)$) that edges of the same `type' have the same colour. Here two sets $A$ and $B$ of leaves in $T_j(u)$ have the same \emph{type} if the minimal subtrees of $T_j(u)$ with leaves $A$ and $B$, respectively, are isomorphic.\footnote{The reason we take the trees $T_j(u)$ to be ordered is to allow such a statement to be true; a similar statement for unordered trees does not hold.}
	One can obtain a similar definition of a type of a set $A$ that consists of leaves from multiple trees $T_j(u)$.

	Given a collection of trees as above for some $j$, we consider the graph $G^{k_{j+1}}$, where $k_1, \ldots, k_h$ is a decreasing sequence of suitably chosen constants. We equip this graph with an auxiliary $s'$-colouring (with $s' \gg s$), defined as follows. Roughly speaking, we colour an edge $uv$ by $(c, S)$ if there is a $c$-coloured `connector' between $T_j(u)$ and $T_j(v)$ of type $S$; if there is no such connector for any $c$ and $S$, we colour $uv$ grey. It turns out that there are two possible outcomes (see \Cref{cor:ramsey-expanders}): either there is a long monochromatic non-grey path; or there is a collection of large disjoint grey cliques that covers almost all of the vertices. In the former case, the monochromatic path can be lifted to the required monochromatic $\HH'$, and in the latter case, we use the grey cliques to define new taller trees $T_{j+1}(v)$. This process will not run forever, because, very roughly speaking, when the trees are tall enough, the required connectors must exist.

	What do we mean by a `connector'? A \emph{connector} for an edge $uv$ in $G^{k_{j+1}}$ consists of disjoint sets $X, Y, Z$ of size $r$ where $X \cup Y$ and $Y \cup Z$ are monochromatic cliques (in an appropriate hypergraph $\HH_j$) of the same colour $c$. Moreover, we require that $X$ and $Z$ consist of leaves of $T_j(u)$ and $T_j(v)$, respectively, whereas $Y$ can contain vertices from various leaf sets. To be able to join connectors to each other, we further require that the subtrees of $T_j(u)$ and $T_j(v)$ corresponding to $X$ and $Z$ are isomorphic to the same tree $S$ (here $X$ \emph{corresponds} to the subtree $T'$ if $T'$ is the minimal subtree with leaves $X$; see \Cref{fig:corresponding}). Now let us see how we can combine connectors together. Suppose that the former case from the previous paragraph holds, namely there is a long monochromatic non-grey path in $G^{k_{j+1}}$. Specifically, suppose that $u_1, \ldots, u_n$ are distinct vertices, and $u_i u_{i+1}$ has colour $(c, S)$. Let $X_i, Y_i, Z_i$ be as above for $u_i u_{i+1}$ (so $X_i \subseteq L(T_j(u_i))$ and $Z_i \subseteq L(T_j(u_{i+1}))$, and $X_i$ and $Z_i$ correspond to $S$). Recall that the trees $T_j(u)$ were chosen such that edges of the same `type' have the same colour. This assumption is very useful here: it allows us to assume that $Z_i = X_{i+1}$ for $i \in [n-1]$. It follows that $X_1 \cup Y_1 \cup \ldots \cup Y_{n-1} \cup X_n$ spans a $c$-coloured homomorphic image $\HH'$ of $\HH$. (Actually we obtain a monochromatic homomorphic copy of $K_r(P_n[K_{r}])$, but note that $\HH = P_n^{(r)}$ is a subgraph of this graph.) It is not hard to show, using a property of the sets $Y_i$ that we did not mention, that every vertex in $\HH'$ is the image of few vertices in $\HH$.

	In order to ensure that at some point, namely for some $j \le h$, we find a non-grey monochromatic $P_n$ in the auxiliary colouring, we impose an additional condition on the trees $T_j(u)$ (see \ref{itm:MT}, \Cref{sec:mainproof}). Roughly speaking, we require that there is no $(c, S)$-connector $(X, Y, Z)$ with $X$ and $Z$ corresponding to disjoint copies of $S$ in $T_j(u)$, for any $c$ and $S$. 

	Recall that if there is no non-grey monochromatic $P_n$ in the auxiliary colouring, then there are many disjoint large grey cliques that cover most of the vertices. For a vertex $u$ covered by such a clique $K$, define $T_{j+1}(u)$ to be the tree rooted at $u$ obtained by joining $u$ to the roots of the trees $T_j(v)$ with $v \in K \setminus \{u\}$ (for this we need to first `trim' the trees $T_j(u)$ to ensure the disjointedness of trees within the same clique). The fact that $K$ is a grey clique, which means that there is no $(c, S)$-connector between $T_j(v)$ and $T_j(w)$ with $v, w \in K$, can be used to show that the aforementioned property \ref{itm:MT}  holds for $T_{j+1}(u)$ (assuming that \ref{itm:MT} holds for all trees $T_j(u)$ as well). We show that there is no ordered $d_h$-ary tree of height $h$ which satisfies the above `disconnectedness' property (see \Cref{lem:trees-tight-paths}), for large enough $h$. It thus follows that after at most $h$ steps, we find a non-grey monochromatic $P_n$ in the auxiliary colouring, completing the proof.

\section{Colouring powers of expanders} \label{sec:ramsey-expanders}

	In this section we prove a Ramsey-type results about powers of expanders. Here is the definition of expanders in this context. 

	\begin{defn} \label{def:expander}
		Let $\eps > 0$.
		A graph $G$ is an \emph{$\varepsilon$-expander} if there is an edge between every two disjoint sets of vertices of size at least $\eps|G|$.
	\end{defn}	

	It will be useful to note that there exist bounded degree expanders on any (large) number of vertices, as stated in the following proposition. The proof is standard: take a random graph $G(N, p)$ with suitable parameters $N$ and $p$, and remove large degree vertices; for more details see \Cref{sec:appendix}.

	\begin{prop} \label{prop:existence-expanders}
		For every $\eps > 0$ and sufficiently large $n$, there exists an $\eps$-expander on $n$ vertices with maximum degree at most $80 \cdot (1/\eps) \log(1/\eps)$. 
	\end{prop}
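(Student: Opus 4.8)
The plan is the standard probabilistic construction: build a binomial random graph, show it is an expander while having few vertices of large degree, and then clean it up. We may assume $\eps \le 1/2$, since for $\eps > 1/2$ there are no two disjoint vertex sets each of size at least $\eps n$ (their union would exceed $n$), so any graph --- in particular the empty one --- works. Set $N := 2n$ and $p := 20\eps^{-1}\log(1/\eps)/N$, let $G \sim G(N,p)$, and write $m := \lceil \eps n \rceil$.

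First I would bound the failure probability of expansion. The probability that some two disjoint $m$-subsets of $V(G)$ span no edge is at most
\[
\binom{N}{m}^2(1-p)^{m^2} \;\le\; \Big(\tfrac{eN}{m}\Big)^{2m} e^{-pm^2} \;=\; \exp\!\big(m(2\log(eN/m) - pm)\big).
\]
Since $m \ge \eps n = \eps N/2$ we get $pm \ge 10\log(1/\eps)$ and $eN/m \le 2e/\eps$, so the exponent is at most $m\big(4 - 8\log(1/\eps)\big)$, which (using $\eps \le 1/2$) tends to $-\infty$ as $n \to \infty$; hence this probability is below $1/2$ once $n$ is large. Next I would control the degrees: each $\deg_G(v)$ is binomial with mean at most $Np = 20\eps^{-1}\log(1/\eps)$, so by a Chernoff bound $\Pr[\deg_G(v) > 80\eps^{-1}\log(1/\eps)] = \Pr[\deg_G(v) > 4Np] \le e^{-cNp}$ for an absolute constant $c>0$. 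Thus the expected number of vertices of degree exceeding $80\eps^{-1}\log(1/\eps)$ is at most $Ne^{-cNp} = 2n\,\eps^{20c/\eps}$, which is at most $n/2$ for every $\eps \le 1/2$, so by Markov's inequality, with probability more than $1/2$ there are at most $n$ such vertices. A union bound then produces an outcome $G$ that simultaneously has no empty pair of disjoint $m$-sets and at most $n$ vertices of degree more than $80\eps^{-1}\log(1/\eps)$.

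To finish, delete from this $G$ all vertices of degree exceeding $80\eps^{-1}\log(1/\eps)$ (at most $n$ of them) and then arbitrary further vertices until exactly $n$ remain, obtaining $G'$. Deleting vertices only decreases degrees, so $\Delta(G') \le 80\eps^{-1}\log(1/\eps)$; and any two disjoint subsets of $V(G')$ of size at least $\eps n$ are disjoint $m$-subsets of $V(G)$, hence joined by an edge of $G$, which lies inside $V(G')$ and so is an edge of $G'$ --- thus $G'$ is an $\eps$-expander on $n$ vertices. There is no real obstacle here; the only point that needs care is the parameter choice $N = \Theta(n)$ together with $p = \Theta(\eps^{-1}\log(1/\eps)/N)$: the mean degree must be a large enough multiple of $\log(1/\eps)$ so that $2\log(eN/m) - pm$ is negative and beats the $\binom{N}{m}^2$ entropy term, while $N$ must stay linear in $n$ so that the mean degree --- and hence, after Chernoff, the typical maximum degree --- remains $O_\eps(1)$ rather than growing with $n$ (pushing $N$ down towards $n$ would force the mean degree up to $\Omega(\log n)$).
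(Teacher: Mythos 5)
Your proof is correct and follows essentially the same route as the paper: take $G(N,p)$ with $N$ twice the target size and edge probability $\Theta(\eps^{-1}\log(1/\eps)/N)$, verify expansion by a union bound over pairs of disjoint sets, and pass to an induced subgraph on $n$ vertices after discarding high-degree vertices. The only (immaterial) difference is that you control the number of high-degree vertices via per-vertex Chernoff plus Markov, whereas the paper bounds the total edge count and uses averaging.
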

	
	We remind the reader of the definition of $\family_{n, d, \ell}$, which was mentioned in the introduction.
	
	\begin{defn}
	    Given integers $n, d, \ell$, we define $\family_{n, d, \ell}$ to be the family of all graphs on $n$ vertices with maximum degree at most $d$ that can be obtained from the singleton graph by either adding a new vertex and joining it by an edge to an existing one or connecting two existing vertices by a path of length at most $\ell$ whose interior vertices are new.
	\end{defn}

	The main result in this section is the following Ramsey-type lemma about powers of expanders.
	We note that similar results, for $H$ being a path or bounded degree tree, were proved implicitly in previous papers (see, e.g.\ Claim 3.10 in \cite{han2020multicolour} and Claim 22 in \cite{berger2020size}); however, the dependence of the parameters in these previous versions is not good enough for our proof (and the following lemma is applicable to more general families of graphs).

	\begin{lem}\label{lem:ramsey-expanders} 
		Fix $s, t, d, n$, and let $\eps, k, \alpha$ be such that $1 \ll \eps^{-1}$ and $s, t, d \ll k \ll \alpha$. 
		Let $G$ be an $\eps$-expander on $\alpha n$ vertices, and let $H$ be a bipartite graph in $\family_{n,d, 2\log (\alpha n)}$. Then 
		\begin{align*}
			G^k \to \left( K_t,\, \overbrace{H, \ldots, H}^{s} \right).
		\end{align*}
	\end{lem}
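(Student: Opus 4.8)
The plan is to prove the more symmetric statement $G^k \to (K_t, H, \ldots, H)$ by exploiting expansion to build the required monochromatic structure colour by colour. Since $H \in \family_{n,d,2\log(\alpha n)}$ is bipartite, it decomposes into a `spanning-tree-like' skeleton together with long paths connecting pairs of vertices. First I would fix an $s$-colouring of $G^k$. If one colour class contains a $t$-clique we are done immediately, so assume that no colour class contains $K_t$; we must then find a monochromatic copy of $H$ in some colour. The idea is that, because $G$ is an $\eps$-expander with $\alpha n$ vertices and $k$ is large, the power $G^k$ is an extremely strong expander: between any two sets of size $\eps \alpha n$ there are many short paths, and more importantly $G^k$ contains, between any two such sets, a complete bipartite-like connection structure allowing us to route many vertex-disjoint short paths. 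Thus $G^k$ inherits robust connectivity and sublinear-diameter properties that survive into each colour class if that class is `dense enough'.

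The main step is a standard but delicate iterative embedding: order the construction of $H$ according to the definition of $\family_{n,d,\ell}$, so that $H$ is built by a sequence of moves, each either attaching a pendant vertex by an edge or joining two existing vertices by a path of length $2 \le \ell = 2\log(\alpha n)$. I would embed $H$ greedily following this order, maintaining the invariant that the current partial embedding uses at most $n$ vertices (so at most an $n / (\alpha n) = 1/\alpha \ll \eps$ fraction of $V(G^k)$) and that each still-to-be-extended vertex has, in some fixed target colour, a large `reservoir' of available neighbours or short connecting paths avoiding the used vertices. Concretely, since $k$ is large relative to $d$ and $s$, for each vertex $v$ of $G$ the $k$-ball around $v$ in $G$ has size at least $\eps \alpha n$ (by expansion iterated $O(\log(1/\eps)/\log(\text{expansion}))$ times, which is $\ll k$); hence in $G^k$ every vertex is adjacent to at least $\eps\alpha n$ vertices, and between any vertex and any set of size $\eps \alpha n$ there is an edge. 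Using that no colour class is $K_t$-free — wait, rather, using a Ramsey-type pigeonhole: among the $\ge \eps \alpha n$ neighbours of a vertex in $G^k$, by iterated application of the fact that a $K_t$-free $s$-colouring of a large clique still leaves a large monochromatic-neighbourhood structure, one extracts for each target vertex a colour and a linear-sized monochromatic reservoir. For the pendant-edge moves this directly gives the extension; for the long-path moves one uses that in $G^k$ (or a suitable power) a path of length $\ge 2\log(\alpha n)$ can be routed between any two vertices through any large set — and a monochromatic such path can be found by a sprinkling/pigeonhole argument over the $\ell$ internal vertices, since $\ell \approx \log(\alpha n)$ is long enough that the number of colour-patterns along the path, $s^\ell$, is dominated by the number of available disjoint routings, which is polynomial in $\alpha n$.

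The technical heart — and the step I expect to be the main obstacle — is ensuring the reservoirs stay large and the paths stay vertex-disjoint throughout all $n$ moves simultaneously, i.e.\ getting the quantifiers in the right order. This is exactly why the hypothesis is $s, t, d \ll k \ll \alpha$: we need $k$ large enough that $G^k$ is a near-complete expander after only $\ll k$ expansion rounds, and $\alpha$ large enough that $H$ occupies a negligible fraction of $G^k$ so that `avoiding the used set' never costs more than a constant factor in reservoir size. I would handle disjointness by the usual trick of reserving, at the start, a random subset of $V(G^k)$ of density say $1/2$ to serve as the pool for internal path vertices, keeping the rest for the `skeleton' vertices of $H$; expansion of $G^k$ restricted to a random linear-sized subset is only mildly degraded (the expansion constant worsens by a constant factor), which is absorbed by taking $k$ slightly larger. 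The bipartiteness of $H$ is used to guarantee that the long connecting paths, which have length exactly matching a prescribed parity, can be realised — in $G^k$ with $k \ge 2$ one has paths of every length above the diameter, so parity is a non-issue, but bipartiteness of $H$ together with $\ell$ being the \emph{exact} subdivision length is what lets us meet the length constraint on the nose. Finally, once the monochromatic copy of $H$ in the majority target colour is assembled, we are done: either some colour was $K_t$-free and we built $H$ in it, or some colour contained $K_t$.
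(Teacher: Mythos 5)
There is a genuine gap, and it starts with the logical structure. The arrow $G^k \to (K_t, H, \ldots, H)$ is asymmetric: you win only if the $t$-clique is monochromatic in the \emph{first} colour, or $H$ is monochromatic in one of the last $s$ colours. Your opening reduction (``if one colour class contains a $t$-clique we are done, so assume no colour class contains $K_t$'') and your closing dichotomy (``either some colour was $K_t$-free and we built $H$ in it, or some colour contained $K_t$'') do not prove this statement: a colouring in which, say, the second colour contains many $K_t$'s but no copy of $H$, and the first colour is very sparse, defeats your argument. The correct contrapositive, which the paper uses, is to assume there is no monochromatic $H$ in any of the last $s$ colours and then \emph{construct} a $K_t$ entirely in the first (``grey'') colour. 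Since the grey class may be extremely sparse, no Ramsey-type pigeonhole inside neighbourhoods can find it; the paper instead iterates a bipartite Ramsey statement ($K_{N,N} \to (K_{N',N'}, H, \ldots, H)$, \Cref{lem:subdivision-bip-expander} and \Cref{cor:subdivision-bip-expander-multicolour}) to produce nested pairs $X_i, Y_i$ whose crossing edges in $G^k$ are all grey, and then routes a path $z_1 \ldots z_t$ with $z_i \in X_i$ inside $G^{k/t}$ (using \Cref{claim:expansion-G'}) so that $\{z_1,\ldots,z_t\}$ is a grey clique. Nothing in your outline produces this grey $K_t$.

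On the embedding side there are further problems. Your claim that every vertex of $G^k$ has at least $\eps\alpha n$ neighbours does not follow from $\eps$-expansion, which says nothing about sets smaller than $\eps|G|$ (a vertex of $G$ may have tiny $k$-ball); this is exactly why the paper first passes to an $(m,2)$-expanding subgraph at the cost of discarding vertices (\Cref{lem:bip-expander-boosting}, \Cref{cor:expander-boosting}). More importantly, your mechanism for producing a monochromatic $H$ is not an argument: assigning each target vertex ``a colour and a linear-sized monochromatic reservoir'' does not make these colours agree across all of $H$, and comparing the $s^{\ell}$ colour patterns along a connecting path with the number of available routings does not yield a path all of whose edges carry one prescribed colour. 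The missing idea is the dichotomy the paper exploits: within a large balanced bipartition, either some non-grey colour class has no large bi-hole, in which case it is a bipartite expander and one embeds $H$ into it via the Friedman--Pippenger-type machinery (\Cref{thm:embed-H}, where bipartiteness of $H$ and the $2\log(\alpha n)$ lower bound on the subdivision paths are genuinely needed to join two binary trees of height about $\log m$ across the bipartition), or every non-grey colour leaves a large pair of sets joined only by grey edges, which feeds the grey-clique construction above. Your sketch gestures at expansion and at the long paths, but without this dichotomy neither half of the statement is actually established.
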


	In fact, we will use the following corollary of \Cref{lem:ramsey-expanders}.

	\begin{cor} \label{cor:ramsey-expanders}
		Fix $s, t, d, \mu, n$. Let $\eps, k, \alpha$ be such that $\mu^{-1} \ll \eps^{-1}$ and $s, t, d, \mu^{-1} \ll k \ll \alpha$. 
		Let $G$ be an $\eps$-expander on $\alpha n$ vertices, and let $H$ be a bipartite graph in  $\family_{n,d,2\log(\alpha n)}$. Then for every subset $U \subseteq V(G)$ of size at least $\mu |G|$,
		\begin{align*}
			G[U]^k \to \left( \overbrace{K_t \cup \ldots \cup K_t}^{\text{disjoint, covering $|U|/2$ vertices}}, \overbrace{H, \ldots, H}^{s} \right).
		\end{align*}
	\end{cor}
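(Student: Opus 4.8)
The plan is to derive the corollary from \Cref{lem:ramsey-expanders} by a greedy extraction argument. Fix $U\subseteq V(G)$ with $|U|\ge \mu|G|$ and fix an $(s+1)$-colouring of $E(G[U]^k)$, where colour $0$ is the colour meant to produce the disjoint copies of $K_t$ and colours $1,\ldots,s$ are meant to produce copies of $H$. I would take a maximal family $\mathcal K$ of pairwise vertex-disjoint copies of $K_t$ all of whose edges have colour $0$. If these copies cover at least $|U|/2$ vertices we are done, since $V(G[U]^k)=U$. Otherwise, I would feed the set of vertices of $U$ left uncovered by $\mathcal K$ into \Cref{lem:ramsey-expanders} to produce, inside the uncovered part, either a colour-$0$ copy of $K_t$ (which will contradict maximality) or a copy of $H$ in one of the colours $1,\ldots,s$.

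The two facts needed are: (i) a large induced subgraph of an expander is again an expander, and (ii) the resulting parameters still satisfy the hypotheses of \Cref{lem:ramsey-expanders}. For (i): if $W\subseteq V(G)$ has $|W|\ge \mu|G|/2$, then $G[W]$ is an $\eps'$-expander for $\eps':=2\eps/\mu$, since any disjoint $A,B\subseteq W$ with $|A|,|B|\ge \eps'|W|\ge \eps|G|$ already span a $G$-edge, which lies in $G[W]$. For (ii): writing $|W|=\alpha' n$ with $\alpha'\ge \mu\alpha/2$, the hierarchy $\mu^{-1}\ll\eps^{-1}$ gives $1\ll\eps'^{-1}$, the hierarchy $s,t,d,\mu^{-1}\ll k\ll\alpha$ gives $s,t,d\ll k\ll\alpha'$, and since $\family_{n,d,\ell}$ shrinks as $\ell$ grows and $\alpha'\le\alpha$, the graph $H$ is a bipartite member of $\family_{n,d,2\log(\alpha' n)}$. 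Hence \Cref{lem:ramsey-expanders} gives $G[W]^k\to(K_t,\overbrace{H,\ldots,H}^{s})$. Now let $\mathcal K$ be maximal as above and let $W$ be the set of uncovered vertices of $U$; if $\mathcal K$ covers fewer than $|U|/2$ vertices then $|W|>|U|/2\ge \mu|G|/2$, so this arrow applies. As $G[W]^k$ is a subgraph of $G[U]^k$ (distances only decrease when vertices are added back), the colouring restricts to $G[W]^k$, and we obtain there either a colour-$0$ copy of $K_t$ — impossible, since it would be vertex-disjoint from $\mathcal K$ and contradict maximality — or a copy of $H$ in some colour in $\{1,\ldots,s\}$, which is what we want.

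I do not expect a serious obstacle: the substance is entirely in \Cref{lem:ramsey-expanders}, and the only points needing care are the bookkeeping that the degraded constant $\eps'=2\eps/\mu$ and the reduced order $\alpha'\ge \mu\alpha/2$ remain inside the hierarchy demanded by the lemma — which is exactly why the hypotheses of the corollary carry $\mu^{-1}\ll\eps^{-1}$ and $\mu^{-1}\ll k$ — and the small observation that $G[W]^k\subseteq G[U]^k$, so that a fresh colour-$0$ clique found inside $W$ genuinely contradicts maximality of $\mathcal K$. If one wants exactly $\lceil |U|/2\rceil$ covered vertices rather than at least $|U|/2$, one can discard copies of $K_t$ from $\mathcal K$ one at a time.
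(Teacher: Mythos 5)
Your proposal is correct and is essentially the paper's own argument: both take a maximal family of vertex-disjoint colour-$0$ (grey) copies of $K_t$, note that if it covers fewer than $|U|/2$ vertices then the uncovered set $W$ has size at least $(\mu/2)|G|$ and $G[W]$ is a $(2\eps/\mu)$-expander, and then apply \Cref{lem:ramsey-expanders} to $G[W]^k$ with the degraded parameters to get either a grey $K_t$ (contradicting maximality) or the desired non-grey copy of $H$. The only difference is the contrapositive phrasing (the paper assumes no non-grey $H$ and derives the covering), which is immaterial.
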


	\begin{proof}
		Call the first colour grey, and suppose that there is no monochromatic non-grey copy of $H$ in $G^k[U]$. Let $\F$ be a maximal collection of vertex-disjoint grey $K_t$'s. We claim that $\F$ covers at least $|U|/2$ vertices of $U$. Indeed, suppose that this is not the case, and let $W$ be a set of vertices in $U$ not covered by $\F$ of size $(\mu/2)|G| = (\mu \alpha/2)n$. Then $G[W]^k$ contains neither a grey $K_t$ nor a non-grey monochromatic copy of $H$. However, $G[W]$ is a $(2\eps/\mu)$-expander on $(\mu\alpha/2)n$ vertices, a contradiction to \Cref{lem:ramsey-expanders}, applied with $\mu\alpha/2$ and $2\eps/\mu$ instead of $\alpha$ and $\eps$.
	\end{proof}

	\subsection{Preliminaries}

		We shall need two notions of expanders on top of $\eps$-expanders that were defined above. The first is that of a bipartite $\eps$-expander.

		\begin{defn} \label{def:bip-expander}
			Let $\eps > 0$. A bipartite graph $G$ with bipartition $\{X_1, X_2\}$ is a \emph{bipartite $\eps$-expander (with respect to the bipartition $\{X_1, X_2\}$)} if for every two sets of vertices  $A_i \subseteq X_i$ such that $|A_i|\geq \eps |X_i|$ for $i \in [2]$, there is an edge between $A_1$ and $A_2$. 

			A graph $G$ is a \emph{balanced bipartite $\eps$-expander} if it is an $\eps$-bipartite expander with respect to a \emph{balanced bipartition} $\{X_1, X_2\}$, i.e.\ $||X_1|-|X_2||\leq 1$.
		\end{defn}
		
		The next notion is that of $(m, d)$-expanding graphs, which admit a stronger expansion property than $\eps$-expanders for small sets of vertices. For a graph $G$ and a set of vertices $X$ the \emph{neighbourhood $N_G(X)$} of $X$ is the set of vertices in $G \setminus X$ that send an edge to $X$. When $G$ is clear from the context we may omit the subscript $G$. 
		 
		\begin{defn} \label{def:expanding-graphs}
			Let $m, d \ge 1$ be integers. A graph $G$ is \emph{$(m, d)$-expanding} if for every set of vertices $X$ of size at most $m$ we have $|N(X)| \ge d|X|$.
		\end{defn}

		The following lemma and \Cref{cor:expander-boosting} stated below allow us to find large $(m, d)$-expanding subgraphs (for suitable $m, d$) in (bipartite) $\eps$-expanders.

		\begin{lem} \label{lem:bip-expander-boosting}
			Let $d \ge 2$ and $\eps \le 1/8d$.
			Let $G$ be a balanced bipartite $\eps$-expander on $2n$ vertices.
			Then $G$ contains an $(n/4d, d)$-expanding subgraph on at least $n$ vertices.
		\end{lem}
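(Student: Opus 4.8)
The plan is to delete a bounded fraction of the vertices of $G$ so that the remaining graph expands by a factor of $d$ on all small sets, using the bipartite $\eps$-expansion only as a tool to rule out "bad" sets. First I would set up the natural candidate for a counterexample: call a set $X$ with $|X| \le n/4d$ \emph{deficient} if $|N_G(X)| < d|X|$. The goal is to show that a bounded-size union of deficient sets can be removed so that no deficient set survives. I would take $X^* $ to be a \emph{maximal} deficient set, or more carefully a maximal union of deficient sets subject to the total size staying below some threshold like $n/2d$, and argue that (a) $X^*$ together with $N_G(X^*)$ is not too large, so that $G$ minus $(X^* \cup N_G(X^*))$ still has at least $n$ vertices, and (b) in the remaining graph $G' = G - (X^* \cup N_G(X^*))$ every set of size at most $n/4d$ expands by a factor $d$.

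For step (b), suppose $Y \subseteq V(G')$ has $|Y| \le n/4d$ but $|N_{G'}(Y)| < d|Y|$. Then $N_G(Y) \subseteq N_{G'}(Y) \cup (X^* \cup N_G(X^*))$, so $Y$ "depends" on $X^*$. The key point is that $X^* \cup Y$ should then itself be (close to) deficient: $N_G(X^* \cup Y) \subseteq N_G(X^*) \cup N_G(Y)$, and the part of $N_G(Y)$ outside $X^* \cup N_G(X^*)$ is exactly $N_{G'}(Y)$ minus a bit, which has size $< d|Y|$. This would give $|N_G(X^* \cup Y)| < d|X^*| + d|Y| = d|X^* \cup Y|$ (using disjointness, which one arranges by replacing $Y$ with $Y \setminus X^*$ and checking this only helps), contradicting maximality of $X^*$ — provided $|X^* \cup Y|$ is still below the threshold, which is where the bound $\eps \le 1/8d$ and the size budget come in. I'd want the threshold chosen (say $m_0 = n/4d$ or a bit larger) so that $|X^*| + |Y| \le n/4d + n/4d \le n/2d$ still lets the maximality argument bite.

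For step (a) — bounding $|X^* \cup N_G(X^*)|$ — this is exactly where bipartite $\eps$-expansion enters. Write the balanced bipartition as $\{X_1, X_2\}$ with $|X_i| \approx n$. A deficient set $X$ splits as $X \cap X_1$ and $X \cap X_2$; its neighbourhood lies on the opposite sides. If $X^*$ and $N_G(X^*)$ together were to miss, on some side $X_i$, a set of size $\ge \eps |X_i|$, while $X^*$ itself occupied $\ge \eps |X_i|$ on the other side, bipartite expansion would force an edge and hence force that missed set into $N_G(X^*)$ — contradiction. So either $X^*$ is small on every side (size $< \eps n$ or so on each side, hence $|X^*| < 2\eps n \le n/4d$, consistent with the regime), in which case $|N_G(X^*)|$ is controlled crudely, or $X^* \cup N_G(X^*)$ already covers all but an $\eps$-fraction of one side. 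In the latter case I'd argue this contradicts deficiency directly: if $X^*$ is large (a positive fraction of a side) then a deficient $X^*$ would need a neighbourhood smaller than $d|X^*|$, but $X^*$ being $\ge \eps|X_i|$ on a side forces, by bipartite expansion applied against any $\eps$-fraction of the other side, that $N_G(X^*)$ misses at most an $\eps$-fraction of the other side — so $|N_G(X^*)| \ge |X_{3-i}| - \eps n \ge (1-\eps)n$, which for $|X^*| \le n/4d$ is $\ge d|X^*|$, contradicting deficiency. Hence $X^*$ is genuinely small, $|X^* \cup N_G(X^*)| \le (1+d)|X^*| \le (1+d)\cdot \frac{n}{4d} < n$, wait — I need this $\le n$; since $(1+d)/(4d) \le 1/2$ for $d\ge 1$ this is fine, leaving $\ge 2n - n/2 \ge n$ vertices (in fact I should be slightly more careful and bound $|X^*|$ by something like $n/8d$, which the above shows since a deficient set of size between $n/8d$ and $n/4d$ on a single side would already be $\ge \eps n$ when $\eps \le 1/8d$, triggering the expansion contradiction).

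The main obstacle I anticipate is bookkeeping the two "scales" correctly: the threshold for the maximal deficient set must be large enough that adding a hypothetical surviving deficient $Y$ stays under budget (so maximality applies), yet the bipartite-expansion dichotomy must kick in before the set gets large enough to have a big neighbourhood "for free". Pinning down the constant in $\eps \le 1/8d$ so both constraints hold simultaneously — essentially checking $2\eps \le 1/4d$ for the expansion trigger and $2 \cdot \frac{1}{4d} \le \frac{1}{2d}$ plus $(1+d)\cdot\frac{1}{4d} \le \frac12$ for the size accounting — is routine but is the place where an off-by-a-factor error would break the argument, so I'd do it carefully rather than wave at it.
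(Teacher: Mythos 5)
Your proposal is correct and is essentially the paper's own argument: take a maximal deficient set $A$ subject to a size budget of $n/2d$, use bipartite $\eps$-expansion to show $|A|\le 2\eps n$ (so $A\cup N(A)$ is small and at least $n$ vertices remain), and use maximality to show no set of size at most $n/4d$ in $G\setminus(A\cup N(A))$ can fail to $d$-expand. The only detail to fix in your bookkeeping is that the budget for the maximal set must indeed be the larger scale $n/2d$ (not $n/4d$), exactly so that $|A|+|B|\le 2\eps n+n/4d\le n/2d$ keeps the union under budget -- which is the choice the paper makes.
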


		\begin{proof}
			Denote the bipartition of $G$ by $\{X_1, X_2\}$, so $|X_1| = |X_2| = n$.
			Let $A \subseteq V(G)$ be maximal with $|A| \le n/2d$ and $|N(A)| < d|A|$. We claim that $G' = G \setminus (A \cup N(A))$ is $(n/4d, d)$-expanding.

			Note that since there are no edges from $A \cap X_i$ to $X_{3-i} \setminus (A \cup N(A))$, we have $|A \cap X_i| \le \eps n$ (using that $G$ is an $\eps$-expander and $|A \cup N(A)| < 3n/4$), and so $|A| \le 2\eps n$. It follows that $|G'| \ge 2n - (d+1)2\eps n \ge n$.

			Suppose that there exists $B \subseteq V(G')$ with $|B| \le n/4d$ such that $|N_{G'}(B)| < d|B|$. Then we can add $B$ to $A$ to obtain $|A \cup B| \le n/2d$ (using $\eps \le 1/8d$ and $|A| \le 2\eps n$) and $|N_G(A \cup B)| \le d|A \cup B|$, contradicting the maximality of $A$. Hence $G'$ is an $(n/4d, d)$-expanding graph on at least $n$ vertices, as required.
		\end{proof}	

		\begin{cor}\label{cor:expander-boosting} 
			Let $\eps \le 1/32$.
			Given an $\eps$-expander $G$ on $n$ vertices, there exists an $(n/16, 2)$-expanding subgraph of $G$ on at least $n/2$ vertices.
		\end{cor}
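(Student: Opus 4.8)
The plan is to deduce \Cref{cor:expander-boosting} from \Cref{lem:bip-expander-boosting} applied with $d = 2$, after turning the $\eps$-expander $G$ into a balanced bipartite expander. Given an $\eps$-expander $G$ on $n$ vertices with $\eps \le 1/32$, I would first fix a balanced bipartition $\{X_1, X_2\}$ of $V(G)$, with $|X_1| = \lceil n/2 \rceil$ and $|X_2| = \lfloor n/2 \rfloor$, and let $G'$ denote the bipartite graph on $X_1 \cup X_2$ whose edge set consists of exactly those edges of $G$ that go between $X_1$ and $X_2$. Since $G'$ has the same vertex set as $G$ and only fewer edges, every subgraph of $G'$ is automatically a subgraph of $G$, so it suffices to find the desired expanding subgraph inside $G'$.

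The one substantive point is that $G'$ is a balanced bipartite $(2\eps)$-expander. To see this, take $A_i \subseteq X_i$ with $|A_i| \ge 2\eps |X_i|$ for $i \in [2]$. When $n$ is even we have $|X_i| = n/2$, so $|A_i| \ge \eps n$; thus $A_1$ and $A_2$ are disjoint vertex sets of $G$, each of size at least $\eps |G|$, and the $\eps$-expansion of $G$ produces an edge between them. Such an edge runs between $X_1$ and $X_2$ and therefore lies in $G'$, which is what we wanted. (For odd $n$ one either absorbs the resulting negligible slack, or simply runs the argument through the evident near-balanced variant of \Cref{lem:bip-expander-boosting}, whose proof is word-for-word the same; I would not dwell on this.)

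It then remains to invoke \Cref{lem:bip-expander-boosting} with $d = 2$. Its hypothesis $2\eps \le \tfrac{1}{8 \cdot 2} = \tfrac{1}{16}$ holds precisely because $\eps \le 1/32$, and $G'$ is a balanced bipartite $(2\eps)$-expander on $n = 2 \cdot (n/2)$ vertices. The lemma therefore yields a subgraph of $G'$ on at least $n/2$ vertices that is $\big( (n/2)/(4 \cdot 2),\, 2 \big)$-expanding, i.e.\ $(n/16, 2)$-expanding. As noted above, this is also a subgraph of $G$, which completes the argument.

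I do not expect a genuine obstacle here: this is a short reduction, and the only things that need care are bookkeeping the factor of $2$ lost when passing to a half of $G$ (this is exactly why the constant $1/32$ appears, so that $2\eps$ still meets the $\tfrac{1}{8d}$ threshold of \Cref{lem:bip-expander-boosting} for $d=2$) and the harmless parity case when $n$ is odd.
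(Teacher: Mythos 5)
Your proposal is correct and follows essentially the same route as the paper: pass to the bipartite graph induced by a balanced bipartition, observe it is a balanced bipartite $2\eps$-expander, and apply \Cref{lem:bip-expander-boosting} with $d = 2$ and $n/2$ in place of $n$. Your extra care with the parity of $n$ and the explicit check that $2\eps \le 1/16$ are fine points the paper leaves implicit, but the argument is the same.
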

		
		\begin{proof}
			Let $H = G[X_1, X_2]$, where $\{X_1, X_2\}$ is an equipartition of $V(G)$. Then $H$ is a balanced bipartite $2\eps$-expander. The proof follows by applying \Cref{lem:bip-expander-boosting} with $2\eps$, $d = 2$, and $n/2$ instead of $n$.
		\end{proof}

		The following theorem shows that bipartite expanders which are also expanding graphs contain all members of $\family_{n, d, \ell}$, for an appropriate choice of parameters. 
		This is a bipartite variant of a statement that readily follows, e.g., from Section 5.2 in \cite{glebov2013dissertation}. However, as the results in \cite{glebov2013dissertation} do not quite apply to our setting, and for the sake of completeness, we provide a proof of this theorem in \Cref{sec:expanding-thm}.

		\begin{thm} \label{thm:embed-H}
			Let $m, d \ge 3$ be integers. Suppose that $G$ is a $(4m-2, d+2)$-expanding graph, which is bipartite with bipartition $\{X, Y\}$, and where for every two subsets $X' \subseteq X$ and $Y' \subseteq Y$ of size at least $m/8$ there is an edge of $G$ between $X'$ and $Y'$.
			Then $G$ contains every bipartite graph in $\family_{m, d, 2\log m}$.
		\end{thm}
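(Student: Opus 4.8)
The plan is to prove the theorem by embedding a given bipartite $H \in \family_{m,d,2\log m}$ into $G$ along a construction sequence $\{v_0\} = H_0 \subset H_1 \subset \dots \subset H_p = H$ witnessing $H \in \family_{m,d,2\log m}$, extending a partial embedding $\phi_i \colon H_i \hookrightarrow G$ one step at a time. The invariant I would carry is that $\phi_i$ is an \emph{extendable} embedding in the Friedman--Pippenger sense~\cite{friedman1987expanding} (as in Section~5.2 of~\cite{glebov2013dissertation}): using the $(4m-2,d+2)$-expansion of $G$ one reserves, for each vertex $v$ of $H_i$ with $\deg_H(v) > \deg_{H_i}(v)$, a system of pairwise disjoint ``growth'' sets $R(v) \subseteq N_G(\phi_i(v))$ avoiding $\phi_i(H_i)$, which certifies that $\phi_i$ can be extended along any bounded-size rooted forest attached at these vertices. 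I would also require $\phi_i$ to respect the bipartitions: fixing a bipartition $\{A,B\}$ of $H$, we keep $\phi_i(A \cap V(H_i)) \subseteq X$ and $\phi_i(B \cap V(H_i)) \subseteq Y$ (possible since $G$ is bipartite and each $H_i$ is connected). A \emph{leaf step}, attaching a new vertex $w$ to some $u \in V(H_i)$, is then the classical situation: $u$ is non-full, so a reserved neighbour of $\phi_i(u)$ serves as $\phi_{i+1}(w)$, and re-establishing extendability for $H_{i+1}$ is exactly the Friedman--Pippenger argument, which uses the $(4m-2,d+2)$-expansion.

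The substance is in a \emph{path step}, where $H_{i+1}$ arises by joining $u',v' \in V(H_i)$ by a new internally disjoint path of some length $L \ge 2\log m$. Here I would first use extendability to grow, disjointly from $\phi_i(H_i)$ and from each other, two auxiliary trees $T_u, T_v$ rooted at reserved neighbours of $\phi_i(u')$ and $\phi_i(v')$, each of ``trunk-plus-fan'' shape: a bare path (the trunk), then an as-evenly-as-possible branching tree that acquires between $m/8$ and $m/4$ leaves, all at a common distance --- $k_u$ for $T_u$, $k_v$ for $T_v$ --- from the respective root. For $d \ge 3$ such a tree has fewer than $m/2$ vertices, so growing both stays within the expansion budget. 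I would choose $k_u, k_v$ with $k_u + k_v + 1 = L$ and with the parities dictated by the bipartition convention, so that the leaf set $L_u$ of $T_u$ lies in $X$ and the leaf set $L_v$ of $T_v$ lies in $Y$; this is consistent because the bipartition of $H$ forces $L \bmod 2$ to agree with whether $u',v'$ lie in the same part, and because $L \ge 2\log m$ leaves ample room to pick nonnegative trunk lengths $k_u - \lceil\log_{d-1}(m/8)\rceil$ and $k_v - \lceil\log_{d-1}(m/8)\rceil$. Since $|L_u|, |L_v| \ge m/8$, the hypothesis that every $X' \subseteq X$, $Y' \subseteq Y$ with $|X'|,|Y'| \ge m/8$ span an edge produces an edge $ab$ with $a \in L_u$, $b \in L_v$; concatenating the root-to-$a$ branch of $T_u$, the edge $ab$, and the $b$-to-root branch of $T_v$ gives a path from $\phi_i(u')$ to $\phi_i(v')$ of length exactly $k_u + 1 + k_v = L$ with new interior vertices. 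I would then set $\phi_{i+1}$ to include these $L-1$ interior vertices, discard all of $T_u \cup T_v$ not used by this path (returning those vertices to the free pool, which is what keeps the total occupied set bounded over the $\le m/(2\log m)$ path steps), and re-establish extendability for $H_{i+1}$ --- whose only ``live'' vertices are those of $H_i$, with one unit of capacity removed at $u'$ and $v'$, the path interior being frozen at degree $2$ --- once more via Friedman--Pippenger, noting that at most $m$ vertices are permanently occupied so abundant expansion remains.

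\textbf{The main obstacle} is the bookkeeping that makes all of this fit: one must pin down exactly how much expansion each operation consumes and verify that the grow-and-discard cycles never push the number of simultaneously occupied vertices (the image $\phi_i(H_i)$, its reservoirs, and the current pair of auxiliary trees together with theirs) past the threshold $4m-2$, and that discarding a temporary tree does not spoil the extendability of the retained embedding. The second delicate point is arranging the trunk lengths so that the routed path has \emph{precisely} the prescribed length $L$ while respecting the parity forced by the bipartitions of $G$ and $H$ --- this is where the lower bound $L \ge 2\log m$, comfortably exceeding twice the depth needed to reach $m/8$ leaves, is used. Neither point needs an idea beyond the Friedman--Pippenger machinery, which is presumably why the full proof is deferred to a later section.
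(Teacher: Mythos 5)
Your proposal matches the paper's proof in all essentials: the paper also runs the Friedman--Pippenger good-embedding machinery along the construction sequence of $H$ (leaf steps via the extension theorem), and handles each long-path step by growing exactly the same ``trunk-plus-fan'' auxiliary trees at the two endpoints, using bipartiteness to place the two leaf sets (each of size at least $m/8$, via binary fans of height $\lfloor \log m - 2\rfloor$) on opposite sides, invoking the bipartite expansion hypothesis to find a connecting edge, and then trimming away the unused fan vertices while preserving goodness. The bookkeeping you flag is resolved in the paper just as you anticipate: the temporary structure has at most $2m$ vertices against the $(4m-2,d+2)$-expansion budget, and the trimming and edge-adding steps are covered by the quoted lemmas of Dragani\'c--Krivelevich--Nenadov and Glebov.
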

		
		The following lemma follows quite easily from the above results. 

		\begin{lem} \label{lem:subdivision-bip-expander}
			Let $d, n, \ell, N$ be such that $n \le N/32d$ and $\ell \ge \log N$. 
			Then for every bipartite graph $H$ in $\family_{n, d, 2\ell}$ the following holds, where $N' = N/256d$.
			\begin{equation*}
				K_{N, N} \to \left(K_{N', N'}, \,H\right).
			\end{equation*}
		\end{lem}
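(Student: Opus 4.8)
The plan is to fix a $2$-colouring of $E(K_{N,N})$ with colour classes ``red'' and ``blue'', where the two sides are $A$ and $B$ with $|A|=|B|=N$, assume there is no red $K_{N',N'}$, and produce a blue copy of $H$ for an arbitrary bipartite $H\in\family_{n,d,2\ell}$. Throughout I take $d\ge 3$ and $N$ larger than a suitable constant depending on $d$; the remaining cases — $d\le 2$, where every member of $\family_{n,d,2\ell}$ is a path or an even cycle and is found in blue by a routine expander argument, and bounded $N$ — are easy or vacuous.

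The first step is to observe that the blue graph $G_{\mathrm{blue}}$ (the spanning subgraph of $K_{N,N}$ on the blue edges) is already well structured: the absence of a red $K_{N',N'}$ says precisely that any $A'\subseteq A$, $B'\subseteq B$ with $|A'|,|B'|\ge N'$ span a blue edge, so $G_{\mathrm{blue}}$ is a balanced bipartite $\eps_0$-expander with $\eps_0=N'/N=1/(256d)\le 1/(8(d+2))$. Feeding this into \Cref{lem:bip-expander-boosting} with its parameters ``$n$'', ``$d$'' taken to be $N$ and $d+2$, I obtain a subgraph $G_1\subseteq G_{\mathrm{blue}}$ on at least $N$ vertices that is $\bigl(N/(4(d+2)),\,d+2\bigr)$-expanding; since $G_1$ is obtained by deleting vertices it is an \emph{induced} subgraph of $G_{\mathrm{blue}}$, bipartite with parts $X^\ast\subseteq A$, $Y^\ast\subseteq B$, and in particular every blue edge of $K_{N,N}$ with both endpoints in $V(G_1)$ is an edge of $G_1$.

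Now I would invoke \Cref{thm:embed-H}, but — and this is the crux — with the parameter $m$ chosen to be roughly $N/(32d)$ rather than $n$; precisely, fix an integer $m$ with $N/(32d)\le m\le N/(16(d+2))$, which exists for $N$ large since $16(d+2)\le 32d$. Then $G_1$ meets the hypotheses of \Cref{thm:embed-H} with this $m$ and with $d$: it is $(4m-2,d+2)$-expanding because $4m-2<N/(4(d+2))$, and any $X'\subseteq X^\ast$, $Y'\subseteq Y^\ast$ with $|X'|,|Y'|\ge m/8\ge N'$ span a blue edge by the no-red-$K_{N',N'}$ hypothesis, which is then an edge of $G_1$ because $G_1$ is induced in $G_{\mathrm{blue}}$. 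Hence $G_1$ contains every bipartite graph in $\family_{m,d,2\log m}$. To finish, I would exhibit a bipartite $H^\ast\in\family_{m,d,2\log m}$ with $H\subseteq H^\ast$: the last operation in a construction of $H$ produces a vertex $w$ with $\deg_H(w)\le 2\le d-1$, and I build $H^\ast$ by first constructing $H$ — whose path-operations have length $\ge 2\ell\ge 2\log N\ge 2\log m$ (as $m\le N$) and so are legal — and then attaching to $w$ a pendant path on $m-n\ge 0$ new vertices. This $H^\ast$ is bipartite, has $m$ vertices, maximum degree $\le\max(\deg_H(w)+1,d)=d$, and lies in $\family_{m,d,2\log m}$, so $G_1\subseteq G_{\mathrm{blue}}$ contains it and hence contains a blue copy of $H$; thus $K_{N,N}\to(K_{N',N'},H)$.

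The one genuinely delicate point is the choice of $m$. The obvious attempt is to run \Cref{thm:embed-H} with $m=n$, but its mild-expansion hypothesis is demanded at scale $m/8$, whereas ``no red $K_{N',N'}$'' supplies expansion only at scale $N'$, and $N'\ge n/8$; the two cannot be reconciled when $m=n$. Taking $m\approx N/(32d)=8N'$ aligns the scales exactly, at the price that $H$ has fewer than $m$ vertices, which forces the padding of $H$ into an $m$-vertex member of the family — and the (short) check that appending a pendant path at a low-degree vertex keeps $H^\ast$ inside $\family_{m,d,2\log m}$ and does not raise the maximum degree above $d$ is where a little care is needed. Everything else — the boosting step, and the chain of elementary inequalities among $32$, $256$, $4(d+2)$, $16(d+2)$ — is routine.
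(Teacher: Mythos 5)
Your proposal is correct and follows essentially the same route as the paper: assume there is no red $K_{N',N'}$, note the blue graph is a balanced bipartite $\frac{1}{256d}$-expander, boost it via \Cref{lem:bip-expander-boosting} (with $d+2$), and embed $H$ via \Cref{thm:embed-H} applied at scale $m\approx N/32d$. The only difference is that you spell out details the paper leaves implicit — the inducedness of the boosted subgraph, the integer choice of $m$, and padding $H$ by a pendant path to get an $m$-vertex member of $\family_{m,d,2\log m}$ — all of which check out.
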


		\begin{proof}
			Set $\eps = 1/256d$, and consider a red-blue colouring of $K_{N, N}$. We may assume that there is no red $K_{\eps N, \eps N}$, so the graph spanned by the blue edges is a balanced bipartite $\eps$-expander. By \Cref{lem:bip-expander-boosting}, applied with $d+2$, there is an induced subgraph $G'$ of $G$ which is $(N/8d, d+2)$-expanding. By \Cref{thm:embed-H} (applied with $m = N/32d$, using that $G$ is a bipartite $\eps$-expander and that $\ell \ge \log m$), $G'$ contains a copy of $H$, as required.
		\end{proof}

		The following corollary is a multicoloured version of the previous lemma.
		It implies a bipartite version of a recent result of Dragani\'c, Krivelevich and Nenadov \cite{draganic2021rolling} who showed that the $s$-colour size-Ramsey number of `long' subdivisions of bounded degree graphs is linear in the number of vertices of the subdivided graph. 

		\begin{cor} \label{cor:subdivision-bip-expander-multicolour}
			Let $d, n, \ell, s$ be such that $n \le 2^{-(8s-3)}d^{-s} N$ and $\ell \ge \log N$. 
			Then for every bipartite graph $H$ in $\family_{n, d, 2\ell}$ the following holds, where $N' = N/(256d)^s$.
			\begin{equation*}
				K_{N, N} \to \left( K_{N', N'},\, \overbrace{H, \ldots, H}^{s} \right).
			\end{equation*}
		\end{cor}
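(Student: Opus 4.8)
The plan is to prove the corollary by induction on $s$. The base case $s = 1$ is literally \Cref{lem:subdivision-bip-expander}: for $s = 1$ the hypothesis on $n$ reads $n \le 2^{-5} d^{-1} N = N/(32d)$, and $N' = N/(256d)$, so the two statements coincide, with the single colour in which we seek $K_{N',N'}$ playing the role of the non-$H$ colour there.

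For the inductive step, fix $s \ge 2$ and assume the statement for $s - 1$. Let $E(K_{N,N})$ be coloured with $s + 1$ colours: a distinguished colour, in which we look for a copy of $K_{N',N'}$, and $s$ further colours, in each of which we look for a copy of $H$. Pick one of the latter $s$ colours, call it $c^\ast$, and recolour by merging all of the other $s$ colours (the distinguished colour together with the $s - 1$ colours in which we seek $H$ other than $c^\ast$) into a single colour. Since $s \ge 1$ and $d \ge 1$ give $n \le 2^{-(8s-3)} d^{-s} N \le N/(32d)$, and since $\ell \ge \log N$, \Cref{lem:subdivision-bip-expander} applies to this $2$-colouring and yields either a copy of $H$ in colour $c^\ast$ --- and then we are done --- or a copy of $K_{N'', N''}$, where $N'' = N/(256d)$, none of whose edges is coloured $c^\ast$.

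In the latter case this $K_{N'', N''}$ inherits an $s$-colouring using only the distinguished colour and the $s - 1$ colours in which we seek $H$ other than $c^\ast$. The identities
\begin{equation*}
	2^{-(8(s-1)-3)} d^{-(s-1)} \cdot \frac{1}{256 d} = 2^{-(8s-3)} d^{-s}
	\qquad\text{and}\qquad
	\frac{N''}{(256 d)^{s-1}} = \frac{N}{(256 d)^{s}} = N',
\end{equation*}
together with $\log N'' \le \log N \le \ell$, show that the induction hypothesis for $s - 1$ applies to $K_{N'', N''}$, with $N''$ in place of $N$. It produces either a copy of $H$ in one of the $s - 1$ remaining colours, or a copy of $K_{N', N'}$ in the distinguished colour; in either case we have found the desired monochromatic subgraph, which completes the induction.

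I do not expect any genuine obstacle here: the proof is a standard colour-merging induction, and the only point requiring care is the exponent bookkeeping, namely that one application of \Cref{lem:subdivision-bip-expander} shrinks the parameters by exactly the factor $256 d = 2^{8} d$ needed to pass from the bound $2^{-(8s-3)} d^{-s} N$ and the conclusion $N/(256d)^{s}$ to their counterparts for $s - 1$; the displayed identities make this explicit.
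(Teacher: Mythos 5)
Your proof is correct and is essentially the paper's argument: the paper assumes there is no monochromatic $H$ in the last $s$ colours and iteratively applies \Cref{lem:subdivision-bip-expander} to produce nested complete bipartite graphs $K_{N_i,N_i}$ with $N_i = N/(256d)^i$ avoiding one more $H$-colour at each step, which is exactly your colour-merging induction on $s$ unrolled. The parameter bookkeeping you display matches the paper's choice of $N_i$, so there is nothing to add.
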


		\begin{proof}
			Suppose that there is no monochromatic copy of $H$ in one of the last $s$ colours. 
			We show that for every $i \in \{0, \ldots, s\}$ there is a copy of $K_{N_i, N_i}$ that avoids edges of the last $i$ colours. By assumption, this holds for $i = 0$. For $i \in [s]$, suppose that there is a copy of $K_{N_{i-1}, N_{i-1}}$ that avoids the last $i-1$ colours. Apply \Cref{lem:subdivision-bip-expander} (with $N_i$ instead of $N$, using that $n \le N_i/32d$ and $\ell \ge \log N_i$) to find in it a copy of $K_{N_i, N_i}$ that avoids the last $i$-th colour as well. 
			The proof follows from the existence of such a copy for $i = s$.
		\end{proof}

	\subsection{Proof of \Cref{lem:ramsey-expanders}}

		\begin{proof}
			Write $N = |G|$.
			By \Cref{cor:expander-boosting}, there exists an $(N/16, 2)$-expanding subgraph $F$ of $G$ on at least $N/2$ vertices.
			Set $G' = F^{k/t}$.

			\begin{claim} \label{claim:expansion-G'}
				$G'$ is a $2^{-k/3t}$-expander.
			\end{claim}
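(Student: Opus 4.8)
The plan is to prove the claim by tracking how balls grow in $F$. Write $m := k/t$ and $N := |G|$, so $|F|\ge N/2$ and $G'=F^m$ is a graph on $V(F)$. I want to show that any two disjoint sets $A,B\subseteq V(F)$ with $|A|,|B|\ge 2^{-m/3}|F|$ are at $F$-distance at most $m$; since such sets are disjoint this gives an edge of $F^m=G'$ between them, which is exactly the statement that $G'$ is a $2^{-m/3}=2^{-k/3t}$-expander. The argument has two phases: a \emph{small-set phase}, where the $(N/16,2)$-expanding property of $F$ blows $A$ (and $B$) up to size more than $N/16$ within about $m/4$ steps; and a \emph{large-set phase}, where any two sets of more than $N/16$ vertices are shown to lie within $F$-distance $2$. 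Combining these with the triangle inequality bounds $\dist_F(A,B)$ by $2\lceil m/4\rceil+2\le m$ (using that $m=k/t$ is large, since $k\gg t$), finishing the proof.

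For the small-set phase, set $A_0=A$ and $A_{i+1}=A_i\cup N_F(A_i)$. Since $N_F(X)$ is by definition disjoint from $X$, and since $F$ is $(N/16,2)$-expanding, we get $|A_{i+1}|\ge 3|A_i|$ whenever $|A_i|\le N/16$. Hence, as long as the ball has not yet exceeded $N/16$, we have $|A_i|\ge 3^i|A|\ge 3^i\,2^{-m/3}|F|$. Because $3^{m/4}2^{-m/3}\ge 1$ (the exponent $\tfrac14\log_2 3-\tfrac13>0$), taking $i=\lceil m/4\rceil$ forces $|A_i|\ge |F|\ge N/2>N/16$ if the threshold had not already been passed — a contradiction — so in fact the ball of radius $\lceil m/4\rceil$ around $A$ has more than $N/16$ vertices, and likewise for $B$. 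This is the only place the specific exponent $1/3$ matters: it is small enough that $3^{\lceil m/4\rceil}\gg 2^{m/3}$ while $2\lceil m/4\rceil<m$.

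For the large-set phase I would use the structure of $F$ produced by the proof of \Cref{cor:expander-boosting}: $F$ is an induced subgraph of the bipartite graph $H=G[X_1,X_2]$ for an equipartition $\{X_1,X_2\}$ of $V(G)$, the graph $H$ is a bipartite $2\eps$-expander, and — since only $O(\eps N)$ vertices are deleted in \Cref{lem:bip-expander-boosting} — each part $Y_i:=X_i\cap V(F)$ has size at least $N/4$. Given disjoint $S,T\subseteq V(F)$ with $|S|,|T|>N/16$, choose sides with $|S\cap Y_i|\ge N/32$ and $|T\cap Y_j|\ge N/32$. If $i\ne j$, the bipartite expansion of $H$ yields an edge of $H$, hence of the induced subgraph $F$, between $S\cap Y_i$ and $T\cap Y_j$, so $\dist_F(S,T)=1$. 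If $i=j$, the same expansion shows that in $Y_{3-i}$ each of $S\cap Y_i$ and $T\cap Y_i$ has fewer than $\eps N$ $F$-non-neighbours, so their $F$-neighbourhoods inside $Y_{3-i}$ each have size at least $|Y_{3-i}|-O(\eps N)$; as $|Y_{3-i}|\le N/2$ and $|Y_{3-i}|\ge N/4$, two such subsets of $Y_{3-i}$ must intersect for $\eps$ small, producing a common $F$-neighbour and so $\dist_F(S,T)\le 2$. The main obstacle is exactly this last step: $F$ is bipartite, so it is \emph{not} itself an $\eps$-expander and the convenient ``two large sets are joined by an edge'' fact is unavailable; it has to be rebuilt from the bipartite expansion of $H$ together with the lower bound $|Y_i|\ge N/4$ on the part sizes. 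The small-set phase, by contrast, is routine, and the only bookkeeping needed is to keep the constants so that $2\lceil m/4\rceil+2\le m$.
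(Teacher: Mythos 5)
Your proof is correct, and its first phase (growing balls around $A$ and $B$ using the $(N/16,2)$-expanding property of $F$ until they exceed $N/16$) is the same ball-growing argument the paper uses; the difference is in how the two large balls are then connected. The paper simply applies the $\eps$-expansion of $G$ to get a single edge of $G$ between the two balls, accepting that the connecting edge lives in $G$ rather than in $F$ — note that the path this produces has length at most $k/t$ in $G$, so strictly it yields an edge of $G^{k/t}$ restricted to $V(F)$ rather than of $F^{k/t}$, which is all that the later application in \Cref{lem:ramsey-expanders} needs (there $G'$ is only used as an expander contained in $G^{k/t}$). You instead insist on a genuine $F$-path, and since $F$ is bipartite and hence not an $\eps$-expander, you rebuild a ``large balls are at $F$-distance at most $2$'' statement from the bipartite expansion of $H=G[X_1,X_2]$ together with the bound $|X_i\cap V(F)|\ge N/4$; this is sound (the non-neighbour counting and the intersection of the two neighbourhoods in $Y_{3-i}$ check out for $\eps$ small, and $2\lceil m/4\rceil+2\le m$ since $k\gg t$), and it proves the literal statement that $F^{k/t}$ is an expander. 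The price is that these structural facts are not in the statement of \Cref{cor:expander-boosting} — you must either reprove them or strengthen the corollary to record that $F$ is an induced subgraph of the bipartite expander $H$ with both parts of size at least $N/4$ — whereas the paper's route needs nothing about $F$ beyond $V(F)\subseteq V(G)$ and the expanding property (at the cost of implicitly reading $G'$ as the $k/t$-th power with respect to distances in $G$). Either fix is fine; just make one of them explicit.
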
  

			\begin{proof}
				For a subset $X \subseteq V(F)$, let $\Gamma^{\le i}(X)$ be the set of vertices $y$ in $F$ that are at distance at most $i$ (in $F$) from some vertex in $X$.
				Note that $\Gamma^{\le (i+1)}(X) = \Gamma^{\le i}(X) \cup N(\Gamma^{\le i}(X))$ for every $i$. 
				It thus follows from the choice of $F$ that $|\Gamma^{\le (i+1)}(X)| \ge \min\{N/16,\, 2|\Gamma^{\le i}(X)|\}$, implying that 
				\begin{equation*}
					|\Gamma^{\le i}(X)| \ge \min\{N/16, 2^i|X|\}
				\end{equation*}
				for every $X \subseteq V(F)$ and every $i$.

				Let $A, B \subseteq V(F)$ be disjoint subsets of size at least $2^{-k/3t}|F|$. Then 
				\begin{equation*}
					|\Gamma^{\le k/3t}(A)|, |\Gamma^{\le k/3t}(B)| \ge \min\{N/16, |F|\} = N/16 \ge \eps |G|.
				\end{equation*}
				By $\eps$-expansion there is an edge of $G$ between $\Gamma^{\le k/3t}(A)$ and $\Gamma^{\le k/3t}(B)$. This means that there is an edge of $G'$ between $A$ and $B$.
			\end{proof}

			Consider an $(s+1)$-colouring of $G^k$, where the first colour is grey. Our task is to show that there is either a grey $K_t$ or a monochromatic copy of $H$ in a non-grey colour. Suppose that the latter does not hold.

			Define $Y_0 = V(F)$, and recall that $|Y_0| \ge N/2$. We will find sets $X_1, \dots, X_t, Y_1, \dots, Y_t$ such that $X_{i+1}$ and $Y_{i+1}$ are disjoint subsets of $Y_i$ for $i
			\geq 0$; $|X_{i}| = |Y_{i}| = 2^{-(i+1)} (256d)^{-si} N$; and there are no non-grey edges of $G^k$ between $X_i$ and $Y_i$. Suppose that $X_1, \ldots, X_i, Y_0, \ldots, Y_i$ are defined for some $i \in [t]$. Consider an arbitrary partition $\{Y_i', Y_i''\}$ of $Y_i$ into two parts of size $|Y_i|/2$, and apply \Cref{cor:subdivision-bip-expander-multicolour} to $G^k[Y_i', Y_i'']$ (with $\ell = \log(\alpha n)$ and $|Y_i|/2$ instead of $N$) to obtain disjoint sets $X_{i+1}$, $Y_{i+1}\subseteq Y_{i}$ with $|X_{i+1}|=|Y_{i+1}| = (256d)^{-s} |Y_i|/2 = 2^{-(i+2)}(256d)^{-s(i+1)}N$ such that all edges in $G^k[X_{i+1}, Y_{i+1}]$ are grey, using that there is no monochromatic copy of $H$ in a non-grey colour. 
			The corollary is applicable since $s, t, d \ll \alpha = N/n$, and $\log (\alpha n) \ge \log(|Y_i|/2)$, for $i \in \{0,\dots, t-1\}$.

			Note that, as $s, t, d \ll k$, we have $|X_i| \ge 2 \cdot 2^{-k/3t} |F|$, so by \Cref{claim:expansion-G'}, there is an edge of $G'$ between every two subsets $X_i' \subseteq X_i$ and $X_j' \subseteq X_j$ with $|X_i'| \ge |X_i|/2$ and $|X_j'| \ge |X_j|/2$. 
			Let $Z_i$ be the set of vertices $z_i \in X_i$ such that there exists a path $z_1 \ldots z_i$ in $G'$ with $z_j \in X_j$ for $j \in [i]$. One can show, by induction, that $|Z_i| \ge |X_i|/2$ for all $i \in [t]$. In particular, $Z_t \neq \emptyset$, so there is a path $z_1 \ldots z_t$ in $G' \subseteq G^{k/t}$ with $z_i \in X_i$ for $i \in [t]$.
			Then $\{z_1, \ldots, z_t\}$ is a grey clique in $G^k$.
		\end{proof}

		\begin{rem}
			Some effort can be spared if instead of proving \Cref{thm:main} in full generality, one settles for a special case.

			Indeed, it is much easier to prove a variant of \Cref{thm:embed-H} for $H = P_n$, e.g., using a consequence of the Depth First Search algorithm (see, e.g., Lemma 2.3 in \cite{ben2012long}). This suffices to prove the special case of our main result, \Cref{thm:main-path}, for powers of tight paths.

			Similarly, a well-known result of Friedman and Pippenger \cite{friedman1987expanding} (whose arguments we use in the proof of \Cref{thm:embed-H}) asserts that $(2m, d+1)$-expanding graphs contain every tree in $\T_{m, d}$, which readily implies the special case of \Cref{thm:embed-H} where $H \in \T_{m, d}$. This suffices to prove the version of our main result, \Cref{thm:main-tree}, for powers of trees.
		\end{rem}

\section{Colouring ordered trees} \label{sec:colouredorderedtrees}

	We will use the notion of ordered trees and forests, introduced in the following two definitions.

	\begin{defn}[ordered trees]\label{def:ordered-tree}
		Let $T$ be a rooted tree. Denote its non-root leaves by $L(T)$, and for any vertex $u$ in $T$ denote the subtree of $T$ rooted at $u$ by $T_u$. We say that $T$ has \emph{height} $h$ if all non-root leaves of $T$ are at distance exactly $h$ from the root.

		A \emph{$d$-ary tree of height $h$} is a tree of height $h$, whose vertices that are not leaves have $d$ children.

		An \emph{ordered tree} of height $h$ is a rooted tree $T$ of height $h$, along with an ordering $\sigma$ of its leaves that corresponds to a planar drawing of $T$. More precisely, there exists a sequence $\sigma_1, \ldots, \sigma_h = \sigma$, such that $\sigma_i$ is an ordering of the vertices $L_i$ whose distance from the root is $i$, and the following holds for $i \in [h-1]$:
		for every $p, q \in L_i$, if $p$ precedes $q$ in $\sigma_i$, then all children of $p$ precede all children of $q$ in $\sigma_{i+1}$.
	\end{defn}
	
	\begin{figure}[h]
	    \centering
	    \includegraphics[scale = 1]{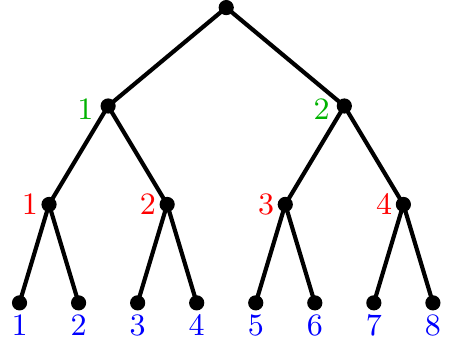}
	    \hspace{1cm}
	    \includegraphics[scale = 1]{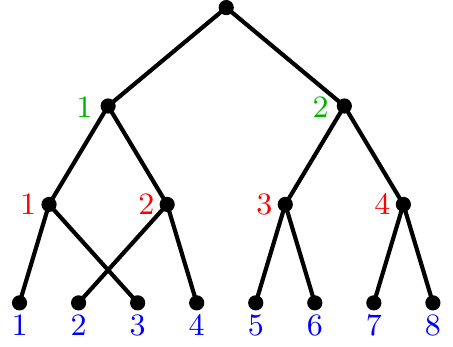}
	    \vspace{-.3cm}
	    \caption{An example of an ordered tree (on the left) and an `unordered' tree (on the right). Here the numbering of the vertices in level $i$ corresponds to the ordering $\sigma_i$.}
		\label{fig:ordered-tree}
	\end{figure}

	\begin{defn}[ordered forests] \label{def:ordered-forest}
		A \emph{rooted forest} is a forest whose components are rooted trees. Given a forest $F$, denote by $L(F)$ the set of non-root leaves of $F$. We say that $F$ has height $h$ if each of its components has height $h$.

		A \emph{$d$-ary forest of height $h$} is a rooted forest whose components are $d$-ary trees of height $h$. 

		An \emph{ordered forest} is a rooted forest whose components are ordered trees, along with an ordering of its components.
	\end{defn}

	Given an ordered tree $T$ we often use the natural correspondence between subsets of $L(T)$ and ordered subtrees of $T$. For $X \subseteq L(T)$, we say that \emph{$X$ corresponds to $S \subseteq T$} if $S$ is the minimal subtree of $T$ that contains $X$. (See \Cref{fig:corresponding} for a set $X$ and the corresponding subtree.)
	
	\begin{figure}[h]
	    \centering
	    \includegraphics[scale = 1]{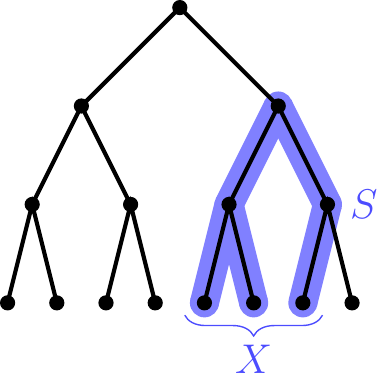}
	    \vspace{-.3cm}
	    \caption{A set $X$ of leaves and the subtree $S$ corresponding to $X$.}
		\label{fig:corresponding}
	\end{figure}
	
	We use a similar correspondence between subsets of leaves of an ordered forest $F$ and subforests of $F$. More precisely, given an ordered forest $F$ with components $T_1, \ldots, T_k$ (in this order), a subset $X$ of $L(T)$ \emph{corresponds to} an ordered subforest $S$ of $F$ with components $S_1 \subseteq T_1, \ldots, S_k \subseteq T_k$, if $S_i$ is the subtree corresponding to $X \cap L(T_i)$, for $i \in [k]$. 
	
	In this section we prove two Ramsey-type results about ordered trees and forests.
	The next lemma, whose proof we delay to the next subsection, is a Ramsey-type result about $D$-ary ordered forests whose copies of a fixed forest $S$ are $s$-coloured.

	\begin{lem} \label{lem:ramsey-trees}
		Let $d,r, h, k,s \ll D$.
		Let $F$ be a $D$-ary ordered forest of height $h$ with $k$ components. Let $S$ be an ordered forest of height $h$ with $r$  leaves and $k$ components. Given any $s$-colouring of the copies of $S$ in $F$, there is an ordered $d$-ary subforest $F'$ of $F$ of height $h$ with $k$ components such that all copies of $S$ in $F'$ have the same colour.
	\end{lem}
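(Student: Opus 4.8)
The plan is to prove the statement by induction on the height $h$, peeling off one level of the forest at a time, using a hypergraph-Ramsey argument at each step. First consider the base case $h=1$: here every component of $F$ is a star with $D$ leaves, and $S$ is a fixed ordered forest of height $1$, so $S$ is determined by how its $r$ leaves are distributed among the $k$ components (say $r_i$ leaves in component $i$, with $\sum r_i = r$). A copy of $S$ in $F$ is then just a choice of an $r_i$-subset of the leaves of the $i$-th star, for each $i$. An $s$-colouring of these copies is an $s$-colouring of $\prod_i \binom{L(T_i)}{r_i}$; by the product (hypergraph) Ramsey theorem applied component-by-component, if $D$ is large enough in terms of $d, r, k, s$ we can pass to $d$-subsets $L(T_i') \subseteq L(T_i)$ so that the colouring is constant on $\prod_i \binom{L(T_i')}{r_i}$. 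Taking $F'$ with components the stars on these leaf sets gives the base case. (One should be slightly careful that the ``ordered'' structure is inherited automatically, since any subset of an ordered leaf set carries the induced order; this is the reason the lemma is stated for ordered rather than unordered forests.)

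For the inductive step, suppose the result holds for height $h-1$. Given $F$ of height $h$, look at the roots of its components and their children. Each root $\rho_i$ has $D$ children, and hanging below each child is a $D$-ary ordered tree of height $h-1$; moreover the order on the children is inherited. The forest $S$ of height $h$ likewise has some number $m_i$ of children under its $i$-th root, and below the $j$-th such child hangs an ordered tree $S_{i,j}$ of height $h-1$; these $S_{i,j}$ together with the multiset structure encode $S$. The strategy is: (i) first, for each child $c$ of each root $\rho_i$ and each of the finitely many isomorphism types $\tau$ of ordered trees of height $h-1$ with at most $r$ leaves, apply the induction hypothesis inside the subtree below $c$ to make the induced colouring of copies of $\tau$ constant — doing this for all $\tau$ simultaneously costs only a bounded number of nested applications, since there are boundedly many types and each application only needs the ambient arity to be large; (ii) after this, the colour of a copy of $S$ in $F$ depends only on the choice of which children $c$ of each root are used (an ordered $m_i$-subset of the $D$ children of $\rho_i$) together with, for each chosen child, which type $S_{i,j}$ is planted there — but the type is forced by $S$, so it depends only on the ordered choice of children; (iii) now this is an $s$-colouring of $\prod_i \binom{D}{m_i}$-type objects (ordered child-subsets), to which the product Ramsey theorem applies again, yielding $d$-subsets of children at each root on which the colour is constant; (iv) restrict each subtree-below-a-kept-child to a $d$-ary height-$(h-1)$ subforest (already arranged in step (i), or apply induction once more), giving the desired $d$-ary $F'$ of height $h$ with all copies of $S$ monochromatic.

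The main obstacle — and the place to be careful — is step (i)/(ii): we need the colour of a copy of $S$ to factor cleanly as ``colour determined by the top-level child-choice, once the lower levels are homogenised,'' and this requires that homogenising the lower levels be done \emph{uniformly over all children of all roots at once}, and for \emph{every} relevant subtree type $\tau$, not just the types actually appearing in $S$ (because while building a copy of $S$ we pick a specific child, and we must already know the lower colouring is blind to the choice within that child's subtree). Handling ``all types at once'' is where the hypothesis $d,r,h,k,s \ll D$ is used: the number of ordered-tree types of height $h-1$ with $\le r$ leaves is bounded by a function of $r$ and $h$, so finitely many nested applications of the induction hypothesis suffice, each time only shrinking arities from something huge to something still huge, and one checks the quantifier chase closes because $D$ was taken sufficiently large at the outset. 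A secondary but routine point is bookkeeping the ordered structure through all these restrictions — but since passing to subsets of leaves (or of children) always preserves the inherited linear order and hence the planarity condition in \Cref{def:ordered-tree}, this never causes trouble.
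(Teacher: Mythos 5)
Your overall architecture (peel off the root level, homogenise the lower levels, then apply Ramsey at the root level) is the right one, and your base case and final Ramsey step (iii) are fine. The genuine gap is in step (i), and it propagates to (ii). You propose to ``apply the induction hypothesis inside the subtree below $c$ to make the induced colouring of copies of $\tau$ constant'', but no such induced colouring exists: the given colouring assigns colours only to full copies of $S$, and a copy of $S$ uses leaves from $\sum_i m_i$ different children spread over all $k$ components simultaneously. A copy of a type $\tau$ inside a single child's subtree has no colour of its own; its ``colour'' is a function of all the choices made in the other children, and if you try to record that dependence you get a derived colouring with a number of colours growing with $D$ (roughly $s$ to the power of the number of possible ``rests''), which destroys the hypothesis $s \ll D$ needed to invoke the induction hypothesis. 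Your remark that the homogenisation must be done ``uniformly over all children of all roots at once'' correctly identifies the danger, but the mechanism you give is still a per-child, per-type application, so the factorisation claimed in (ii) --- that after (i) the colour depends only on the choice of children at the roots --- does not follow from anything you have established.

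The repair stays inside your induction scheme but must homogenise \emph{jointly over tuples of children}: first cut each root down to $\alpha$ children with $d \ll \alpha \ll D$; then, for each of the boundedly many choices of $m_i$ children of the $i$-th root (for all $i$), consider the height-$(h-1)$ ordered forest consisting of the subtrees hanging below those children, equip its copies of $S$-minus-its-roots with the colouring obtained by re-attaching the roots, and apply the height-$(h-1)$ induction hypothesis (which, in your formulation, must therefore be available for an arbitrary number of components) to this multi-component forest; iterate over all tuples with a nested chain of arities $D = d_0 \gg d_1 \gg \cdots \gg d_t = d$, one application per tuple. Only after this joint homogenisation does the colour of a copy of $S$ depend solely on the tuple of children used, and then your product-Ramsey step at the roots finishes the proof. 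This is essentially the paper's argument, organised there as a double induction on $(h,k)$: the single-tree case deletes the root and homogenises over the $\binom{\alpha}{\ell}$ subsets of children via nested applications of the induction hypothesis followed by Ramsey's theorem on an auxiliary $\ell$-uniform colouring, and the $k \ge 2$ case peels off one component, homogenising over the copies of the first component of $S$ in the same nested fashion.
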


	We obtain the following corollary by repeatedly applying \Cref{lem:ramsey-trees}. The notation in this corollary is somewhat different than what we used in the lemma; this is to make it easier to apply the corollary in \Cref{sec:hanging-trees}.

	\begin{cor} \label{cor:ramsey-trees}
		Let $d,r, h, \rho, s \ll D$.
		Let $T_1, \ldots, T_{\rho}$ be vertex-disjoint $D$-ary ordered trees of height $h$, and let $\HH$ be the $r$-uniform complete graph on $L(T_1) \cup \ldots \cup L(T_{\rho})$.
		For any $s$-colouring of $\HH$, there exist $d$-ary subtrees $T_i' \subseteq T_i$ of height $h$, for $i \in [\rho]$, such that for any sequence $S = (S_1, \ldots, S_{\rho})$ of ordered trees of height at most $h$ and $r$ leaves in total, the following holds: all edges of form $L(S')$, where $S' = (S_1', \ldots, S_{\rho}')$ is a copy of $S'$ (in order, namely $S_i'$ is a copy of $S_i$) with $S_i' \subseteq T_i'$ and $L(S_i') \subseteq L(T_i')$, have the same colour.
	\end{cor}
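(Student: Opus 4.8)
The plan is to deduce \Cref{cor:ramsey-trees} from \Cref{lem:ramsey-trees} by a finite iteration over all possible ``shape sequences'' $S = (S_1, \ldots, S_\rho)$. First I would observe that, up to isomorphism of ordered trees, there are only finitely many sequences $S = (S_1, \ldots, S_\rho)$ where each $S_i$ is an ordered tree of height at most $h$ and $\sum_i |L(S_i)| \le r$; call this finite list $S^{(1)}, \ldots, S^{(N)}$, where $N = N(r, h, \rho)$ is a constant. (We should also pad each $S_i$ to have height exactly $h$, using the fact that a copy of an ordered tree of height $h' < h$ inside a $d$-ary tree of height $h$ can be taken to sit at the top and then extended down one branch to a genuine height-$h$ ordered tree; alternatively one works directly with the forest version of \Cref{lem:ramsey-trees} at each height, but padding keeps the bookkeeping uniform.) The key point is that each $S^{(m)}$ with fewer than $r$ leaves, once padded and embedded, still determines an $r$-set of leaves via $L(S')$, and the colouring of $\HH$ restricted to such $r$-sets induces an $s$-colouring of the copies of $S^{(m)}$ (viewed as an ordered forest of height $h$ with $\rho$ components) inside the ordered forest $F = (T_1, \ldots, T_\rho)$.

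Next I would set up the iteration. Choose a rapidly decreasing sequence of ``arity'' parameters $D = D_0 \gg D_1 \gg \cdots \gg D_N = d$, each large relative to $r, h, \rho, s$ and the next one, so that \Cref{lem:ramsey-trees} applies at every stage. Start with $F^{(0)} = (T_1, \ldots, T_\rho)$, a $D_0$-ary ordered forest of height $h$ with $\rho$ components. At step $m \in [N]$, we have a $D_{m-1}$-ary ordered subforest $F^{(m-1)}$ of height $h$ with $\rho$ components; apply \Cref{lem:ramsey-trees} with $S = S^{(m)}$, $k = \rho$, arity target $D_m$, and the $s$-colouring of copies of $S^{(m)}$ in $F^{(m-1)}$ inherited from $\HH$ via $S' \mapsto$ (colour of $L(S')$). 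This yields a $D_m$-ary ordered subforest $F^{(m)} \subseteq F^{(m-1)}$ of height $h$ with $\rho$ components such that all copies of $S^{(m)}$ in $F^{(m)}$ receive the same colour. After $N$ steps, set $(T_1', \ldots, T_\rho') = F^{(N)}$, which is $d$-ary of height $h$. Since each $F^{(m)}$ is a subforest of $F^{(m-1)}$, for every $m$ all copies of $S^{(m)}$ inside $F^{(N)}$ are monochromatic; and every shape sequence $S$ as in the statement is (after padding to height $h$) one of the $S^{(m)}$, so we are done.

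The routine points to be checked are: (i) the padding operation is well-defined and a ``copy'' of the padded shape inside $F^{(N)}$ restricts to a copy of the original shape with the same $r$-set $L(S')$ of leaves — this just says that choosing how to extend below the original leaves does not change which $r$ leaves we end up selecting, provided we declare $L(S')$ to be the set of images of the original $r$ leaves (equivalently, work throughout with ``marked'' leaves); (ii) \Cref{lem:ramsey-trees} is stated for forests with $k$ components and an ordered forest $S$ with $k$ components and $r$ leaves in total, which is exactly the format we feed it, so the hypothesis $d, r, h, k, s \ll D$ is met at each stage by our choice of the $D_m$; (iii) finiteness of $N$ — the number of ordered trees of height $\le h$ on $\le r$ leaves is bounded by a function of $r$ and $h$ only (each is determined by its ordered branching structure), and sequences of length $\rho$ of these number at most $N(r,h)^\rho$, a constant.

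The main obstacle is essentially bookkeeping rather than mathematics: one must be careful that ``copy'' is interpreted consistently — as an ordered-tree embedding preserving the root, the level structure, and the leaf order — so that the notion of ``a copy of $S$ inside $F^{(m-1)}$'' that \Cref{lem:ramsey-trees} speaks about coincides with the restriction to $F^{(m-1)}$ of ``a copy of $S$ inside $F^{(0)}$''. Because subforests are taken in the ordered sense (each $S_i' \subseteq T_i'$ with $L(S_i') \subseteq L(T_i')$ and orders inherited), this compatibility holds automatically, but it is the one place where the ordering of the trees is genuinely used, and it is worth stating explicitly. Once that is pinned down, the corollary is just $N$ successive applications of the lemma.
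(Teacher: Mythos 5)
Your overall strategy---enumerate the finitely many shape sequences and repeatedly apply \Cref{lem:ramsey-trees} along a rapidly decreasing sequence of arities $D=D_0\gg D_1\gg\ldots\gg D_N=d$---is exactly the paper's argument, and that skeleton is sound. The genuine gap is in your padding step, which goes in the wrong direction. The copies the corollary quantifies over satisfy $L(S_i')\subseteq L(T_i')$, so they sit at the \emph{bottom} of $T_i'$: their leaves are at level $h$ and their roots at level $h$ minus the height of $S_i$ (in particular, only sequences in which each non-empty $S_i$ has all its non-root leaves at the same depth admit any such copies; this observation is needed and missing from your plan). You instead place the copy ``at the top'' and extend \emph{below} the original leaves, declaring the images of the original leaves to be the selected $r$-set. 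This fails twice over. First, the induced colouring is not defined: those marked images lie at interior levels of $T_i'$, whereas $\HH$ is a hypergraph on $L(T_1)\cup\ldots\cup L(T_\rho)$ only, so a copy of your downward-padded shape has no colour to inherit. Second, even granting such a colouring, monochromaticity of copies of the downward-padded shape (whose leaves lie in $L(T_i')$ and whose roots are therefore forced to the top of $T_i'$) says nothing about the copies the corollary is about, whose roots range over an entire intermediate level; your claim that a padded copy ``restricts to a copy of the original shape with the same $r$-set $L(S')$'' is false for downward padding, since the extension changes the leaf set. The correct fix, which is what the paper does, is to pad \emph{upward}: attach a path above the root of each non-empty $S_i$ so that the padded tree has height exactly $h$ and the \emph{same} leaf set; then copies of the padded forest with leaves in the leaf set of the host forest are precisely the (unique) upward extensions of the copies you must control, the colouring by the colour of $L(S')$ in $\HH$ is well defined, and \Cref{lem:ramsey-trees} applies.

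A second, smaller gap: some $S_i$ in a sequence may be empty, while \Cref{lem:ramsey-trees} requires the shape forest and the host forest to have the same number of height-$h$ components. So at each step you must apply the lemma only to the host trees indexed by the non-empty $S_i$, and pass to arbitrary $d_j$-ary subtrees of height $h$ in the remaining coordinates, as the paper does. Your fallback of invoking ``the forest version of \Cref{lem:ramsey-trees} at each height'' is not available either: the lemma is stated for shape and host of the same height, and replacing the host by the subtrees rooted at level $h-h'$ changes the number of components and only controls copies within a fixed choice of those subtrees, not across all root positions simultaneously.
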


	\begin{proof} 
		Note that it suffices to consider sequences $S = (S_1, \ldots, S_{\rho})$ such that in each $S_i$, all non-root leaves are at the same distance from the root, because for sequences $S$ that violate this, there are no copies $(S_1', \ldots, S_{\rho}')$ of $S$ in $(T_1, \ldots, T_{\rho})$ with $S_i' \subseteq L(T_i)$ for $i \in [\rho]$. 
		Denote by $\ell$ the number of such sequences $S$ and observe that $\ell$ is a function of $r$, $h$ and $k$.
		Thus, by choice of the parameters, there exists a sequence $d_0, \ldots, d_{\ell}$ such that $d_0 = D$, $d_{\ell} = d$, and $r, h, k, s, d_{\ell} \ll d_{\ell-1} \ll \ldots \ll d_0$. 

		Fix an ordering $(S^j)_{j \in [\ell]}$ of all sequences as in the statement. 
		We claim that there exist sequences $T_i = T_{i,0} \supseteq \ldots \supseteq T_{i, \ell}$, for $i \in [\rho]$, such that: $T_{i, j}$ is a $d_j$-ary ordered tree of height $h$; and all edges of $\HH$ of form $L(S')$, where $S' = (S_1', \ldots, S_{\rho'})$ is a copy of $S^j$ (in order) with $S_i' \subseteq T_{i, j}$ and $L(S_i') \subseteq L(T_{i, j})$, have the same colour. 
		
		To see this, suppose that for some $j \in [\rho]$, the trees $T_{1, j-1}, \ldots, T_{\rho, j-1}$ satisfy the properties listed in the previous paragraph. We now show how to obtain suitable trees $T_{1, j}, \ldots, T_{\rho, j}$. Consider the sequence of trees $S^j = (S_1, \ldots, S_{\rho})$. Let $k$ be the number of indices $i$ for which $S_i$ is non-empty, and for convenience assume that $S_1, \ldots, S_k$ are non-empty. Replace each tree $S_i$ by an ordered tree $S_i'$ of height $h$, obtained by adding a path of appropriate length to the root of $S_i$ and making the other end of the path be the root of $S_i'$ (here we use the assumption that in each $S_i$, all non-root leaves are at the same distance from the root). 

		Let $S$ be the ordered forest with components $S_1', \ldots, S_k'$ (in this order), and let $F$ be the forest with components $T_{1, j-1}, \ldots, T_{k, j-1}$. Consider the $s$-colouring of copies $S'$ of $S$ in $F$ with $L(S') \subseteq L(F)$ according to the colour of $L(S')$ in $\HH$. Apply \Cref{lem:ramsey-trees} with the forests $F$ and $S$, to obtain a $d_j$-ary forest $(T_{1, j}, \ldots, T_{k, j})$ of height $h$ whose copies of $S$ all have the same colour. For $i \in [k+1, \rho]$, take $T_{i, j}$ to be any $d_j$-ary subtree of $T_{i, j-1}$ of height $h$. The trees $T_{1, j}, \ldots, T_{\rho, j}$ satisfy the desired properties.
		
		To complete the proof of the corollary, take $T_i' = T_{i, \rho}$. 
	\end{proof}

	The next lemma shows that for every tall enough ordered $d$-ary tree, and every $s$-colouring of $r$-sets of leaves, there exists a monochromatic structure that `connects' the leaves of two isomorphic subtrees with $\ell$ leaves. 

	\begin{lem} \label{lem:trees-tight-paths}
	    Let $r \le \ell, s \ll h \ll d$.
		Let $T$ be a $d$-ary ordered tree of height $h$, and let $\HH$ be the $r$-uniform complete graph on $L(T)$.
		Then for every $s$-colouring of $\HH$ there exist disjoint sets $X, Y, Z \subseteq L(T)$ of size $\ell$, such that the  subtrees $S_x, S_z \subseteq T$ that correspond to  $X$ and $Z$ are isomorphic and vertex-disjoint, and $X \cup Y$ and $Y \cup Z$ are monochromatic cliques of the same colour.
	\end{lem}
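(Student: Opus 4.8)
The plan is to first strip the colouring down to a colouring by `types' of subtrees, then reformulate the goal as locating a large monochromatic region of $T$ containing two disjoint isomorphic subtrees, and finally to produce such a region. Throughout I treat $r,\ell,s$ as bounded and $h,d$ as large with $h\ll d$, as in the statement.

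\textbf{Step 1: reduce to a type-colouring.} Apply \Cref{cor:ramsey-trees} with $\rho=1$ to $T$ (its arity $d$ playing the role of $D$ there). Passing to a subtree — which costs only a bounded, $(r,h,s)$-dependent factor in the arity, so the new tree is still $d'$-ary with $s,\ell\ll h\ll d'$, and we rename $d'$ to $d$ — we may assume there is a map $\kappa$ from isomorphism types of ordered trees with at most $r$ leaves and height at most $h$ to $[s]$ such that every $r$-set $F\subseteq L(T)$ has colour $\kappa(S_F)$, where $S_F$ is the subtree of $T$ corresponding to $F$.

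\textbf{Step 2: what suffices.} It is enough to find a colour $\gamma$ and a subtree $\Sigma\subseteq T$ with: (i) $\kappa(S_F)=\gamma$ for every $r$-set $F$ of leaves of $\Sigma$; and (ii) $\Sigma$ contains two vertex-disjoint subtrees $\sigma\cong\sigma'$ with $L(\sigma),L(\sigma')\subseteq L(T)$ and $|L(\sigma)|=|L(\sigma')|=\ell$, together with at least $\ell$ further leaves of $\Sigma$ lying in $L(T)$. Indeed, take $X=L(\sigma)$, let $Z=L(\sigma')$ be its image under an isomorphism $\sigma\to\sigma'$, and let $Y$ be $\ell$ leaves of $\Sigma$ outside $\sigma\cup\sigma'$. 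Then $X,Y,Z$ are disjoint; since $\sigma,\sigma'$ are disjoint subtrees, $S_x$ (computed inside $\sigma$, hence inside $T$) is isomorphic to $S_z$ and vertex-disjoint from it; and $X\cup Y,\,Y\cup Z\subseteq L(\Sigma)$, so (i) makes them monochromatic $r$-cliques of colour $\gamma$. (Here $r\le\ell$ ensures the relevant sets carry $r$-sets and that such $\sigma$ fit inside the $d$-ary tree.)

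\textbf{Step 3: producing $\Sigma$.} When $r=2$, the type of a pair of leaves is just its meeting level, so $\kappa$ is a map $\Psi\colon\{0,\dots,h-1\}\to[s]$; by pigeonhole there are levels $a'<a\le h-2$ with $\Psi(a)=\Psi(a')=:\gamma$. Let $w$ be a vertex at level $a'$, let $c_1,c_2$ be distinct children of $w$, pick $p_i$ at level $a$ inside $T_{c_i}$, and let $\sigma$ (resp.\ $\sigma'$) consist of $2\ell$ (resp.\ $\ell$) pendant leaf-paths below distinct children of $p_1$ (resp.\ $p_2$); let $\Sigma=w\cup\sigma\cup\sigma'$ together with the two connecting paths. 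Any two leaves of $\Sigma$ inside a single spider meet at level $a$, any two split between the spiders meet at level $a'$, so every pair of leaves of $\Sigma$ has colour $\gamma$, giving (i); and (ii) holds (take the $\ell$-leg sub-spider of $\sigma$ and the spider $\sigma'$, with the remaining $\ell$ legs of $\sigma$ as spare leaves). For general $r$, I would argue by induction on $r$, peeling one vertex off an $r$-set and using the type structure of Step~1 to control the induced $(r-1)$-uniform colouring; this reduces to finding a large monochromatic region. For that, apply hypergraph Ramsey inside $T$ (whose number of leaves is astronomically larger than any Ramsey number we need, as $h\ll d$) to get a large monochromatic clique, whose Steiner tree is a large monochromatic shape; if that shape is `wide' it already contains two disjoint isomorphic $\ell$-leaf subtrees plus spare leaves, while if it is `narrow' (essentially a spider hanging from a single vertex) one recurses into a deep child-subtree, or falls back on the level-pigeonhole of the $r=2$ case, choosing the common vertex $w$ — as there — so that the crossing $r$-sets also land in colour $\gamma$.

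\textbf{Main obstacle.} The crux is precisely the last point. The $r$-sets that straddle the two isomorphic copies $\sigma,\sigma'$ realise types that are neither internal to a single copy nor (for $r\ge3$) of spider form, and a careless placement of $w$ — its level, and its distance to the two copies — produces straddling types whose $\kappa$-colour differs from the internal one; moreover the `narrow' case genuinely occurs for some colourings and must be handled without looping forever. Reconciling all ways of splitting an $r$-set between the two copies simultaneously, while keeping the region large enough, is what forces the use of the ordered structure (so that `type' is well defined and there are only finitely many of them, cf.\ Step~1) and a careful multi-level, inductive bookkeeping; this is the technical heart of the lemma.
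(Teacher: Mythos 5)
Your Step 1 (reduction to a type-colouring via \Cref{cor:ramsey-trees}) is exactly the paper's first move, and your $r=2$ argument (pigeonhole on meeting levels plus a double spider) is a correct, if different, way to finish that special case. But for $r\ge 3$ the proposal has a genuine gap, and you identify it yourself: you never show how to make the $r$-sets that straddle the two isomorphic copies have the same colour as the internal ones, nor how to terminate the ``narrow shape'' recursion. The sketch ``induct on $r$, apply hypergraph Ramsey, distinguish wide from narrow Steiner trees'' is not a proof of the statement; the case you concede is ``the technical heart of the lemma'' is precisely what remains to be done, so the argument is incomplete where it matters.

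The paper's resolution avoids your obstacle entirely, and it is worth seeing why. After the type-reduction one passes to a \emph{binary} type-uniform subtree $T'$, takes the leftmost root-to-leaf path $v_0\ldots v_h$, and lets $u_i$ be the \emph{last} leaf of the subtree rooted at $v_i$. Since $\ell,r,s\ll h$, ordinary Ramsey applied to $\{u_0,\ldots,u_h\}$ gives $2\ell+1$ leaves $w_1,\ldots,w_{2\ell+1}$ all of whose $r$-subsets are, say, red. Now let $S$ be the subtree spanned by $w_1,\ldots,w_\ell$, with root $x$ and parent $y$, and let $S'$ be the isomorphic copy of $S$ under the sibling $x'$ of $x$ in $T'$, with leaves $w_1',\ldots,w_\ell'$. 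The key point is that $\{w_1,\ldots,w_\ell,w_{\ell+2},\ldots,w_{2\ell+1}\}$ and $\{w_1',\ldots,w_\ell',w_{\ell+2},\ldots,w_{2\ell+1}\}$ correspond to isomorphic ordered subtrees (discarding $w_{\ell+1}$, which may lie below $y$), so \emph{every} straddling $r$-set involving $Z=\{w_1',\ldots,w_\ell'\}$ and $Y=\{w_{\ell+2},\ldots,w_{2\ell+1}\}$ inherits the colour red from its isomorphic counterpart involving $X=\{w_1,\ldots,w_\ell\}$, by type-uniformity. In other words, one does not need to engineer a region in which all occurring types share one colour (your Step 2 target, which is what makes the cross-copy types problematic); one Ramsey-monochromatic set along the spine plus a sibling translation handles all straddling types simultaneously. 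Incorporating this idea would close the gap; as written, the proposal proves the lemma only for $r=2$.
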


	\begin{proof}
		Fix an $s$-colouring of $r$-subsets of $L(T)$.
		By \Cref{cor:ramsey-trees} applied with $k=1$, $d = 2$ and $F = T$, there is a binary subtree $T' \subseteq T$ of height $h$, such that $r$-sets of leaves that correspond to isomorphic ordered subtrees of $T$ have the same colour. 
		Let $v_h$ be the first vertex in $L(T')$ (according to the ordering of $L(T')$ that $T'$ is equipped with), and let $v_0 \ldots v_h$ be the path from the path from the root of $T'$ to $v_h$. 
		Let $u_i$ be the last leaf in the tree rooted at $v_i$ for $i \in \{0, \ldots, h\}$. (See \Cref{fig:ramsey-tree} for an illustration of the vertices $v_i$ and $u_i$.)
		By Ramsey's theorem there is a subset $A \subseteq \{u_0, \ldots, u_h\}$ of size $2\ell+1$ whose $r$-subsets all have the same colour, say red. Let $w_1, \ldots, w_{2\ell + 1}$ be the vertices in $A$, in order. 
		\begin{figure}[h]
			\centering
			\includegraphics[scale = .9]{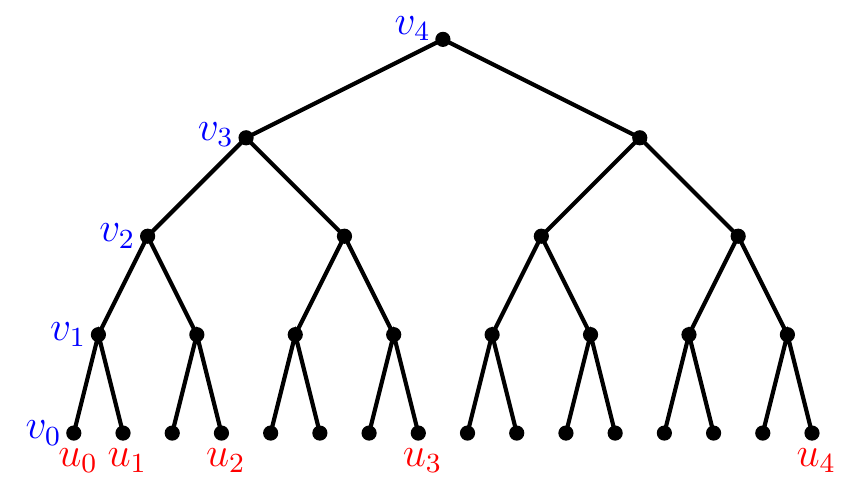}
			\hspace{.5cm}
			\includegraphics[scale = .9]{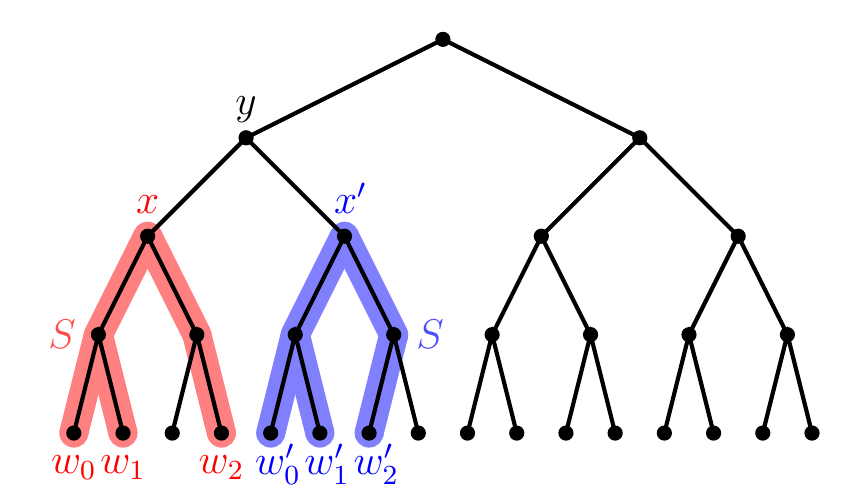}
			\vspace{-.3cm}
			\caption{An illustration of the sets $\{v_0, \ldots, v_h\}$, $\{u_0, \ldots, u_h\}$, $\{w_1, \ldots, w_{\ell}\}$ and $\{w_1', \ldots, w_h'\}$.}
			\label{fig:ramsey-tree}
		\end{figure}

		Let $S$ be the minimal ordered subtree of $T'$ whose leaves are $\{w_1, \ldots, w_{\ell}\}$. Denote its root by $x$ and denote the father of $x$ by $y$. Let $x'$ be any child of $y$ other than $x$, let $S'$ be a copy of $S$ rooted at $x'$, and denote the leaves of $S'$ by $\{w_1', \ldots, w_{\ell}'\}$. (See \Cref{fig:ramsey-tree} for an illustartion of the vertices $w_i$ and $w_i'$.) 

		Note that $\{w_1, \ldots, w_{\ell}, w_{\ell+2}, \ldots, w_{2\ell + 1}\}$ and $\{w_1', \ldots, w_{\ell}', w_{\ell+2}, \ldots, w_{2\ell + 1}\}$ correspond to isomorphic trees (we omitted $w_{\ell+1}$ as it may be a descendant of $y$, whereas $w_{\ell+1}, \ldots, w_{2\ell+1}$ are not), so by choice of $w_1, \ldots, w_{2\ell + 1}$ and $T'$ both sets are red cliques. Take $X = \{w_1, \ldots, w_{\ell}\}$, $Y = \{w_{\ell+2}, \ldots, w_{2\ell + 1}\}$ and $Z = \{w_1', \ldots, w_{\ell}'\}$. These sets satisfy the requirements of the lemma.
	\end{proof}

	It remains to prove \Cref{lem:ramsey-trees}, which we do in the following subsection.
	Both proofs will use Ramsey's theorem, stated here.
	($K_t^{(r)}$ is the complete $r$-uniform hypergraph on $t$ vertices.)

	\subsection{Proof of Lemma~\ref{lem:ramsey-trees}}

		\begin{proof} 
			We prove the lemma by induction on $h$ and $k$. 
			The initial case is $h=k= 1$, namely $F$ and $S$ are both stars, with $D$ and $r$ leaves, respectively, and so copies of $S$ correspond to $r$-subsets of leaves of $F$. This case thus follows from Ramsey's theorem.

			Fix $(h,k)\neq(1,1)$ and suppose that the lemma holds for all $(h',k')$ with either $h'=h, k'<k$ or with $h'<h, k' \geq 1$. Let $\chi$ be an $s$-colouring of the copies of $S$ in $F$.

			Suppose first that $k = 1$ and $h \ge 2$, so $F$ and $S$ are trees of height $h$.  Denote the root of $F$ by $r_F$ and the root of $S$ by $r_S$.
			Let $F' = F \setminus \{r_F\}$, let $S' = S \setminus \{r_S\}$, and let $\ell$ be the number of components in $S'$; so $1 \le \ell \le r$ as $S$ has $r$ leaves. Consider the $s$-colouring $\chi'$ of copies of $S'$ in $F'$ defined as follows: if $S''$ is a copy of $S'$ in $F'$, consider the copy of $S$ obtained by joining $r_F$ to the roots of the components of $S'$ (since both $S'$ and $F'$ have height $h-1$, the roots of components of $S'$ are roots of components of $F'$, so the copy of $S$ formed in this way is indeed a subtree of $F$), and colour $S''$ by the colour of this copy of $S$ in $F$ according to $\chi$. 

			Let $\alpha$ be such that $d, r, h, s \ll \alpha \ll D$, and set $t = \binom{\alpha}{\ell}$. 
			By choice of parameters, there is a sequence $d_0, \ldots, d_t$ such that $d_0 = D$, $d_t = d$, and $r, h, s, d_t \ll d_{t-1} \ll \ldots \ll d_0$.
			Let $A$ be a set of $\alpha$ children of $r_F$ in $F$, and let $A_1, \ldots, A_{t}$ be any enumeration of its $\ell$-subsets. 
			Let $T_a$ be the subtree of $F$ rooted at $a$.
			We claim that there exist sequences $T_a = T_a^{(0)} \supseteq T_a^{(1)} \supseteq \ldots \supseteq T_a^{(t)}$ for $a \in A$, where $T_a^{(i)}$ is a $d_i$-ary ordered tree of height $h-1$ with the following property: all copies of $S'$ in the forest $\bigcup_{a \in A_i} T_a^{(i)}$ have the same colour. To see this, use apply the induction hypothesis with $h' = h-1$ and $k' = \ell$, to $\bigcup_{a \in A_i} T_a^{(i-1)}$, letting $\bigcup_{a \in A_i} T_a^{(i)}$ be the resulting subforest, and take $T_a^{(i)} \subseteq T_a^{(i-1)}$ to be an arbitrary $d_i$-ary ordered subforest of height $h-1$ for $a \in A \setminus A_i$. The subtrees $T_a^{(i)}$ satisfy the requirements. Denote by $c_i$ the colour of any copy of $S'$ in $\bigcup_{a \in A_i} T_a^{(i)}$.

			Consider the auxiliary colouring of the complete $\ell$-graph on $A$, where the edge $A_i$ has colour $c_i$. Then by Ramsey's theorem and choice of $\alpha$ there is a monochromatic subset $A' \subseteq A$ of size $d$; say the common colour is $c$. Let $F''$ be the $d$-ary tree of height $h$ obtained by reattaching the root of $F$ to the subtrees $T_a^{(t)}$ with $a \in A'$. Then $F'' \subseteq F$ is an ordered $d$-ary tree of height $h$ whose copies of $S$ all have colour $c$, as required.

			Now suppose $k \ge 2$.
			Let $T$ be the first tree in $F$, let $F' = F \setminus T$, let $R$ be the first tree in $S$ and let $S' = S \setminus R$. Choose $d'$ satisfying $d, r, h, k, s \ll d' \ll D$, and let $T'$ be any $d'$-ary subtree of $T$ of height $h$. Enumerate the copies of $R$ in $T'$ by $R_1, \ldots, R_t$. By choice of $d'$ and $t$ there exist $d_0, \ldots, d_t$ such that $d_0 = D$, $d_t = d$, and $r, s, h, k, d_t \ll d_{t-1} \ll \ldots \ll d_0$. 
			By induction, there exist subforests $F' \supseteq F_1 \supseteq \ldots \supseteq F_t$, such that $F_i$ is a $d_i$-ary forest of height $h$ with $k-1$ components, whose copies of $S$ in $T' \cup F_i$ that contain $R_i$ all have the same colour, denoted by $c_i$.
			Now consider the colouring of copies of $R$ in $T'$, where $R_i$ is coloured $c_i$. By induction, $T'$ contains a $d$-ary subtree $T''$ of height $h$ whose copies of $R$ all have the same colour, say red. Then the forest $T'' \cup F_t$ is a $d$-ary forest of height $h$ with $k$ components whose copies of $S$ are red.
		\end{proof}

\section{Hypergraphs with hanging trees} \label{sec:hanging-trees}

	In this section we prove auxiliary results that will help us in the proof of the main result, given in \Cref{sec:main-proof}. In most of them we are given a bounded degree hypergraph $\G$, where for each vertex $u$ there is a $D$-ary ordered tree $T(u)$ of height $h$, rooted at $u$, for a large constant $D$. The results then tell us that we can `trim' each tree, namely replace it by a $d$-ary subtree $T'(u)$ of height $h$, with useful properties, provided that $d\ll D$.

	The following easy lemma allows us to assume trim two possibly intersecting trees so that the resulting trees are vertex-disjoint.

	\begin{lem} \label{lem:untangling-trees}
		Let $d, h \ll D$.
		Let $T_1, T_2$ be $D$-ary ordered trees of height $h$ with distinct roots. Then there exist vertex-disjoint $d$-ary subtrees of height $h$, denoted by $T_1', T_2'$, such that $T_1' \subseteq T_1$, $T_2' \subseteq T_2$. 
	\end{lem}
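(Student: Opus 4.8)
The plan is to build the two subtrees one after the other, greedily from the root, exploiting the fact that a $d$-ary tree of height $h$ has only $1 + d + \dots + d^h \le (h+1)d^h = O_{d,h}(1)$ vertices, so that whenever $D$ is large we always have enough children available to dodge a bounded set of ``forbidden'' vertices.

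First I would record the following elementary observation: if $T$ is a $D$-ary ordered tree of height $h$ and $W$ is a set of vertices with $\mathrm{root}(T)\notin W$ and $|W|\le D-d$, then $T$ contains a $d$-ary ordered subtree $T'\subseteq T$ of height $h$ with $V(T')\cap W=\emptyset$. To see this, construct $T'$ top-down starting from $\mathrm{root}(T)$: at each vertex $v$ already placed in $T'$ and lying at level $<h$, among the $D$ children of $v$ in $T$ choose any $d$ that do not belong to $W$ (possible since at most $|W|\le D-d$ of them do). This yields a $d$-ary tree of height exactly $h$ whose only vertices are $\mathrm{root}(T)$ and chosen children, none of which lies in $W$; and the orderings $\sigma_1,\dots,\sigma_h$ of $T$ restrict to orderings of the levels of $T'$ witnessing that $T'$ is again an ordered tree.

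Now let $r_1,r_2$ be the roots of $T_1,T_2$, so $r_1\neq r_2$ by hypothesis. Apply the observation to $T_1$ with $W=\{r_2\}$ (valid since $r_1\neq r_2$ and $1\le D-d$) to obtain a $d$-ary ordered subtree $T_1'\subseteq T_1$ of height $h$ with $r_2\notin V(T_1')$. Since $T_1'$ is $d$-ary of height $h$, we have $|V(T_1')|\le (h+1)d^h$, which is at most $D-d$ because $d,h\ll D$. Apply the observation a second time, now to $T_2$ with $W=V(T_1')$: this is legitimate precisely because $r_2=\mathrm{root}(T_2)\notin V(T_1')$ (from the first step) and $|V(T_1')|\le D-d$. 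It produces a $d$-ary ordered subtree $T_2'\subseteq T_2$ of height $h$ with $V(T_2')\cap V(T_1')=\emptyset$, and $T_1',T_2'$ are the required vertex-disjoint subtrees.

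I do not expect any genuine obstacle here; the only point requiring a little care is the order of operations and the two instances of the ``$\mathrm{root}\notin W$'' hypothesis: one must delete $r_2$ when trimming $T_1$ in the first step, exactly so that the second step --- whose output $T_2'$ is forced to contain $r_2$ --- is never asked to avoid it. Everything else reduces to the crude counting bound $|V(T_1')|=O_{d,h}(1)\ll D$.
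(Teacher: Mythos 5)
Your proof is correct, and it takes a genuinely different route from the paper. The paper trims both trees \emph{simultaneously} by a probabilistic argument: each non-root vertex of $V(T_1)\cup V(T_2)$ is independently assigned a label in $\{1,2\}$, Chernoff plus a union bound over all $O(D^h)$ non-leaves shows that with positive probability every non-leaf of $T_i$ keeps at least $D/3$ children labelled $i$, and one then extracts a $d$-ary height-$h$ subtree of $T_i$ inside the label-$i$ vertices (avoiding the other root). You instead argue \emph{sequentially and deterministically}: trim $T_1$ greedily to a $d$-ary $T_1'$ avoiding $r_2$, observe that $|V(T_1')|\le (h+1)d^h\le D-d$ since $d,h\ll D$, and then trim $T_2$ greedily avoiding all of $V(T_1')$; the one delicate point --- that the second trimming is never asked to avoid its own root $r_2$ --- is exactly what the first step's exclusion of $r_2$ guarantees, and you handle it correctly (note also that $r_1\in V(T_1')$, so $T_2'$ automatically avoids $r_1$, and both trimmed trees keep their original roots, as the later applications in the paper implicitly require). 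Your approach is more elementary (no concentration inequality, no union bound) and gives an explicit, cleaner threshold on $D$ of order $d^h$, versus the paper's requirement that $2D^he^{-D/36}<1$; the paper's symmetric random-splitting has the mild advantage of treating the two overlapping trees in one shot and of generalising directly to trimming several trees at once, though the paper never needs more than the pairwise statement (multiple trees are handled afterwards via an edge-colouring in the next lemma), so nothing is lost by your argument.
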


	\begin{proof} 
		Assign to each non-root vertex $u$ of $V(T_1) \cup V(T_2)$ a random variable $\tau(u)$ which is chosen to be either $1$ or $2$ uniformly at random, independently of other elements. We claim that with positive probability, for every non-leaf in $T_i$, at least $D/3$ of its children are assigned $i$, for $i \in [2]$. Indeed, the probability that this fails for a particular non-leaf in either $T_1$ or $T_2$ is at most $e^{-D/36}$, by Chernoff's bounds, and thus by a union bound the probability that some non-leaf in $T_i$ has fewer than $D/3$ children that were assigned $i$, for $i \in [2]$, is at most $2D^h e^{-D/36} < 1$ (using $D \gg h$), as claimed. It follows that there exist subtrees $T_i' \subseteq T_i$, for $i \in [2]$, such that $T_i'$ is a $d$-ary tree of height $h$ and its non-root vertices were assigned $i$ (and are not the root of $T_{3-i}$; using $D \gg d$). The trees $T_1', T_2'$ are vertex-disjoint (using that the roots of $T_1$ and $T_2$ are distinct), as required.
	\end{proof}

	The next lemma follows from the previous one by applying it to the setting mentioned at the beginning of the section.

	\begin{lem} \label{lem:trimming-order}
		Let $d, h, r, \Delta \ll D$. Let $G$ be a graph with maximum degree $\Delta$. Given a collection of trees $\{T(u)\,|\,u\in V(G)\}$, which are $D$-ary ordered trees of height $h$, there exists a collection of $d$-ary subtrees $\{T'(u) \subseteq T(u)\,|\,u\in V(G)\}$ of height $h$ such that for every edge $uv$ in $G$ the trees $T'(u)$ and $T'(v)$ are vertex-disjoint. 
	\end{lem}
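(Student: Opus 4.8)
The plan is to reduce to \Cref{lem:untangling-trees} by processing the edges of $G$ in batches that form matchings, so that the pairwise untangling operations within a batch can be carried out simultaneously and independently. Concretely, since $G$ is simple with maximum degree $\Delta$, Vizing's theorem gives a proper edge-colouring of $G$ with colour classes $M_1, \ldots, M_{\Delta+1}$, each of which is a matching. Because $\Delta, h, d \ll D$, we may fix a chain of constants $D = D_0 \gg D_1 \gg \cdots \gg D_{\Delta+1} = d$ with $D_i, h \ll D_{i-1}$ for each $i \in [\Delta+1]$ (the parameter $r$ in the statement plays no role and can be ignored).

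Next I would construct collections $\{T_i(u) : u \in V(G)\}$ of $D_i$-ary ordered subtrees of height $h$ for $i \in \{0, \ldots, \Delta+1\}$, by induction on $i$. Set $T_0(u) = T(u)$. Given the collection $\{T_{i-1}(u)\}$, consider the matching $M_i$: for each edge $uv \in M_i$, apply \Cref{lem:untangling-trees} to $T_{i-1}(u)$ and $T_{i-1}(v)$ (valid since $D_i, h \ll D_{i-1}$) to obtain vertex-disjoint $D_i$-ary subtrees of height $h$, which we take as $T_i(u)$ and $T_i(v)$; for each vertex $u$ not covered by $M_i$, let $T_i(u)$ be an arbitrary $D_i$-ary subtree of $T_{i-1}(u)$ of height $h$. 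Since $M_i$ is a matching, the trees being modified in different edges are disjoint as index sets of vertices of $G$, so these choices do not conflict and the construction is well defined.

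Finally I would set $T'(u) = T_{\Delta+1}(u)$, which is a $d$-ary ordered subtree of $T(u)$ of height $h$ since $D_{\Delta+1} = d$ and $T_{\Delta+1}(u) \subseteq T_{\Delta}(u) \subseteq \cdots \subseteq T_0(u) = T(u)$. To check disjointness, let $uv$ be any edge of $G$, and say $uv \in M_i$. At step $i$ we ensured $T_i(u)$ and $T_i(v)$ are vertex-disjoint; every subsequent step only passes to subtrees, so $T'(u) \subseteq T_i(u)$ and $T'(v) \subseteq T_i(v)$ remain vertex-disjoint, as required.

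There is no real obstacle here beyond the bookkeeping of how the arity parameter degrades: each of the $\Delta+1$ rounds costs one step down the chain $D_0 \gg \cdots \gg D_{\Delta+1}$, and the only thing to verify is that $\Delta+1$ rounds suffice and that $\Delta, h, d \ll D$ leaves enough room for such a chain, which it does. (If one prefers to avoid invoking Vizing's theorem, one can instead process edges one at a time using the trivial bound that the edge set decomposes into at most $2\Delta - 1$ matchings, or even handle a single edge per round at the cost of a longer chain of length $e(G)$; the matching decomposition is only a convenience to keep the number of rounds bounded in terms of $\Delta$.)
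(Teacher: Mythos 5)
Your proposal is correct and matches the paper's own argument: Vizing's theorem to split $E(G)$ into $\Delta+1$ matchings, a decreasing chain of arities, and one application of \Cref{lem:untangling-trees} per edge of each matching, with disjointness preserved under passing to subtrees. No issues.
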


	\begin{proof}
		Let $d_0, \ldots, d_{\Delta + 1}$ be such that $d_0 = D$, $d_{\Delta + 1} = d$, and $h, r, \Delta, d_{\Delta+1} \ll d_{\Delta} \ll \ldots \ll d_0$.
		
		By Vizing's theorem, there is a proper colouring of the edges of $G$ by $\Delta+1$ colours. In other words, there is a collection of pairwise edge-disjoint matchings $M_1, \ldots, M_{\Delta+1}$ that covers the edges of $G$. 
		We claim that for every vertex $u$ there exists a sequence $T(u) = T_0(u) \supseteq T_1(u) \ldots \supseteq T_{\Delta+1}(u)$ such that $T_i(u)$ is an ordered $d_i$-ary tree of height $h$; and $T_i(u)$ and $T_i(v)$ are vertex-disjoint for every $uv \in M_i$.
		To see this, given trees $T_{i-1}(u)$ as above, apply \Cref{lem:untangling-trees} to the trees $T_{i-1}(u)$ and $T_{i-1}(v)$ for every edge $uv$ in $M_i$, thus yielding trees $T_i(u)$ and $T_i(v)$ that satisfies the requirements. For any vertex $u$ which is not contained in some edge in $M_i$, let $T_i(u)$ be an arbitrary $d_i$-ary subtree of $T_{i-1}(u)$ of height $h$. 
		Take $T'(u) = T_{\Delta + 1}(u)$ for $u \in V(G)$.
	\end{proof}

	In the proof of our main result, we will consider $s$-colourings of $r$-graphs of the form $K_r(G^k[K_B])$ (whose edges are the $r$-cliques in the complete blow-up of $G^k$ where each vertex is replace by a $B$-clique), where $G$ is an expander with bounded degree. The following lemma allows us to assume, by switching to a suitable copy of $K_r(G^k[K_b])$ for some $b\ll B$, that all edges of the same `type' (namely, they have the same intersection size with the blob corresponding to $u$, for every vertex $u$ in $G$) have the same colour.

	\begin{lem} \label{lem:monochromatic-blowup}
		Let $\Delta, b, r, s \ll B$.
		Let $G$ be a graph with maximum degree $\Delta$. Consider the hypergraph $\HH$, with vertices $V(\HH) = \bigcup_{u \in V(G)} B(u)$, where $B(u)$ is a set of $B$ elements and the sets $B(u)$ are pairwise disjoint, and edges 
		\begin{equation*}
			E(\HH) = \{(v_1, \ldots, v_r) : v_i \in B(u_i) \text{ and $\{u_1, \ldots, u_r\}$ is a clique in $G$}\}.
		\end{equation*}
		Then for every $s$-colouring of $\HH$, there exist subsets $B'(u) \subseteq B(u)$ of size $b$, such that in the subhypergraph $\HH'$ of $\HH$, induced by $\bigcup_{u \in V(G)} B'(u)$, every two edges $e$ and $f$ in $\HH'$, with $|e \cap B'(u)| = |f \cap B'(u)|$ for every $u \in V(G)$, have the same colour.
	\end{lem}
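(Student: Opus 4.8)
Every edge $e$ of $\HH$ is an $r$-set $\{v_1,\dots,v_r\}$ with $v_i\in B(u_i)$ and $\{u_1,\dots,u_r\}$ a clique of $G$; since the $B(u)$ are pairwise disjoint, the vector $\big(|e\cap B(u)|\big)_{u\in V(G)}$ is equivalent to the data of a clique $K(e)\subseteq V(G)$ with $1\le|K(e)|\le r$ (the set of blobs that $e$ meets) together with a composition $\bar a(e)$ of $r$ into $|K(e)|$ positive parts, indexed by $K(e)$. Thus, after passing to sub-blobs $B'(u)$, two edges of $\HH'$ have the same type in the sense of the lemma exactly when they determine the same pair $(K,\bar a)$, and the goal is to choose $B'(u)$ of size $b$ so that for every such pair all type-$(K,\bar a)$ edges of $\HH'$ are monochromatic. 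A fixed clique carries at most $2^{r-1}$ compositions, so only a bounded number of types are incident to any one vertex of $G$.

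First I would organise the cliques into a bounded number of \emph{clique matchings}. Let $\mathcal C$ be the set of all cliques of $G$ of size between $1$ and $r$, and let $\Gamma$ be the graph on vertex set $\mathcal C$ in which two cliques are joined iff they share a vertex of $G$. Since $G$ has maximum degree $\Delta$, each vertex of $G$ lies in at most $M=M(\Delta,r)$ members of $\mathcal C$, so $\Delta(\Gamma)\le rM$ and $\Gamma$ is properly colourable with $q:=rM+1$ colours, where $q=q(\Delta,r)$. Fix such a colouring with classes $\mathcal C_1,\dots,\mathcal C_q$; each $\mathcal C_i$ consists of pairwise vertex-disjoint cliques, so each vertex of $G$ lies in at most one member of $\mathcal C_i$. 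Fix also a chain of constants $b=b_q,b_{q-1},\dots,b_0=B$ with $r,s,b_i\ll b_{i-1}$ for every $i$ (possible since $\Delta,b,r,s\ll B$ and $q$ is bounded).

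The main loop processes the classes one at a time, producing $B(u)=B_0(u)\supseteq B_1(u)\supseteq\cdots\supseteq B_q(u)$ with $|B_i(u)|=b_i$, maintaining the invariant: for every $K\in\mathcal C_1\cup\cdots\cup\mathcal C_i$ and every composition $\bar a$ of $r$ over $K$, all edges $e$ of $\HH$ with $e\subseteq\bigcup_u B_i(u)$ and type $(K,\bar a)$ have one colour. To pass from $i-1$ to $i$: for $u$ in no member of $\mathcal C_i$ take $B_i(u)$ an arbitrary $b_i$-subset of $B_{i-1}(u)$; for each $K=\{u_1,\dots,u_j\}\in\mathcal C_i$ (with $j\le r$) process its compositions one by one, each time shrinking the blobs of $K$ by an application of the product Ramsey theorem --- for any $s$-colouring of $\binom{X_1}{a_1}\times\cdots\times\binom{X_j}{a_j}$ with the $X_l$ large there are $Y_l\subseteq X_l$ of prescribed size for which $\binom{Y_1}{a_1}\times\cdots\times\binom{Y_j}{a_j}$ is monochromatic; this follows from Ramsey's theorem by induction on the number of factors --- applied to the colouring $(A_1,\dots,A_j)\mapsto\chi(A_1\cup\cdots\cup A_j)$. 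Because the cliques in $\mathcal C_i$ are vertex-disjoint, these shrinkings live on disjoint blobs and do not interfere, and the total size drop in step $i$ is a $2^{r-1}$-fold iterate of a product-Ramsey number in $r,s,b_i$, hence bounded; this is why the chain $b_q,\dots,b_0$ can be fixed in advance. The invariant is preserved since passing to subsets cannot break monochromaticity. Finally set $B'(u)=B_q(u)$: given type-$(K,\bar a)$ edges $e,f$ of $\HH'$, the clique $K$ lies in some $\mathcal C_i$, and $B'(u)\subseteq B_i(u)$ for all $u$, so the invariant forces $\chi(e)=\chi(f)$.

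The main obstacles are twofold, and neither is conceptually deep: the bookkeeping that every quantity introduced ($M$, $q$, the number of compositions, the per-step size drop) is bounded purely in terms of $\Delta,b,r,s$, so that $B$ may legitimately be chosen first; and the product Ramsey theorem, the only ingredient here beyond the plain Ramsey theorem already quoted, which is standard but must be stated (or derived) with care. The two real ideas --- reducing the type condition to the pair $(K,\bar a)$, and using the colour classes $\mathcal C_i$ to reduce to independent single-clique problems --- are what make the argument go through, and everything after them is routine.
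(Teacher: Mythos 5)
Your proof is correct and is essentially the paper's own argument: the paper obtains this lemma as the $h=1$ (star) case of \Cref{lem:trimming-ramsey}, whose proof likewise properly colours the line graph of the hypergraph of cliques of size at most $r$ (bounded degree in terms of $\Delta,r$) to split them into boundedly many families of vertex-disjoint cliques, fixes a decreasing chain of blob sizes, and shrinks the blobs one family at a time via a Ramsey statement for types (\Cref{cor:ramsey-trees}), which for stars is exactly the product Ramsey theorem you invoke. The only difference is presentational: you unfold the $h=1$ case directly and self-containedly instead of citing the general ordered-tree machinery.
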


	\begin{proof}
		This follows from \Cref{lem:trimming-ramsey} below, applied with $h = 1$ (by letting $T(u)$ be a star with leaves $B(u)$ and root $u$, for $u \in V(G)$, and assuming that the sets $B(u)$ do not intersect $V(G)$).
	\end{proof}			

	In fact, we will need a stronger version of \Cref{lem:monochromatic-blowup}, see \Cref{lem:trimming-ramsey} below, which is applicable for graphs with ordered trees hanging from each vertex. In order to prove the latter lemma, we first prove the following lemma.
	It says that given a hypergraph $\G$ with bounded degree and bounded edge size, and a collection of ordered $D$-ary trees $T(u)$ of height $h$ hanging from each vertex $u$, the following holds. Given any $s$-colouring of $r$-sets of leaves whose roots form an edge of $\G$, the trees $T(u)$ can be `trimmed' so that in the resulting structure any two $r$-sets of leaves of the same `type' have the same colour.

	\begin{lem} \label{lem:trimming-ramsey}
		Fix $d, r, h, \Delta \ll D$. Let $\G$ be a hypergraph with  $\Delta(\G)\leq \Delta$ where every edge has size at most $r$. Suppose we are given a collection of $D$-ary trees $\{T(u) \,|\,u\in V(\G)\}$ of height $h$ such that trees corresponding to vertices of an edge of $\G$ are vertex-disjoint. Let $\HH$ be the $r$-uniform hypergraph on vertices $\bigcup_{u}L(T(u))$ whose edges are as follows: for every edge $e$ in $\G$, any $r$ elements in $\bigcup_{u \in e}{L(T(u))}$ form an edge.  
		
		Then for every $s$-colouring of $\HH$, there exists a collection of  $d$-ary subtrees $\{T'(u)\,|\,T'(u)\subseteq T(u)\}$ of height $h$ such that  the following holds for all $\rho\leq r$. 
        For any sequence of ordered trees $S = (S_1, \dots, S_{\rho})$  of height at most $h$ and with $r$ leaves in total the following holds. For every $(u_1, \dots u_{\rho}) \in E(\G)$  there is a colour $c$ such that for all copies $S' = (S_1', \dots, S_{\rho}')$  of $S$ (in order, namely $S_i'$ is a copy of $S_i$) such that $S_i'\subseteq T'(u_i) $ and $L(S_i')\subseteq L( T'(u_i) ) $, the edge $L(S')$ has colour $c$.
	\end{lem}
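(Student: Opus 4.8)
The plan is to deduce \Cref{lem:trimming-ramsey} from the Ramsey-type result for ordered trees, \Cref{cor:ramsey-trees}, by processing one edge of $\G$ at a time. First I would fix an ordering $e_1, \ldots, e_M$ of the edges of $\G$, and set up a sequence of constants $D = D_0 \gg D_1 \gg \cdots \gg D_M = d$, chosen so that whenever we apply \Cref{cor:ramsey-trees} with at most $\rho \le r$ trees of arity $D_{i-1}$, we can pass to subtrees of arity $D_i$ (this is possible since $r, h, \Delta \ll D$, and the number of edges $M$ incident to the relevant vertices is bounded; in fact it is cleanest to use Vizing-type colouring of the `line graph' of $\G$ so that the edges fall into $O_{\Delta,r}(1)$ matchings and process one matching at a time, but a plain sequential argument also works with a long enough constant chain). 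Maintain the invariant that after step $i$, every vertex $u$ has a $D_i$-ary subtree $T_i(u) \subseteq T_{i-1}(u)$ of height $h$, and for every already-processed edge $e_j$ ($j \le i$) and every sequence of ordered trees $S$ as in the statement, all copies of $S$ inside the trees $\{T_i(u) : u \in e_j\}$ are monochromatic.

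The step from $i-1$ to $i$ is where \Cref{cor:ramsey-trees} is applied directly: let $e_i = \{u_1, \ldots, u_{\rho}\}$ (with $\rho = |e_i| \le r$), which by hypothesis have pairwise vertex-disjoint (hence vertex-disjoint $D_{i-1}$-ary) trees $T_{i-1}(u_1), \ldots, T_{i-1}(u_{\rho})$. Apply \Cref{cor:ramsey-trees} with these $\rho$ trees, the uniformity $r$, the height $h$, and the given $s$-colouring of $\HH$ restricted to the complete $r$-graph on $\bigcup_{j} L(T_{i-1}(u_j))$: this yields $D_i$-ary subtrees $T_i(u_j) \subseteq T_{i-1}(u_j)$ of height $h$ such that, for every sequence $S = (S_1, \ldots, S_{\rho})$ of ordered trees of height $\le h$ with $r$ leaves in total, all edges $L(S')$ over copies $S'$ of $S$ with $S_j' \subseteq T_i(u_j)$, $L(S_j') \subseteq L(T_i(u_j))$ have a single colour. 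For the vertices $u$ not in $e_i$, simply take $T_i(u)$ to be an arbitrary $D_i$-ary subtree of $T_{i-1}(u)$ of height $h$. Since passing to subtrees only shrinks the set of copies of any $S$, the monochromaticity established for earlier edges $e_j$ is preserved, so the invariant is maintained. After all $M$ steps, set $T'(u) = T_M(u)$; this is a $d$-ary subtree of $T(u)$ of height $h$, and the invariant at $i = M$ is exactly the conclusion of the lemma (the colour $c$ in the statement is the common colour furnished for the pair $(e, S)$ when $e$ was processed; one should note this is well-defined, i.e.\ independent of which later step we look at, because copies only disappear).

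The only genuinely non-trivial point — and the step I would be most careful about — is the bookkeeping of constants: the arity must not be allowed to collapse to something too small before all edges are processed. The naive sequential argument requires a chain of length $M$, and $M$ can be as large as $|V(\G)|$, so one cannot literally take $d, r, h, \Delta \ll D$ with $D$ a constant and expect a chain of $\Theta(|V(\G)|)$ strict $\ll$'s to work. The fix is to observe that when processing edge $e_i$ we only touch the trees of the $\le r$ vertices in $e_i$, and each vertex lies in at most $\binom{\Delta}{r-1} \le \Delta^{r-1}$ edges of $\G$ (since $\G$ has maximum degree $\Delta$ and uniformity $\le r$); hence, colouring $E(\G)$ so that edges sharing a vertex get different colours uses only $O_{\Delta, r}(1)$ colour classes $\mathcal{M}_1, \ldots, \mathcal{M}_q$, each a "matching" of edges with pairwise disjoint vertex sets. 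Processing all edges within one class $\mathcal{M}_p$ simultaneously (they involve disjoint trees, so the applications of \Cref{cor:ramsey-trees} do not interfere) drops the arity once per class, so the total number of drops is $q = O_{\Delta, r}(1)$, and a constant chain $D = D_0 \gg D_1 \gg \cdots \gg D_q = d$ suffices — which is exactly what the hypothesis $d, r, h, \Delta \ll D$ provides. With this organisation the proof is routine; everything else is just unwinding the definition of "corresponds to" and of the $\HH$-colouring, which \Cref{cor:ramsey-trees} already handles.
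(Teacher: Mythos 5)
Your proposal is correct and takes essentially the same approach as the paper: there, $E(\G)$ is likewise split into at most $r\Delta+1$ families of pairwise vertex-disjoint edges via a proper colouring of the line graph of $\G$, each family is processed in one round by applying \Cref{cor:ramsey-trees} to every edge in it (arbitrary subtrees for untouched vertices), and the arity drops once per family along a constant chain $D=d_0\gg\cdots\gg d_m=d$, exactly as in your "fix". (Minor quibble: since $\Delta(\G)\le\Delta$ already bounds the number of edges through a vertex, the count is $\Delta$ rather than $\binom{\Delta}{r-1}$, but either way the number of classes is $O_{\Delta,r}(1)$, so nothing changes.)
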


	\begin{proof}
		Let $m = r \Delta + 1$, and let $d_0, \ldots, d_m$ be such that $d_0 = D$, $d_m = d$ and $r, h, \Delta, d_m \ll d_{m-1} \ll \ldots \ll d_0$.

		Consider the line graph $\LG(\G)$ of $\G$, and observe that it has maximum degree at most $r \Delta = m - 1$, implying that there is a proper colouring of the vertices of $\LG(\G)$ with $m$ colours.
		In other words, there exist pairwise disjoint families $\F_1, \ldots, \F_m$ of matchings (i.e.\ pairwise vertex-disjoint edges) that cover all edges in $\G$.

		We claim that for every vertex $u$ in $\G$, there exist subtrees $T(u) = T_0(u) \supseteq \ldots \supseteq T_m(u)$ as follows: $T_i(u)$ is a $d_i$-ary tree of height $h$; for every edge $(u_1, \ldots, u_{\rho})$ in $\F_i$, and for every sequence of trees $S = (S_1, \ldots, S_{\rho})$ as in the statement of the lemma, all edges of $\HH$ of form $L(S')$, where $S' = (S_1', \ldots, S_{\rho'})$ is a copy of $S$ (in order) with $S_j' \subseteq T_i(u_j)$ and $L(S_j') \subseteq L(T_i(u_j))$, have the same colour.
		To see this, given trees $T_{i-1}(u)$ with these properties, apply \Cref{cor:ramsey-trees} to each edge $e$ in $\F_i$ to obtain trees $T_i(u)$ for $u \in e$ with the required properties; for any vertex $u$ which is not contained in an edge in $\F_i$, let $T_i(u)$ be an arbitrary $d_i$-ary subtree of $T_{i-1}(u)$ of height $h$. 
		Set $T'(u) = T_m(u)$.
	\end{proof}

\section{Proof of the main result} \label{sec:main-proof}

	In this section we prove our main result, \Cref{thm:main}. To get started, we recall some notation.
	For a graph $G$, denote by $G^-$ its $1$-subdivision (the graph obtained by replacing each edge of $G$ by a path of length $2$, whose middle vertex is new), and recall that $K_r(G)$ is the $r$-graph whose edges are $r$-cliques in $G$, and $G[K_r]$ is the blow-up of $G$ where every vertex is replaced by a $r$-clique. Recall also that $\family_{n, d, \ell}$ is the family of $n$-vertex graphs with maximum degree $d$ that can be obtained from a singleton by successively either adding a vertex of degree $1$ or connecting two vertices by a path of length at least $\ell$ whose interior is new. 

	The main ingredient in the proof of \Cref{thm:main} is the following result, which allows us to find a monochromatic copy of $\HH$ in a hypergraph $\G$ obtained by modifying a bounded degree expander, for every $\HH$ of the form $\HH = K_r(Q^-[K_t])$, where $Q$ is a bipartite graph in $\family_{n, D, \ell}$ with $\ell = \Omega(\log n)$.

	\begin{thm} \label{thm:main-auxiliary}
		Let $r \le t, s, D \ll \eps^{-1}, \Delta \ll k \ll B \ll \alpha$. The following holds for every integer $n$. Let $Q$ be a bipartite graph in $\family_{n, D, 2\log(\alpha n)}$, and let $G$ be an $\eps$-expander on $\alpha n$ vertices with maximum degree at most $\Delta$.
		Then in every $s$-colouring of $K_r(G^{k}[K_B])$ there is a monochromatic copy of $K_r(Q^-[K_{t}])$.
	\end{thm}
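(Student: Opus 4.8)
The plan is to implement the strategy sketched in \Cref{sec:overview}. Write $\G = K_r(G^k[K_B])$ and, for $u \in V(G)$, let $B(u)$ be the $B$-clique of $G^k[K_B]$ indexed by $u$, so $V(\G) = \bigcup_u B(u)$; fix an $s$-colouring of $\G$. The first point is that $Q^-$, the $1$-subdivision of $Q$, is a bipartite graph on $n' = \Theta(n)$ vertices lying in $\family_{n', D, 2\log(\alpha' n')}$ with $\alpha' = \alpha n / n' = \Theta(\alpha)$: members of $\family_{n, D, 2\log(\alpha n)}$ use at most $n/\log(\alpha n)$ path operations, hence have only $O(n)$ edges, so $|Q^-| = |Q| + e(Q) = O(n)$; and subdividing once turns a pendant edge into two pendant edges and a length-$L$ path operation into a length-$2L$ one, so the defining constraints of $\family$ are preserved (up to adjusting $\ell$ by a constant factor). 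Next, \Cref{lem:monochromatic-blowup} applied to $G^k$ produces subcliques $B'(u) \subseteq B(u)$ of size $b$, with $r,s,D,\Delta \ll b \ll B$, such that any two edges of $K_r(G^k[K_b])$ inside $\bigcup_u B'(u)$ that meet every $B'(u)$ in the same number of vertices get the same colour. It therefore suffices to produce, in the correspondingly type-coloured $K_r(G^k)$, a monochromatic homomorphic copy of $K_r(Q^-[K_t])$ in which no vertex of $G^k$ is the image of more than $b$ vertices, since such a copy lifts to a monochromatic copy inside $\G$ by sending the copies of each vertex to distinct vertices of the relevant $B'(\cdot)$.

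To find this homomorphic copy I iterate over levels $j = 0, 1, \dots, h$, where $h$ is a constant with $r,t,s,D \ll h \ll \eps^{-1}$, maintaining: a set $V_j \subseteq V(G)$ with $|V_j| \ge |V(G)|/2^{j}$; for each $u \in V_j$ an ordered $d_j$-ary tree $T_j(u)$ of height $j$ rooted at $u$ with vertices in $V(G)$, where $d_h \ll d_{h-1} \ll \dots \ll d_0$; a \emph{type-monochromaticity} property, obtained by combining \Cref{lem:trimming-order} and \Cref{lem:trimming-ramsey}, stating that in the natural $r$-graph on $\bigcup_u L(T_j(u))$ inherited from $K_r(G^{k_j})$ two edges whose leaves span isomorphic ordered subforests get the same colour; and a \emph{disconnectedness} property, that no single $T_j(u)$ contains disjoint sets $X, Y, Z$ of size $\ell$ (a fixed constant with $t \le \ell$) for which $X \cup Y$ and $Y \cup Z$ are monochromatic cliques of a common colour and $X, Z$ correspond to vertex-disjoint isomorphic subtrees. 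The case $j = 0$ is immediate with $V_0 = V(G)$ and $T_0(u) = \{u\}$; here $k \gg k_0 \gg k_1 \gg \dots \gg k_h \gg$ the other constants.

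At step $j$ I equip $G[V_j]^{k_{j+1}}$ with an auxiliary colouring by a constant number $s' = s \cdot (\#\text{types}) + 1$ of colours: colour $uv$ by $(c, S)$ if some $c$-coloured \emph{connector} of type $S$ joins $T_j(u)$ to $T_j(v)$ --- a connector being disjoint sets $X, Y, Z$ of size $\ell$ with $X \subseteq L(T_j(u))$ and $Z \subseteq L(T_j(v))$ both corresponding to $S$, with $X \cup Y$ and $Y \cup Z$ monochromatic cliques of colour $c$, and with $Y$ satisfying a low-congestion condition --- and grey if no connector exists. Then \Cref{cor:ramsey-expanders}, applied with $U = V_j$, $H = Q^-$, and with the (common) grey-clique size chosen much larger than $d_{j+1}$, gives one of two outcomes. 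In the first there is a non-grey monochromatic copy of $Q^-$; then, as in \Cref{sec:overview}, the type-monochromaticity property lets me realign the connectors along this copy so that all connectors at the image of a given vertex of $Q^-$ share a common $\ell$-set of endpoints (this is where the boundedness of the degree of $Q^-$ and the width of the trees enter), and gluing the connectors along $Q^-$ yields a monochromatic copy of $K_r(Q^-[K_t])$ in the leaf hypergraph, with the congestion condition ensuring every vertex of $G^k$ is used at most $b$ times, so this copy lifts and we are done. In the second outcome there is a family $\F$ of vertex-disjoint grey cliques covering at least $|V_j|/2$ vertices; I set $V_{j+1} = \bigcup \F$, trim (via \Cref{lem:trimming-order}) so that $\{T_j(w) : w \in K\}$ are pairwise vertex-disjoint within each $K \in \F$, define $T_{j+1}(u)$ for $u \in K$ by joining a fresh root over $u$ to the roots of the $T_j(w)$ with $w \in K \setminus \{u\}$, and re-establish type-monochromaticity at level $j+1$ with a further application of \Cref{lem:trimming-order} and \Cref{lem:trimming-ramsey}; crucially, greyness of $K$ (no connector between distinct trees of $K$) together with disconnectedness at level $j$ forces disconnectedness at level $j+1$.

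Finally, \Cref{lem:trees-tight-paths} (with this $\ell$ and $s$, and $h, d_h$ large enough) guarantees that every $d_h$-ary ordered tree of height $h$ contains a configuration of the kind forbidden by the disconnectedness property; hence the second outcome cannot occur at step $h-1$, so the first outcome occurs at some step $j \le h-1$, completing the proof. I expect the first outcome to be the main obstacle: one must set up the notions of connector and type precisely enough that connectors along an embedded copy of $Q^-$ can be simultaneously realigned despite $Q^-$ having many vertices of degree $D$, that the result of gluing them is a genuine copy of $K_r(Q^-[K_t])$ rather than a homomorphic image with large fibres, and that these properties survive the repeated trimming of the trees across all $h$ levels while the expansion needed to keep invoking \Cref{cor:ramsey-expanders} is preserved --- which is exactly why the trees $T_j(u)$ are taken to be \emph{ordered} and why the whole hierarchy of constants must be arranged so carefully.
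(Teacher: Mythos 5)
Your overall architecture is the same as the paper's (reduce to a low-multiplicity monochromatic homomorphic copy via \Cref{lem:monochromatic-blowup}, then run the tree-skeleton iteration with connectors, the auxiliary colouring, \Cref{cor:ramsey-expanders}, the disconnectedness invariant, and the contradiction from \Cref{lem:trees-tight-paths}), but there is one step in your Case~1 that fails as written: you apply \Cref{cor:ramsey-expanders} with $H = Q^-$ and then ``glue the connectors along $Q^-$''. A connector for an auxiliary edge $uv$ only certifies that $L(S_u)\cup Y_{uv}$ and $Y_{uv}\cup L(S_v)$ are monochromatic cliques; it says nothing about $L(S_u)\cup L(S_v)$. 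So after gluing along a monochromatic copy of $Q^-$ you do not get $K_r(Q^-[K_t])$ with blobs at the vertices of $Q^-$: what you actually obtain is a homomorphic image of $K_r\bigl((Q^-)^-[K_t]\bigr)$, with an extra blob $Y_{uv}$ sitting on every edge of $Q^-$. This is genuinely the wrong hypergraph: when $Q$ has a vertex of degree at least $3$, the graph $(Q^-)^-$ (each edge of $Q$ subdivided three times) does not contain $Q^-$ as a subgraph, and correspondingly $K_r\bigl((Q^-)^-[K_t]\bigr)$ does not contain $K_r(Q^-[K_t])$ --- an embedding would have to send the degree-$\ge 3$ blobs of $Q^-[K_t]$ onto branch blobs of $(Q^-)^-[K_t]$ and then force two branch blobs at distance $2$, which do not exist there. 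So either you claim $K_r(Q^-[K_t])$ with blobs $L(S_u)$, $u\in V(Q^-)$, and the needed cliques are not certified monochromatic, or you accept the $Y$-blobs and end up with a further subdivision that does not contain the target.

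The repair is exactly what the paper does in the proof of \Cref{thm:main-homomorphic}: apply \Cref{cor:ramsey-expanders} with $H = Q$ itself (which is already bipartite and in $\family_{n,D,2\log(\alpha n)}$, so your first-paragraph detour placing $Q^-$ in the family becomes unnecessary), find a non-grey monochromatic copy of $Q$ in the auxiliary colouring, and let the subdivision arise from the connectors themselves: the blobs $L(S_u)$, $u\in V(Q)$, realise the branch vertices of $Q^-$ and the sets $Y_{uv}$, $uv\in E(Q)$, realise the subdivision vertices, so that $r$-subsets of $L(S_u)\cup Y_{uv}$ are exactly the certified monochromatic edges and the result is a homomorphic copy of $K_r(Q^-[K_t])$; the multiplicity bound then comes from distance/bounded-degree considerations (every vertex lies in few sets $L(S_u)$ and $Y_{uv}$) rather than from a congestion condition built into the connectors. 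With that change, the remaining steps you describe (realignment via type-monochromaticity, the grey-clique construction of taller trees, preservation of the disconnectedness property, and the final lift through \Cref{lem:monochromatic-blowup}) match the paper's argument.
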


	\Cref{thm:main-auxiliary} follows quite easily from the following weaker version of \Cref{thm:main-auxiliary}, where instead of finding a monochromatic copy of $\HH$, we find a monochromatic copy of a homomorphic image $\HH'$ of $\HH$ where every vertex of $\HH'$ is the image of few vertices in $\HH$, using a result from the previous section. (Recall that a map $\phi : \HH \to \HH'$ is a \emph{homomorphism} if edges of $\HH$ are mapped to edges of $\HH'$.)
	The proof of \Cref{thm:main-homomorphic} appears in the next subsection.

	\begin{thm} \label{thm:main-homomorphic}
		Let $r \le t, s, D \ll \eps^{-1}, \Delta \ll k  \ll \alpha$. The following holds for every integer $n$. Let $Q$ be a bipartite graph in $\family_{n, D, 2\log(\alpha n)}$, and let $G$ be an $\eps$-expander on $\alpha n$ vertices with maximum degree at most $\Delta$.
		Then in every $s$-colouring of $K_r(G^{k})$ there is a monochromatic homomorphic image $\HH'$ of $\HH = K_r(Q^-[K_{t}])$ such that every vertex of $\HH'$ is the image of at most $(D+1) \Delta^{(2k + 1)}$ vertices of $\HH$.
	\end{thm}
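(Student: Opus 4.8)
The plan is to follow the strategy outlined in \Cref{sec:overview}. Fix constants in the following order: a large $h = h(r,t,s)$ for which \Cref{lem:trees-tight-paths} applies with $\ell = t$; arities $d_h \ll \cdots \ll d_1$ with $d_h \gg h$; powers $k_h \ll \cdots \ll k_1$ with $2^h, d_{j+1} \ll k_{j+1}$ and $\Delta^{k_{j+1}} \ll d_j$ for all $j$; and $s' = s\cdot f(t,h)+1$, where $f(t,h)$ is the finite number of ordered trees of height at most $h$ with at most $t$ leaves. All of these depend only on $r,t,s,D,\Delta$, so we may take $k$ larger than all of them while keeping $\Delta\ll k$ and $3\sum_i k_i\le k$; then $\eps^{-1},\alpha$ are taken even larger. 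I will build, for $j=0,\dots,h$ and every vertex $u$ of a set $U_j\subseteq V(G)$ with $U_0=V(G)$ and $|U_{j+1}|\ge|U_j|/2$, an ordered tree $T_j(u)$ rooted at $u$, all of whose vertices lie within distance $\sum_{i\le j}k_i$ of $u$ in $G$, which is $d_j$-ary of height $j$ and satisfies: (i) $T_j(u),T_j(v)$ are vertex-disjoint whenever $uv\in E(G^{k_{j+1}})$ (arranged via \Cref{lem:trimming-order} applied to $G^{k_{j+1}}$, whose maximum degree is at most $\Delta^{k_{j+1}}\ll d_j$); and (ii) \emph{disconnectedness}: letting $\HH_j$ be the $r$-graph on $\bigcup_{u\in U_j}L(T_j(u))$ whose edges are the $r$-sets forming cliques in $G^k$ (all leaf-collections arising below are such cliques, since $3\sum_i k_i\le k$), with the $s$-colouring inherited from $K_r(G^k)$, for no $u$ is there a monochromatic \emph{connector} inside $L(T_j(u))$ — disjoint $t$-sets $X,Y,Z\subseteq L(T_j(u))$ with $X\cup Y$ and $Y\cup Z$ monochromatic cliques of $\HH_j$ of a common colour and with $X,Z$ corresponding to vertex-disjoint isomorphic subtrees of $T_j(u)$. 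We start from $T_0(u)=\{u\}$.

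Given the trees at level $j$, first apply \Cref{lem:trimming-ramsey} with $\G=G^{k_{j+1}}$ to pass to subtrees (still satisfying (i)) in which any two $r$-sets of leaves \emph{of the same type} receive the same $\HH_j$-colour — this ``type determines colour'' property is precisely what makes connectors gluable at shared vertices. Then define an auxiliary $s'$-colouring of $G^{k_{j+1}}[U_j]$: colour an edge $uv$ by $(c,S)$ if there is a colour-$c$ connector of type $S$ between $T_j(u)$ and $T_j(v)$, i.e.\ disjoint $t$-sets $X\subseteq L(T_j(u))$ and $Z\subseteq L(T_j(v))$ corresponding to copies of the ordered tree $S$, together with a set $Y$, such that $X\cup Y$ and $Y\cup Z$ are monochromatic $c$-cliques of $\HH_j$; colour $uv$ grey if no such connector exists. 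Since $|U_j|\ge\alpha n/2^j$ and $Q$ is a bipartite graph in $\family_{n,D,2\log(\alpha n)}$, \Cref{cor:ramsey-expanders} applied to $G$ with $U=U_j$, $\mu=2^{-j}$ (grey the first colour, $Q$ in place of $H$) gives one of two outcomes. If there is a copy of $Q$ in $G^{k_{j+1}}[U_j]$ all of whose edges have a common non-grey colour $(c,S)$, I pass to the last paragraph. Otherwise there are vertex-disjoint grey $K_{d_{j+1}+1}$'s covering at least $|U_j|/2$ vertices; let $U_{j+1}$ be their union, and for $u\in U_{j+1}$ lying in a grey clique $K$, define $T_{j+1}(u)$ to be $u$ joined to the roots of the $T_j(v)$, $v\in K\setminus\{u\}$ (these are pairwise disjoint and avoid $u$ by (i), since $K$ is a clique of $G^{k_{j+1}}$). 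Disconnectedness is inherited: in a connector inside $L(T_{j+1}(u))$ with $X,Z$ in disjoint isomorphic subtrees, neither subtree can contain the new root $u$ (two subtrees through $u$ would not be disjoint, and a subtree through $u$ has greater height than one avoiding it), so both localise to single trees $T_j(v)$; if both to the same $v$ this contradicts (ii) at level $j$ (using the trimming to rule out the stray case where $Y$ leaves $T_j(v)$), and if to $v\ne w$ in $K$ it yields a non-grey connector between $T_j(v)$ and $T_j(w)$, contradicting that $vw$ is grey. If the first outcome never occurs, then $T_h(u)$ is a $d_h$-ary ordered tree of height $h$ with $s\ll h\ll d_h$ and $r\le t$, so \Cref{lem:trees-tight-paths} produces a connector inside $L(T_h(u))$, contradicting (ii). Hence the first outcome occurs at some level $j<h$.

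It remains to lift a monochromatic copy of $Q$ in the auxiliary colouring to the required $\HH'$. Let the copy have vertex set $\{u : u\in V(Q)\}$ (distinct vertices of $G$); each edge $uv$ of $Q$ then carries a connector $(X_{uv},Y_{uv},Z_{uv})$ in the common colour $c$ and type $S$, with $X_{uv}\subseteq L(T_j(u))$, $Z_{uv}\subseteq L(T_j(v))$ corresponding to copies of $S$, and $X_{uv}\cup Y_{uv}$, $Y_{uv}\cup Z_{uv}$ monochromatic $c$-cliques of $\HH_j$. Using ``type determines colour'' (and a further round of trimming so that the chosen copies are disjoint from the bridging sets), for each $u$ I replace all the $\le\deg_Q(u)\le D$ sets lying in $T_j(u)$ (one per incident edge) by a single fixed copy $W(u)$ of $S$ in $T_j(u)$; since the new sets have the same type as the old ones, $W(u)\cup Y_{uv}$ and $Y_{uv}\cup W(v)$ remain monochromatic $c$-cliques. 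Now let $\phi$ map the blow-up class of $u$ in $Q^-[K_t]$ bijectively onto $W(u)$ for $u\in V(Q)$, and the blow-up class of the subdivision vertex $m_{uv}$ bijectively onto $Y_{uv}$. Every edge of $Q^-$ is of the form $\{u,m_{uv}\}$ or $\{m_{uv},v\}$, and since $Q^-$ is triangle-free every edge of $\HH=K_r(Q^-[K_t])$ lies inside $B(x)\cup B(y)$ for some edge $xy$ of $Q^-$, hence maps under $\phi$ to an $r$-subset of $W(u)\cup Y_{uv}$ or $Y_{uv}\cup W(v)$, which is a $c$-coloured edge of $K_r(G^k)$; thus $\phi$ is a homomorphism onto a $c$-monochromatic $\HH'\subseteq K_r(G^k)$. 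Finally, if $x\in W(u)$ then $u$ lies within distance $\sum_{i\le j}k_i\le k$ of $x$ in $G$, and if $x\in Y_{uv}$ then, as $X_{uv}\cup Y_{uv}$ is a clique of $K_r(G^k)$, $u$ lies within distance $k+\sum_{i\le j}k_i\le 2k$ of $x$; there are at most $\Delta^{2k+1}$ vertices of $G$ within distance $2k$ of $x$, and each such $u$ accounts for at most $D+1$ elements of $\phi^{-1}(x)$ (one in the class of $u$, one in the class of $m_{uv}$ for each of the $\le D$ edges $uv$ of $Q$ with $x\in Y_{uv}$), so $|\phi^{-1}(x)|\le (D+1)\Delta^{2k+1}$, as required.

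The main obstacle I anticipate lies behind the repeated invocations of ``trimming'' and the claim that a connector inside $T_{j+1}(u)$ reduces to a connector at level $j$: this is where the ordered-tree Ramsey machinery (\Cref{cor:ramsey-trees,lem:trimming-ramsey}, built on \Cref{lem:ramsey-trees}) must be applied, level by level, to every one of the finitely many admissible shapes $S$, and one must check that these trimmings preserve both the pairwise disjointness of neighbouring trees and — the ``overlapping trees'' point stressed in the overview — the expansion of the relevant induced subgraph $G[U_j]$ (which shrinks only by a factor $2$ per level). A secondary, more routine, issue is verifying that the tower-like constant hierarchy ($s\ll h\ll d_h$, and $d_{j+1}\ll k_{j+1}$, $\Delta^{k_{j+1}}\ll d_j$ for each $j$, together with $3\sum_i k_i\le k$) is simultaneously realisable — it is, since each constant is chosen in terms of the earlier ones and $k$ is taken last among them.
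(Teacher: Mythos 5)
Your outline reproduces the paper's own strategy (tree skeletons, trimming for disjointness and ``type determines colour'', the grey/non-grey auxiliary colouring fed into \Cref{cor:ramsey-expanders}, the final contradiction via \Cref{lem:trees-tight-paths}, and the lifting of a non-grey monochromatic $Q$), but two of the places you explicitly wave at are genuine gaps. First, your invariant (ii) requires $Y\subseteq L(T_j(u))$, and with that restriction the inheritance step fails exactly where you flag it: when a level-$(j+1)$ connector has $X,Z$ in the same child-subtree $T_j(v)$ but $Y$ spread over other trees of the grey clique, nothing at level $j$ forbids it — greyness of edges $vw$ only excludes configurations with $X$ in one tree and $Z$ in another, and no amount of trimming can remove a colouring-dependent configuration whose $Y$ is allowed to sit in neighbouring trees. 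The paper's invariant \ref{itm:MT} is deliberately stated with $Y$ unconstrained (only $X,Z$ must lie in $L(T_j(u))$ and $X\cup Y$, $Y\cup Z$ must be monochromatic cliques of $\G_j$); that stronger hypothesis is what the same-child case of the inheritance argument needs, and it is still contradicted at level $h$ because \Cref{lem:trees-tight-paths} happens to output $X,Y,Z$ all inside $L(T_h(u))$. So you should strengthen (ii) to the paper's form rather than try to ``rule out the stray case''.

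Second, taking $\HH_j$ and the connectors to be arbitrary cliques of $G^k$ breaks the very step you rely on to glue connectors. The ``type determines colour'' property comes from \Cref{lem:trimming-ramsey}, which can only be applied to an auxiliary hypergraph of maximum degree $\ll d_j$, i.e.\ to leaf-sets whose roots form a clique in a \emph{small} power such as $G^{3k_{j+1}}$ (degree about $\Delta^{3rk_{j+1}}$), never in $G^{k}$ (degree about $\Delta^{rk}\gg d_j$). With your definition, the bridging set $Y_{uv}$ may live in trees whose roots are only within distance $\approx 2k$ of $u$, so the colour-invariance needed to replace $X_{uv}$ by a fixed copy $W(u)$ has simply not been established (and the swapped set $W(u)\cup Y_{uv}$ need not even remain a clique of $G^k$, only of $G^{k+2\sum_i k_i}$). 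This is why the paper defines connectors with respect to $\G_j'$, whose edges have root-sets forming cliques in $G^{3k_{j+1}}$: that localisation is compatible both with \Cref{lem:trimming-ramsey} and with the distance bookkeeping, and the inheritance still works because a level-$(j+1)$ configuration has all its roots inside one grey $G^{k_{j+1}}$-clique. Finally, even with the correct localisation, the substitution $X_{uv}\mapsto W(u)$ must also re-choose the part of $Y_{uv}$ lying inside $L(T_j(u))$ so that the combined type is preserved; ``a further round of trimming so that the chosen copies are disjoint from the bridging sets'' cannot do this (the $Y_{uv}$ are specific sets selected after all trimming, and the issue is type-matching, not disjointness) — this is exactly the role of the extensible copy $S_u$ constructed in \Cref{claim:find-good-Su} of the paper, which your argument is missing.
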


	\begin{proof}[Proof of \Cref{thm:main-auxiliary} using \Cref{thm:main-homomorphic}]
		Let $b= (D+1) \Delta^{(2k + 1)}$, and note that by our choice of $D,\Delta,k$ we can ensure that $b \ll B$.

		Let $\G$ be the hypergraph on vertices $\bigcup_{u \in V(G)} B(u)$, where $B(u)$ is a set of $B$ vertices and the sets $B(u)$ are pairwise disjoint, and edges 
		\begin{equation*}
			E(\G) = \{(v_1, \ldots, v_r) \,:\, \exists \, u_1, \ldots, u_r \text{ s.t.\ $v_i \in B(u_i)$ and $\{u_1, \ldots, u_r\}$ is a clique in $G^k$}\}.
		\end{equation*}
		(The vertices $u_1, \ldots, u_r$ need not be distinct.) So $\G \cong K_r(G^{k}[K_B])$.
		We will show that $\G \sarrow \HH$.

		Fix an $s$-colouring of $\G$.
		By \Cref{lem:monochromatic-blowup} there exist subsets $B'(u) \subseteq B(u)$ of size $b$, such that in the subhypergraph $\G'$ of $\G$, induced on $\bigcup_{u \in V(G)} B'(u)$, if edges $e$ and $f$ satisfy $|e \cap B'(u)| = |f \cap B'(u)|$ for every $u \in V(G)$, then $e$ and $f$ have the same colour.
		Consider the hypergraph $\G'' = K_r(G^k)$, along with the $s$-colouring inherited from $\G'$ as follows: given an edge $(u_1, \ldots, u_r)$ in $\G''$ colour it according to the colour of $(v_1, \ldots, v_r)$ in $\G'$, for any choice of vertices $v_i \in B'(u_i)$ for $i \in [r]$ (by assumption on $\G'$ this is well-defined; namely, the colour does not depend on the choice of $v_1, \ldots, v_r$). 

		Now, by \Cref{thm:main-homomorphic}, there is a monochromatic, say red, homomorphic image $\HH'$ of $\HH$ in $\G''$, where every vertex in $\HH'$ is the image of at most $b$ vertices in $\HH$; let $\varphi : \HH \to \G''$ be a homomorphism that maps edges of $\HH$ to red edges of $\G''$, such that every vertex in $\G''$ is the image of at most $b$ vertices in $\HH$.
		We claim that $\G'$ contains a red copy of $\HH$. To see this, define a map $\varphi' : \HH \to \G'$, where each vertex $u$ in $\HH$ is mapped to a vertex in $B'(\varphi(u))$, and moreover $\varphi'$ is injective. Such a map exists by choice of $\varphi$ and because $|B'(x)| = b$ for every vertex $x$. By choice of $\G'$, the image of $\HH$ under $\varphi'$ is a red copy of $\HH$ in $\G'$ (and so in $\G$), as required.
	\end{proof}

	We will also need the following lemma for the proof of the main result. Its proof is quite long but mundane; we thus delay it to \Cref{sec:subdivisions}.

	\begin{lem} \label{lem:reduction}
		Let $d, t \ll D, T$. The following holds for every $n$ and $\ell \ge 28t$, where $L = \ell/4t - 6$.
		For every $H \in \family_{n, d, \ell}$ there exists a bipartite $F$ in $\family_{n, D, L}$ such that $F^-[K_T]$ contains a copy of $H^t$.
	\end{lem}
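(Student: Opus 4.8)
The plan is to obtain $F$ by running the operations that build $H$ one at a time, replacing each long‑path operation by a much shorter one, and finally padding with pendant vertices up to exactly $n$ vertices; the required copy of $H^t$ inside $F^-[K_T]$ will then be read off directly from this construction. The one structural fact I would isolate at the outset is the following description of $F^-[K_T]$: its vertex set partitions into pairwise disjoint \emph{blobs} $B(x)$ of size $T$, one for each vertex $x$ of $F^-$ — equivalently, one for each vertex of $F$ and one for the midpoint of each edge of $F$ — each $B(x)$ is a clique, and $B(x)$ is completely joined to $B(y)$ exactly when $xy\in E(F^-)$, i.e.\ exactly when one of $x,y$ is a vertex $v$ of $F$ and the other is the midpoint of an edge of $F$ incident to $v$. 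Hence a (subgraph) copy of $H^t$ in $F^-[K_T]$ is the same thing as an injective map $\Phi\colon V(H)\to\bigsqcup_x B(x)$ for which, writing $\psi(v)$ for the vertex of $F^-$ whose blob contains $\Phi(v)$, one has $\dist_{F^-}(\psi(u),\psi(v))\le 1$ whenever $0<\dist_H(u,v)\le t$, subject only to at most $T$ vertices of $H$ landing in any one blob. So the whole task reduces to producing such a $\psi$, with $F$ bipartite and in $\family_{n,D,L}$.

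I would construct $F$ and $\psi$ simultaneously, by induction on the number of operations performed, maintaining an invariant of the shape ``$\dist_{F^-}(\psi(u),\psi(v))\lesssim \dist_H(u,v)/t+1$'' (a slightly stronger, more precise version is what actually has to be carried), together with the bound that every blob currently holds at most a bounded number of vertices of $H$, the bound depending only on $d$ and $t$ — and therefore at most $T$, since $H$ has maximum degree $d$, so every radius‑$t$ ball in $H$ has bounded size. A pendant operation adding $v$ next to $u$ is handled either by placing $\Phi(v)$ in a blob at $F^-$‑distance at most $1$ from $\psi(u)$ without changing $F$ — there is always room, by the capacity bound — or, once a chain of pendants has advanced about $t$ steps in $H$, by opening one new edge of $F$ and moving on to its midpoint blob. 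A long‑path operation adding a path $u_0u_1\cdots u_m$ with $m\ge\ell$ between $u_0$ and $u_m$ is handled by adding to $F$ a path $q_0q_1\cdots q_K$ between the $F$‑vertices $q_0,q_K$ associated with $u_0,u_m$, where $K$ is chosen of the parity that keeps $F$ bipartite and in the range $L\le K\lesssim m/(2t)$; this range is nonempty because $m\ge\ell\ge 28t$ gives $m/(2t)\gtrsim \ell/(2t)\ge \ell/(4t)-6=L$ with several units to spare, which is exactly where the gap between $\ell$ and $L$ is spent. After all operations $F$ has at most $n$ vertices (each operation adds no more vertices to $F$ than to $H$), and I add the remaining pendants as a chain; the maximum degree stays bounded in terms of $d$ and $t$ — each vertex of $F$ absorbs only boundedly many vertices of $H$, each of bounded degree — hence at most $D$.

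Inside $F^-[K_T]$ the new path $q_0q_1\cdots q_K$ becomes a ``thick path'' of $2K+1$ blobs $B(q_0),B(\mu_1),B(q_1),\dots,B(\mu_K),B(q_K)$ with consecutive blobs completely joined, and I would extend $\Phi$ along it by a non‑decreasing grouping of $u_0,\dots,u_m$ into these blobs, each intermediate group having size between $t$ and $2t$, with symmetric behaviour near the two ends. The lower bound $t$ on group sizes guarantees that two of the $u_j$ placed in non‑adjacent blobs are at $H$‑distance greater than $t$, so every edge of $H^t$ internal to the new path is realised; the upper bound $2t$ is what keeps $K$ large enough to reach $L$. For the edges of $H^t$ joining the new path to the part of $H$ built earlier, I would use that, by the invariant, the entire radius‑$t$ ball of $u_0$ in $H$ as it stood before the operation had been placed inside $B(q_0)$ together with its $F^-$‑neighbours; since the first group of the new path also lies in $B(q_0)$, these cross edges land inside a single blob or between a pair of joined blobs. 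The capacity bound survives because each blob is touched by only boundedly many operations, each contributing a bounded number of vertices.

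The main obstacle is precisely this bookkeeping at the interfaces: one must pin down a version of the invariant that is at once strong enough to force the last group of every old path incident to $u_0$ — and all the bounded clutter hanging off $u_0$ — to lie genuinely inside $B(q_0)$, and not merely in a neighbouring blob, whenever a new long path is attached there; weak enough to be restored after both kinds of operation; and compatible with $F$ being bipartite, of bounded degree, and on exactly $n$ vertices while every contracted path length stays in $[L,m]$. Each of these points is routine — which is why the lemma can be quarantined away from the main argument — but reconciling the constants with the case analysis is unavoidably lengthy, and that is the bulk of the proof.
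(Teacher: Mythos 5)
Your reduction of the task to producing a map $\psi\colon V(H)\to V(F^-)$ with bounded fibres sending pairs at $H$-distance at most $t$ to pairs at $F^-$-distance at most $1$ is exactly the right reformulation, and your route to it is genuinely different from the paper's. The paper does not compress by a factor of $\Theta(t)$ in one pass: it proves two separate propositions --- a ``distance-halving'' statement (there is $F\in\family_{n,4d^3,(\ell-3)/2}$ and $\varphi\colon H\to F$ with $\dist_F(\varphi(x),\varphi(y))\le\lceil\dist_H(x,y)/2\rceil$ and fibres of size $O(d^2)$) and a separate statement mapping a graph of $\family_{n,d,\ell}$ into the $1$-subdivision of a bipartite member of $\family_{n,4d^2,(\ell-6)/2}$ with adjacency preserved --- and then iterates the first one $\lceil\log_2 t\rceil$ times before applying the second once. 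The advantage of the iterated halving is that each step only has to coordinate placements at radius $2$ (the invariant is essentially ``stars map to edges or single vertices''), so the per-step bookkeeping is light; the price is the tower of compositions and constants like $(2d)^{12t^2}$. Your single-pass construction, with groups of size in $[t,2t]$ along contracted paths, buys a single induction and better constants ($d^{O(t)}$), but it transfers all of the difficulty into one global invariant, which is where your write-up is thinnest.

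Two concrete points in the part you label ``routine'' would fail as literally stated and must be built into the invariant. First, pendant additions branch, and the rule ``open one new edge of $F$ once a chain has advanced about $t$ steps'' is not sound for branching chains: if two branches diverge fewer than about $t/2$ steps before the threshold and each opens its own edge at the shared anchor $q$, then vertices just past the threshold on the two branches land in distinct midpoint blobs $B(\mu_1),B(\mu_2)$ with $\dist_{F^-}(\mu_1,\mu_2)=2$, yet are at $H$-distance at most $t$. So branches may be separated onto distinct edges of $F$ only when their divergence point lies sufficiently far back, and this splitting rule is also what controls both the degree of $F$ and the blob loads; it is not an afterthought but the core of the maintenance argument. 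Second, your treatment of a new long path attached at $u_0$ assumes the radius-$t$ ball of $u_0$ sits inside the closed star of an $F$-\emph{vertex} $q_0$; for a vertex placed mid-chain its image is a midpoint blob and its ball typically straddles both endpoints of the subdivided edge, so no such $q_0$ exists unless the invariant explicitly carries an anchor vertex for every $H$-vertex (which then has to be maintained through both operations), or unless you route the first group(s) of the new path through the midpoint blob before the new $F$-path attaches at an endpoint. Related corner cases (the two attachment points having equal or adjacent anchors, while the family operation requires a path between two distinct existing vertices) need their own clauses. None of this is fatal --- with the strengthened invariant the plan goes through, and the arithmetic $K\ge \ell/4t-6$ and the parity/padding steps are fine --- but as written the proposal defers precisely the steps where the argument can break, so it is a viable alternative outline rather than a complete proof.
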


	Finally, here is the proof of the main result of this paper.

	\begin{proof}[Proof of \Cref{thm:main} using \Cref{thm:main-auxiliary,lem:reduction}]
		Let $r, s, t, d \ge 1$ be integers. Let $T, D$ be as in \Cref{lem:reduction}, applied with $d$ and $t$, such that $T \ge r$. Let $\eps$ satisfy $T, s, D \ll \eps^{-1}$, and let $\Delta = 80(1/\eps) \log(1/\eps)$, so we also have $T, s, D \ll \Delta$. Finally, pick $k, B, \alpha$ so that $\eps^{-1}, \Delta \ll k \ll B \ll \alpha \ll n$ (such parameters exist because $n$ is large with respect to $r, s, t, d$).

		By \Cref{prop:existence-expanders}, there exists an $\eps$-expander $G$ on $\alpha n$ vertices with maximum degree at most $\Delta$. Let $\G = K_r(G^k[K_B])$. Then, by \Cref{thm:main-auxiliary},
		\begin{equation} \label{eqn:G}
			\G \sarrow K_r(F^-[K_T]) \qquad \text{for every bipartite $F$ in $\family_{n, D, \log(\alpha n)}$}.
		\end{equation}
		Let $H \in \family_{n, d, 10t\log n}$. By choice of $T$ and $D$ (according to \Cref{lem:reduction}), taking $L = 10t \log n / 4t - 5 = 2.5 \log n - 5$, there exists a bipartite $F$ in $\family_{n, D, L}$ such that $F^-[K_T]$ contains a copy of $H^t$, so $K_r(F^-[K_T])$ contains a copy of $K_r(H^t)$. As $n \gg \alpha$, we have $L = 2.5 \log n - 5 \ge 2 \log(\alpha n)$, implying that $F \in \family_{n, D, 2 \log(\alpha n)}$. It follows from \eqref{eqn:G} that $\G \sarrow K_r(H^t)$, completing the proof.
	\end{proof}

	\begin{rem}
		In order to prove the version of our main result for powers of tight paths, one can bypass \Cref{lem:reduction}. Indeed, take $F = P_n$ and $T = r+t-1$ in \eqref{eqn:G} and observe that $K_r(P_n^{-}[K_{r+t-1}])$ contains the $t$-power of the tight path $P_n^{(r)}$. 
	\end{rem}

	\subsection{Proof of \Cref{thm:main-homomorphic}}\label{sec:mainproof}
		Throughout the proof, we fix constants as follows. Let $h$ be such that 
		\begin{equation} \label{eqn:rst}
			r, s, t \ll h \ll \eps^{-1}, \Delta, 
		\end{equation}
		and let $d_0, \ldots, d_h, d_1', \ldots, d_{h-1}', k_0, \ldots, k_{h}$ be constants such that $k_0 = k$ and 
		\begin{equation} \label{eqn:dk}
			\eps^{-1}, \Delta \ll d_h \ll k_{h} \ll d_{h-1}'\ll d_{h-1}\ll \ldots \ll d_1' \ll d_1 \ll k_1 \ll d_0 \ll k_0.   
		\end{equation}
		Fix an $s$-colouring of $K_r(G^{k_0})$. Our aim is to show that there is a monochromatic copy of a homomorphic image $\HH'$ of $\HH = K_r(Q^-[K_{t}])$ such that every vertex in $\HH'$ is the image of at most  $(D+1) \Delta^{(2k + 1)}$  vertices in $\HH$.

		Throughout the proof we will maintain a set of vertices $U_j \subseteq V(G)$ and a collection of ordered trees $T_j(u)$, for $u \in U_j$, as in the following definition. The trees will serve as auxiliary structures that will become more complex as $j$ grows.
		\begin{defn}\label{def:tree-skeleton} 
			For $j\in [h]$, a \emph{height $j$ tree skeleton} is a pair $(U_j,\, \{T_j(u) \,|\, u \in U_j\})$ as follows.
			\begin{enumerate}[label = \rm(I\arabic*)]
				\item  \label{itm:size}
					$|U_j| \ge |G|/2^j$,
				\item  \label{itm:ordered}
					$T_j(u)$ is a $d_j$-ary ordered tree of height $j$ with $V(T_j(u))\subseteq V(G)$, for each $u\in U_j$, 
				\item  \label{itm:edges}
					the edges of $T_j(u)$ between levels $j-i$ and $j-(i-1)$ are in $G^{k_i}$, for each $u\in U_j$ and $i\in [j]$. 
			\end{enumerate}
			Given a height $j$ tree skeleton $(U_j,\, \{T_j(u) \,|\, u \in U_j\})$, define $\G_j$ to be the $r$-graph on vertices $\bigcup_{u \in U_j} L(T_j(u))$ and edges
			\begin{equation} \label{eqn:edges-hypergraph}
				\{(v_1, \ldots, v_r) \,:\, \exists \, u_1, \ldots, u_r \in U_j \text{ s.t.\ } v_i \in L(T_j(u_i)) \text{ and $\{u_1, \ldots, u_r\}$ is a clique in $G^{k_j}$}\}. 
			\end{equation}
			(The vertices $u_1, \ldots, u_r$ need not be distinct.)
		\end{defn}

		The following observation shows that the hypergraph $\G_j$ defined above is a subhypergraph of $K_r(G^{K_0})$, and so we can $s$-colour $\G_j$ according to the $s$-colouring of $K_r(G^{K_0})$. 

		\begin{obs}\label{claim:Gj} 
			Then  $\G_j \subseteq K_r(G^{k_0})$.
		\end{obs}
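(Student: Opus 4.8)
The plan is to show that every edge of $\G_j$ is an $r$-clique of $G^{k_0}$, which reduces to a routine distance computation in $G$ using the definition of a height $j$ tree skeleton.

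First I would record the basic distance bound implied by \ref{itm:edges}. Fix $u \in U_j$ and a leaf $v \in L(T_j(u))$; the root-to-leaf path in $T_j(u)$ has exactly $j$ edges, and the edge joining level $m-1$ to level $m$ lies in $G^{k_{j-m+1}}$ by \ref{itm:edges}, hence its endpoints are at distance at most $k_{j-m+1}$ in $G$. Summing over $m \in [j]$ gives
\begin{equation*}
	\dist_G(u, v) \le \sum_{m=1}^{j} k_{j-m+1} = \sum_{i=1}^{j} k_i .
\end{equation*}

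Next, take an edge $(v_1, \ldots, v_r) \in E(\G_j)$. By \eqref{eqn:edges-hypergraph} there exist $u_1, \ldots, u_r \in U_j$ (not necessarily distinct) with $v_a \in L(T_j(u_a))$ and $\{u_1, \ldots, u_r\}$ a clique in $G^{k_j}$. For any $a \ne b$ with $v_a \ne v_b$,
\begin{equation*}
	\dist_G(v_a, v_b) \le \dist_G(v_a, u_a) + \dist_G(u_a, u_b) + \dist_G(u_b, v_b) \le 2\sum_{i=1}^{j} k_i + k_j \le 3 \sum_{i=1}^{j} k_i ,
\end{equation*}
where I used the displayed bound twice and that $\dist_G(u_a, u_b) \le k_j$ (this also covers the case $u_a = u_b$, where the middle term is $0$ and the two leaves lie in the same tree). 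Since $k_i \le k_1$ for all $i \in [j] \subseteq [h]$ by \eqref{eqn:dk}, we get $\dist_G(v_a, v_b) \le 3 h k_1 \le k_0$, the last inequality holding by our choice of constants ($h$ is fixed and $k_1 \ll k_0$). Hence $v_a v_b \in E(G^{k_0})$ whenever $v_a \ne v_b$, so $\{v_1, \ldots, v_r\}$ is an $r$-clique in $G^{k_0}$, i.e.\ an edge of $K_r(G^{k_0})$. Combining this with \ref{itm:ordered}, which gives $V(\G_j) \subseteq V(G) = V(K_r(G^{k_0}))$, yields $\G_j \subseteq K_r(G^{k_0})$.

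I do not expect any genuine obstacle here: the only points requiring care are the bookkeeping of which power $G^{k_i}$ controls which level of the trees $T_j(u)$ (the short powers near the leaves, the long power $k_j$ at the top, but in any case the total telescopes to $\sum_{i=1}^{j} k_i$), and checking that the resulting constant $3 h k_1$ is absorbed by $k_0$, which is immediate from the hierarchy in \eqref{eqn:dk}.
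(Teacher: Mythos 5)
Your argument is correct and is essentially the paper's own proof: both bound the root-to-leaf distance via \ref{itm:edges} by $k_1 + \dots + k_j$, apply the triangle inequality through $u_a, u_b$ with $\dist_G(u_a,u_b)\le k_j$, and absorb the total into $k_0$ using the hierarchy \eqref{eqn:dk}. The only (cosmetic) difference is that you bound the sum by $3hk_1$ whereas the paper writes $2(k_1+\dots+k_j)+k_j\le k_0$ directly.
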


		\begin{proof} 
			Suppose that $(v_1, \ldots, v_r) \in E(\G_j)$ and let $u_1, \ldots, u_r$ be as in \eqref{eqn:edges-hypergraph}. Then for every $i, l \in [r]$, either $u_i u_l$ is an edge in $G^{k_j}$ or $u_i = u_l$, and so by \ref{itm:edges},
			\begin{equation*}
				\dist_G(v_i, v_l) 
				\le \dist_G(v_i, u_i) + \dist_G(u_i, u_l) + \dist_G(u_l, v_l)
				\le 2(k_1 + \ldots + k_j) + k_j 
				\le k_0,
			\end{equation*}
			(using $k_1, \ldots, k_j \ll k_0$). It follows that $(v_1, \ldots, v_r) \in E(K_r(G^{k_0}))$, as required.	
		\end{proof}

		The following proposition is the main drive of the proof.

		\begin{prop} \label{prop:one-step}
			Let $j \in \{0, \ldots, h-1\}$. Suppose that $(U_j, \, \{T(u) \,|\, u\in U_j\})$ is a height $j$ tree skeleton with the following property.

			\begin{enumerate} [label = \rm(MT)]
				\item \label{itm:MT}
					There are no disjoint sets $X, Y, Z\subseteq V(G)$ of size $t$ such that for some $u \in U_j$ we have: $X, Z \subseteq L(T_j(u))$; the ordered subtrees in $T_j(u)$ corresponding to $X$ and $Z$, respectively, are vertex-disjoint isomorphic ordered trees; and both $X \cup Y$ and $Y \cup Z$ are monochromatic cliques of the same colour in $\G_j$.
			\end{enumerate}

			Then either there is a height $j+1$ tree skeleton that also satisfies \ref{itm:MT}, or there is a monochromatic homomorphic copy $\HH'$ of $\HH$ such that every vertex in $\HH'$ is the image of at most $(D+1)\Delta^{2k_0+1}$ vertices in $\HH$. 
		\end{prop}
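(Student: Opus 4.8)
The plan is to implement the scheme of \Cref{sec:overview}; write $k=k_0$. The first move is to \emph{trim} the trees. I would apply \Cref{lem:trimming-order} to $G^{3k_{j+1}}$ to make trees hanging from vertices at $G$-distance at most $3k_{j+1}$ pairwise vertex-disjoint, and then \Cref{lem:trimming-ramsey} to $K_r(G^{3k_{j+1}})$ with these trees attached. The point of using the power $3k_{j+1}$ rather than $k_j$ is that $G^{3k_{j+1}}$, and hence $K_r(G^{3k_{j+1}})$, has bounded degree $\ll d_j$, so the lemma genuinely applies to the $d_j$-ary trees $T_j(u)$. This yields $d$-ary ordered subtrees $T_j'(u)\subseteq T_j(u)$ of height $j$, for a suitable constant $d$ with $d_{j+1}\ll d\ll d_j$, pairwise disjoint on vertices within distance $3k_{j+1}$, with the property that \emph{types have well-defined colours}: writing $\G_j^\ast$ for the $r$-graph on $\bigcup_u L(T_j'(u))$ whose edges are the $r$-sets of leaves lying in trees indexed by a clique of $G^{3k_{j+1}}$ (so $\G_{j+1}\subseteq\G_j^\ast\subseteq\G_j$ and $\G_j^\ast\subseteq K_r(G^{k_0})$, the last by \Cref{claim:Gj}), any two edges of $\G_j^\ast$ realising the same tuple of ordered subtree shapes receive the same colour. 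Passing to subtrees leaves minimal subtrees unchanged and $\G_j^\ast\subseteq\G_j$, so \ref{itm:size}--\ref{itm:edges} and \ref{itm:MT} persist; I relabel and assume the given trees already have all these properties.

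Next I set up the auxiliary colouring of $G[U_j]^{k_{j+1}}$. For an edge $uv$ and a pair $(c,S)$ with $c\in[s]$ and $S$ an ordered tree of height at most $j$ with $t$ leaves, a \emph{connector of type $(c,S)$} for $uv$ is a triple of pairwise disjoint $t$-sets $(X,Y,Z)$ with $X\subseteq L(T_j(u))$ and $Z\subseteq L(T_j(v))$, both corresponding to copies of $S$, such that $X\cup Y$ and $Y\cup Z$ are monochromatic cliques of colour $c$ in $\G_j^\ast$ (no restriction on $Y$; note that $X\cup Y$ being a $\G_j^\ast$-clique forces every tree meeting $Y$ to be rooted within $G$-distance $3k_{j+1}$ of $u$). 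Colour $uv$ by such a pair if one exists and \emph{grey} otherwise; this uses $s'=s|\mathcal S_j|$ non-grey colours, a constant with $s'\ll k_{j+1}$. Since $|U_j|\ge|G|/2^h$ with $2^h\ll\eps^{-1},k_{j+1}$, and $s',t_0,D\ll k_{j+1}\ll\alpha$ for a large constant $t_0\ge d_{j+1}+1$, and $Q$ is bipartite and in $\family_{n,D,2\log(\alpha n)}$, I apply \Cref{cor:ramsey-expanders} with $H=Q$, $U=U_j$, grey as the first colour, $t=t_0$, and power $k_{j+1}$, obtaining one of two outcomes.

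If there is a monochromatic non-grey copy of $Q$, say an embedding $\psi:V(Q)\hookrightarrow U_j$ into $G[U_j]^{k_{j+1}}$ all of whose edges have colour $(c,S)$ (using a bipartition of $Q$ to decide which endpoint of each edge carries the $X$-side), I would relocate the connectors so that, for each $w\in V(Q)$, a single copy $A_w$ of $S$ in $L(T_j(\psi(w)))$ serves every edge at $w$: around $A_w$ and $A_{w'}$ one finds, using that the trees are $d$-ary with $d$ enormous, a copy of the whole bounded connector configuration, which by ``well-defined colours'' is again monochromatic of colour $c$ in $\G_j^\ast\subseteq K_r(G^{k_0})$. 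Reading the $A_w$ as the blobs of $w\in V(Q)\subseteq V(Q^-)$ and the middle sets $Y_{ww'}$ as the blobs of the subdivision vertices, and using that $Q^-$ is triangle-free (so every edge of $\HH=K_r(Q^-[K_t])$ lies in the union of two adjacent blobs), this produces a monochromatic homomorphic image $\HH'$ of $\HH$. A short distance estimate finishes: a leaf lies in $A_w$ only if within $G$-distance $k_1+\dots+k_j\le k$ of $\psi(w)$, and in $Y_{ww'}$ only if within distance $2k$ of $\psi(w)$; since $\psi$ is injective, $\Delta(G)\le\Delta$ and $\Delta(Q)\le D$, each vertex of $\HH'$ has at most $\Delta^{k+1}+D\Delta^{2k+1}\le(D+1)\Delta^{2k_0+1}$ preimages in $\HH$.

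Otherwise there is a family $\mathcal K$ of vertex-disjoint grey $K_{t_0}$'s covering at least $|U_j|/2\ge|G|/2^{j+1}$ vertices; put $U_{j+1}=\bigcup\mathcal K$. Inside each $K\in\mathcal K$ I first make the trees $T_j(v)$ ($v\in K$) pairwise disjoint and $d_{j+1}$-ary (\Cref{lem:untangling-trees}, $|K|=t_0$ being constant), and for $u\in K$ let $T_{j+1}(u)$ be the tree rooted at $u$ whose children are the roots of $d_{j+1}$ of the trees $T_j(v)$, $v\in K\setminus\{u\}$. Then \ref{itm:size}, \ref{itm:ordered} are clear, and \ref{itm:edges} holds since the new root-to-child edges lie in $G^{k_{j+1}}$. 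To verify \ref{itm:MT}: from a purported violation $(X,Y,Z)$ inside $T_{j+1}(u)$ with $X\cup Y$, $Y\cup Z$ monochromatic of colour $c$ in $\G_{j+1}\subseteq\G_j^\ast\subseteq\G_j$, note that the root $u$ is the only vertex of $T_{j+1}(u)$ joining distinct subtrees $T_j(v)$; since the subtrees corresponding to $X$ and $Z$ are vertex-disjoint and isomorphic ordered subtrees have equal height, both $X$ and $Z$ must lie in single sets $L(T_j(v))$, $L(T_j(w))$. If $v=w$ this contradicts \ref{itm:MT} for $T_j(v)$ (inherited from the hypothesis); if $v\ne w$ then $(X,Y,Z)$ is a connector for the grey edge $vw$, again a contradiction. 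Hence \ref{itm:MT} holds and the proposition follows. I expect the genuine difficulty to be precisely these two arguments: in the grey case, the fact that a connector inside a glued tree is forced to descend to a connector \emph{between} two of the constituent trees of the grey clique — this being the single place the \emph{ordered} structure is truly used, through equality of heights of isomorphic subtrees; and in the non-grey case, the bookkeeping of the relocation, which must keep all $r$-subsets of the relocated connectors monochromatic of colour $c$ and relies on the ``well-defined colours'' property together with $d$ being large enough to carry any fixed finite configuration of subtrees.
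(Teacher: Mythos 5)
Your route is the same as the paper's: trim with \Cref{lem:trimming-order} and \Cref{lem:trimming-ramsey} relative to $G^{3k_{j+1}}$, define the $(c,S)$/grey auxiliary colouring of $G[U_j]^{k_{j+1}}$, apply \Cref{cor:ramsey-expanders} with $H=Q$, and then split into the non-grey case (relocate connectors, read off a homomorphic image of $K_r(Q^-[K_t])$, count preimages by distances) and the grey case (glue the trimmed trees below each covered vertex and verify \ref{itm:MT} via the dichotomy ``both $X$ and $Z$ descend into the same child tree'' versus ``into two different child trees'', the first contradicting \ref{itm:MT} at level $j$ and the second contradicting greyness). The grey-case argument, the inclusion $\G_{j+1}\subseteq\G_j^\ast\subseteq\G_j\subseteq K_r(G^{k_0})$, and the $(D+1)\Delta^{2k_0+1}$ count all match the paper; your extra untangling/arity-adjustment inside each grey clique is a harmless cosmetic deviation.

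The one genuine gap is in the relocation step of the non-grey case. You assert that for each $w$ one can fix ``a single copy $A_w$ of $S$'' and that ``around $A_w$ \dots one finds, using that the trees are $d$-ary with $d$ enormous, a copy of the whole bounded connector configuration.'' Largeness of the arity is not sufficient for this: the obstruction is positional, not numerical. The configuration that must be reproduced is the ordered subtree corresponding to $(X_{uv}\cup Y_{uv})\cap L(T_j'(u))$, and its $Y$-leaves may be required to sit at prescribed ordered positions relative to the leaves of the copy of $S$ — before its first leaf under the same parent, between two of its leaves, etc. If $A_w$ is an arbitrary copy of $S$ (say one of its leaves is the first child of its parent, or two of its leaves are consecutive siblings), no such insertion exists, no matter how large $d$ is; and since $\ref{itm:mono}$ only identifies colours of sets of the \emph{same} type, you cannot pass to a differently-shaped extension instead. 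So the missing ingredient is a careful choice of $A_w$: one must pick an \emph{extensible} copy of $S$, whose leaves occupy child-slots spaced out enough that every ordered tree of height at most $j$ with at most $2t$ leaves containing $S$ embeds around it. This is exactly the paper's \Cref{claim:find-good-Su}, proved by embedding $S$ along a respectful labelling with labels $t+1, 2(t+1),\ldots,t(t+1)$ so that at least $t$ free slots remain on each side of every used child. With that claim inserted (and noting, as you do, that type-preservation is needed only $r$-subset by $r$-subset and only for the portion of $Y_{uv}$ inside $L(T_j'(u))$ and $L(T_j'(v))$), your argument goes through and coincides with the paper's proof.
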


		It is easy to complete the proof using the above proposition. To see this, let $T_0(u)$ to be the tree on the single vertex $u$, for every $u \in V(G)$, and let $U_0=V(G)$. Then $(U_0,\, \{T_0(u) \,|\, u\in U_0\})$ is a height $0$ tree skeleton that satisfies \ref{itm:MT} trivially (as $L(T_j(u))$ has size $1$).\footnote{The parameter $d_0$ in the definition of a tree skeleton plays a figurative role here, and can be replaced by any other constant.}

		By iterating \Cref{prop:one-step} up to $h$ times, either there is a monochromatic homomorphic image $\HH'$ of $\HH$ such that every vertex in $\HH'$ is the image of at most $(D+1) \Delta^{2k_0+1} $ vertices in $\HH$, completing the proof of \Cref{thm:main-auxiliary}; or there is a height $h$ tree skeleton that satisfies \ref{itm:MT}, a contradiction to \Cref{lem:trees-tight-paths}.

		\begin{proof} [Proof of \Cref{prop:one-step}]
			We first prove that there is a collection of ordered $d_j'$-ary subtrees $\{ T'(u_j) \subseteq T(u_j) \,|\, u\in U_j\}$ that satisfies the following two properties.
			\begin{enumerate} [label = \rm(J\arabic*)]
				\item  \label{itm:order}
					for every edge $uv$ of $G^{3k_{j+1}}$, the trees $T_j'(u)$ and $T_j'(v)$ are vertex-disjoint,
				\item  \label{itm:mono}
					for every clique $\{u_1, \ldots, u_{\rho}\}$ in $G^{3k_{j+1}}$ with $\rho \le r$, any two $r$-sets $X, Y \subseteq \bigcup_{i \in [\rho]} L(T_j'(u_i))$, such that $X \cap L(T_j(u_i))$ and $Y \cap L(T_j(u_i))$ correspond to isomorphic ordered trees for $i \in [\rho]$, have the same colour.
			\end{enumerate}
			Note that if $j = 0$, we can take $T_j'(u) = T_j(u)$ for every $u \in U_0$; properties \ref{itm:order} and \ref{itm:mono} trivially hold in this case. 

			To see that such trees $T_j'(u)$ exist for $j \ge 1$, recall that $k_{j+1} \ll d_j' \ll d_j$ (see \eqref{eqn:dk}).  Apply \Cref{lem:trimming-order}, with parameters $d_j'$, $j$, $r$, $(\Delta+1)^{3k_{j+1}}$, $d_j$, and the graph $G^{3k_{j+1}}$. As $d_j', j, k_{j+1}, \Delta \ll d_j$, and the maximum degree of $G^{3k_{j+1}}$ is bounded by $(\Delta + 1)^{3k_{j+1}}$, the lemma is applicable. It yields a collection $\{T_j''(u) \subseteq T_j(u) \,|\, u \in U_j\}$, of $d_j'$-ary subtrees of height $j$, that satisfies \ref{itm:order}.

			Next, we can apply \Cref{lem:trimming-ramsey} with parameters $d_{j+1}, r, j, (\Delta+1)^{3r k_{j+1}}, d_j'$, the hypergraph whose edges are cliques in $G^{3k_{j+1}}$ of size at most $r$, and the collection of trees $\{T_j''(u) \,|\, u\in V(G)\}$. As before, because $d_{j+1}, j, k_{j+1}, \Delta \ll d_j'$, and the maximum degree of the aforementioned hypergraph is at most $(\Delta+1)^{3rk_{j+1}}$, the lemma is applicable. It yields a collection $\{T_j'(u) \subseteq T_j''(u) \,|\, u\in V(G)\}$, of $d_{j+1}$-ary subtrees of height $j$, that satisfies \ref{itm:mono}. 

			Let $\G_j'$ be the hypergraph on vertices $\bigcup_{u \in U_j} L(T_j'(u))$ and edges
			\begin{equation*}
				\{(v_1, \ldots, v_r) \,:\, \exists \, u_1, \ldots, u_r \in U_j \text{ s.t.\ } v_i \in L(T_j'(u_i)) \text{ and $\{u_1, \ldots, u_r\}$ is a clique in $G^{3k_{j+1}}$}\}. 
			\end{equation*}
			Note that $\G_j' \subseteq \G_j$ since $k_{j+1}\ll k_j$, and $\G_j\subseteq  K_r(G^{k_0})$ by \Cref{claim:Gj}. Consider the $s$-colouring of $\G_j'$ inherited from $K_r(G^{k_0})$.

			We define an auxiliary edge-colouring of $G[U_j]^{k_{j+1}}$ as follows: for an edge $uv$, if there exist disjoint sets $X, Y, Z$ of size $t$ such that 
			\begin{itemize}
				\item
					$X \subseteq L(T_j'(u))$, $Y \subseteq \bigcup_{w \in U_j} L(T_j'(w))$ and $Z \subseteq L(T_j'(v))$,
				\item
					the ordered subtrees of $T_j'(u))$ and $T_j'(v))$ that correspond to $X$ and $Z$, respectively, are isomorphic copies of some ordered tree $S$,
				\item
					$\G_j'[X \cup Y]$ and $\G_j'[Y \cup Z]$ are both monochromatic cliques of some colour $c$,
			\end{itemize}
			then colour $uv$ by $(c, S)$. If no such $X, Y, Z$ exist, colour $uv$ grey. (It may be that an edge $uv$ receives more than one non-grey colour.)

			Let $s'$ be the number of pairs $(c, S)$ as above; then $s'$ is bounded by a function of $s, t, h$. Apply \Cref{cor:ramsey-expanders} with parameters $s', d_{j+1}+1, D, 2^{-j}, n$ and $\eps, k_{j+1}, \alpha$, and graph $G$. As $j \le h \ll \eps^{-1}$ and $r, s, t, D, h, d_{j+1} \ll k_{j+1} \ll \alpha$, the corollary is applicable. It implies that
			\begin{equation} \label{eqn:ramsey-auxiliary}
				G[U_j]^{k_{j+1}} \to 
				\left(\,\, 
				\overbrace{K_{d_{j+1}+1} \cup \ldots \cup K_{d_{j+1}+1}  }^{\text{disjoint cliques covering $|U_j|/2$ vertices}}, \quad \quad \overbrace{Q, \ldots, Q}^{s'} 
				\,\,\right).
			\end{equation}

			We consider two cases: there is a non-grey copy of $Q$; or there is a collection of pairwise-disjoint grey $(d_{j+1}+1)$-cliques that covers at least $|U_j|/2$ vertices.

		\subsubsection*{Case 1: non-grey monochromatic $Q$}
			In this case there exists a $(c, S)$-coloured copy of $Q$, denoted by $Q'$, for some $c$ and $S$ as above. We shall need the following claim, that guarantees the existence of an `extensible' copy of $S$ in each tree $T_j'(u)$.

			\begin{claim} \label{claim:find-good-Su}
				For every $u \in U_j$, there is a copy $S_u$ of $S$ in $T_j'(u)$ such that: 
				\begin{itemize}
					\item
						$L(S_u) \subseteq L(T_j'(u))$, 
					\item
						for every ordered tree $S'$ that contains $S$, has height $j' \le j$, has at most $2t$ leaves, and $L(S) \subseteq L(S')$, there is a copy of $S'$ in $T_j'(u)$ that contains $S_u$.
				\end{itemize}
			\end{claim}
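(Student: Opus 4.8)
The plan is to exploit the fact that, at this point in the argument, every $T_j'(u)$ is a $d_{j+1}$-ary ordered tree of height $j$, and by \eqref{eqn:dk} we may take $d_{j+1}$ to be as large as we wish in terms of $t$ and $h$ (say $d_{j+1}\ge 4t^2$). So there is plenty of room, and I would build $S_u$ by embedding $S$ into $T_j'(u)$ greedily while deliberately \emph{reserving} unused children at every vertex we touch, so that any later extension of bounded size can be routed through the reserved space.

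Concretely: write $h_S$ for the height of $S$, and note $h_S\le j$ (since $S$ is the abstract ordered tree of a subtree of some $T_j'(w)$ spanned by a set of leaves, rooted at their least common ancestor). First I would fix a path $w_0,\dots,w_{j-h_S}$ starting at the root $w_0$ of $T_j'(u)$ so that for each $i<j-h_S$ the child $w_{i+1}$ has at least $2t$ siblings to its left and at least $2t$ to its right in the left-to-right order of the children of $w_i$; this is possible because $d_{j+1}>4t$. Then, taking $w_{j-h_S}$ to be the image of the root of $S$, I would embed $S$ level by level: having placed a vertex $v$ of $S$ at $\phi(v)$, place the at most $t$ children of $v$ among the children of $\phi(v)$ so that each of the $\deg_S(v)+1$ gaps around the used children still contains at least $2t$ unused children of $\phi(v)$ (this needs only $d_{j+1}\ge t+(t+1)\cdot 2t$). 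Since the root of $S$ sits at depth $j-h_S$ and $S$ has height $h_S$, the leaves of the resulting copy $S_u$ land at depth $j$, i.e.\ $L(S_u)\subseteq L(T_j'(u))$, as required.

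For extensibility, let $S'$ be an ordered tree of height $j'\le j$ with $S\subseteq S'$, $L(S)\subseteq L(S')$ and $|L(S')|\le 2t$. Since all leaves of $S'$ are at depth $j'$ and the leaves of $S$ are among them, the root of $S$ sits at depth $j'-h_S$ in $S'$, so $S'$ decomposes as: a path $\sigma_0\cdots\sigma_{j'-h_S}$ from the root of $S'$ down to the root of $S$; the copy of $S$ itself; and a family of ``extra branches'' hanging off the $\sigma_i$'s and off vertices of $S$, with at most $2t$ leaves in total, each branch having exactly the height needed to reach depth $j'$. I would send $\sigma_i$ to $w_{j-j'+i}$ (so the root of $S$ maps to $w_{j-h_S}$, consistently with $\phi$), embed $S$ via $\phi$, and route each extra branch hanging off $\sigma_i$ (respectively off $v\in S$) into a distinct, previously unused child of $w_{j-j'+i}$ (respectively of $\phi(v)$), choosing the child in the correct gap so the ordering is preserved. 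The reserved slack ($\ge 2t$ per gap) makes every such choice available, and each extra branch embeds into the $d_{j+1}$-ary subtree rooted at its target child greedily: it has at most $2t$ leaves, exactly matches the height of that subtree, and $d_{j+1}\ge 2t$. Distinct extra branches go into disjoint subtrees of $T_j'(u)$, through children $\phi$ never uses, so the whole map is an injective ordered embedding of $S'$ that contains $S_u$.

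The mathematical content is elementary; I expect the only mildly delicate points to be bookkeeping: keeping the depth offsets straight ($h_S$ versus $j$, and $j'$ versus $j$), reserving enough unused children \emph{in each individual gap} rather than merely in total so that the left-to-right order of children is respected, and checking that the routed pieces are pairwise disjoint. All the numerical inequalities on the parameters are swallowed by $r,s,t,h\ll d_{j+1}$ from \eqref{eqn:dk}, so there is no genuine obstacle.
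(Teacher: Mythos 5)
Your proof is correct and is essentially the paper's argument: the paper realises the same ``spread-out copy of $S$ with reserved room in every gap'' idea by fixing a respectful labelling of $T_j'(u)$ and placing $S$ at the labels $t+1, 2(t+1), \ldots, t(t+1)$, so that for any $S'$ with at most $2t$ leaves the labelling extends and yields a copy of $S'$ containing $S_u$. Your greedy reservation version is, if anything, a bit more explicit than the paper's about the depth bookkeeping and about keeping spare children on both sides of the path from the root of $T_j'(u)$ down to the root of $S_u$, which the paper's labelling argument leaves implicit.
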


			\begin{proof}
				For an ordered tree $T$ with root $x$, denote by $T_u$ the subtree of $T$ rooted at $u$.
				A \emph{respectful labelling} of $T$ is a labelling $\varphi: V(T) \setminus \{x\} \to \N$ such that for any two vertices $y$ and $z$ that are children of the same vertex, if the leaves in the subtree $T_y$ precede the leaves in $T_z$ then $\varphi(y) < \varphi(z)$ (note that by definition of an ordered tree, either the leaves of $T_y$ precede the leaves of $T_z$, or vice versa).

				Fix $u \in U_j$ and denote $T = T_j'(u)$. Let $\varphi$ be a respectful labelling of $T$ with labels in $[d_j']$ (note that all labels are uniquely determined). Let $\psi$ be a respectful labelling of $S$ with labels in $\{t+1, 2(t+1), \ldots, t(t+1)\}$; such a labelling exists, because $S$ has $t$ leaves and so every vertex in $S$ has at most $t$ children. 

				Let $S'$ as in the statement of the claim and fix an embedding $f : S \to S'$. We claim that there is a respectful labelling $\psi'$ of $S'$ with labels in $[t^2+2t]$ that extends $\psi$; namely $\psi(u) = \psi'(f(u))$ for every $u \in S$ (except for the root of $S$, which does not receive a label in $\psi$).
				Indeed, this follows by choice of $\psi$ because every vertex in $S$ has at most $t$ children that are not in $S$.

				We take $S_u$ to be the copy of $S$ in $T$ for which the labelling $\varphi$ restricted to $S_u$ is $\psi$. The claim follows from the previous paragraph. (Here we use that $d_j' \gg t$ to guarantee the existence of $S_u$.)
			\end{proof}

			For each $u \in U_j$, fix a copy $S_u$ of $S$ in $T_j'(u)$ as in \Cref{claim:find-good-Su}.
			By definition of the auxiliary colouring of $G[U_j]^{k_{j+1}}$, for every edge $uv$ in $Q'$, there are disjoint sets $X_{uv}, Y_{uv}, Z_{uv}$ of size $t$ such that: $X_{uv} \subseteq L(T_j'(u))$ and $Z_{uv} \subseteq L(T_j'(v))$,  and the ordered subtrees of $T_j'(u)$ and $T_j'(v)$ corresponding to $X_{uv}$ and $Z_{uv}$ are isomorphic to $S$.  Moreover,  $X_{uv} \cup Y_{uv}$ and $Y_{uv} \cup Z_{uv}$ are $c$-coloured cliques in $\G_j'$. We claim that we may assume $X_{uv} = L(S_u)$ and $Z_{uv} = L(S_v)$. 

			Indeed, replace $X_{uv}$ by $L(S_u)$, and replace the vertices of $Y_{uv} \cap L(T_j'(u))$ by a set $Y'$ of the same size in $L(T_j'(u))$, such that the subtree of $T_j'(u)$ corresponding to $L(S_u) \cup Y'$ is isomorphic to the subtree of $T_j'(u)$ corresponding to $(X_{uv} \cup Y_{uv}) \cap L(T_j'(u))$. Such a set $Y'$ exists due to the choice of $S_u$ as in \Cref{claim:find-good-Su}. These modified sets $X_{uv}, Y_{uv}, Z_{uv}$ satisfy the same properties as the original sets, due to \ref{itm:mono}. Similarly, we may assume that $Z_{uv} = L(S_v)$. 

			Consider the $r$-uniform hypergraph $\HH'$ on vertices $\left(\bigcup_{u \in V(Q')} L(S_u)\right) \cup \left(\bigcup_{uv \in E(Q')} Y_{uv}\right)$ whose edges are $r$-subsets of $L(S_u) \cup Y_{uv}$, for $uv \in E(Q')$. Note that this is a $c$-coloured homomorphic copy of $\HH = K_r(Q^-[K_{t}])$.
			Notice also that every vertex in $L(S_u)$ is at distance at most $k_0$ from $u$, by \ref{itm:edges}. Similarly, every vertex in $Y_{uv}$ is at distance at most $2k_0$ from $u$. Indeed, since $L(S_u) \cup Y_{uv}$ is a clique in $\G_j'$ and $\G_j' \subseteq K_r(G^{k_0})$, every vertex in $Y_{uv}$ is at distance at most $k_0$ from every vertex in $L(S_u)$, which in turn is at distance at most $k_0$ from $u$. It follows that every vertex in $G$ is in at most $1 + \Delta + \ldots + \Delta^{k_0} \le \Delta^{k_0+1}$ sets $L(S_u)$, and in at most $D \cdot (1 + \Delta + \ldots + \Delta^{2k_0}) \le D \cdot \Delta^{2k_0+1}$ sets $Y_{uv}$. So $\HH'$ is a $c$-coloured homomorphic copy of $\HH$ where every vertex in $\HH'$ is the image of at most $(D + 1) \cdot \Delta^{2k_0+1}$ vertices in $\HH$, as required.

		\subsubsection*{Case 2: many disjoint grey cliques}

			In this case there is a collection of pairwise disjoint grey $(d_{j+1}+1)$-cliques in $G^{k_{j+1}}[U_j]$ that covers at least $|U_j|/2$ vertices in $U_j$; denote by $U_{j+1}$ the set of vertices covered by these cliques.

			Given $u \in U_{j+1}$, let $K$ be a grey $(d_{j+1}+1)$-clique in $G^{k_{j+1}}$ that contains $u$, and let $u_1, \ldots, u_{d_{j+1}}$ be the other vertices in $K$. Form a $d_{j+1}$-ary tree $T_{j+1}(u)$ of height $j+1$ by joining the roots of $T_j'(u_1), \ldots, T_j'(u_{d_{j+1}})$ to $u$ and thinking of $u$ as the root. Note that $T_{j+1}(u)$ is indeed a tree since $T_j'(u_p)$ and $T_j'(u_q)$ are vertex disjoint for $p\neq q$ due to \ref{itm:order} and since $u_pu_q\in E(G^{k_{j+1}}[U_j])$. Inside each $T_j'(u_q)$, keep the order of the leaves in $T_{j+1}(u)$ as they were in $T_j'(u)$, and order the leaves of distinct trees in increasing order of the indices of the roots, that is, we first put the leaves of $T_j'(u_1)$ then the leaves of $T_j'(u_2)$, etc.

			\begin{claim} 
				$(U_{j+1},\, \{T_{j+1}(u) \,|\, u\in U_{j+1}\})$ is a $d_{j+1}$-ary tree skeleton of height $j+1$ that satisfies \ref{itm:MT}.
			\end{claim}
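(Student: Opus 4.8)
The plan is to verify that the pair $(U_{j+1},\, \{T_{j+1}(u) \,|\, u\in U_{j+1}\})$ constructed in Case 2 satisfies the three conditions \ref{itm:size}--\ref{itm:edges} of a height $j+1$ tree skeleton (\Cref{def:tree-skeleton}), and then separately verify \ref{itm:MT}. The first three are essentially bookkeeping: \ref{itm:size} holds because the grey cliques cover at least $|U_j|/2$ vertices and $|U_j| \ge |G|/2^j$ by the inductive hypothesis, so $|U_{j+1}| \ge |G|/2^{j+1}$; \ref{itm:ordered} holds because $T_{j+1}(u)$ was explicitly constructed as a $d_{j+1}$-ary ordered tree of height $j+1$ (the ordering on the leaves being inherited level-by-level from the $T_j'(u_q)$'s and ordered across the subtrees by the index of the root), and its vertices lie in $V(G)$ since each $T_j'(u_q)$ did; and \ref{itm:edges} holds because the new edges joining $u$ to the roots of the $T_j'(u_q)$ lie in $G^{k_{j+1}}$ (as $K$ is a clique in $G^{k_{j+1}}[U_j]$), while the edges within each $T_j'(u_q)\subseteq T_j(u_q)$ between levels $j-i$ and $j-(i-1)$ lie in $G^{k_i}$ by the inductive hypothesis applied to the height $j$ tree skeleton, and these become the edges between levels $(j+1)-(i+1)$ and $(j+1)-i$ of $T_{j+1}(u)$.

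The substantive part is \ref{itm:MT}. I would argue by contradiction: suppose there are disjoint sets $X,Y,Z \subseteq V(G)$ of size $t$ and a vertex $u \in U_{j+1}$ with $X, Z \subseteq L(T_{j+1}(u))$, with the ordered subtrees of $T_{j+1}(u)$ corresponding to $X$ and $Z$ being vertex-disjoint isomorphic ordered trees, and with $X\cup Y$ and $Y\cup Z$ both monochromatic cliques of the same colour $c$ in $\G_{j+1}$. The key structural observation is that $L(T_{j+1}(u)) = \bigcup_{q} L(T_j'(u_q))$ where $u_1, \dots, u_{d_{j+1}}$ are the children of $u$ (the other vertices of the grey clique $K$), and the root $u$ has height-$1$ descendants exactly the roots of the $T_j'(u_q)$. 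Because $X$ and $Z$ correspond to \emph{vertex-disjoint} isomorphic ordered subtrees of $T_{j+1}(u)$, and because each such subtree, once it descends below level $1$, lives entirely inside a single child subtree $T_j'(u_q)$, there are two cases. Either $X$ and $Z$ are both contained in a single $L(T_j'(u_p))$ for the same $p$ — but then, since $T_{j+1}(u)$ restricted to $T_j'(u_p)$ is exactly $T_j'(u_p)$ as an ordered tree, and $\G_{j+1} \subseteq \G_j$ (this needs checking: an edge of $\G_{j+1}$ comes from a clique in $G^{k_{j+1}}$, hence from a clique in $G^{k_j}$ since $k_{j+1} \ll k_j$, and the leaf sets are subsets of the $L(T_j(u_q))$'s, so one gets an edge of $\G_j$), this would contradict \ref{itm:MT} for the \emph{original} height $j$ tree skeleton. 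Or $X \subseteq L(T_j'(u_p))$ and $Z \subseteq L(T_j'(u_q))$ for distinct $p \ne q$ — and this is precisely the configuration forbidden by the greyness of the clique $K$: the triple $(X, Y, Z)$ would be a $(c, S)$-connector between $T_j'(u_p)$ and $T_j'(u_q)$ (with $S$ the common isomorphism type of the subtrees corresponding to $X$ and $Z$), contradicting the fact that $u_p u_q$ was coloured grey in the auxiliary colouring of $G[U_j]^{k_{j+1}}$.

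The main obstacle I anticipate is the case analysis in the previous paragraph, specifically making precise the claim that if the subtrees corresponding to $X$ and $Z$ in $T_{j+1}(u)$ are vertex-disjoint then they cannot "straddle" in a way that mixes levels incompatibly — one needs to use that both subtrees are ordered trees of the \emph{same} height (equal to the depth of the deepest leaf of $X$, which equals that of $Z$ by the isomorphism) and that the root $u$ of $T_{j+1}(u)$ has as its children the roots of the $T_j'(u_q)$'s, so a connected subtree of $T_{j+1}(u)$ that contains at least two leaves from distinct children must contain $u$ itself; hence vertex-disjointness forces the two subtrees to sit inside the children-subtrees, possibly the same one or two different ones. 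A minor subtlety is that if the subtrees corresponding to $X$ and $Z$ both contain the root $u$, they cannot be vertex-disjoint, so that case is vacuous; and one should also handle the degenerate possibility that $X$ or $Z$ meets level $0$ or level $1$ only — but the height of these subtrees is at least the height of the minimal subtree spanned by $t \ge r \ge 1$ leaves, and the disjointness hypothesis rules out both containing $u$, so after a short argument everything reduces to the two clean cases above. Once this reduction is in place, invoking \ref{itm:MT} for the height $j$ skeleton (first case) and the definition of grey (second case) closes the argument.
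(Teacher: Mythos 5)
Your proposal follows the paper's proof essentially step for step: the routine verification of \ref{itm:size}--\ref{itm:edges}, and for \ref{itm:MT} the same contradiction argument, using disjointness plus the isomorphism (equal heights) to place the subtrees corresponding to $X$ and $Z$ below children of $u$, then splitting into the same-child case (contradicting \ref{itm:MT} at level $j$) and the distinct-children case (contradicting greyness of the edge between the two children).

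One point needs tightening in the distinct-children case: greyness is defined via connectors whose cliques are monochromatic in $\G_j'$ (whose edges require cliques in $G^{3k_{j+1}}$ among $U_j$ and leaves of the trimmed trees $T_j'$), so you need the inclusion $\G_{j+1} \subseteq \G_j'$, not only $\G_{j+1} \subseteq \G_j$; the paper proves this by noting that each leaf $v_i \in L(T_{j+1}(u_i))$ lies in $L(T_j'(w_i))$ for some $w_i \in U_j$ with $\dist_G(u_i,w_i)\le k_{j+1}$, so the $w_i$ form a clique in $G^{3k_{j+1}}$ --- your sketched inclusion argument refers to the clique $\{u_1,\dots,u_r\}$ itself rather than these parents $w_i$, which is the (easily fixed) imprecision. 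Also, in your check of \ref{itm:edges} the levels shift by exactly one, so an edge between levels $j-i$ and $j-(i-1)$ of $T_j'(u_q)$ becomes an edge between levels $(j+1)-i$ and $(j+1)-(i-1)$ of $T_{j+1}(u)$ (not $(j+1)-(i+1)$ and $(j+1)-i$); with your labelling the requirement would be membership in $G^{k_{i+1}}$, which does not follow, whereas with the correct labelling the inherited membership in $G^{k_i}$ is exactly what is needed.
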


			\begin{proof} 
				It is easy to check that \ref{itm:size} to \ref{itm:edges} hold. It remains to check that \ref{itm:MT} holds too.  

				To do so, we will use the fact that $\G_{j+1} \subseteq \G_j' \subseteq \G_j$. The second inclusion is true since $k_{j+1}\ll k_j$. To see that $\G_{j+1} \subseteq \G_j'$, consider an edge $(v_1, \ldots, v_r)$ in $\G_{j+1}$, and let $u_1, \ldots, u_r \in U_{j+1}$ be such that $v_i \in L(T_{j+1}(u_i))$ and $\{u_1, \ldots, u_r\}$ is a clique in $G^{k_{j+1}}$. (Recall that the $u_i$'s need not be distinct.) By definition of $T_{j+1}(\cdot)$, there exist $w_1, \ldots, w_r \in U_j$ such that $u_i w_i \in E(G^{k_{j+1}})$ and $v_i \in L(T_j'(w_i))$. Since $\{u_1, \ldots, u_r\}$ is a clique in $G^{k_{j+1}}$ it follows that $\{w_1, \ldots, w_r\}$ is a clique in $G^{3k_{j+1}}$, so $(v_1, \ldots, v_r) \in E(\G_j')$.

				Suppose now that \ref{itm:MT} does not hold for $(U_{j+1},\, \{T_{j+1}(u) \,|\, u\in U_{j+1}\})$. Then there exist disjoint sets $X, Y, Z$ of size $t$ and $u \in U_j$ such that $X, Z \subseteq L(T_{j+1}(u))$; the subtrees $S_X$ and $S_Z$ of $T_{j+1}(u)$ corresponding to $X$ and $Z$ are isomorphic and vertex-disjoint; and both $X \cup Y$ and $Y \cup Z$ are monochromatic cliques in $\G_{j+1}$. 

				Note that, by disjointness of $S_X$ and $S_Z$, the roots of $S_X$ and $S_Z$ are not $u$. We can thus define $w_X$ and $w_Z$ to be the children of the root of $T_{j+1}(u)$ that are common ancestors of $X$ and $Z$, respectively.

				If $w_X = w_Z$, then this contradicts \ref{itm:MT} for $j$, using $\G_{j+1} \subseteq \G_j$. If $w_X \neq w_Z$, then $w_X w_Z$ is a grey edge of $G^{k_{j+1}}$, by construction of $T_{j+1}(u)$. We thus reached a contradiction to the definition of a grey edge, as $\G_{j+1} \subseteq \G_j'$.
				It follows that \ref{itm:MT} holds, as required.
			\end{proof}

			This completes the proof of \Cref{prop:one-step}. With it, the proof of \Cref{thm:main-auxiliary} is also complete.
		\end{proof}

\section{Powers of hypergraph trees} \label{sec:reductiontrees}

	Recall \Cref{def:hypergraph-tree}, which defines an \emph{$r$-uniform tree} to be an $r$-graph with edges $\{e_1, \ldots, e_m\}$ such that for every $i \in \{2, \ldots, m\}$ the following holds: $|e_i \cap \bigcup_{1 \le j < i} e_j| \le r-1$ and $e_i \cap \bigcup_{1 \le j < i} e_j \subseteq e_{i_0}$ for some $i_0 \in [i-1]$. Our aim in this lemma is to deduce the version of our main result for powers of $r$-uniform trees, namely \Cref{thm:main-hypergraph-tree}, from \Cref{thm:main-tree}. The main ingredient in this deduction is the following lemma, which we prove below. Here $c(\T)$ is the number of connected components in $\T$.

	\begin{lem} \label{lem:switch-to-hypergraph-trees}
		Let $\T$ be an $r$-uniform tree on $n$ vertices with maximum degree at most $d$.
		There exists a tree $S$ on $n+c(\T)$ vertices with maximum degree at most $d \cdot r$ such that $\T \subseteq K_r(S^{d+1})$.
	\end{lem}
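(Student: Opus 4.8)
The plan is to build the tree $S$ directly from the construction sequence of the $r$-uniform tree $\T$, adding for each edge $e_i$ of $\T$ a single new "hub" vertex $x_i$ that is joined to all of $e_i$, so that $e_i \cup \{x_i\}$ becomes an $(r+1)$-clique in $S$; then any $r$-subset of $e_i$ — in particular $e_i$ itself — lies in a tight path of length at most $d+1$ inside $S$ (through the hub), hence is an edge of $K_r(S^{d+1})$. First I would fix a construction ordering $e_1, \ldots, e_m$ of the edges of $\T$ as in \Cref{def:hypergraph-tree}, handling each connected component separately (this is where the $c(\T)$ extra vertices come from: one component needs no hub for its first edge, but to keep the bookkeeping uniform we may add one hub per component, or per edge — I would add one hub $x_i$ per edge $e_i$ that is \emph{not} the first edge of its component, plus possibly one "seed" vertex per component; a careful count gives exactly $n + c(\T)$ vertices). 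I would then let $S$ be the graph on $V(\T) \cup \{x_1, \ldots\}$ whose edges are, for each $i$, all pairs inside $e_i$ together with all pairs $\{x_i, v\}$ for $v \in e_i$. The key claim is that $S$ is a tree (in $\T_{n+c(\T), dr}$): acyclicity follows because the hub $x_i$ has all its neighbours inside the single old edge $e_i$, and the old vertices form a tree-like structure since $e_i \cap \bigcup_{j<i} e_j \subseteq e_{i_0}$ forces each new edge to attach along a subset of one previously-built edge — so contracting each $e_i$ gives the block structure of a genuine tree. The degree bound is immediate: an old vertex $v$ lies in at most $d$ edges $e_i$, each contributing at most $(r-1)$ old neighbours and one hub neighbour, so $\deg_S(v) \le d\cdot r$; a hub has degree $r$.

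The main steps, in order: (1) record the construction sequence of $\T$ and the attachment map $i \mapsto i_0$; (2) define the auxiliary hubs and the vertex set of $S$, and verify the count $|V(S)| = n + c(\T)$ — this is a small inclusion-exclusion over components; (3) define the edge set of $S$ and prove $S$ is acyclic and connected-per-component, i.e. a forest with $c(\T)$ components, hence a tree after the seed-vertex bookkeeping (or simply work with forests, since \Cref{thm:main-tree} and the family $\family_{n,d,\ell}$ are closed under taking forests); (4) check $\Delta(S) \le dr$; (5) verify $\T \subseteq K_r(S^{d+1})$: for each $i$, the set $e_i$ (size $r$) is contained in the tight path $v_1 x_i v_2 v_3 \cdots v_r$ — wait, more carefully, a tight path on $\le r + (d+1) - 1 = r+d$ vertices — one can route a tight path through $x_i$ visiting all of $e_i$, using that $\{x_i\} \cup e_i$ is a clique, so $e_i$ sits inside an interval of length $r+1 \le r+d$; here I should double-check the exact tight-path length needed against \Cref{def:power} (edges of $\HH^t$ are $r$-sets in a tight path on $\le r+t-1$ vertices), so I need $r+1 \le r + (d+1) - 1 = r+d$, which holds.

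The step I expect to be the main obstacle is (5), matching the power parameter exactly: I must exhibit, for each edge $e_i = \{v_1, \ldots, v_r\}$, an honest tight path in $S$ on at most $r+d$ vertices containing all of $v_1, \ldots, v_r$. Since $\{x_i, v_1, \ldots, v_r\}$ spans a clique $K_{r+1}$ in $S$, any ordering of these $r+1$ vertices is a tight path (every $r$ consecutive vertices form an edge trivially inside a clique), so $e_i$ lies in a tight path on exactly $r+1 \le r+d$ vertices — provided $d \ge 1$, which is assumed. The only subtlety is ensuring $S^{d+1}$ (not some smaller power) suffices when $e_i$ intersects several cliques; but since each single $e_i$ already lies in its own $K_{r+1}$, no long routing is ever needed, and $d+1$ is in fact generous — I would state it as $d+1$ to match \Cref{thm:main-hypergraph-tree}'s application (where the degree-$d$ tree's $t$-power with $t = d+1$ is what gets fed into \Cref{thm:main-tree}). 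The remaining routine points — the vertex count, acyclicity via the block/contraction argument, and the degree bound — are straightforward given \Cref{def:hypergraph-tree}'s two conditions, and I would dispatch them quickly.
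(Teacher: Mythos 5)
There is a genuine gap: the graph $S$ you construct is not a tree, and treeness is the whole point of the lemma. For each edge $e_i$ of $\T$ you take all pairs inside $e_i$ together with a hub $x_i$ joined to all of $e_i$, so $e_i \cup \{x_i\}$ spans a $K_{r+1}$ in $S$; this already contains triangles for every $r \ge 2$, so $S$ is neither a tree nor a forest, and the proposed acyclicity argument (``contracting each $e_i$ gives the block structure of a genuine tree'') cannot work -- contracting cliques shows at best a chordal/block structure, not acyclicity. Even the star-only variant (hub edges only, no clique edges inside $e_i$) fails, since two edges of $\T$ may share up to $r-1 \ge 2$ vertices, creating $4$-cycles through their two hubs. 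The tree requirement cannot be relaxed: the lemma is consumed by \Cref{thm:main-tree}, which applies to powers of bounded-degree \emph{trees}, so a bounded-degree graph containing cliques is useless here. Your vertex count is also wrong: one hub per edge gives $n$ plus roughly $e(\T)$ vertices, and $e(\T)$ can be of order $n$, far exceeding the claimed $c(\T)$ (the intended $+\,c(\T)$ is one auxiliary vertex per \emph{component}). A smaller point: $S$ is a $2$-uniform graph, so $S^{d+1}$ is simply the graph power (adjacency iff distance at most $d+1$ in $S$) and $K_r(S^{d+1})$ asks for $r$-sets that are pairwise at distance at most $d+1$; your ``tight path on at most $r+d$ vertices'' computation applies \Cref{def:power} with the wrong uniformity, though this is not the source of the error.

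For contrast, the paper's construction adds a single new root $x$ per component, makes $e_1$ the leaf set of a star at $x$, and for $i \ge 2$ attaches only the genuinely new vertices $e_i \setminus e_{f(i)}$ as leaves of a carefully chosen vertex $p_i \in (e_i \cap e_{f(i)}) \setminus e_{f^2(i)}$ (where $e_{f(i)}$ is the earlier edge containing $e_i$'s intersection with the past). Since only leaves are ever added, $S$ is a tree on $n + c(\T)$ vertices, and $\deg_S(u) \le r\deg_\T(u) \le rd$. The price is that two vertices of the same edge $e_i$ are no longer at distance $2$ in $S$: one proves $\dist_S(u,w) \le \deg_{\T_i}(w)+1 \le d+1$ by induction along $f$, and this is exactly why the exponent is $d+1$. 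Your remark that ``$d+1$ is in fact generous'' and that distance $2$ always suffices is an artifact of the illegal clique construction; in a legitimate tree construction the bound $d+1$ is genuinely needed and is where the work lies.
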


	The following corollary follows quite easily (recall \Cref{def:power} about powers of hypergraphs).

	\begin{cor} \label{cor:power-tight-trees}
		Let $\T$ be an $r$-uniform tree on $n$ vertices with maximum degree at most $d$. Then 
		there is a tree $S$ on $n+c(\T)$ vertices with maximum degree at most $d \cdot r$ such that ${\T}^t \subseteq K_r(S^{t(d+1)})$ for any $t \ge 1$.
	\end{cor}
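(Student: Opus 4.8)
The plan is to use the \emph{same} tree $S$ produced by \Cref{lem:switch-to-hypergraph-trees} for all powers, and to reduce the statement to two elementary facts: that the $t$-power of a hypergraph is monotone under taking subhypergraphs, and that $t$-powers interact cleanly with the $K_r(\cdot)$ operation and with graph powers.

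First I would record the monotonicity of the $t$-power: if $\HH_1$ is a subhypergraph of $\HH_2$ (so $V(\HH_1) \subseteq V(\HH_2)$ and every edge of $\HH_1$ is an edge of $\HH_2$), then $\HH_1^t \subseteq \HH_2^t$. This is immediate from \Cref{def:power}, since every window of a tight path in $\HH_1$ is an edge of $\HH_1$, hence of $\HH_2$, so any tight path of $\HH_1$ is also a tight path of $\HH_2$. Applying this to the inclusion $\T \subseteq K_r(S^{d+1})$ supplied by \Cref{lem:switch-to-hypergraph-trees} gives $\T^t \subseteq \big(K_r(S^{d+1})\big)^t$.

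Next I would prove a general claim: for every graph $G$ and all integers $r, t \ge 1$, one has $\big(K_r(G)\big)^t \subseteq K_r(G^t)$. Indeed, let $e$ be an edge of $\big(K_r(G)\big)^t$. By \Cref{def:power}, $e$ is an $r$-subset of the vertex set $\{v_1, \ldots, v_m\}$ of some tight path of $K_r(G)$ with $m \le r+t-1$; since $|e| = r$ we also have $m \ge r$. Each window $\{v_c, \ldots, v_{c+r-1}\}$ of this tight path is an edge of $K_r(G)$, i.e.\ an $r$-clique of $G$, so any two vertices $v_i, v_j$ of the path whose indices differ by at most $r-1$ lie in a common window and are therefore adjacent in $G$ (one just checks that for $1 \le i < j \le m$ with $j - i \le r-1$ and $m \ge r$ such a window exists — routine bookkeeping). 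Walking along the path in steps of size $r-1$ then yields $\dist_G(v_a, v_b) \le \lceil (b-a)/(r-1) \rceil \le \lceil (m-1)/(r-1) \rceil \le \lceil (r+t-2)/(r-1) \rceil \le t$ for every $a, b$, provided $r \ge 2$ (the case $r = 1$ being degenerate and trivial). Hence $e$ is an $r$-clique of $G^t$, proving the claim.

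Finally I would combine everything: using the claim with $G = S^{d+1}$ and the identity $(G^a)^b = G^{ab}$ for graph powers (both sides join precisely the pairs at $G$-distance at most $ab$), we get
\[
\T^t \subseteq \big(K_r(S^{d+1})\big)^t \subseteq K_r\big((S^{d+1})^t\big) = K_r\big(S^{t(d+1)}\big).
\]
Since $S$ already has $n + c(\T)$ vertices and maximum degree at most $d \cdot r$ by \Cref{lem:switch-to-hypergraph-trees}, this is exactly the assertion of the corollary. The only mildly delicate point is the estimate in the general claim — that a tight path on at most $r+t-1$ vertices has diameter at most $t$ in the underlying graph — which is where the hypothesis $r \ge 2$ and the ceiling inequality $\lceil (r+t-2)/(r-1) \rceil \le t$ are used; everything else is formal.
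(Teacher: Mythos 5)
Your proof is correct and follows essentially the same route as the paper: both rest on \Cref{lem:switch-to-hypergraph-trees} together with the observation that along a tight path on at most $r+t-1$ vertices any two vertices can be reached in $\lceil (r+t-2)/(r-1)\rceil \le t$ hops of size at most $r-1$, each hop staying within an edge, which yields distance at most $t(d+1)$ in $S$. The paper simply carries out this distance computation directly in $S$, whereas you package it as the general inclusions $\T^t \subseteq \big(K_r(S^{d+1})\big)^t \subseteq K_r\big((S^{d+1})^t\big) = K_r(S^{t(d+1)})$ --- a purely organisational (if slightly more reusable) difference, not a different argument.
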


	\begin{proof}
		By Lemma~\ref{lem:switch-to-hypergraph-trees} there is a tree $S$ on $n+c(\T)$ vertices with maximum degree at most $d \cdot r$ such that $\T \subseteq K_r(S^{d+1})$. Let $e \in E(\T^t)$. By definition of $\T^t$, there exists a tight path $P$ on at most $r+t-1$ vertices that contains $e$; denote the vertices of $P$ (in order) by $(u_1, \ldots, u_s)$. Observe that for $i, j \in [s]$ with $|i-j| \le r-1$, the vertices $u_i$ and $u_j$ are contained in an edge of $\T$, so by choice of $S$ they are at distance at most $d+1$ in $S$. It follows that $\dist_S(u_i, u_j) \le \ceil{(s-1)/(r-1)} \cdot (d+1) \le t (d+1)$ for every $i, j \in [s]$. This implies that $\{u_1, \ldots, u_s\}$ is a clique in $S^{t(d+1)}$. As $e \subseteq \{u_1, \ldots, u_s\}$, we have $e \in K_r(S^{t(d+1)})$, establishing that $\T^t \subseteq K_r(S^{t(d+1)})$, as required. 
	\end{proof}

	It is easy to prove the version of our main result for hypergraph trees.

	\begin{proof} [Proof of \Cref{thm:main-hypergraph-tree}]
		Let $\T$ be an $r$-uniform tree on $n$ vertices with maximum degree $d$. By \Cref{cor:power-tight-trees}, there is a tree $S$ on $n + c(\T) \le 2n$ vertices with maximum degree at most $dr$, such that $\T^t \subseteq K_r(S^{t(d+1)})$. By \Cref{thm:main-tree}, applied with $r, s, dr, t(d+1)$, we find that $\rhat_s(K_r(S^{t(d+1)})) = O(n)$. It follows that $\rhat_s(\T^t) = O(n)$, as required.
	\end{proof}

	It remains to prove \Cref{lem:switch-to-hypergraph-trees}.

	\begin{proof}[Proof of \Cref{lem:switch-to-hypergraph-trees}]

		Observe that the connected components of any $r$-uniform tree are themselves $r$-uniform trees. It thus suffices to prove the lemma under the assumption that $\T$ is connected. Indeed, if $\T$ consists of components $\T_1, \ldots, \T_k$ and the statement for connected hypergraph trees holds, then there are trees $S_1, \ldots, S_k$ such that $S_i$ has at most $|\T_i|+1$ vertices, has maximum degree at most $dr$, and satisfies $\T_i \subseteq K_r(S^{d+1})$, for $i \in [k]$. 
		Form a tree $S$ as follows. Suppose that $S_1, \ldots, S_k$ are pairwise vertex-disjoint. For $i \in [k-1]$, let $e_i$ be an edge that joins a leaf of $S_i$ with a leaf of $S_{i+1}$. Take $S$ to be the union of the trees $S_1, \ldots, S_k$ and the edges $e_1, \ldots, e_{k-1}$; then $S$ is a tree on $n + c(\T)$ vertices, with maximum degree at most $dr$ (observe that the degree of vertices incident with at least one edge $e_i$ is at most $dr$, and we may assume $dr \ge 3$, as other cases result in trivial statements). From now on, we assume that $\T$ is connected.

		Let $e_1, \ldots, e_m$ be the edges of $\T$, and suppose that for every $i \in \{2, \ldots, m\}$ we have $1 \le |e_i \cap (\bigcup_{1 \le j < i} e_j)| \le r-1$ (using that $\T$ is connected), and $e_i \cap (\bigcup_{1 \le j < i}e_j) \subseteq e_{i_0}$ for some $i_0$ with $1 \le i_0 < i$. 
		Define $f: [m] \to [m]$ by setting $f(1) = 1$ and $f(i) = i_0$, where $i > 1$ and $i_0$ is as above. We will denote $f(f(i))$ simply by $f^2(i)$.

		For $i \in [m]$, let $\T_i$ be the subtree of $\T$ spanned by the edges $e_1, \ldots, e_i$. We construct a tree $S_i$ on the vertex set $\{x\} \cup V(\T_i)$ for every $i \in [m]$, where $x$ is a new vertex, and such that $S_1 \subseteq \ldots \subseteq S_m = S$, as follows.

		\begin{itemize}
			\item 
				$S_1$ is a star whose root is $x$ and whose leaves are the vertices in $e_1$,
			\item  
				for each $i$ with $2 \le i \le m$, pick $p_i$ as follows: if $f(i) = 1$, define $p_i$ to be any vertex from $ e_i \cap e_1$; and if $f(i) > 1$, define $p_i$ to be any vertex from $ (e_i \cap e_{f(i)}) \setminus e_{f^2(i)}$. Form $S_i$ by adding the vertices in $e_i \setminus e_{f(i)}$ to $S_{i-1}$ and joining them to $p_i$.
		\end{itemize}
		(Note that if $f(i) > 1$, then $(e_i \cap e_{f(i)}) \setminus e_{f^2(i)}$ is non-empty by minimality of $f(i)$, as $f^2(i) < f(i)$.)

		It is easy to see that each $S_i$ is a tree, as we keep adding leaves at each iteration, starting  with a star. 
		We first argue that $S$ has bounded degree, using the following claim.
		\begin{claim} \label{claim:deg-Si}
			For every vertex $u$ in $\T_i$ we have $\deg_{S_i}(u) \le r \cdot \deg_{\T_i}(u)$.
		\end{claim}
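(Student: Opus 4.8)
We wish to show that for every vertex $u$ in $\T_i$ we have $\deg_{S_i}(u) \le r \cdot \deg_{\T_i}(u)$. The natural approach is induction on $i$. The base case $i=1$ is immediate: $S_1$ is a star with root $x$ and leaves $e_1$, so the leaves have degree $1 = r \cdot \deg_{\T_1}(u)/r \le r \cdot 1$ (each vertex of $e_1$ lies in the single edge $e_1$), and $x$ has degree $r$, but $x \notin V(\T_i)$ so it is not constrained by the claim.

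**The inductive step.** Suppose the claim holds for $S_{i-1}$ and $\T_{i-1}$. Recall $S_i$ is obtained from $S_{i-1}$ by adding the vertices of $e_i \setminus e_{f(i)}$ as new leaves attached to a single vertex $p_i \in e_i \cap e_{f(i)}$. The only vertices whose degree changes are $p_i$ (whose degree increases by $|e_i \setminus e_{f(i)}| \le r-1$) and the newly-added vertices (each gets degree $1$). For a newly-added vertex $v \in e_i \setminus e_{f(i)}$: it lies in the edge $e_i$ of $\T_i$, so $\deg_{\T_i}(v) \ge 1$, hence $\deg_{S_i}(v) = 1 \le r \cdot \deg_{\T_i}(v)$. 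For $p_i$: I would argue that $\deg_{\T_i}(p_i) = \deg_{\T_{i-1}}(p_i) + 1$, since $p_i \in e_i$ and $p_i$ was already in $\T_{i-1}$ (it lies in $e_i \cap e_{f(i)}$ with $f(i) < i$), so $e_i$ is a new edge at $p_i$ in $\T_i$. Then
\[
\deg_{S_i}(p_i) = \deg_{S_{i-1}}(p_i) + |e_i \setminus e_{f(i)}| \le r \cdot \deg_{\T_{i-1}}(p_i) + (r-1) \le r\big(\deg_{\T_{i-1}}(p_i) + 1\big) = r \cdot \deg_{\T_i}(p_i),
\]
using the induction hypothesis and $|e_i \setminus e_{f(i)}| \le r - 1$ (which holds because $|e_i \cap e_{f(i)}| \ge 1$, as $p_i$ witnesses). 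All other vertices of $\T_i$ retain their $S_{i-1}$-degree and their $\T_{i-1} = \T_i$-restricted degree is unchanged, so the bound is inherited directly. This closes the induction.

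**The main subtlety.** The step I expect to need the most care is verifying that $p_i$ really does gain exactly one new incident edge in $\T_i$ relative to $\T_{i-1}$, and in particular that $p_i \in V(\T_{i-1})$ — this relies on $p_i \in e_i \cap e_{f(i)}$ and $f(i) \le i-1$, so $p_i$ already appears in some earlier edge. One should also double check the edge case $f(i) = 1$ versus $f(i) > 1$: in both cases $p_i$ is chosen from $e_i \cap e_{f(i)}$ (possibly minus $e_{f^2(i)}$), and in both cases this intersection is nonempty — for $f(i)=1$ because $\T$ is connected and for $f(i) > 1$ by the minimality-of-$f(i)$ remark in the construction — so $p_i$ is well-defined and the degree bookkeeping above goes through identically. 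Finally, since $x \notin V(\T_i)$, the fact that $\deg_{S_i}(x)$ may be as large as $r$ does not affect the claim, and the bound $\deg_S(u) \le r \cdot \deg_{\T}(u) \le dr$ for all $u \in V(\T)$ follows by taking $i = m$.
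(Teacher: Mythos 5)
Your proof is correct and follows essentially the same route as the paper: induction on $i$, observing that only $p_i$ (whose $S$-degree grows by $|e_i\setminus e_{f(i)}|\le r-1$ while its $\T$-degree grows by $1$) and the new leaves of $e_i\setminus e_{f(i)}$ change degree. Your bookkeeping is in fact slightly cleaner than the paper's, which states the degree relation for $p_i$ with the sign reversed (a typo), but the argument is identical in substance.
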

		\begin{proof} 
			We prove the statement by induction on $i$.
			The statement holds for $i=1$, as the vertices of $\T_1$ have degree $1$ in both $S_1$ and $\T_1$. 
			For $i \ge 2$, suppose that the statement holds for $i-1$. Note that the only vertices whose degree changes when moving from $S_{i-1}$ to $S_i$ are $p_i$, whose degree increases by less than $r$, and the elements of $e_i \setminus e_{f(i)}$, whose degree in $S_i$ is $1$. As $\deg_{\T_i}(p_i) = \deg_{\T_{i-1}}(p_i) - 1$ (because $p_i \in e_i$), the statement for $i$ follows. 
		\end{proof}
		Note that $\deg_{S_i}(x) = r$ for every $i \in [m]$. It thus follow from \Cref{claim:deg-Si} that $\Delta(S) \le rd$. 

		It remains to show that for every edge $e_i$ in $\T$, the distance between any two elements of $e_i$ is at most $d + 1$. We use the following claim.

		\begin{claim} \label{claim:distance-Si}
			Let $i \geq 2$, $u \in e_i \setminus e_{f(i)}$ and $w \in e_i$. Then $\dist_{S}(u, w) \le \deg_{\T_i}(w) + 1$.
		\end{claim}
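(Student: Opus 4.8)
The plan is to prove the claim by strong induction on $i\ge 2$, relying on two structural features of the construction. First, $S$ is built from $S_1$ by repeatedly attaching \emph{new leaves}, so once two vertices have appeared their distance in $S$ is never altered; in particular $\dist_S(p,q)=\dist_{S_m}(p,q)$ whenever $p,q\in V(S_m)$. Second, each $u\in e_i\setminus e_{f(i)}$ is added at step $i$ as a leaf attached to $p_i$, and $p_i\in V(S_{i-1})$ (since $p_i\in e_{f(i)}\subseteq V(\T_{f(i)})\subseteq V(\T_{i-1})$); hence removing the edge $up_i$ from $S$ separates $u$ from all of $V(S_{i-1})$, so for every $v\in V(S_{i-1})$ the unique $u$--$v$ path in $S$ runs through $p_i$, giving $\dist_S(u,v)=1+\dist_S(p_i,v)$.

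With these in hand I would fix $i\ge 2$, $u\in e_i\setminus e_{f(i)}$, $w\in e_i$, and split on the location of $w$. If $w\in(e_i\setminus e_{f(i)})\cup\{p_i\}$, then $w$ equals $p_i$ or is a leaf attached to $p_i$ at step $i$, so $\dist_S(u,w)\le 2\le\deg_{\T_i}(w)+1$, using $\deg_{\T_i}(w)\ge 1$ because $w\in e_i$. The substantive case is $w\in(e_i\cap e_{f(i)})\setminus\{p_i\}$: here $w\in V(S_{f(i)})\subseteq V(S_{i-1})$, so by the second feature above $\dist_S(u,w)=1+\dist_S(p_i,w)$, and it suffices to show $\dist_S(p_i,w)\le\deg_{\T_i}(w)$.

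If $f(i)=1$, then $p_i$ and $w$ are distinct leaves attached to the added vertex $x$ in $S_1$, so $\dist_S(p_i,w)=2$; and $w$ lies in the two distinct edges $e_1,e_i$ of $\T_i$, so $\deg_{\T_i}(w)\ge 2$, as needed (this also handles the base case $i=2$, where $f(i)=1$ forcibly). If $f(i)=j>1$, recall the construction chose $p_i\in(e_i\cap e_j)\setminus e_{f^2(i)}=(e_i\cap e_j)\setminus e_{f(j)}$, so in particular $p_i\in e_j\setminus e_{f(j)}$ — this is the point that makes the induction close. Since $w\in e_i\cap e_j\subseteq e_j$ and $j<i$, the inductive hypothesis applied to the edge $e_j$, with $p_i$ in the role of ``$u$'' and $w$ in the role of ``$w$'', yields $\dist_S(p_i,w)\le\deg_{\T_j}(w)+1$. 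Finally $\T_j$ is a proper prefix-subtree of $\T_i$ and $e_i\in E(\T_i)\setminus E(\T_j)$ contains $w$, so $\deg_{\T_i}(w)\ge\deg_{\T_j}(w)+1\ge\dist_S(p_i,w)$, completing the induction.

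The only real obstacle is this last case: the inductive hypothesis by itself only gives $\dist_S(u,w)\le\deg_{\T_j}(w)+2$, which is one too large, and it is saved exactly by observing that enlarging $\T_j$ to $\T_i$ adds the edge $e_i\ni w$ and therefore bumps $\deg(w)$ up by at least one. The accompanying subtlety is that one must know $p_i\in e_j\setminus e_{f(j)}$ to be allowed to invoke the hypothesis at all, which is precisely why the construction insists on picking $p_i$ outside $e_{f^2(i)}$ whenever $f(i)>1$; everything else is bookkeeping about leaves.
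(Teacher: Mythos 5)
Your proof is correct and takes essentially the same route as the paper: the heart in both arguments is the recursion from $(i,u,w)$ to $(f(i),p_i,w)$, made legitimate by the choice $p_i\in e_{f(i)}\setminus e_{f^2(i)}$ and closed by the observation that $e_i$ raises the degree of $w$ by one when passing from $\T_{f(i)}$ to $\T_i$. The only differences are cosmetic: you run strong induction on $i$ rather than on $\deg_{\T_i}(w)$, and you establish the exact identity $\dist_S(u,w)=1+\dist_S(p_i,w)$ via a separation argument where the paper just uses the triangle inequality through $p_i$.
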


		\begin{proof}
			We prove the claim by induction on $\deg_{\T_i}(w)$. If $\deg_{\T_i}(w)=1$, this means that $w \in e_i\setminus e_{f(i)}$, which by construction of $S$ implies $\dist_{S}(u, w) \leq  2 =\deg_{\T_i}(w) + 1$, as desired.

			Now suppose that $\deg_{\T_i}(w) > 1$. In particular, we have $w \in e_{f(i)}$. 
			We consider two cases: $f(i) = 1$ and $f(i) > 1$. 
			In the former case, we have $w, p_i \in e_1$, and so $\dist_{S}(w, p_i) = 2$, implying that $\dist_{S}(u, w) \le \dist_{S}(u, p_i) + \dist_{S}(p_i, w) = 3 \le \deg_{\T_i}(w) + 1$.
			Now suppose that $f(i) > 1$. Then $\deg_{\T_{f(i)}}(w) < \deg_{\T_{i}}(w)$ and $p_i \in e_{f(i)} \setminus e_{f^2(i)}$. By induction, applied to $f(i)$, $p_i$, and $w$, we find that $\dist_{S}(p_i, w) \le \deg_{\T_{f(i)}}(w) + 1 \leq  \deg_{\T_{i}}(w)$. As $p_i$ and $u$ are adjacent in $S$, it follows that $\dist_{S}(u, w) \le   \dist_{S}(p_i, w) + \dist_{S}(u, p_i)\le \deg_{\T_{i}}(w) + 1$, as required. 
		\end{proof}

		It is now easy to deduce that for every $i \in [m]$ we have $\dist_S(u, w) \le d+1$ for every $u, w \in e_i$. Indeed, for such $i, u, w$, without loss of generality, $i$ is minimum such that $u, w \in e_i$. Then either $u, w \in e_1$ and so $\dist_S(u, v) = 2$, or at least one of $u$ and $w$ is in $e_i \setminus e_{f(i)}$. Either way, by \Cref{claim:distance-Si}, we have $\dist_S(u, w) \le d+1$, as required.
	\end{proof}

\section{Conclusion} \label{sec:conc}

    In this paper we studied families of bounded degree hypergraphs whose size-Ramsey number is linear in their order. Not much is known about bounded degree hypergraphs, or even graphs, whose size-Ramsey numbers are not linear. As mentioned in the introduction, R\"odl and Szemer\'edi \cite{rodl2000size} constructed a sequence $(H_n)$, where $H_n$ is an $n$-vertex graph with maximum degree $3$ such that $\rhat(H_n) = \Omega(n (\log n)^{1/60})$, thus refuting a conjecture of Beck \cite{beck1983size}, which said that all bounded degree graphs have linear size-Ramsey numbers.  (This construction can be generalized to hypergraphs, as shown in~\cite{dudek2017size}.) R\"odl and Szemer\'edi  conjectured that this bound can be improved to $n^{1 + \eps}$ for some constant $\varepsilon >0 $, and this is widely open. We restate their conjecture here.
    
    \begin{conj}
        There exist $\Delta$, $\eps > 0$ and a sequence $(F_n)$, where $F_n$ is an $n$-vertex graph with maximum degree at most $\Delta$, such that $\rhat(F_n) = \Omega(n^{1+\eps})$.
    \end{conj}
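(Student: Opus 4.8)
The R\"odl--Szemer\'edi conjecture is wide open, so what follows is a program rather than a proof. The only known superlinear lower bound, $\rhat(H_n) = \Omega\big(n(\log n)^{1/60}\big)$, comes from the hierarchical construction of R\"odl and Szemer\'edi, so the natural starting point is to dissect that construction and locate the source of the modest $(\log n)^{1/60}$ factor. In their argument one builds $H_n$ with a nested block structure of some depth $d$, and shows that every host $G$ with $G \to H_n$ must simultaneously be ``dense'' at all $d$ scales; a counting (density-increment) argument multiplies these per-scale density gains together, and the bounded-degree restriction caps the per-level gain at a factor that is only $1+o(1)$ as $d$ grows, so the product falls short of a power of $n$ and lands at $(\log n)^{1/60}$. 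The first goal of the program is therefore to engineer a bounded-degree construction in which each recursive level contributes a \emph{constant-factor} multiplicative gain to the edge lower bound while the number of levels grows to a super-constant quantity (ideally $\Theta(\log n)$); since a level-$i$ graph typically has $c_1^i$ vertices and an accumulated gain $c_2^i$, such a scheme would yield $\rhat(H_n)=\Omega\big(n^{1+\eps}\big)$ with $\eps=\log c_2/\log c_1$.

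Concretely, I would attempt the following. On the construction side: fix a bounded-degree expander-like gadget $\Gamma$ on a constant number of vertices such that any host graph containing a monochromatic copy of $\Gamma$ in one colour class is forced into a constant-factor local blow-up of its edge density; then iterate a balanced hierarchical blow-up of $\Gamma$ by itself, inserting subdivisions only where necessary to keep the maximum degree bounded — crucially, keeping those subdivisions short enough not to collapse the size-Ramsey number, in view of the linearity results of Dragani\'c, Krivelevich and Nenadov for long subdivisions. On the lower-bound side: given a host $G$ with too few edges, build a $2$-colouring avoiding monochromatic $H_n$ by a \emph{recursive random colouring} that mirrors the hierarchy of $H_n$ — at scale $i$, if $G$ fails to be sufficiently dense between the relevant blocks, a random two-colouring of the corresponding edge set destroys all partial embeddings at that scale with probability bounded away from $0$; a union bound over the $O(\log n)$ scales, combined with a first-moment bound on the number of candidate embeddings, would then finish the argument.

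The main obstacle is the min--max tension intrinsic to size-Ramsey lower bounds: proving $\rhat(H_n)$ large requires a single sparse colouring that defeats \emph{every} potential host, and the hardest hosts to defeat are precisely the pseudorandom ones. The work of Dragani\'c, Krivelevich and Nenadov shows that a random host of size $n^{1+1/q}$ already arrows the $q$-subdivision of a bounded-degree graph, and that once the subdivision is long enough the size-Ramsey number collapses all the way to linear; so any construction proving the conjecture must specifically obstruct pseudorandom and algebraic hosts and cannot be merely ``subdivision-like''. Compounding this, a bounded-degree graph on $n$ vertices carries only $O(n)$ edges and hence only $O(n)$ units of local structure, so the obstruction forcing $n^{1+\eps}$ edges in the host must be genuinely global and entropic rather than a local clique-type witness — local clique witnesses being unavailable precisely because forcing a $K_k$ locally forces degree $\Omega(k)$, and ``spreading out'' the clique turns it into a cheap bounded-degree subdivision. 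I expect that reconciling a constant per-level exponent gain with the degree cap is the crux of the entire problem. A realistic intermediate milestone — and a good proving ground for any candidate hierarchical construction — would be to improve the constant $1/60$ to a larger explicit value, or, with a genuinely deeper hierarchy, to obtain $n^{o(1)}$-but-superpolylogarithmic bounds such as $\rhat(H_n)=\Omega\big(n\exp(\sqrt{\log n})\big)$, before attempting a true power of $n$.
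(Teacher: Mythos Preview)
The statement you were asked to prove is a \emph{conjecture} in the paper, not a theorem; the paper offers no proof and explicitly calls it ``widely open''. You have correctly recognised this and written a research programme rather than a proof, so in that sense there is nothing to compare against.

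As a programme your outline is sensible: you correctly locate the bottleneck in the R\"odl--Szemer\'edi argument (the per-level gain being only $1+o(1)$ once the degree is bounded), you correctly flag the tension with the linearity results for long subdivisions, and you identify the right intermediate targets. But none of this constitutes a proof, and the crux you yourself name --- reconciling a constant-factor per-level gain with a global degree bound --- is precisely the obstacle nobody knows how to overcome. In particular, your ``bounded-degree expander-like gadget $\Gamma$ forcing a constant-factor local blow-up'' is asserted rather than constructed, and the recursive random colouring step hides the same difficulty: the union bound over $\Theta(\log n)$ scales only works if the per-scale failure probability is bounded away from $1$ \emph{uniformly in the scale}, which is exactly the constant-gain-per-level statement in disguise. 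So the proposal is an honest and well-informed survey of the landscape, but it does not close the gap, and the paper does not either.
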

    
    Regarding upper bounds, Kohayakawa, R\"{o}dl, Schacht, and Szemer\'edi~\cite{kohayakawa2011sparse} proved that for every $\Delta\geq 3$ and $\eps <1/\Delta$, every $n$-vertex graph $F$ with maximum degree at most $\Delta$ satisfies $\rhat(F) = O(n^{2-\eps})$, thus answering a question of R\"odl and Szemer\'edi \cite{rodl2000size}. 
    
    In fact, other than the R\"odl--Szemer\'edi construction, we do not know of any other examples of bounded degree graphs with superlinear size-Ramsey number. One candidate that seems worth considering is the grid graph. The $n\times n$ \emph{grid} graph $G_{n,n}$ is defined on the vertex set $[n]\times [n]$ with edges $uv$ present, whenever $u$ and $v$ differ in exactly one coordinate and the difference is exactly $1$ (equivalently, $G_{n, n} \cong P_n \,\square\, P_n$, where $H \,\square\, F$ is the Cartesian product of $H$ and $F$). Recently, Clemens, Miralaei, Reding, Schacht and Taraz~\cite{clemens2021size} showed that the size-Ramsey number of the grid graph on $n\times n$ vertices is bounded from above by $n^{3+o(1)}$. As is often the case, the host graph in their proof is a random graph with $\Theta(n^2)$ vertices and appropriate density, and the $n^{3+o(1)}$ bound is tight, up to the $o(1)$ term in the exponent, for random graphs. No non-trivial lower bounds are known. It would thus be very interesting to have an answer to the following question.
    
    \begin{qn}
        Is the size-Ramsey number of the grid graph on $n \times n$ vertices $O(n^2)$?
    \end{qn}
    
   For hypergraphs,  even less is known. Dudek, La Fleur, Mubayi and R\"odl \cite{dudek2017size} asked for the maximum size-Ramsey number of an $r$-uniform $\ell$-tree (imposing no restrictions on the maximum degree). Recall that for $r\geq 3$ and $1\leq \ell \leq r-1$, an \emph{$r$-uniform $\ell$-tree} is an $r$-graph with edges $\{e_1, \ldots, e_m\}$ such that for every $i \in \{2, \ldots, m\}$ we have $\big|e_i \cap (\bigcup_{1 \le j < i} e_j)\big| \le \ell$ and $e_i \cap (\bigcup_{1 \le i < j} e_j) \subseteq e_{i_0}$ for some $i_0 \in [i-1]$.  The authors in~\cite{dudek2017size} showed that if $\T$  is an $r$-uniform $\ell$-tree then   $\rhat(\T)=O(n^{\ell+1})$, which is tight for $\ell=1$.  They asked whether the bound is tight for all $\ell$. We suspect the answer to be positive, and reiterate their question here.
    \begin{qn} 
        For any  $r\geq 3$ and $1\leq \ell \leq r-1$, is it true that for every $n$ there exists $r$-uniform 
        $\ell$-tree $\T$ of order at most $n$ such that $\rhat(\T)=\Omega(n^{\ell+1})$?
    \end{qn}

    Another interesting problem is the tightness of known bounds on the size-Ramsey number of $q$-subdivisions of bounded degree graphs, where $q$ is fixed. Recall that Dragani\'c, Krivelevich and Nenadov \cite{draganic2021size} recently showed that for fixed $\Delta$ and $q$, the size-Ramsey number of the $q$-subdivision of an $n$-vertex graph with maximum degree $\Delta$ is bounded by $O(n^{1+1/q})$. Similarly to the grid, this is close to tight if the host graph is a random graph. However, it is unclear if this bound is anywhere near tight in general.  We pose it as our final question. 
    \begin{qn}
        For fixed $\Delta$ and $q$, is there a sequence $(H_n)$, where $H_n$ is an $n$-vertex graph with maximum degree at most $\Delta$, such that the size-Ramsey number of the $q$-subdivision of $H_n$ is at least $\Omega(n^{1+1/q})$? 
    \end{qn}
    
    \subsection*{Acknowledgements}
    
        We would like to thank Nemanja Dragani\'c for pointing out that their original proof of  Lemma 2.4 in \cite{draganic2021rolling} was incorrect, and that their amended proof required a slightly different version of  \Cref{def:good-embedding}. This affected our proof of \Cref{thm:embed-H}.
    
	\bibliography{size-ramsey}
	\bibliographystyle{amsplain}

	\appendix 

\section{Proof of Proposition~\ref{prop:existence-expanders}} \label{sec:appendix} 

	\begin{proof}
		Without loss of generality, $\eps < 1/2e$.
		Let $G$ be a copy of $G(N, p)$, where $N = 2\alpha n$, $p = \beta / N$ and $\beta = 20 \cdot (1/\eps) \log(1/\eps)$.

		We show that with high probability $G$ is an $\eps/2$-expander.  
		\begin{align*}
			& \Pr[\text{there exist disjoint $A, B \subseteq V(G)$ of size $\eps N/2$ with no edge from $A$ to $B$}] \\
			& \qquad \qquad = \sum_{\substack{\text{$A, B \subseteq V(G)$ disjoint,}\\ |A| = |B| = \eps N / 2}} (1 - p)^{|A| \cdot |B|} \\
			& \qquad \qquad \le \binom{N}{\eps N/2}^2 \cdot \exp(-p \cdot \eps^2 N^2 / 4) \\
			& \qquad \qquad \le (2e/\eps)^{2\eps N} \cdot \exp(-(\beta \eps^2 / 4) N) \\
			& \qquad \qquad \le \exp\left( (2\eps \log(2e/\eps) - \beta \eps^2 / 4) \cdot N\right) \\
			& \qquad \qquad \le \exp \left( (4 \log(1/\eps) - \beta \eps / 4) \cdot \eps N \right) \\
			& \qquad \qquad \le \exp \left( -\eps \log(1/\eps) \alpha n \right) = o(1).
		\end{align*}

		We also observe that as the expected number of edges in $G$ is $p \binom{N}{2} \le pN^2/2$, with high probability $G$ has at most $pN^2$ edges, e.g.\ using Chernoff's bounds. 

		We may thus take $G$ to be a graph on $2\alpha n$ vertices that satisfies the above two properties; namely, it is an $\eps/2$-expander and it has at most $p N^2$ edges. The latter property implies that there are at most $N/2$ vertices with degree larger than $4pN$. It follows that there is an induced subgraph $G'$ of $G$ that has exactly $N/2 = \alpha n$ vertices, and which has maximum degree at most $4pN = 4 \beta = 80 \cdot (1/\eps) \log(1/\eps)$. Note that $G'$ is an $\eps$-expander, so it satisfies the requirements.
	\end{proof}

\section{Proof of Theorem~\ref{thm:embed-H}} \label{sec:expanding-thm}

	Our proof of \Cref{thm:embed-H} uses machinery that was introduced by Friedman and Pippenger \cite{friedman1987expanding} to prove that $(2m, d+1)$-expanding graphs contain every tree in $\T_{m,d}$. Their method was modified by Glebov, Johannsen and Krivelevich \cite{glebov2013dissertation,glebov2021hitting} to provide a flexible approach for embedding bounded degree trees. Here we use notation and results due to Dragani\'c, Krivelevich and Nenadov \cite{draganic2021rolling}.

    In a graph $G$, given a set of vertices $X$, we write $\Gamma_G(X)$ for the set of vertices in $G$ that are neighbours of at least one vertex in $X$. 
    
    	\begin{defn} \label{def:good-embedding}
		Let $m, d$ be integers. Given graphs $G$ and $H$, an embedding $\varphi : H \to G$ is an \emph{$(m, d)$-good embedding} if for every subset $X \subseteq V(G)$ of size at most $m$ the following holds.
		\begin{equation} \label{eqn:good-embedding}
			\left| \Gamma_G(X) \setminus \varphi(V(H)) \right| \ge
			\sum_{x \in X} \left( d - \deg_H(\varphi^{-1}(x)) \right) + |\varphi(V(H)) \cap X|,
		\end{equation}
		where if $x \notin \varphi(V(H))$ then $\deg_H(\varphi^{-1}(x))$ is defined to be $0$.
	\end{defn}

    The following theorem is very similar to a result implicit in \cite{friedman1987expanding}; its proof can be found in \cite{draganic2021rolling} (see Theorem 2.3).
    
	\begin{thm} \label{thm:extend-good-embedding}
		Let $m, d$ be integers. Let $H$ be a graph on fewer than $m$ vertices and with maximum degree at most $d$, let $G$ be a $(2m-2, d+2)$-expanding graph, and let $\varphi : H \to G$ be a $(2m-2, d)$-good embedding. 
		Then for every graph $H'$ on at most $m$ vertices and with maximum degree at most $d$, that can be obtained from $H$ by successively adding vertices of degree $1$, there exists a $(2m-2, d)$-good embedding $\varphi' : H' \to G$ that extends $\varphi$.
	\end{thm}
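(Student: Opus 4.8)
The plan is to induct on $|V(H')|-|V(H)|$, which reduces matters to the one-step case $|V(H')|=|V(H)|+1$; there $H'$ is obtained from $H$ by adding a single new vertex $v$ of degree $1$ with unique neighbour $w\in V(H)$, so $\deg_H(w)\le d-1$ (as $\deg_{H'}(w)\le d$) and $|V(H)|\le m-1$, and the general statement follows by iteration. (We may assume $m,d\ge2$, the remaining cases being vacuous or trivial.) Write $S=\varphi(V(H))$. The only freedom is the image $u$ of $v$; it must lie in $C:=N_G(\varphi(w))\setminus S$ for $\varphi':=\varphi\cup\{v\mapsto u\}$ to be an embedding, and \eqref{eqn:good-embedding} applied to $X=\{\varphi(w)\}$ already gives $|C|\ge d-\deg_H(w)+1\ge 2$. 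I will choose $u\in C$ so that $\varphi'$ is again a $(2m-2,d)$-good embedding.

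First I would set up a ``tight set'' formalism. For $X\subseteq V(G)$ put $R(X)=\sum_{x\in X}\big(d-\deg_H(\varphi^{-1}(x))\big)+|S\cap X|$ and $L(X)=|\Gamma_G(X)\setminus S|$, so that \eqref{eqn:good-embedding} reads $L(X)\ge R(X)$ whenever $|X|\le 2m-2$; call such an $X$ \emph{tight} if $L(X)=R(X)$, and \emph{$w$-tight} if in addition $\varphi(w)\notin X$. Writing $L',R'$ for the analogues of $L,R$ attached to $\varphi':H'\to G$, a direct bookkeeping with indicator functions yields
\begin{equation*}
	L'(X)=L(X)-[\,u\in\Gamma_G(X)\,]\qquad\text{and}\qquad R'(X)=R(X)-[\,\varphi(w)\in X\,]\qquad(|X|\le 2m-2),
\end{equation*}
so $\varphi'$ is $(2m-2,d)$-good iff $L(X)-R(X)\ge[\,u\in\Gamma_G(X)\,]-[\,\varphi(w)\in X\,]$ for all such $X$. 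The right-hand side is at most $1$ and equals $1$ exactly when $\varphi(w)\notin X$ and $u\in\Gamma_G(X)$, so the condition can fail only when $X$ is $w$-tight and $u\in\Gamma_G(X)$; equivalently, \textbf{$u\in C$ is a valid choice if and only if $u\notin\Gamma_G(X)$ for every $w$-tight $X$.}

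The main step is then to show that the $w$-tight sets form a finite lattice with a largest element. Here $X\mapsto|\Gamma_G(X)\setminus S|$ is a coverage function, hence submodular, and $R$ is modular, so $L-R$ is submodular and nonnegative on $\{X:|X|\le 2m-2\}$ with zero set the tight sets. Crucially, if $X$ is tight then $(2m-2,d+2)$-expansion together with $|S|\le m-1$ gives
\begin{equation*}
	(d+2)|X|-(m-1)\le |N_G(X)|-|S|\le L(X)=R(X)\le(d+1)|X|,
\end{equation*}
forcing $|X|\le m-1$. Hence for $w$-tight $X,Y$ we have $|X\cup Y|\le 2m-2$, so submodularity gives $(L-R)(X\cup Y)+(L-R)(X\cap Y)\le(L-R)(X)+(L-R)(Y)=0$ with both summands nonnegative, whence $X\cup Y$ and $X\cap Y$ are $w$-tight. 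As $\emptyset$ is $w$-tight, there is a unique maximal $w$-tight set $X^*$, and every $w$-tight $X$ is contained in it, so $\bigcup\{\Gamma_G(X):X\ w\text{-tight}\}=\Gamma_G(X^*)$; thus the valid images for $v$ are exactly $C\setminus\Gamma_G(X^*)$. Finally I would apply \eqref{eqn:good-embedding} to $X^*\cup\{\varphi(w)\}$ (of size $\le m\le 2m-2$): since $\Gamma_G(X^*\cup\{\varphi(w)\})\setminus S=(\Gamma_G(X^*)\setminus S)\cup C$ and $X^*$ is tight with $\varphi(w)\notin X^*$, one gets $L(X^*)+|C\setminus\Gamma_G(X^*)|\ge R(X^*)+\big(d-\deg_H(w)\big)+1$, i.e.\ $|C\setminus\Gamma_G(X^*)|\ge d-\deg_H(w)+1\ge 2$. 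Picking any such $u$ proves the one-step extension, and the induction finishes the proof.

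The only real obstacle I anticipate is the lattice structure of tight sets, and within it the size bound $|X|\le m-1$ for tight $X$ — this is exactly what keeps the family of $w$-tight sets closed under union \emph{within the range $|X|\le 2m-2$ where \eqref{eqn:good-embedding} is actually assumed}, and it is precisely here that the hypotheses ``$G$ is $(2m-2,d+2)$-expanding'' (the extra factor over the degree parameter $d$) and ``$|V(H)|<m$'' both get used; the remaining work is routine manipulation of indicators. It is worth stressing that the argument is entirely greedy — carrying along the invariant of \Cref{def:good-embedding}, one never has to un-place a vertex — which is the reason for phrasing the extension step via \eqref{eqn:good-embedding} rather than through a bare expansion condition.
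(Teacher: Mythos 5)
Your proposal is correct; in fact the paper does not prove this theorem itself but cites Theorem 2.3 of \cite{draganic2021rolling}, whose proof is exactly the Friedman--Pippenger-type argument you reconstruct: reduce to adding one leaf, observe goodness can only fail at tight sets avoiding $\varphi(w)$, show tight sets have size at most $m-1$ via the $(2m-2,d+2)$-expansion and hence are closed under union by submodularity, and then use the inequality at $X^*\cup\{\varphi(w)\}$ to find an unforbidden image. So your argument matches the cited proof in essence, and your bookkeeping (the identities for $L'$, $R'$, the bound $R(X)\le(d+1)|X|$, and the final count $|C\setminus\Gamma_G(X^*)|\ge d-\deg_H(w)+1$) is accurate.
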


	Next, we give three handy observations.

	\begin{obs} \label{obs:single-vx-embedding}
		Let $m, d$ be integers. Let $G$ be an $(m, d+2)$-expanding graph, and let $H$ be a graph on a single vertex. Then every embedding $\varphi : H \to G$ is $(m, d)$-good.
	\end{obs}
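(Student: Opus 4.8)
The plan is simply to unwind the two definitions. Write $V(H) = \{v\}$, so that $\deg_H(v) = 0$ and $\varphi(V(H)) = \{\varphi(v)\}$. By the convention in \Cref{def:good-embedding} (if $x \notin \varphi(V(H))$ then $\deg_H(\varphi^{-1}(x))$ is declared to be $0$), every term $\deg_H(\varphi^{-1}(x))$ occurring on the right-hand side of \eqref{eqn:good-embedding} vanishes, whether $x = \varphi(v)$ or not. Hence, for any $X \subseteq V(G)$ with $|X| \le m$, the right-hand side of \eqref{eqn:good-embedding} equals $d|X| + |\{\varphi(v)\} \cap X|$, and it suffices to prove
\[
	\bigl|\Gamma_G(X) \setminus \{\varphi(v)\}\bigr| \;\ge\; d|X| + \bigl|\{\varphi(v)\} \cap X\bigr| .
\]

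First I would record the relationship between $\Gamma_G$ and the neighbourhood $N_G$ of \Cref{def:expanding-graphs}: since $N_G(X) = \Gamma_G(X) \setminus X$, the $(m, d+2)$-expansion of $G$ gives $|\Gamma_G(X)| \ge |N_G(X)| \ge (d+2)|X|$ for every $X$ with $|X| \le m$. The case $|X| = 0$ is trivial (both sides are $0$), so assume $|X| \ge 1$ and split according to whether $\varphi(v) \in X$. If $\varphi(v) \in X$, then $\varphi(v) \notin N_G(X)$ because $N_G(X) \subseteq G \setminus X$, so $N_G(X) \subseteq \Gamma_G(X) \setminus \{\varphi(v)\}$ and therefore $|\Gamma_G(X) \setminus \{\varphi(v)\}| \ge (d+2)|X| \ge d|X| + 2 \ge d|X| + 1$, which is exactly the required inequality. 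If $\varphi(v) \notin X$, the right-hand side is just $d|X|$, and deleting one vertex costs at most $1$, so $|\Gamma_G(X) \setminus \{\varphi(v)\}| \ge |N_G(X)| - 1 \ge (d+2)|X| - 1 \ge d|X|$.

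There is essentially no obstacle: the observation is a direct consequence of the two definitions. The only point deserving a moment's care is the case $\varphi(v) \in X$, where one must notice that $\varphi(v)$ is accounted for within $X$ rather than within $N_G(X)$, so removing it from $\Gamma_G(X)$ does not erode the expansion bound --- which is precisely the reason the extra term $|\varphi(V(H)) \cap X|$ appears in \Cref{def:good-embedding}.
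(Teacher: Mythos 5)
Your proof is correct and matches the paper's treatment: the paper gives no argument beyond stating that the observation ``follows immediately from the definition of a good embedding,'' and your write-up is exactly that immediate verification, with the only non-trivial point (the case $\varphi(v)\in X$, where $\varphi(v)$ lies outside $N_G(X)$ and the extra term $|\varphi(V(H))\cap X|$ is absorbed by the slack $(d+2)|X|\ge d|X|+2$) handled correctly.
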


	\Cref{obs:single-vx-embedding} follows immediately from the definition of a good embedding.	

	\begin{lem} \label{lem:trim-good-embedding}
		Let $m, d$ be integers. Let $\varphi : H \to G$ be an $(m, d)$-good embedding. Then for every graph $H'$ obtained by successively removing vertices of degree $1$ from $H$, the restriction $\varphi'$ of $\varphi$ to $H'$ is an $(m, d)$-good embedding.
	\end{lem}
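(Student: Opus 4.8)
The plan is to reduce to the case of deleting a single leaf and then compare the two good-embedding inequalities directly. Since $H'$ is obtained from $H$ by successively removing vertices of degree $1$, it suffices — by induction on the number of deletions — to treat the case $H' = H - v$ for a single vertex $v$ of degree $1$ in $H$; at every intermediate stage the current graph is an ordinary graph and the restriction of $\varphi$ to it is still an embedding, so the induction proceeds with the same parameters $m$ and $d$.

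So fix a leaf $v$ of $H$, let $u^*$ be its unique neighbour in $H$, and write $w = \varphi(v)$ and $w^* = \varphi(u^*)$; these are two distinct vertices of $G$ joined by an edge, since $\varphi$ is an embedding. Set $\varphi' = \varphi|_{H'}$, so that $\varphi'(V(H')) = \varphi(V(H)) \setminus \{w\}$, and fix an arbitrary $X \subseteq V(G)$ with $|X| \le m$. I would then track exactly how the two sides of the inequality in \Cref{def:good-embedding} change as we pass from $(H, \varphi)$ to $(H', \varphi')$. For the left-hand side: since $w \in \varphi(V(H))$, one has $|\Gamma_G(X) \setminus \varphi'(V(H'))| = |\Gamma_G(X) \setminus \varphi(V(H))| + \one\{w \in \Gamma_G(X)\}$. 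For the right-hand side: the term $|\varphi'(V(H')) \cap X|$ decreases by $\one\{w \in X\}$; in the sum $\sum_{x \in X}\bigl(d - \deg(\cdot)\bigr)$ the contribution of $x = w$ rises from $d - \deg_H(v) = d-1$ to $d$ (a gain of $\one\{w \in X\}$, using the convention that the degree counts as $0$ once $w$ leaves the image), the contribution of $x = w^*$ rises by $1$ because $\deg_{H'}(u^*) = \deg_H(u^*) - 1$ (a gain of $\one\{w^* \in X\}$, and $w^* \ne w$ so this is a genuinely separate term), and every other term is unchanged. Adding these up, the right-hand side changes by exactly $\one\{w^* \in X\}$.

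Combining the two computations, the inequality to be proved for $(H', \varphi')$ is
$\mathrm{LHS}(H,\varphi) + \one\{w \in \Gamma_G(X)\} \ge \mathrm{RHS}(H,\varphi) + \one\{w^* \in X\}$,
and since $\mathrm{LHS}(H,\varphi) \ge \mathrm{RHS}(H,\varphi)$ by hypothesis, it is enough to check $\one\{w \in \Gamma_G(X)\} \ge \one\{w^* \in X\}$. But this is immediate: if $w^* \in X$ then $w$, being adjacent to $w^*$ in $G$, is a neighbour of a vertex of $X$, hence $w \in \Gamma_G(X)$. This settles the single-deletion step, and with it the lemma.

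The argument is entirely elementary and I do not expect a genuine obstacle; the only thing that needs care is the bookkeeping of the indicator terms — in particular correctly invoking the convention ``$\deg_H(\varphi^{-1}(x)) = 0$ when $x \notin \varphi(V(H))$'' for the vertex $w$ after it leaves the image, and remembering that deleting the leaf $v$ lowers the degree of its neighbour $u^*$ by one. It is precisely this last point that forces the edge $w w^*$ into the picture and thereby yields the expansion-free domination $\one\{w \in \Gamma_G(X)\} \ge \one\{w^* \in X\}$.
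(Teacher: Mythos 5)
Your single-deletion computation is correct, and with it the induction is immediate: writing $w=\varphi(v)$, $w^*=\varphi(u^*)$ for a leaf $v$ with neighbour $u^*$, the left-hand side of \eqref{eqn:good-embedding} grows by $\one\{w\in\Gamma_G(X)\}$, while on the right-hand side the loss of $\one\{w\in X\}$ from $|\varphi(V(H))\cap X|$ is exactly cancelled by the gain at the term $x=w$ (via the convention $\deg_H(\varphi^{-1}(x))=0$ once $w$ leaves the image), leaving a net increase of $\one\{w^*\in X\}$ from the term $x=w^*$; the required domination $\one\{w\in\Gamma_G(X)\}\ge\one\{w^*\in X\}$ then follows from the edge $ww^*$ of $G$, using that the paper's $\Gamma_G(X)$ (unlike $N_G(X)$) is not required to avoid $X$, so $w\in\Gamma_G(X)$ holds even when $w\in X$. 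Note that the paper itself does not prove this lemma: it simply cites Lemma~2.4 of \cite{draganic2021rolling} (and, per the acknowledgements, the statement is delicate enough that the originally published proof there was flawed and forced the extra term $|\varphi(V(H))\cap X|$ into \Cref{def:good-embedding}). Your argument is a correct, self-contained verification with exactly that definition, so it both fills the citation and confirms that the amended definition is the one that makes the bookkeeping close.
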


	For a proof of \Cref{lem:trim-good-embedding} see Lemma 2.4 in \cite{draganic2021rolling}. 
	
	\begin{obs} \label{obs:add-edge-good-embedding}
		Let $m, d$ be integers. Suppose that $\varphi : H \to G$ is an $(m, d)$-good embedding, and let $H'$ be a graph obtained from $H$ by joining two vertices $u, v$ in $H$ for which $\varphi(u) \varphi(v)$ is an edge in $G$. Then $\varphi$, interpreted as an embedding of $H'$ into $G$, is an $(m, d)$-good embedding.
	\end{obs}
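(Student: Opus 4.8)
The plan is to unwind the definition of an $(m,d)$-good embedding and observe that passing from $H$ to $H'$ changes none of the quantities that control the left-hand side of \eqref{eqn:good-embedding} while only improving the right-hand side. First I would check that $\varphi$, read as a map $H' \to G$, is genuinely an embedding: the vertex set is unchanged, so $\varphi$ is still injective, and the only new adjacency in $H'$ is the pair $uv$, which by hypothesis maps to the edge $\varphi(u)\varphi(v)$ of $G$; hence $\varphi$ remains a graph homomorphism and thus an embedding.

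Next I would fix an arbitrary $X \subseteq V(G)$ with $|X| \le m$ and compare the two sides of \eqref{eqn:good-embedding} for $H$ and for $H'$. Since $V(H') = V(H)$ we have $\varphi(V(H')) = \varphi(V(H))$, so both $\Gamma_G(X) \setminus \varphi(V(H'))$ on the left and the term $|\varphi(V(H')) \cap X|$ on the right are literally the same as for $H$. For the remaining sum, note that $H'$ is obtained from $H$ by adding a single edge at $u$ and $v$, so $\deg_{H'}(w) = \deg_H(w) + 1$ for $w \in \{u,v\}$ and $\deg_{H'}(w) = \deg_H(w)$ otherwise; in particular $\deg_{H'}(\varphi^{-1}(x)) \ge \deg_H(\varphi^{-1}(x))$ for every $x \in X$ (with the usual convention that the degree of a vertex outside the image is $0$). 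Therefore
\begin{equation*}
	\sum_{x \in X}\bigl(d - \deg_{H'}(\varphi^{-1}(x))\bigr) + |\varphi(V(H')) \cap X|
	\le
	\sum_{x \in X}\bigl(d - \deg_{H}(\varphi^{-1}(x))\bigr) + |\varphi(V(H)) \cap X|,
\end{equation*}
and the latter is at most $|\Gamma_G(X) \setminus \varphi(V(H))| = |\Gamma_G(X) \setminus \varphi(V(H'))|$ because $\varphi$ is an $(m,d)$-good embedding of $H$. Chaining these inequalities gives \eqref{eqn:good-embedding} for $H'$ and $X$, and since $X$ was arbitrary this proves $\varphi : H' \to G$ is $(m,d)$-good.

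There is essentially no obstacle here: the statement is a bookkeeping check, and the only point that needs a sentence of care is confirming that the left-hand side and the $|\varphi(V(H)) \cap X|$ term are unaffected (because the vertex set does not change when an edge is added), so that the monotonicity of the degree terms is all that is needed. I would present the argument in the two short paragraphs above, with the displayed inequality as the single computation.
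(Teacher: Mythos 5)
Your proposal is correct and is exactly the paper's argument, spelled out in full: adding the edge $uv$ leaves the vertex set (hence the left-hand side and the term $|\varphi(V(H)) \cap X|$) unchanged, while it can only increase the degrees $\deg_{H}(\varphi^{-1}(x))$ and hence only decrease the right-hand side of \eqref{eqn:good-embedding}. Nothing further is needed.
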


	The proof of \Cref{obs:add-edge-good-embedding} is very easy; see Lemma 5.2.8 in \cite{glebov2013dissertation} or the following proof.

	\begin{proof}
		Adding the edge $uv$ to $H$ does not change the left-hand side of \eqref{eqn:good-embedding}, and does not increase the right-hand side of \eqref{eqn:good-embedding}.
	\end{proof}

	The following lemma is a variant of Lemma 5.2.9 in \cite{glebov2013dissertation} (a similar lemma appears in \cite{draganic2021rolling}, and a stronger version is given in Lemma 3.10 in \cite{montgomery2019spanning}). As our setting is slightly different, existing variants do not seem to apply directly, so we give the proof here. Nevertheless, the proof is essentially the same.

	\begin{lem} \label{lem:add-long-path-embedding}
		Let $m, d$ be integers with $m \ge 4$. Suppose that $G$ is a $(4m-2, d+2)$-expanding graph, which is bipartite with bipartition $\{X, Y\}$, and for every two subsets $X' \subseteq X$, $Y' \subseteq Y$ of size at least $m/8$ there is an edge of $G$ between $X'$ and $Y'$.

		Let $H$ and $H'$ be bipartite graphs in $\family_{m, d, 2\log m}$ such that $H'$ can be obtained from $H$ by joining two vertices in $H$ by a path $P$ of length at least $2\log m$ whose interior vertices are not in $H$. Suppose that $\varphi : H \to G$ is a $(4m-2, d)$-good embedding. Then there exists a $(4m-2,d)$-good embedding $\varphi' : H' \to G$ that extends $\varphi$.
	\end{lem}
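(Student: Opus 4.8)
The plan is to realise the $\ell-1$ interior vertices of the path $P$ inside $G$ in three stages, using the three tools of this subsection: \Cref{thm:extend-good-embedding} to grow pendant paths and trees inside a good embedding, \Cref{obs:add-edge-good-embedding} to splice two half-paths together with a single edge, and \Cref{lem:trim-good-embedding} to delete auxiliary scaffolding at the very end. Write $\ell\ge 2\log m$ for the length of $P$ and let $a,b\in V(H)$ be its endpoints; since $P$ contributes one further edge at each of $a,b$ in $H'$ we have $\deg_H(a),\deg_H(b)\le d-1$, and since the interior of $P$ is disjoint from $V(H)$ we have $|V(H)|+\ell-1=|V(H')|\le m$. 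Put $L=\lceil\log_2(m/8)\rceil$, so that $2^L\ge m/8$ and $2^{L+1}<m/2$, and fix $p,q\ge 1$ with $p+q=\ell-1-2L$ (possible since $\ell\ge 2\log m$). We may assume $d\ge 3$ (for $d=2$ a minor path-based variant is needed, and $d=1$ is vacuous). Crucially, every embedded graph appearing below will have fewer than $2m$ vertices and maximum degree at most $d$, so that \Cref{thm:extend-good-embedding} applies with the parameter $2m$: its hypotheses then read ``$G$ is $(4m-2,d+2)$-expanding'' and ``the embedding is $(4m-2,d)$-good'', both of which hold throughout.

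\emph{Stage 1.} Starting from $\varphi$, apply \Cref{thm:extend-good-embedding} to attach a pendant path of length $p$ at $a$ and a pendant path of length $q$ at $b$ (adding the new vertices one at a time); this yields a $(4m-2,d)$-good embedding $\varphi_2$ of $H_2:=H\cup P_a\cup P_b$, where $P_a,P_b$ are the new dangling paths with far endpoints $x$ and $y$. Here $|V(H_2)|=|V(H)|+p+q\le|V(H)|+\ell-1\le m$. \emph{Stage 2.} Again by \Cref{thm:extend-good-embedding}, attach at $x$ a complete binary tree $T_x$ of depth $L$ (each internal vertex acquires degree $3\le d$) and likewise a complete binary tree $T_y$ of depth $L$ at $y$, producing a $(4m-2,d)$-good embedding $\varphi_4$; the resulting graph has fewer than $m+2(2^{L+1}-1)<2m$ vertices, so the theorem was applicable.

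Now let $A,B\subseteq V(G)$ be the $\varphi_4$-images of the leaf-sets of $T_x$ and of $T_y$; these are disjoint (the embedding is injective) with $|A|=|B|=2^L\ge m/8$. Since $\varphi_4$ maps each path of the current abstract graph to a walk of the same length in $G$, and every leaf of $T_x$ is joined to $a$ by a path of length $p+L$ (every leaf of $T_y$ to $b$ by a path of length $q+L$), the set $A$ lies entirely in one part of the bipartition $\{X,Y\}$ of $G$ and $B$ entirely in one part. A parity check shows these are opposite parts: $H$ is connected and $H'$ is bipartite, so $\varphi$ respects the two colour classes and $\varphi(a),\varphi(b)$ lie in the same part of $G$ iff $a,b$ lie in the same class of $H'$, i.e.\ iff $\ell$ is even; on the other hand $p+L$ and $q+L$ differ in parity by $p+q\equiv\ell-1\pmod 2$, so $A$ and $B$ end up in different parts for \emph{any} admissible choice of $p,q$. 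By the hypothesis on $G$ there is then an edge between $A$ and $B$, say $\varphi_4(\ell_u)\varphi_4(\ell_v)\in E(G)$ with $\ell_u$ a leaf of $T_x$ and $\ell_v$ a leaf of $T_y$; adjoining the abstract edge $\ell_u\ell_v$ (which makes $\ell_u,\ell_v$ have degree $2\le d$) keeps $\varphi_4$ a $(4m-2,d)$-good embedding by \Cref{obs:add-edge-good-embedding}. \emph{Stage 3.} Let $P_x$ be the length-$L$ path from $x$ to $\ell_u$ in $T_x$ and $P_y$ the length-$L$ path from $y$ to $\ell_v$ in $T_y$, and delete the vertices of $(V(T_x)\setminus V(P_x))\cup(V(T_y)\setminus V(P_y))$ one at a time in order of decreasing depth, so that each deleted vertex is currently a leaf; by \Cref{lem:trim-good-embedding} the restriction stays $(4m-2,d)$-good. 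What remains is $H$ together with the concatenated path $P_a,P_x,\ell_u\ell_v,P_y,P_b$ from $a$ to $b$, which has length $p+L+1+L+q=\ell$ and interior disjoint from $V(H)$; hence it is exactly $H'$, and the restricted embedding is the required $\varphi'$ extending $\varphi$.

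The main obstacle is the vertex budget: a dangling path of near-full length $\ell$ together with two binary trees of total size $\Theta(m)$ would overshoot the $2m$-vertex ceiling of \Cref{thm:extend-good-embedding} were it not for the coupling $|V(H)|+\ell-1\le m$, which is precisely what lets the two half-paths and the two logarithmic-depth bushes coexist. The secondary point needing care is the parity argument placing $A$ and $B$ in opposite parts of $G$; this is where connectedness of $H$ (guaranteed by $H\in\family_{m,d,\ell}$) and bipartiteness of $H'$ enter, and it is what makes the final concatenated path have exactly the prescribed length $\ell$.
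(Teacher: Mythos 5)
Your proposal is correct and follows essentially the same route as the paper's proof: grow pendant paths plus two binary bushes of $\Theta(\log m)$ depth at the two endpoints via \Cref{thm:extend-good-embedding} (the paper does this in a single application with trees $T_x,T_y$ rather than your two stages, which is immaterial), use the size bound $|H'|\le m$ to stay under the $2m$-vertex budget, find an edge between the two leaf sets using the bipartite expansion hypothesis, splice via \Cref{obs:add-edge-good-embedding}, and trim via \Cref{lem:trim-good-embedding}. Your parity argument placing the two leaf sets in opposite parts is in fact more explicit than the paper's brief ``without loss of generality'' remark, and the only blemishes — the degenerate value $L=\lceil\log_2(m/8)\rceil<0$ at $m=4$ (fixable by taking $L=\max\{0,\lceil\log_2(m/8)\rceil\}$) and the implicit assumption $d\ge 3$ — are shared with, or no worse than, the paper's own treatment.
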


	\begin{proof}
		Denote the ends of $P$ by $x$ and $y$ (so $x, y \in V(H)$). 
		Write $\ell = \floor{\log m - 2}$ and $k = |P| - 2 - 2\ell$. Note that $k \ge 2$, and let $k_x, k_y$ be integers such that $k_x, k_y \ge 1$ and $k = k_x + k_y$. Let $T_x$ be a rooted tree constructed as follows. Let $S_x$ be a complete binary tree of height $\ell$ rooted at $r_x$, let $P_x$ be a path of length $k_x$ with ends $u_x$ and $r_x$, such that the only common vertex of $P_x$ and $S_x$ is $r_x$. Take $T_x$ to be the tree $S_x \cup P_x$, rooted at $u_x$. Define $T_y$ similarly, using a binary tree $S_y$ of height $\ell$ and a path $P_y$ of length $k_y$ (see \Cref{fig:Tx}). 
		\begin{figure}[h]
		    \centering
		    \includegraphics[scale = .6]{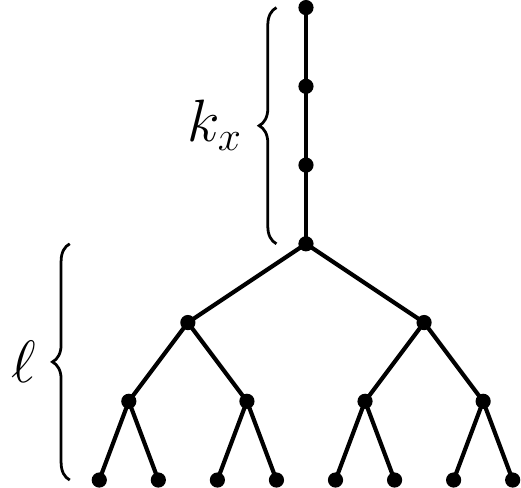}
		    \caption{The rooted tree $T_x$}
		    \label{fig:Tx}
		\end{figure}
		
        Consider the graph $F$ obtained from $H$ by attaching $T_x$ to $x$ at the root $u_x$, and attaching $T_y$ to $y$ at the root of $T_y$ (so that $x$ is the root of $T_x$ and $y$ is the root of $y$; the other vertices of $T_x$ and $T_y$ are not in $H$; and the trees $T_x$ and $T_y$ are vertex-disjoint). 

		Observe that $F$ can be obtained from $H$ by successively adding vertices of degree $1$, it has maximum degree at most $d$ (for this we use that $x$ and $y$ have degrees at most $d-1$ in $H$ by the assumptions on $H$ and $H'$, $k_x, k_y \ge 1$, and $d \ge 3$), and we can bound the number of vertices in $F$ as follows. 
		\begin{equation*}
			|F| \le |H| + k + 2(1 + 2 + \ldots + 2^{\ell}) \le |H'| + 4 \cdot 2^{\ell} \le 2m.
		\end{equation*}
		Thus by \Cref{thm:extend-good-embedding} there is a $(4m-2, d)$-good embedding $\psi : F \to G$ that extends $\varphi$. Denote by $L_x$ and $L_y$ the images of the non-root leaves of $T_x$ and $T_y$ under $\psi$. Because $H'$ and $G$ are bipartite and $H$ is connected, without loss of generality $L_x \subseteq X$, and thus $L_y \subseteq Y$.
		As $|L_x|, |L_y| = 2^{\ell} \ge m/8$ and by assumption on $G$, there exist vertices $x' \in L_x$ and $y' \in L_y$ such that $x'$ and $y'$ are adjacent in $G$. Consider the graph $F'$ obtained from $F$ by removing all vertices in $T_x$ except for the vertices on the path from $\psi^{-1}(x')$ to the root $x$, and similarly removing all vertices in $T_y$ except for the vertices on the path from $\psi^{-1}(y')$ to $y$. By \Cref{lem:trim-good-embedding}, the restriction $\psi'$ of $\psi$ to $F'$ is $(4m-2, d)$-good. Note that $H'$ can be obtained from $F'$ by joining $\psi^{-1}(x')$ and $\psi^{-1}(y')$. Thus, by \Cref{obs:add-edge-good-embedding} and by choice of $x'$ and $y'$, the embedding $\varphi' : H' \to G$ obtained by thinking of $\psi'$ as an embedding of $H$ is $(4m-2, d)$-good.
	\end{proof}

	The proof of \Cref{thm:embed-H} follows easily from the above results.

	\begin{proof}[Proof of \Cref{thm:embed-H}]
		Let $H$ be a bipartite graph in $\family_{m, d, 2\log m}$, and let $H_0, \ldots, H_t = H$ be such that $H_0$ is a graph on one vertex, and $H_{i+1}$ can be obtained from $H_i$ by either adding a new vertex and joining it by an edge to some vertex in $H_i$, or by connecting two vertices in $H_i$ by a path of length at least $2 \log m$ whose interior vertices are new. By \Cref{obs:single-vx-embedding}, any embedding $\varphi_0 : H_0 \to G$ is $(4m-2, d)$-good. Suppose that $\varphi_i : H_i \to G$ is a $(4m-2, d)$-good embedding. If $H_{i+1}$ can be obtained from $H_i$ by joining a new vertex by an edge to $H_i$, then by \Cref{thm:extend-good-embedding} there is a $(4m-2, d)$-good embedding $\varphi_{i+1} : H_{i+1} \to G$. Otherwise, there exists a $(4m-2, d)$-good embedding $\varphi_{i+1} : H_{i+1} \to G$ by \Cref{lem:add-long-path-embedding}. It follows that there is a good embedding $\varphi : H \to G$, as required.
	\end{proof}
\section{Embedding powers of subdivisions into blowups of large subdivisions} \label{sec:subdivisions}

	In this section we prove \Cref{lem:reduction}. 
	We split the task of proving the lemma into the following two propositions.

	\begin{prop} \label{prop:reduction-square}
		Let $d$ and $\ell$ be integers, with $\ell \ge 6$, and set $D = 4d^3$ and $L = (\ell-3)/2$.
		Let $H \in \family_{n, d, \ell}$. 
		Then there is a graph $F \in \family_{n, D, L}$ and a map $\varphi : H \to F$ such that 
		\begin{itemize}
			\item
				$\dist_F(\varphi(x), \varphi(y)) \le \ceil{\dist_H(x, y)/2}$ for every $x, y \in V(H)$, and
			\item
				$|\varphi^{-1}(u)| \le 4d^2$ for every $u \in V(F)$.
		\end{itemize}
	\end{prop}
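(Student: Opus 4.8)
The plan is to realise $F$ and $\varphi$ as a carefully chosen contraction of $H$ that roughly halves distances. Fix a breadth‑first spanning tree $T$ of $H$ rooted at an arbitrary vertex $\rho$, write $\mathrm{dep}(v)=\dist_H(v,\rho)$, and partition $V(H)$ into \emph{clusters}, each of radius at most $2$ in $H$, organised into ``levels'' indexed by $\floor{\mathrm{dep}(\cdot)/2}$ so that (roughly) each cluster meets two consecutive BFS levels. The map $\varphi$ sends every vertex to a fixed representative of its cluster, and $F$ is the resulting quotient graph: $\varphi(u)\varphi(v)\in E(F)$ whenever $uv\in E(H)$ and $\varphi(u)\ne\varphi(v)$. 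Finally I would append a pendant path of new vertices to make $|F|=n=|H|$; since these vertices have empty $\varphi$‑preimage this affects none of the required properties.

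With clusters of radius at most $2$, each fibre $\varphi^{-1}(u)$ lies in a ball of radius $2$ in $H$, so $|\varphi^{-1}(u)|\le 1+d+d(d-1)\le 4d^2$, which is the second bullet. For the degree bound, a neighbour of $\varphi(u)$ in $F$ is the image of a vertex of $N_H(\varphi^{-1}(u))$, which lies in a ball of radius $3$, so $\deg_F(\varphi(u))\le 1+d+d(d-1)+d(d-1)^2\le 4d^3=D$. To see $F\in\family_{n,D,L}$ one builds the clustering along a construction sequence of $H$, which simultaneously produces one for $F$: each type‑(a) step (adding a pendant in $H$) either adds a pendant in $F$ or is absorbed into an existing cluster, and each type‑(b) step, inserting into $H$ a path of length $p\ge\ell$ between two existing vertices, is mirrored in $F$ by a path of length $\ceil{p/2}\ge\ceil{\ell/2}>(\ell-3)/2=L$ between the corresponding representatives (placing the images of the interior vertices along this path so the clustering stays level‑consistent); the padding path is added by further type‑(a) steps. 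Together with $D=4d^3$ this shows $F\in\family_{n,D,L}$.

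The substantive point — where the ``long but mundane'' work lies — is verifying $\dist_F(\varphi x,\varphi y)\le\ceil{\dist_H(x,y)/2}$ for all $x,y$. The natural route is to project a shortest $H$‑path $x=v_0,\dots,v_m=y$ to the walk $\varphi(v_0),\dots,\varphi(v_m)$ in $F$ and show that, after deleting consecutive repetitions, at most $\ceil{m/2}$ distinct steps survive. This forces delicate local conditions on the clustering, the crucial one being that any two vertices at $H$‑distance $2$ land in a common cluster or in $F$‑adjacent clusters. A naive distance‑$3$ net fails this (e.g.\ at a branch vertex whose two sides get pulled into a third cluster), so the clusters must be aligned with the BFS structure, and high‑degree vertices need special care: at a vertex $z$ of large degree one cannot afford two neighbours of $z$ in two distinct clusters both different from that of $z$, so $z$'s cluster must absorb all but one of its neighbours. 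The same care is needed along the inserted long paths so that the ceilings do not accumulate when a route passes through one — this is precisely why the mirrored paths need their interior images placed in a parity‑consistent way relative to the two endpoint clusters. Once these properties are in place, the halving inequality follows from the step‑counting argument, and assembling the pieces finishes the proof.
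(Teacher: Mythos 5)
Your reductions of the two quantitative bullets to the single combinatorial condition are fine, and you have correctly isolated the crux: one needs a map $\varphi$ whose fibres are small and which satisfies the ``star'' condition that any two vertices at $H$-distance $2$ land in the same fibre or in fibres joined by an $H$-edge (plus the requirement that the quotient be buildable by pendant additions and insertions of paths of length at least $L$). But the proposal stops exactly where the proposition starts: no rule is given that actually produces such a clustering, and the scheme you sketch does not survive the obstruction you yourself point out. With clusters grouped by $\floor{\mathrm{dep}/2}$, a branch vertex $z$ at odd depth with children $a,b$ forces either $a,b$ into one cluster or (your fix) their absorption into the cluster of $z$; but then the children of $a$ lie in a level group whose only connector, $a$, has been removed, and the same violation reappears one level down (e.g.\ a child of $a'$ is at distance $2$ from $a$, yet no $H$-edge joins its cluster to the cluster now containing $a$). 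In a bounded-degree tree this cascades for ever, and in fact the ``each fibre is a ball of radius $2$'' assumption on which both of your counting bounds rest appears unattainable there: the construction that works (the paper's) produces fibres consisting of a vertex's children together with all their children, i.e.\ sets within distance $2$ of a \emph{pair} of vertices but of diameter $4$, which is why the paper bounds $|\varphi^{-1}(u)|$ by $2(d+d^2)\le 4d^2$ rather than by a radius-$2$ ball. So the existence of your clustering is not established, and the specific shape you assume for it is dubious.

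There is also an unresolved tension between your two organising principles. To certify $F\in\family_{n,D,L}$ you must, as you say, mirror the construction sequence of $H$; but then the fibres are dictated by that sequence and cannot be prescribed in advance from a BFS tree. The paper resolves all of this with one explicit rule processed along the sequence $H_0\subseteq\cdots\subseteq H_k$: when a pendant $y$ is attached to $x$, open a new vertex of $F$ if the current image of the star $\{x\}\cup N(x)$ is a single vertex, and otherwise send $y$ to the image of the \emph{last} vertex of that star to have appeared; when a path of length $h\ge\ell$ is inserted, first spend up to two vertices at each end to make the endpoint stars have singleton images, then mirror by a path of length $\floor{(h+1)/2}$ with a parity-consistent two-to-one mapping (this endpoint surgery is also why $L=(\ell-3)/2$ rather than your $\ceil{\ell/2}$). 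The star property, the fibre bound, and the degree bound are then verified by an inductive analysis of the preimages (the paper's Claim~\ref{claim:auxPreimage}), and the distance-halving follows by the same step-counting you describe. That rule and its verification constitute essentially the whole proof, and they are missing from your argument.
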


	\begin{prop} \label{prop:reduction-subdivision}
		Let $d$ and $\ell$ be integers, with $\ell \ge 7$, and set $D \geq 4d^2$ and $L = (\ell-6)/2$. Let $H \in \family_{n, d, \ell}$. Then there exists a bipartite graph $F$ in $\family_{n, D, L}$ and a map $\varphi : H \to F^-$ such that
		\begin{itemize}
			\item
				adjacent vertices in $H$ are mapped either to the same vertex or to adjacent ones in $F^-$, and 
			\item
				$|\varphi^{-1}(u)| \le 4d$ for every vertex $u \in V(F)$.
		\end{itemize}
	\end{prop}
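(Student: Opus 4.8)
The plan is to prove Proposition~\ref{prop:reduction-subdivision} by induction along a fixed construction sequence for $H$. Write $H_0 \subseteq H_1 \subseteq \dots \subseteq H_N = H$, where $H_0$ is a single vertex and each $H_{i+1}$ arises from $H_i$ either by a \emph{pendant step} (adding a new degree-$1$ vertex) or a \emph{path step} (joining two existing vertices by an internally disjoint new path of length at least $\ell$); since the whole sequence is fixed, the eventual degree in $H$ of every vertex is known when it is introduced. In parallel I will construct bipartite graphs $F_0 \subseteq \dots \subseteq F_N = F$, each with a fixed bipartition and maximum degree at most $4d^2$, each obtained from a single vertex by pendant steps and by path steps of length at least $L$ (so that $F \in \family_{n, D, L}$ after padding $F$ with pendants up to $n$ vertices), together with maps $\varphi_i : H_i \to F_i^-$ satisfying the two conclusions of the proposition on $H_i$ and, additionally: $\deg_{F_i}(a) \le \sum_{v \in \varphi_i^{-1}(a)} \deg_H(v)$ for every $a \in V(F_i)$ (which gives $\deg_{F_i} \le 4d^2$ once the fibre bound is in place), and every vertex of $H_i$ with eventual $H$-degree at least $3$ is mapped to a vertex of $V(F_i)$, never to a subdivision vertex.

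A pendant step adding $v$ joined to $u$ is mirrored by a pendant step in $F$ (or nothing). If $\varphi_i(u)$ is a real vertex $a$, attach a new pendant edge $aw$ to $F_i$ and set $\varphi_{i+1}(v)$ to be the subdivision vertex of $aw$ if $v$ has eventual degree at most $2$, and $\varphi_{i+1}(v) = w$ otherwise. If $\varphi_i(u)$ is a subdivision vertex with real neighbours $a$ and $b$, set $\varphi_{i+1}(v) = b$, which may be a collapse. In all cases the image of $v$ is equal or adjacent to $\varphi_i(u)$. The key point is that a maximal run of pendant steps building a path of $H$ is carried roughly two-to-one onto an alternating real/subdivision walk in $F^-$, so $F$ only grows a path of about half the length --- built by pendant steps, hence subject to no length restriction; and because new pendants of a vertex mapped to a real vertex $a$ are pushed onto new subdivision vertices rather than onto $a$, the fibres over $V(F)$ grow only by a controlled amount, charged against the degrees of the vertices of $H$ involved.

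A path step joining $u$ to $v$ along $x_0 x_1 \cdots x_k$ ($x_0 = u$, $x_k = v$ already present, $x_1, \dots, x_{k-1}$ new, $k \ge \ell$) is mirrored by a path step in $F$. Let $a^\ast$ be $\varphi_i(u)$ if this lies in $V(F_i)$, and a real neighbour of it in $F_i^-$ otherwise; define $b^\ast$ similarly from $\varphi_i(v)$. As $F_i$ is connected, I may add a new internally disjoint path from $a^\ast$ to $b^\ast$ of length $m$, where $m$ is chosen with $L \le m \le (k-2)/2$ and $m \equiv \dist_{F_i}(a^\ast, b^\ast) \pmod 2$ so that $F_{i+1}$ stays bipartite; such an $m$ exists exactly because $k \ge \ell = 2L + 6$, so $[L, (k-2)/2]$ contains at least three consecutive integers --- this is what the $-6$ in $L = (\ell-6)/2$ pays for. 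The resulting path of $F_{i+1}^-$ from $\varphi_i(u)$ to $\varphi_i(v)$ has length at most $2m + 2 \le k$, so $x_0 \cdots x_k$ can be mapped onto it by traversing it once and padding near one end with back-and-forth moves, plus a single collapsed edge if the parities of $k$ and of the distance in $F_{i+1}^-$ disagree --- which is what happens when the $H$-path closes an odd cycle. Any $x_i$ of eventual degree at least $3$ is placed on a real vertex of the new $F$-path (collapsing it onto its predecessor if the alternation would otherwise send it to a subdivision vertex), so that later pendant steps at $x_i$ behave as in the previous paragraph.

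I expect the main obstacle to be the simultaneous bookkeeping of the quantitative invariants through the interplay of the two step types: a freshly introduced path vertex may later be branched at by many pendant steps, a single vertex of $F$ may be both an interior vertex of one $F$-path and an endpoint of others, and one has to check at every step that no fibre over $V(F)$ exceeds $4d$ (using that every vertex of $H$ has degree at most $d$, is an endpoint of at most $d$ path steps, and triggers at most a bounded amount of collapsing), that $\deg_F \le 4d^2$, and that every path step performed in $F$ has length at least $L$. Conceptually this is routine --- which is presumably why the authors describe the argument as long but mundane --- and the structural invariant that does the real work is that vertices of eventual degree at least $3$ always sit on genuine vertices of $F$, so a high-degree vertex of $H$ is never forced to crowd its neighbours onto the two real neighbours of a single subdivision vertex of $F^-$; granting the invariants, the two bulleted statements of the proposition are immediate.
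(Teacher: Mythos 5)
Your overall plan---building $F$ and $\varphi$ in parallel with a fixed construction sequence of $H$, mirroring pendant steps, replacing each long path of $H$ by a new path of $F$ whose length is chosen with the correct parity to keep $F$ bipartite---is the same as the paper's. However, the two quantitative devices you add are exactly where the argument breaks. In the path step you only require $L \le m \le (k-2)/2$ (with the right parity) and then absorb the surplus length by ``back-and-forth padding near one end''. Every edge of $F^-$ has an endpoint in $V(F)$, so such padding places roughly half of the surplus $k-2m-O(1)$ on a \emph{single} vertex of $V(F)$; since $k$ may be of order $n$ while $m$ may be as small as $L$, this destroys the bound $|\varphi^{-1}(u)|\le 4d$. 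What is needed (and what the paper does) is to take $m$ within an additive constant of $k/2$, e.g.\ $m\in\{\lfloor k/2\rfloor,\lfloor k/2\rfloor-1\}$ after the $O(1)$ endpoint adjustment, and to spread the resulting $O(1)$ collapses along the path so that each new vertex of $F^-$ receives at most two vertices of the $H$-path. The hypothesis $k\ge\ell=2L+6$ then gives $m\ge L$ automatically; that, rather than the presence of three consecutive integers in $[L,(k-2)/2]$, is what the $-6$ in $L=(\ell-6)/2$ pays for.

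The second problem is the invariant you describe as doing the real work: that every vertex of eventual $H$-degree at least $3$ is mapped into $V(F)$. Distinct vertices of $V(F)$ are never adjacent in $F^-$, so two consecutive vertices of an $H$-path that are both forced into $V(F)$ must be mapped to the \emph{same} vertex of $F$. Now take $d\ge 3$ and an $H\in\family_{n,d,\ell}$ in which some path step of length $\ell$ is followed by attaching a pendant to each of its interior vertices: these interior vertices are consecutive and all have eventual degree $3$, so your invariant forces the entire interior of the path (about $\ell$ vertices, and $\ell$ is of order $\log n$ in the intended application) onto one vertex of $F$, violating both $|\varphi^{-1}(u)|\le 4d$ and, after the pendants are mirrored, the degree bound $\Delta(F)\le 4d^2$. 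Hence the invariant is not maintainable, and the claim that each vertex ``triggers at most a bounded amount of collapsing'' fails. It is also unnecessary: the accounting that actually yields the fibre bound (as in the paper) is that every edge of $F$ is the image of exactly one vertex of $H$, and a vertex $u$ of $F$ acquires new preimages only through the at most two edges incident to it at the moment of its creation, giving $|\varphi^{-1}(u)|\le 2+2d\le 4d$ with no reference to eventual degrees. So the skeleton of your argument matches the paper's, but the fibre and degree bookkeeping---the actual content of the proposition---must be redone along these lines.
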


	We now prove \Cref{lem:reduction} using the two propositions, whose proofs we delay to subsequent subsections.

	\begin{proof}[Proof of \Cref{lem:reduction} using \Cref{prop:reduction-subdivision,prop:reduction-square}]
		Let $s = \ceil{\log_2 t}$. Define $\ell_0, \ldots, \ell_s$ as follows.
		\begin{equation*}
			\ell_0 = \ell \quad \text{and} \quad \ell_i = \floor{(\ell_{i-1}-3)/2} \text{ for $i \in [s]$}. \\
		\end{equation*}
		Define $d_0, \ldots, d_s$ and $b_1, \ldots, b_s$ as follows.
		\begin{equation*}
			d_0 = d \quad \text{and} \quad d_i = 4d_{i-1}^3, \,\, b_i = 4d_{i-1}^2 \text{ for $i \in [s]$}.
		\end{equation*}
		Now define graphs $H_0, \ldots, H_s$, such that $H_i \in \family_{n, d_i, \ell_i}$, and maps $\varphi_1, \ldots, \varphi_s$ as follows. Take $H_0 = H$. Having defined $H_{i-1}$, by \Cref{prop:reduction-square} and choice of $d_i, b_i$, there exists $H_i \in \family_{n, d_i, \ell_i}$ and a map $\varphi_i : H_{i-1} \to H_i$ such that $\dist_{H_i}(\varphi_i(x), \varphi_i(y)) \le \ceil{\dist_{H_{i-1}}(x, y)/2}$ for every $x, y \in V(H_{i-1})$, and $|\varphi_i^{-1}(u)| \le b_i$ for every $u$ in $H_i$. Set $\varphi = \varphi_s \circ \ldots \circ \varphi_1$. Then $\varphi$ is a map from $H_0 = H$ to $H_s$ such that if $\dist_H(x, y) \le 2^s$ then $\dist_{H_s}(\varphi(x), \varphi(y)) \le 1$, and $|\varphi^{-1}(u)| \le b_1 \cdot \ldots \cdot b_s$ for every $u$ in $H_s$.

		Observe that $\ell_i \ge \ell_{i-1}/2 - 2.5$ for $i \in [s]$. By iterating this we find that $\ell_s \ge 2^{-s} \ell_0 - 5 \ge \ell/2t - 5$, and so $L = \ell/4t - 6 \le (\ell_s - 6)/2$.
		Next, we have $2d_i = (2d_{i-1})^3 = \ldots = (2d_0)^{3^i} = (2d)^{3^i}$, and in particular $2d_s \le (2d)^{4t^2}$. Defining $D = (2d)^{8t^2}$, we have $D \ge (2d_s)^2$, and so by \Cref{prop:reduction-subdivision} there exists a bipartite $F \in \family_{n, D, L}$ and a map $\psi: H_s \to F^-$ such that adjacent vertices in $H_s$ are mapped either to the same vertex or to adjacent ones, and $|\psi^{-1}(u)| \le 4d_s$ for every vertex $u$ in $F$. 

		Consider the map $\rho = \psi \circ \phi$. Then $\rho$ is a map from $H$ to $F^-$ such that vertices at distance at most $t \le 2^s$ in $H$ are mapped to vertices at distance at most $1$ in $F^-$, and for every vertex $u$ in $F^-$
		\begin{equation*}
			|\rho^{-1}(u)| 
			\le b_1 \cdot \ldots \cdot b_s \cdot 4d_s 
			\le (2d_0)^2 \cdot \ldots \cdot (2d_s)^2 
			\le (2d)^{2(1 + \ldots + 3^s)} 
			\le (2d)^{3^{s+1}} 
			\le (2d)^{12t^2}.
		\end{equation*}
		Taking $T = (2d)^{12t^2}$, it follows that $H^t$ is a subgraph of $F^-[K_T]$, as claimed.
	\end{proof}

\subsection{Proof of Proposition~\ref{prop:reduction-square}}

	\begin{proof}
		Let $H_0 \subseteq \ldots \subseteq H_k = H$ be a sequence of graphs such that $H_0$ is a singleton, and $H_{i+1}$ is obtained from $H_i$ either by adding a new vertex of degree $1$, or by joining two vertices in $H_i$ by a path of length at least $\ell$ whose interior vertices are new. This sequence exists by the definition of $H$. Note that when we join two vertices by a path, we assume that we add the vertices in an order, either from $y$ to $x$ or $x$ to $y$.

		We will define a sequence of graphs $F_0 \subseteq \ldots \subseteq F_k$ such that $F_0$ is a singleton, and $F_{i+1}$ is either equal to $F_i$, or it can be obtained from $F_i$ either by adding a new vertex of degree $1$, or by connecting two vertices of $F_i$ by a path of length at least $L$ whose interior vertices are new. Additionally, we will define a sequence of maps $\varphi_i : H_i \to F_i$ for $i \in \{0, \ldots, k\}$. 

		Define $\varphi_0 : H_0 \to F_0$ to map the single vertex in $H_0$ to the single vertex in $F_0$. For $i \in \{0, \ldots, s-1\}$ define $F_{i+1}$ and let $\varphi_{i+1}$ be a map $\varphi_{i+1} : H_{i+1} \to F_{i+1}$ that extends $\varphi_i$, as follows.

		\begin{enumerate}[label = \rm(\roman*)]
			\item \label{itm:add-vertex}
				Suppose that $H_{i+1}$ is obtained by joining a new vertex $y$ to a vertex $x$ in $H_i$. Let $S = \{x\} \cup N_{H_i}(x)$. 

				If $\varphi_i(S)$ consists of a single vertex $u$, let $F_{i+1}$ be the graph obtained by adding a new vertex $v$ to $F_i$ and joining it to $u$, and define $\varphi_{i+1}(y) = v$.

				Otherwise, let $z$ be the last vertex in $S$ to join the sequence $H_0, \ldots, H_i$. Define $F_{i+1} = F_i$ and $\varphi_{i+1}(y) = \varphi_{i+1}(z)$.
			\item \label{itm:add-path}
				Suppose that $H_{i+1}$ is obtained from $H_i$ by connecting two vertices $x, y$ in $H_i$ by a path $P$ of length $h$, such that $h \ge \ell$, whose interior vertices are new. 

				Let $S_x= \{x\} \cup N_{H_i}(x)$. We would like to assume $|\varphi_i(S_x)| = 1$. If this is not the case, we add one or two new graphs $H_i', H_i''$ to obtain the sequence $H_0, H_1,\dots, H_i, H_i', (H_i'',) H_{i+1}, \dots$, and define maps $
				\varphi_i': H_i'\rightarrow F_i$ and $\varphi_i'': H_i''\rightarrow F_i$, such that either in the graph $H_i'$ or in $H_i''$ we may assume $|\varphi_i'(S_{x'})|=1$  or $|\varphi_i''(S_{x''})|=1$, where $x'$ and $x''$ are the second and third vertices in $P$ (thinking of $x$ as the first vertex). Indeed, let $H_i'$ be obtained from $H_i$ by adding $x'$ and joining it to $x$, and let $H_i''$ be obtained from $H_i'$ by adding $x''$ and joining it to $x'$. Now follow the instructions in item \ref{itm:add-vertex} to obtain a map $\varphi_i' : H_i' \to F_i$, which extends the map $\varphi_i$, i.e.\ let $z$ to be the last vertex in $S_x$ to join the sequence $H_0, H_1, \dots, H_i$ and set $\varphi_i'(x')=\varphi(z)$.
				If  $\varphi_i'(x) = \varphi_i'(x')$ then let $S_{x'} = \{x'\} \cup N_{H_i'}(x')=\{x,x'\}$. Note that  $|\varphi_i'(S_{x'})| = 1$, so  it is enough for us to consider the new sequence $H_0, H_1, \dots, H_i, H_i', H_{i+1}', \dots $ where $H_{i+1}'$ is obtained from $H_i'$ by adding a path of length $h-1$ between $x'$ and $y$.

				Otherwise, again follow item \ref{itm:add-vertex} to obtain a map $\varphi_i'' : H_i'' \to F_i$ that extends $\varphi_i'$.  Since $|\varphi_i'(S_{x'})| \neq 1$, we set $\varphi_i''(x'')=\varphi_i''(x')$. Now we have $S_{x''}=\{x'', x'\}$ and $|\varphi_i(S_{x''})|=1$, thus we may consider the new sequence $H_0, H_1, \dots, H_i, H_i', H_i'' H_{i+1}'', \dots $, where $H_{i+1}''$ is obtained from $H_i'$ by adding a path of length $h-2$ between $x''$ and $y$.

				To conclude, by possibly adding one or both of $H_i'$ and $H_i''$ to the sequence $H_0, \ldots, H_i$, we may assume that $|\varphi_i(S_x)| = 1$. Similarly, we may assume that $|\varphi_i(S_y)| = 1$, where $S_y = \{y\} \cup N_{H_i}(y)$. Taking into account the possible modifications, we now have that $h \ge \ell-4$.

				Let $h' = \floor{(h+1)/2}$, and form $F_{i+1}$ by connecting $\varphi_i(x)$ and $\varphi_i(y)$ by a path $P'$ of length $h'$ whose interior vertices are new. Extend $\varphi_i$ to a map $\varphi_{i+1} : H_{i+1} \to F_{i+1}$ such that if $P = (x_0\ldots x_h)$, where $x_0=x, x_h=y$ and $P' = (u_0 \ldots u_{h'})$ then $\varphi_{i+1}(x_{2j-1}) = \varphi_{i+1}(x_{2j}) = u_j$ for $j \in [h'-1]$, and if $h$ is even put $\varphi_{i+1}(x_{h-1}) = u_{h'-1}$.
		\end{enumerate}

		Set $F = F_k$ and $\varphi = \varphi_k$.
		We will show that 
		\begin{enumerate}[label = \rm(\alph*)]
			\item \label{itm:star}
				for every star $S$ in $H$ the image $\varphi(S)$ of $S$ consists of either a single vertex or two adjacent vertices,
			\item \label{itm:preimage}
				$|\varphi^{-1}(u)| \le 4d^2$ for every vertex $u$ in $F$,
			\item \label{itm:max-deg}
				$F \in \family_{n, D, L}$ (recall that $D = 4d^3$ and $L = (\ell - 4)/2$).
		\end{enumerate}

		To prove \ref{itm:star} we prove by induction on $i$ that $\varphi_i$ maps stars to edges or vertices. This clearly holds for $\varphi_0$. Now if the statement holds for $i$, then it is easy to see from the construction of $\varphi_{i+1}$ in terms of $\varphi_{i}$ that it holds for $i+1$, by considering all stars that appear in $H_{i+1}$ but not in $H_i$. 

		Next, to see \ref{itm:preimage}, consider a vertex $u$ in $F$. We wish to show that $|\varphi^{-1}(u)| \le 4d^2$. Let $i$ be minimum such that $u$ is in $F_i$. Note that if $u$ is the single vertex in $F_0$ then $\varphi^{-1}(u)$ contains only the single vertex in $H_0$, so we may assume that $i \ge 1$.
		Then $F_i$ is obtained by $F_{i-1}$ either by adding $u$ and joining it to a vertex in $F_{i-1}$, or by connecting two vertices of $F_{i-1}$ by a path of length at least $L$ that contains $u$.

		We assume that the former holds. It follows that $H_i$ is formed by adding a vertex $x$ to $H_{i-1}$ and joining it to a vertex $y$ in $H_{i-1}$, and $\varphi_i(x)=u$. 

		\begin{claim}\label{claim:auxPreimage} 
			$\varphi_j^{-1}(u)$ consists of a subset $S_j$ of neighbours of $y$ and a set of neighbours of $S_j$, for $j \ge i$.
		\end{claim}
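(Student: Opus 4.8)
The plan is to prove the statement by induction on $j$, starting at $j=i$, in the slightly stronger form that one may take $S_j$ to be the singleton $\{x\}$: concretely, I will show $\varphi_j^{-1}(u)=\{x\}\cup W_j$ for some $W_j\subseteq N_{H_j}(x)$, which immediately yields the claim since $x$ is joined to $y$ when it is introduced at step $i$ and no edge is ever removed, so $\{x\}\subseteq N_{H_j}(y)$ and $W_j$ is a set of neighbours of $\{x\}$. For the base case $j=i$: since $u$ is added to $F_i$ together with an edge to $F_{i-1}$, no vertex of $H_{i-1}$ is mapped to $u$ by $\varphi_{i-1}$, so $\varphi_i^{-1}(u)=\{x\}$ and $W_i=\emptyset$.

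For the inductive step, assume $\varphi_{j-1}^{-1}(u)=\{x\}\cup W_{j-1}$ with $W_{j-1}\subseteq N_{H_{j-1}}(x)\subseteq N_{H_j}(x)$. As $\varphi$ is only ever (re)defined on newly introduced vertices, $\varphi_j$ agrees with $\varphi_{j-1}$ on $V(H_{j-1})$, so it suffices to decide which newly introduced vertices of $H_j$ land on $u$. If $H_j$ is obtained from $H_{j-1}$ by adding a path (case~\ref{itm:add-path}), the new interior vertices are mapped to interior vertices of the path $P'$ attached to $F_{j-1}$, which are new in $F_j$ and hence $\ne u$ (as $u\in V(F_{j-1})$), so $W_j=W_{j-1}$; the auxiliary graphs introduced inside case~\ref{itm:add-path} are themselves instances of case~\ref{itm:add-vertex} and are covered below. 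If $H_j$ is obtained by attaching a new leaf $w$ to a vertex $v$ (case~\ref{itm:add-vertex}) and the neighbourhood-star of $v$ collapses to a single vertex, then $w$ is mapped to a new vertex of $F_j$, again $\ne u$, and $W_j=W_{j-1}$. The only remaining case is case~\ref{itm:add-vertex} in the second sub-case, where $\varphi_j(w)=\varphi_{j-1}(z)$ with $z$ the last vertex of $\{v\}\cup N_{H_{j-1}}(v)$ to join the sequence: if $\varphi_{j-1}(z)\ne u$ we again set $W_j=W_{j-1}$, and otherwise $\varphi_j(w)=u$ and it remains to prove $v=x$, after which $W_j:=W_{j-1}\cup\{w\}$ works.

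To establish $v=x$ I would combine two ingredients. First, since we are in the second sub-case the star at $v$ is not mapped to a single vertex, so by property~\ref{itm:star} the set $\{v\}\cup N_{H_{j-1}}(v)$ has exactly one vertex mapped to $u$ and at least one mapped elsewhere. Second, $z$ is the \emph{last-joined} vertex of that closed neighbourhood, and a vertex of $H$ acquires new neighbours after its introduction only by later becoming a path endpoint (in which case the new neighbour is an interior vertex of the associated path, introduced at that later step and mapped away from $u$) or by later being a leaf-attachment vertex. With these, a short case analysis on $z$ finishes: if $z=v$ then $\varphi_{j-1}(v)=u$, so $v\in\{x\}\cup W_{j-1}$, and being last-joined $v$ has no later neighbour, hence its only neighbour is the vertex it was attached to — which for $v\in W_{j-1}$ is $x$, forcing the whole star $\{v,x\}$ onto $u$ and contradicting the second sub-case, so $v=x$; and if $z\ne v$ is a neighbour of $v$, then being last-joined it is introduced no earlier than $v$, from which a trace of the construction gives that $\varphi_{j-1}(z)$ is an image of a path-interior vertex (hence $\ne u$) unless $z=x$, while $z=x\sim v$ with $x$ last-joined contradicts that every neighbour of $x$ other than $y$ is introduced after $x$. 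In all surviving cases $v=x$, completing the induction.

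The step I expect to be the main obstacle is exactly this last one — ruling out a freshly collapsed leaf being attached to a $u$-preimage vertex \emph{other} than $x$ — since it is the only place where the particular bookkeeping of the construction (property~\ref{itm:star} together with the ``last-joined'' rule for $z$) is genuinely used. Everything else is routine, and taking $j=k$ gives $\varphi^{-1}(u)=\{x\}\cup W_k$ with $W_k\subseteq N_H(x)$, so in particular $|\varphi^{-1}(u)|\le 1+\deg_H(x)\le 1+d\le 4d^2$, which is property~\ref{itm:preimage}.
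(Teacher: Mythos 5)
Your strengthened induction hypothesis --- that one may always take $S_j=\{x\}$, i.e.\ $\varphi_j^{-1}(u)\subseteq\{x\}\cup N_{H_j}(x)$ --- is false, and the step you flagged as the main obstacle (proving that the attachment vertex $v$ must equal $x$) is exactly where it breaks. Concrete counterexample: at step $i$, $x$ is attached to $y$, the image of $\{y\}\cup N_{H_{i-1}}(y)$ is a single vertex $p$, so $u$ is created adjacent to $p$ and $\varphi_i(x)=u$, while $\varphi_i(y)=p\neq u$. At step $i+1$, attach a new leaf $z_1$ to $y$. Now $\varphi_i(\{y\}\cup N_{H_i}(y))\supseteq\{p,u\}$ is not a single vertex, so the construction sets $\varphi_{i+1}(z_1)=\varphi_{i+1}(z)$ where $z$ is the last vertex of $\{y\}\cup N_{H_i}(y)$ to join the sequence; that vertex is $x$, hence $\varphi_{i+1}(z_1)=u$. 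But $z_1$ is adjacent only to $y$, so $z_1\notin\{x\}\cup N(x)$. In your case analysis this is the configuration $z=x$, $v=y$: your claimed contradiction (``every neighbour of $x$ other than $y$ is introduced after $x$'') does not apply when $v=y$, so the case survives with $v\neq x$, and your conclusion ``in all surviving cases $v=x$'' is wrong. The situation compounds: a further leaf $z_2$ attached to $z_1$ also gets mapped to $u$ (now $z_1$ itself is the last-joined vertex of its star), so the preimage genuinely grows into a set of several neighbours of $y$ together with neighbours of those --- which is precisely why the paper formulates the invariant with a subset $S_j\subseteq N(y)$ rather than with the single vertex $x$, and why the resulting bound is $2(d+d^2)\le 4d^2$ rather than the $1+d$ your version would give.

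The paper's proof keeps the weaker, correct invariant and handles the inductive step by analysing the last-joined vertex $s$ of the star at the attachment vertex $w$: either $s=w$ (then $w$'s unique neighbour is forced to be $y$, so the new vertex is a neighbour of a member of $S_j$), or $s\neq w$, in which case one passes to the step $k$ at which $s$ was introduced and applies the induction hypothesis to $s$, concluding that either $w=y$ (so the new vertex joins $S_{j+1}$) or $w\in S_j$ (so the new vertex is a neighbour of $S_{j+1}$). To repair your argument you would have to abandon the singleton strengthening and allow $S_j$ to accumulate new neighbours of $y$ (and, as in the paper, track that every other preimage vertex is adjacent to some member of $S_j$); as written, the proposal proves a false statement and so has a genuine gap.
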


		\begin{proof} 
			This clearly holds for $j = i$, because then $\varphi_i^{-1}(u) = \{x\}$. Suppose that the claim holds for $j$. There are two options. Either  $H_{j+1}$ is formed by adding a new vertex and joining it to a vertex in $H_j$, or $H_{j+1}$ is formed  by connecting two vertices in $H_j$ by a path of length at least $\ell-2$ whose interior vertices are new. In the second we have $\varphi^{-1}_{j+1}(u) = \varphi^{-1}_{j}(u)$, so it suffices to consider the first. 

			So we may assume that $H_{j+1}$ is formed by adding a new vertex $z$ and joining it to a vertex $w$ in $H_j$. We may also assume $\varphi_{j+1}(z) = u$, because otherwise $\varphi_{j+1}^{-1}(u) = \varphi_j^{-1}(u)$ and the claim follows from the assumption on $j$. Write $S_w = \{w\} \cup N_{H_j}(w)$. Let $s$ be the last vertex in $S$ to appear in $H_0, \ldots, H_j$. If $s = w$, then $w$ has exactly one neighbour $w'$ in $H_j$. Indeed, if $w$ had more than one neighbour in $H_j$, then it would have exactly two neighbours and that would mean $w$ was added as a new vertex of an internal path added to $H_j$. But note that in that case $w$ would have been mapped to a new vertex different from $u$ (which was added at an earlier step, precisely at $i$-th step), contradicting out assumption that $\varphi_{j+1}(z)=\varphi_{j+1}(w)=u$. 

			As $\varphi_{j+1}(z) = u$, by definition of $\varphi_{j+1}$, we have $\varphi_j(w) = u$ and $\varphi_j(w') \neq u$. Indeed, if $\varphi_j(w)=\varphi_j(w')$ then $\varphi_{j+1}(z)$ would be a new vertex added to $H_j$, contradicting the assumption $\varphi_{j+1}(z) = u$. By assumption on $j$ the preimage $\varphi_j^{-1}(u)$ consists of a subset $S_j$ of neighbours of $y$ and some neighbours of $S_j$. Since $w\in \varphi_j^{-1}(u)$, it follows that $w$ is either a neighbour of $y$, or a neighbour of a vertex in $S_j$. Buy $w'$ is the only neighbour of $w$, and $\varphi_j(w') \neq u$, so $w' \notin S_j$, hence the former holds and $w' = y$. This implies that $z$ is a neighbour of a neighbour of $y$, and so the statement holds for $j+1$.

			Now suppose that $s \neq w$. Let $k$ be minimum such that $s$ is in $H_k$. Then $H_k$ is formed by adding $s$ to $H_{k-1}$ and joining it to $w$ and maybe to more vertices.  Note that $u=\varphi_{j+1}(z)=\varphi_{j+1}(s)$ by definition of the mapping $\varphi_{j+1}$, hence $s\in \varphi_k^{-1}(u)$. Now if $k=i$, that means $s=x$ and $w=y$, so $z$ is a neighbour of $y$, as claimed. So we may assume $k>i$. We claim that $w$ is the only neighbour of $s$ in $H_k$. Indeed, if $s$ had other neighbours then it would have been added as a part of an internal path added to $H_k$ and  would have been mapped to a new vertex, contradicting $\varphi_{j+1}(s) = u$.  By induction, using that $i<k\leq j$, we have that $s$ is either a neighbour of $y$ or a neighbour of a vertex in $S_j$ (the set of neighbours of $y$ in $H_j$ whose image is $u$). If $s$ is a neighbour of $y$ then $y=w$, and we are done, and if $s$ is a neighbour a vertex in $S_j$, then $w$ is a neighbour of $y$, which means that also $z$ is a neighbour a vertex in $S_j$. The statement thus follows for $j+1$.
		\end{proof}

		It remains to consider the latter case. Then $F_i$ is obtained from $F_{i-1}$ by joining two vertices $v$ and $w$ by a path $P'$ of length at least $L$, and $u$ is one of the new internal vertices of this path. Then $H_i$ is obtained from $H_{i-1}$ by joining two vertices $x, y$ in $H_{i-1}$ by a path $P$ of length at least $\ell - 4$.
		Moreover, $\varphi_i^{-1}(u)$ consists of two or three consecutive vertices in the interior of $P$. Let $x', y'$ be the two vertices in $P$ that are adjacent to a vertex of $\varphi_i^{-1}(u)$. A similar argument used to prove ~\Cref{claim:auxPreimage}, would show that $\varphi^{-1}_j(u)$ consists of a set of vertices $S'$ that are neighbours of $x'$ or $y'$, and a set of neighbours of $S'$, for all $j\geq i$. 

		To summarise, $\varphi^{-1}(u)$ consists of vertices at distance at most $2$ from a set of size at most $2$, for every vertex $u$ in $F$. In particular, $|\varphi^{-1}(u)| \le 2(d+d^2) \le 4d^2$ for every vertex $u$ in $F$, as required for \ref{itm:preimage}.

		By construction, for every vertex $u$ in $F$, each of $u$'s neighbours in $F$ corresponds to a neighbour in $H$ of a vertex in $\varphi^{-1}(u)$, implying that the maximum degree in $F$ is at most $|\varphi^{-1}(u)| d \le 4d^3 = D$, proving \ref{itm:max-deg}. 

		We claim that $F$ and $\varphi$ satisfy the requirements of \Cref{prop:reduction-square}.
		The requirement on $F$ follows from \ref{itm:max-deg}, and the second itemized property follows from \ref{itm:preimage}.
		To see that the first property holds, let $x, y \in V(H)$ and let $(x_0, \ldots, x_{\rho})$, where $x_0 = x$ and $x_{\rho} = y$, be a shortest path from $x$ to $y$. By \ref{itm:star}, since $\dist_H(x_i, x_{i+2}) \le 2$ we have $\dist_F(\varphi(x_i), \varphi(x_{i+2})) \le 1$ for $i \in \{0, \ldots, \rho-2\}$. It follows that $\dist_F(\varphi(x), \varphi(y)) \le \ceil{\rho/2} = \ceil{\dist_H(x, y)/2}$, as required.
	\end{proof}

\subsection{Proof of Proposition~\ref{prop:reduction-subdivision}}
	\begin{proof} 
		Let $H_0 \subseteq \ldots \subseteq H_k = H$ be a sequence of graphs such that $H_0$ is a singleton, and $H_{i+1}$ is obtained from $H_i$ either by adding a new vertex and joining it to a vertex in $H_i$, or by connecting two vertices in $H_i$ by a path of length at least $\ell$ whose interior vertices are new.

		We will define a sequence of graphs $F_0 \subseteq \ldots \subseteq F_k$ such that $F_0$ is a singleton, and $F_{i+1}$ is either equal to $F_i$, or it is obtained from $F_i$ either by adding a new vertex of degree $1$, or by connecting two vertices in $F_i$ by a path of length at least $(\ell - 6)/2$ whose interior vertices are new. Additionally, we will define a sequence of maps $\varphi_i : H_i \to F_i^-$ for $i \in \{0, \ldots, k\}$ such that $\varphi_{i+1}$ extends $\varphi_i$. It will be convenient at times to think of the vertices of $F_i^-$ as either vertices or edges of $F_i$ in a natural way, i.e. if $w_e$ is the vertex of $F_i^-$ added via subdividing the edge $e$ of $F_i$ then we may think of $w_e$ and $e$ interchangeably.  

		Define $\varphi_0 : H_0 \to F_0^-$ to map the single vertex in $H_0$ to the single vertex of $F_0^-$.
		For $i \in \{0, \ldots, k-1\}$, we define $F_{i+1}$ and $\varphi_{i+1}$ as follows.

		\begin{enumerate}[label = \rm(\roman*)]
			\item
				Suppose that $H_{i+1}$ is obtained by joining a new vertex $y$ to a vertex $x$ in $H_i$.

				If $\varphi_i(x)$ is a vertex of $F_i$, denote $\varphi_i(x) = u$ and form $F_{i+1}$ by adding a new vertex $v$ to $F_i$ and joining it to $u$, and set $\varphi_{i+1}(y) = uv$.

				If $\varphi_i(x)$ is an edge of $F_i$, denote $\varphi_i(x) = uv$, set $F_{i+1} = F_i$ and $\varphi_{i+1}(y) = u$.

			\item
				Suppose that $H_{i+1}$ is obtained from $H_i$ by connecting vertices $x, y$ in $H_i$ by a path $P$ of length $h$, where $h \ge \ell$, whose interior consists of new vertices. 

				It will be convenient for us to assume $\varphi_i(x)$ is a vertex of $V(F_i)$. To do so, if this is not the case (i.e.\ $\varphi_i(x)$ is an edge of $F_i$) let $x'$ be the neighbour of $x$ in $P$ and let $H_i'$ be the graph formed by adding $x'$ to $H_i$ and joining it to $x$. Observe that $H_{i+1}$ can be obtained from $H_i'$ by joining $x'$ and $y$ by a path of length $h-1$ whose interior vertices are new. Follow the instructions in the previous item to extend $\varphi_i$ to a map $\varphi_i'$ from $H_i'$ to $F_i'^-$. Note that since $\varphi_i(x)=uv$ for some edge $uv\in E(F_i)$, we set $F_i'=F_i$ and $\varphi_i'(x')=u$. Now $\varphi_i'(x')$ is indeed a vertex of $F_i'$. Repeating the same reasoning for $y$, with some abuse of notation we may assume that $\varphi_i : H_i \to F_i^-$ maps $x$ and $y$ to vertices of $F_i$, and $H_{i+1}$ is obtained from $H_i$ by joining $x$ and $y$ by a path $P$ of length $h \ge \ell-2$.

				Let $h' \in \{\floor{h/2}, \floor{h/2}-1\}$ be such that the graph $F_{i+1}$, obtained from $F_i$ by joining $\varphi_i(x)$ and $\varphi_i(y)$ by a path $P'$ of length $h'$ whose interior consists of new vertices, is bipartite (such $h'$ and $F_{i+1}$ exist as $F_i$ is bipartite, the choice of $h'$ depends on whether $\varphi_i(x), \varphi_i(y)$ are on the same side of the bipartition or not). Let $\varphi_{i+1} : H_{i+1} \to F_{i+1}^-$ be a map that extends $\varphi_i$, such that the vertices in the interior of $P$ are mapped to edges of $P'$ or vertices in the interior of $P'$, each edge of $P'$ is the image of exactly one vertex in $P$, each vertex in the interior of $P'$ is the image of one or two vertices of $P$, and consecutive vertices of $P$ are mapped either to the same vertex of $F_i^-$ or to adjacent ones. Specifically, map the second vertex of $P$ (namely, the neighbour of $x$) to the first edge in $P'$, then map the third vertex of $P$ to the second vertex of $P'$, etc., occasionally mapping two consecutive vertices of $P$ to the same vertex in $P'$, so that the penultimate vertex of $P$ is mapped to the last edge of $P'$; by choice of $h'$ such $\varphi_{i+1}$ exists.
		\end{enumerate}

		Let $F = F_k$ and $\varphi = \varphi_k$. 
		We will show that
		\begin{enumerate}[label = \rm(\alph*)]
			\item \label{itm:hom}
				adjacent vertices in $H$ are mapped either to the same vertex or to adjacent vertices in $F^-$,
			\item \label{itm:preimage-b}
				$|\varphi^{-1}(u)| \le 4d$ for every vertex $u$ in $F^-$,
			\item \label{itm:family}
				$F$ is a bipartite graph in $\family_{n, D, L}$ (recall that $D = 4d^2$ and $L = (\ell - 6)/2$).
		\end{enumerate}
		To see \ref{itm:hom}, one can prove by induction on $i$ that $\varphi_i$ maps adjacent vertices in $H_i$ to either the same vertex or adjacent ones in $F_i^-$, using the construction of $\varphi_{i+1}$ in terms of $\varphi_i$.

		To see \ref{itm:preimage-b}, note that every edge in $F$ is the image of exactly one vertex in $H$.
		Next, let $u$ be a vertex in $F$, and let $i$ be minimum such that $u$ is in $F_i$. Then $\deg_{F_i}(u) \le 2$, and $|\varphi_i^{-1}(u)| \le 2$. Every vertex in $\varphi^{-1}(u) \setminus \varphi_i^{-1}(u)$ is a neighbour (in $H$) of $\varphi^{-1}(e)$, where $e$ is one of the edges incident with $u$ in $F_i$. It follows that $|\varphi^{-1}(u)| \le 2d+2 \le 4d$, as required for \ref{itm:preimage-b}.

		For \ref{itm:family}, let $u$ be a vertex in $F$. Every neighbour of $u$ in $F$ corresponds to a neighbour (in $H$) of some vertex in $\varphi^{-1}(u)$. It thus follows that the degree of $u$ in $F$ is at most $d \cdot |\varphi^{-1}(u)| \le 4d^2 = D$, using \ref{itm:preimage-b}. Property \ref{itm:family} follows by construction of $F$.

		The proof of \Cref{prop:reduction-subdivision} follows directly from \ref{itm:hom}, \ref{itm:preimage-b} and \ref{itm:family}.
	\end{proof}

\end{document}